\newtheorem{theorem}{Theorem}[section]
\newtheorem{proposition}[theorem]{Proposition}
\newtheorem{corollary}[theorem]{Corollary}
\newtheorem{lemma}[theorem]{Lemma}
\newtheorem{remark}[theorem]{Remark}
\theoremstyle{definition}
\newtheorem{definition}{Definition}
\newtheorem{example}[theorem]{Example}
\newcommand{\R}{\mathbb{R}}
\newcommand{\Sp}{\mathbb{S}}
\newcommand{\diam}{\mathrm{diam}}
\newcommand{\dgh}{d_\mathrm{GH}}
\newcommand{\dH}{d_\mathrm{H}}
\newcommand{\mr}{\mathrm{MR}}
\newcommand{\TS}{\mathbf{E}}
\newcommand{\comax}{\mathrm{comax}}
\newcommand{\vr}{\mathrm{VR}}
\newcommand{\ds}{d_{\Sp^1}}
\newcommand{\norm}[1]{\left\lVert#1\right\rVert}
\newcommand{\eps}{\varepsilon}
\newcommand{\caE}{\mathcal{E}}
\newcommand{\caF}{\mathcal{F}}
\newcommand{\bbD}{\mathbb{D}}
\newcommand{\lbracket}{\left( }
\newcommand{\rbracket}{\right) }
\newcounter{claimcount}
\definecolor{pink}{RGB}{255,20,147}
\definecolor{darkblue}{rgb}{0.0, 0.0, 0.8}
\definecolor{darkred}{rgb}{0.8, 0.0, 0.0}
\definecolor{darkgreen}{rgb}{0.0, 0.8, 0.0}
\definecolor{purple}{RGB}{153,50,204}
\begin{document}
\title{Some results about the Tight Span of spheres}

\author[1]{Sunhyuk Lim}
\author[2]{Facundo M\'emoli}
\author[3]{Zhengchao Wan}
\author[4]{Qingsong Wang}
\author[5]{Ling Zhou}

\affil[1]{Max Planck Institute for Mathematics in the Sciences, Leipzig\\ \texttt{sulim@mis.mpg.de}}
\affil[2]{Department of Mathematics and Department of Computer Science and Engineering, The Ohio State University\\ \texttt{memoli@math.osu.edu}}
\affil[3]{Department of Mathematics, The Ohio State University\\ \texttt{wan.252@osu.edu}}
\affil[4]{Department of Mathematics, The Ohio State University\\ \texttt{wang.8973@osu.edu}}
\affil[5]{Department of Mathematics, The Ohio State University\\ \texttt{zhou.2568@osu.edu}}

\date{\today}
\maketitle
	
\begin{abstract}
The smallest hyperconvex metric space containing a given metric space $X$ is called the tight span of $X$. It is known that tight spans have many nice geometric and topological properties, and they are gradually becoming a target of research of both the metric geometry community and the topological/geometric data analysis community. In this paper, we study the tight span of $n$-spheres (with either geodesic metric or $\ell_\infty$-metric).
\end{abstract}

\newpage
\setlength{\nomlabelwidth}{4em}
\mbox{}
\nomenclature{\(\Sp^n\)}{$n$-dimensional sphere with its geodesic metric.}

\nomenclature{\(\Sp^n_\infty\)}{$n$-dimensional sphere with $\ell^\infty$-norm (coming from usual embedding into $\R^{n+1}$).}

\nomenclature{\(\TS(X)\)}{Tight span of the metric space $X$.}

\nomenclature{\(\diam(X)\)}{Diameter of the metric space $X$.}

\nomenclature{\(\dgh\)}{Gromov-Hausdorff distance.}

\nomenclature{\(\dH\)}{Hausdorff distance.}

\nomenclature{\(\bar{x}\)}{Antipodal point of $x$ in an antipodal metric space $X$.}

\nomenclature{\(\ell^2\)}{Hilbert space of infinite real sequences with finite square sums.}

\nomenclature{\(L^\infty(X)\)}{Banach space of real valued functions on $X$ with $\ell^\infty$-norm.}

\nomenclature{\(\mathfrak{H}\)}{Hilbert cube.}

\nomenclature{\(\R^n_\infty\)}{$\R^n$ with $\ell^\infty$-norm.}

\nomenclature{\(B_r(X,E)\)}{Open $r$-thickening of $X$ in $E$, for $X$ a sub-metric space of $E$.}

\printnomenclature
\tableofcontents

\section{Introduction}

A metric space $(E,d_E)$ is called \emph{hyperconvex} if it satisfies the following property:

\begin{framed}
For every family $(x_i,r_i)_{i \in I}$ of $x_i$ in $E$ and $r_i \geq 0$ such that $d_E(x_i,x_j) \leq r_i + r_j$ for every $i,j$ in $I$, there exists a point $x\in E$ such that $d_E(x_i,x) \leq r_i$ for every $i$ in $I$.
\end{framed}

Hyperconvex spaces were first studied by Aronszajn and Panitchpakdi in \cite{aronszajn1956extension}, and the authors proved that a metric space $E$ is hyperconvex if and only if $E$ satisfies the following ``injectivity" condition:

\begin{framed}
For each $1$-Lipschitz map $f: X \to E$ and an isometric embedding of $X$ into $\tilde{X}$, there exists a $1$-Lipschitz map $\tilde{f}: \tilde{X} \to E$ extending $f$:
$$\begin{tikzcd}
X \arrow[r, hook] \arrow[dr, "f" , rightarrow]
& \tilde{X} \arrow[d, "\tilde{f}"]\\
& E
\end{tikzcd}$$
\end{framed}

This notion of injectivity of metric spaces was rediscovered in \cite{isbell1964six} and the theory was developed further in \cite{dress1984trees,lang2013injective,lang2013metric}. One important fact is that any arbitrary metric space $(X,d_X)$ can be isometrically embedded into an injective metric space. This can be easily shown by using the \emph{Kuratowski embedding} $x\longmapsto d_X(x,\cdot)$ and the fact that $L^\infty(X)$, the space of all real-valued maps on $X$ with the uniform norm, is hyperconvex.

In the first place, we became interested in hyperconvex metric spaces because of their application to Topological Data Analysis. In $\cite{lim2020vietoris}$, the authors proved that the filtration obtained through increasing thickenings of $X$ inside any hyperconvex metric space is homotopy equivalent to the Vietoris-Rips filtration. Also, the recent papers \cite{joharinad2019topology,joharinad2020topological} introduced a new notion of curvature for metric spaces through a quantification of their deviation from hyperconvexity, and suggested applications of this notion of curvature for topological/geometric data analysis.

For an arbitrary metric space $X$,  $\TS(X)$, the tight span of $X$ is defined to be the smallest (up to isometric embedding) hyperconvex space containing $X$. It is known that tight spans have many nice geometric properties. For example, $\TS(X)$ is always a contractible geodesic metric space, and it inherits the compactness, diameter, and $\delta$-hyperbolicity from its inducing space $X$ \cite{isbell1964six,lang2013injective}.

It is known that every complete metric tree $T$ is isometric to its own tight span $\TS(T)$ (hence it is itself hyperconvex) \cite[Theorem 8]{dress1984trees}. A natural next step is to understand the tight span of metric graphs: one motivation for this is that, firstly, metric graphs can approximate arbitrary geodesic metric spaces \cite{burago2001course,memoli2018metric} in the Gromov-Hausdorff sense and, secondly, the tight span is itself Gromov-Hausdorff stable \cite[Theorem 3.1]{lang2013metric}. Since the simplest example of a non-tree metric graph is $\Sp^1$ (with geodesic metric), this motivates one of the main goals of this paper which is to characterize $\TS(\Sp^1)$. Some of the arguments we use also help us obtain a better understanding of $\TS(\Sp^n)$, the tight span of $n$-spheres for $n\geq 2$. 

Another reason why precise knowledge about the structure of the tight span of spheres is important comes from applied algebraic topology (AAT) \cite{carlsson2009topology}, where the Vietoris-Rips complex (and the filtration it induces) plays a fundamental role \cite{adamaszek2017vietoris,adamaszek2018metric}. Being natural ``model spaces", it is of interest to fully characterize the persistent homology induced by the Vietoris-Rips filtrations of spheres (endowed with their geodesic metric). Since
it is known \cite[Theorem 5]{lim2020vietoris} that for any compact metric space $X$, its Vietoris-Rips filtration and the filtration $B_\bullet(X,\TS(X))$ (arising from thickening $X$ inside its tight span) are naturally isomorphic, we hope that by better understanding $\TS(\Sp^n)$ we will be able to eventually characterize the successive homotopy types of Vietoris-Rips complexes of $\Sp^n$.  For partial results in this direction, see \S\ref{sec:TSS1:httypes}.

\paragraph{Contributions and organization.}
In \S\ref{sec:prelim}, we review preliminary notions and theorems which will be required throughout this paper. We give a succinct proof of the equivalence between hyperconvexity and injectivity (Proposition \ref{prop:hypinj}). Also, with the aid of the tight span, we classify the successive homotopy types of the Vietoris-Rips filtration of tree-like metric spaces  (Corollary \ref{cor:httype_of_tree_metric_space}).\vspace{\baselineskip}

In \S\ref{sec:TSS1}, first, motivated by the cases of finite antipodal metric spaces, we prove that for the circle $\Sp^1$ with geodesic metric,   $\TS(\Sp^1)$ is homeomorphic to the Hilbert cube. Next, we figure out the explicit form of those functions in $\TS(\Sp^1)$ which play the role of ``vertices" of the Hilbert cube. Finally, with this understanding, we prove that the homotopy type of $B_r(\Sp^1,\TS(\Sp^1))$ is that of $\Sp^1$ for $r\in \left(0,\frac{\pi}{3}\right]$ and that of a point for $r=\frac{\pi}{2}$.\vspace{\baselineskip}

In \S\ref{sec:mtrg}, we review the notion of mountain range function introduced by Katz in \cite{katz1991neighborhoods}, and with that we generate some examples of functions belonging to $\TS(\Sp^n)$ for arbitrary $n>2$.\vspace{\baselineskip}

In \S\ref{sec:TSSninfty}, we move our focus to $\Sp^n_\infty$, which is the $n$-sphere equipped with the $\ell_\infty$-metric instead of the geodesic metric. To understand $\TS(\Sp^n_\infty)$, we introduce the notions of $X$-surrounding points and $X$-minimal points associated to a metric space $X$.  In \cite{kilicc2016tight}, the authors prove that, for the circle $\Sp^1_\infty$,  $\TS(\Sp^1_\infty)$ is isometric to $\mathbb{D}^2_\infty$. This result suggests the question whether for all $n\geq 1$ it holds that $\TS(\Sp^n_\infty)$ is isometric to $\mathbb{D}^{n+1}_\infty$. We answer this question to the negative through an application of the notions of $X$-surrounding points and $X$-minimal points.  We prove that $\TS(\Sp^n_\infty)$ is not isometric to $\mathbb{D}^{n+1}_\infty$, $\mathbb{D}^{n+1}_\infty$ is not hyperconvex for $n\geq 2$ (See Theorem \ref{thm:ESn_infinity}), and also find an alternative description of $\TS(\Sp^2_\infty)$ (See Theorem \ref{thm:caE(S2)}).

\paragraph{Acknowledgements.} We acknowledge funding from these sources: NSF-DMS-1723003, NSF-CCF-1740761,
NSF-CCF-1839358, and BSF-2020124.

\section{Preliminaries}\label{sec:prelim}

Let us introduce some notations that will be used throughout this paper.

Between two metric spaces $X$ and $Y$, $X\cong Y$ denotes that $X$ is isometric to $Y$, and $X\preceq Y$ (resp. $X\prec Y$) denotes that $X$ can be isometrically embedded in $Y$ (resp. $X$ can be isometrically and properly embedded in $Y$).

Suppose that $X$ is a subspace of a metric space $(E,d_E)$. For every $r>0$, let $B_r(X,E):=\{z\in E:\,\exists x\in X\mbox{ with } d_E(z,x)<r\}$ denote the open $r$-thickening of $X$ in $E$. Respectively, the closed $r$-thickening of $X$ in $E$, denoted by $\overline{B}_r(X,E)$ is defined in the analogous way. In particular, if $X=\{x\}$ for some $x\in E$, it is just denoted by $B_r(x,E)$ (resp. $\overline{B}_r(x,E)$), the usual open (resp. closed) $r$-ball around $x$ in $E$.

As one more convention, whenever there is an isometric embedding $\iota:X\longhookrightarrow E$, we will use the notation $B_r(X,E)$ instead of $B_r(\iota(X),E)$.\vspace{\baselineskip}

Now we will review the concepts of \emph{injective} and \emph{hyperconvex} metric spaces. The main references for this subsection are \cite{dress1984trees,dress2012basic,lang2013injective}.

\begin{definition}[Injective metric space]\label{def:Injme}
A metric space $E$ is called \emph{injective} if for every $1$-Lipschitz map $f: X \to E$ and isometric embedding of $X$ into $\tilde{X}$, there exists a $1$-Lipschitz map $\tilde{f}: \tilde{X} \to E$ extending $f$:
$$\begin{tikzcd}
X \arrow[r, hook] \arrow[dr, "f" , rightarrow]
& \tilde{X} \arrow[d, "\tilde{f}"]\\
& E
\end{tikzcd}$$
\end{definition}

\begin{definition}[Hyperconvex space] \label{def:hyper}
A metric space $E$ is called  \emph{hyperconvex} if for every family $(x_i,r_i)_{i \in I}$ of $x_i$ in $E$ and $r_i \geq 0$ such that $d_E(x_i,x_j) \leq r_i + r_j$ for every $i,j$ in $I$, there exists a point $x\in E$ such that $d_E(x_i,x) \leq r_i$ for every $i$ in $I$.
\end{definition}

The following lemma is easy to deduce from the definition of hyperconvex space.

\begin{lemma}\label{lemma:intercloballhyper}
Any nonempty intersection of closed balls in a hyperconvex space is hyperconvex.
\end{lemma}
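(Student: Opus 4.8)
The plan is to show that any nonempty intersection $C = \bigcap_{\alpha \in A} \overline{B}_{s_\alpha}(c_\alpha, E)$ of closed balls in a hyperconvex space $E$ inherits hyperconvexity. So I would start with a family $(x_i, r_i)_{i \in I}$ of points $x_i \in C$ and radii $r_i \geq 0$ satisfying the consistency condition $d_E(x_i, x_j) \leq r_i + r_j$ for all $i, j \in I$, and my goal is to produce a point $x \in C$ with $d_E(x_i, x) \leq r_i$ for every $i$. The natural idea is to enlarge the given family by throwing in the ball centers $c_\alpha$ paired with their radii $s_\alpha$, and then apply hyperconvexity of the \emph{ambient} space $E$ to this combined family. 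The resulting point $x$ will automatically lie in $C$ (because it is within $s_\alpha$ of each $c_\alpha$) and will satisfy the required inequalities with respect to the $x_i$.

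The key step is verifying that the enlarged family still satisfies the pairwise consistency hypothesis required to invoke hyperconvexity of $E$. There are three types of pairs to check. First, for two indices $i, j \in I$, the inequality $d_E(x_i, x_j) \leq r_i + r_j$ is given. Second, for two centers $c_\alpha, c_\beta$, I need $d_E(c_\alpha, c_\beta) \leq s_\alpha + s_\beta$; this holds because $C$ is nonempty, so picking any $z \in C$ gives $d_E(c_\alpha, c_\beta) \leq d_E(c_\alpha, z) + d_E(z, c_\beta) \leq s_\alpha + s_\beta$ by the triangle inequality. Third, for a mixed pair $x_i$ and $c_\alpha$, I need $d_E(x_i, c_\alpha) \leq r_i + s_\alpha$, which follows immediately from $x_i \in C \subseteq \overline{B}_{s_\alpha}(c_\alpha, E)$, giving $d_E(x_i, c_\alpha) \leq s_\alpha \leq r_i + s_\alpha$ since $r_i \geq 0$.

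Having checked consistency, hyperconvexity of $E$ yields a point $x \in E$ satisfying $d_E(x_i, x) \leq r_i$ for all $i \in I$ and $d_E(c_\alpha, x) \leq s_\alpha$ for all $\alpha \in A$. The latter batch of inequalities says precisely that $x \in \overline{B}_{s_\alpha}(c_\alpha, E)$ for every $\alpha$, i.e. $x \in C$, while the former batch is exactly the conclusion we wanted for the family $(x_i, r_i)$. This completes the verification that $C$ is hyperconvex.

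I do not anticipate a serious obstacle here, since the argument is essentially bookkeeping; the only point requiring a moment's care is the consistency check for pairs of centers, which is where the nonemptiness hypothesis on the intersection is genuinely used (without it, the distinct centers might violate the triangle-style bound and the argument would collapse). Everything else reduces to the definition of a closed ball and the nonnegativity of the radii $r_i$.
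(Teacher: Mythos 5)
Your proof is correct and is precisely the standard argument the paper has in mind when it states that the lemma is ``easy to deduce from the definition'' (the paper omits the proof entirely). All three consistency checks are handled properly, including the use of nonemptiness of the intersection for pairs of centers, so there is nothing to add.
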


For a proof of the following proposition, see \cite{aronszajn1956extension} or \cite[Proposition 2.3]{lang2013injective}.

\begin{proposition}\label{prop:hypinj}
A metric space is injective if and only if it is hyperconvex.
\end{proposition}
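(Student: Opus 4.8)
The plan is to prove the equivalence by establishing both directions directly from the definitions, with the hyperconvex-to-injective direction being the substantive one.

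\textbf{Injective implies hyperconvex.} First I would show the easier direction. Suppose $E$ is injective and let $(x_i, r_i)_{i \in I}$ be a family in $E$ with $d_E(x_i, x_j) \leq r_i + r_j$ for all $i, j$. The idea is to realize the constraints as a one-point extension problem. Consider the metric space $X = \{x_i : i \in I\} \subseteq E$ together with an abstract extra point $p$, forming $\tilde{X} = X \cup \{p\}$, where I declare $\tilde{d}(p, x_i) := r_i$ and keep the existing distances among the $x_i$. I would need to check that $\tilde{d}$ is a genuine metric: the triangle inequalities involving $p$ read $d_E(x_i, x_j) \leq r_i + r_j$ (which holds by hypothesis) and $|r_i - r_j| \leq d_E(x_i, x_j)$ (which follows from $r_i \leq d_E(x_i,x_j) + r_j$, itself a rearrangement of the hypothesis with the roles symmetric). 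Then the inclusion $f: X \hookrightarrow E$ is an isometry hence $1$-Lipschitz, and $X \hookrightarrow \tilde{X}$ is an isometric embedding, so injectivity yields a $1$-Lipschitz extension $\tilde{f}: \tilde{X} \to E$. Setting $x := \tilde{f}(p)$ gives $d_E(x_i, x) = d_E(\tilde{f}(x_i), \tilde{f}(p)) \leq \tilde{d}(x_i, p) = r_i$ for every $i$, which is exactly the hyperconvexity conclusion.

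\textbf{Hyperconvex implies injective.} This is the direction I expect to require the main work, and the natural tool is Zorn's lemma. Given a $1$-Lipschitz $f: X \to E$ and an isometric embedding $X \hookrightarrow \tilde{X}$, I would consider the poset of pairs $(Y, g)$ where $X \subseteq Y \subseteq \tilde{X}$ and $g: Y \to E$ is a $1$-Lipschitz extension of $f$, ordered by extension. Chains have upper bounds (take the union of domains and the union of the maps, which remains $1$-Lipschitz), so Zorn gives a maximal element $(Y, g)$. The crux is to show $Y = \tilde{X}$: if not, pick $p \in \tilde{X} \setminus Y$ and try to extend $g$ to $p$. For each $y \in Y$ set $r_y := d_{\tilde{X}}(p, y)$; since $g$ is $1$-Lipschitz, $d_E(g(y), g(y')) \leq d_{\tilde{X}}(y, y') \leq r_y + r_{y'}$ by the triangle inequality in $\tilde{X}$. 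Hyperconvexity applied to the family $(g(y), r_y)_{y \in Y}$ then produces a point $x \in E$ with $d_E(g(y), x) \leq r_y = d_{\tilde{X}}(p, y)$ for all $y$; defining $g(p) := x$ extends $g$ $1$-Lipschitzly to $Y \cup \{p\}$, contradicting maximality. Hence $Y = \tilde{X}$ and $g$ is the desired extension.

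\textbf{Main obstacle.} The principal subtlety is the one-point extension step in the hyperconvex-to-injective direction, where one must verify that the newly defined map stays $1$-Lipschitz on all of $Y \cup \{p\}$ — the constraints toward the new point are handled by hyperconvexity, but one also checks the old pairs are unaffected. A secondary technical point is confirming that $\tilde{d}$ in the first direction satisfies the triangle inequality, which reduces cleanly to the hypothesis. Since the excerpt permits citing \cite{aronszajn1956extension} or \cite[Proposition 2.3]{lang2013injective}, I would present the above as a self-contained ``succinct proof'' matching the contribution advertised in the introduction, keeping the Zorn argument terse and foregrounding the single-point extension as the conceptual heart.
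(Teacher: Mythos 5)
Your strategy is the classical, self-contained one --- a direct equivalence via one-point extensions plus Zorn's lemma --- whereas the paper's proof routes everything through the tight span: it uses that a space is injective iff it equals $\TS(X)$, shows hyperconvexity forces every minimal $f\in\TS(X)$ to coincide with some $d_X(x_0,\cdot)$, and conversely realizes a constraint family as an element of $\Delta(X_I)$ pushed into $\TS(X)=X$ via the projection $p_I$ and the embedding $h_I$ of Lemma \ref{lm:iso span}. Your Zorn's-lemma direction (hyperconvex $\Rightarrow$ injective) is correct and complete as written.

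There is, however, a genuine error in your first direction. You assert that $\tilde d(p,x_i):=r_i$ defines a metric on $X\cup\{p\}$ because $|r_i-r_j|\le d_E(x_i,x_j)$ ``follows from a rearrangement of the hypothesis.'' It does not: $d_E(x_i,x_j)\le r_i+r_j$ rearranges to $d_E(x_i,x_j)-r_j\le r_i$, which is not the needed inequality $r_i\le r_j+d_E(x_i,x_j)$. Concretely, two points at distance $1$ with $r_1=100$ and $r_2=0$ satisfy the hypothesis, yet $\tilde d(p,x_1)=100>1=\tilde d(p,x_2)+d_E(x_2,x_1)$, so $\tilde d$ violates the triangle inequality and the extension problem is not well posed. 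The standard repair is to first replace each $r_i$ by $r_i':=\inf_j\bigl(r_j+d_E(x_i,x_j)\bigr)\le r_i$; one then checks $r_i'+r_j'\ge d_E(x_i,x_j)$ (from the hypothesis and the triangle inequality) and $r_i'\le r_j'+d_E(x_i,x_j)$ (immediate from the definition of the infimum), runs your one-point extension with the $r_i'$, and the resulting point satisfies $d_E(x_i,x)\le r_i'\le r_i$. With this fix your argument is correct and is more elementary than the paper's, which presupposes the existence and injectivity of $\TS(X)$ and the projection map of Remark \ref{rmk:propsofEX}; the paper's version buys brevity at the cost of that imported machinery.
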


Soon, we will provide an elegant proof of Proposition \ref{prop:hypinj} as one of the applications of tight span.

Moreover, every injective metric space is a contractible geodesic metric space, as one will see in Lemma \ref{lemma:geobicomb} and Corollary \ref{cor:injectivecont}.

\begin{definition}[Geodesic bicombing]
By a \emph{geodesic bicombing} $\gamma$ on a metric space $(X,d_X)$, we mean a continuous map $\gamma: X\times X\times [0,1]\rightarrow X$ such that, for every pair $(x,y)\in X\times X$, $\gamma(x,y,\cdot)$ is a geodesic from $x$ to $y$ with constant speed. In other words, $\gamma$ satisfies the following:
\begin{enumerate}
    \item $\gamma(x,y,0)=x$ and $\gamma(x,y,1)=y$.
    \item $d_X(\gamma(x,y,s),\gamma(x,y,t))=(t-s)\cdot d_X(x,y)$ for all $0\leq s\leq t\leq 1$.
\end{enumerate}
\end{definition}

\begin{lemma}[{\cite[Proposition 3.8]{lang2013injective}}]\label{lemma:geobicomb}
Every injective metric space $(E,d_E)$ admits a geodesic bicombing $\gamma$ such that, for all $x,y,x',y'\in E$ and $t\in [0,1]$, it satisfies the following conditions:
\begin{enumerate}
    \item \textbf{Conical:} $d_E(\gamma(x,y,t),\gamma(x',y',t))\leq (1-t)\,d_E(x,x')+t\,d_E(y,y')$. 
    \item \textbf{Reversible:} $\gamma(x,y,t)=\gamma(y,x,1-t)$.
    \item \textbf{Equivariant:} $L\circ \gamma(x,y,\cdot)=\gamma(L(x),L(y),\cdot)$ for every isometry $L$ of $E$.
\end{enumerate}
\end{lemma}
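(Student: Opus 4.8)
The plan is to obtain $\gamma$ by transporting the obvious linear interpolation of an ambient Banach space back to $E$ through a $1$-Lipschitz retraction, and then to check each decoration by hand. Concretely, I would embed $E$ isometrically into $L^\infty(E)$ via the (basepointed) Kuratowski map $\kappa\colon x\mapsto d_E(x,\cdot)-d_E(x_0,\cdot)$, whose image lies in $L^\infty(E)$ because $|d_E(x,y)-d_E(x_0,y)|\le d_E(x,x_0)$ and which is isometric since $\sup_y|d_E(x,y)-d_E(x',y)|=d_E(x,x')$. Since $E$ is injective by hypothesis, the identity $\mathrm{id}_E$, viewed as a $1$-Lipschitz map out of $\kappa(E)\subseteq L^\infty(E)$, extends to a $1$-Lipschitz retraction $\rho\colon L^\infty(E)\to E$ with $\rho\circ\kappa=\mathrm{id}_E$. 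Writing $\sigma(f,g,t):=(1-t)f+tg$ for the linear bicombing of $L^\infty(E)$, I define
\[
\gamma(x,y,t):=\rho\bigl((1-t)\,\kappa(x)+t\,\kappa(y)\bigr).
\]

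Next I would verify the bicombing axioms together with conicality and reversibility. The endpoint conditions $\gamma(x,y,0)=\rho(\kappa(x))=x$ and $\gamma(x,y,1)=y$ are immediate, and continuity of $\gamma$ follows from that of $\rho$, $\kappa$, and $\sigma$. For constant speed, since $\rho$ is $1$-Lipschitz and $\kappa$ is isometric we get $d_E(\gamma(x,y,s),\gamma(x,y,t))\le|t-s|\,d_E(x,y)$; summing these upper bounds over a partition of $[0,1]$ against the lower bound $d_E(x,y)$ from the triangle inequality forces every inequality to be an equality, so $\gamma(x,y,\cdot)$ is a constant-speed geodesic. Conicality is convexity of the norm combined with $1$-Lipschitzness of $\rho$:
\[
d_E(\gamma(x,y,t),\gamma(x',y',t))\le\bigl\|(1-t)(\kappa x-\kappa x')+t(\kappa y-\kappa y')\bigr\|_\infty\le(1-t)\,d_E(x,x')+t\,d_E(y,y').
\]
Reversibility holds because $\sigma(f,g,t)=\sigma(g,f,1-t)$ identically in $L^\infty(E)$, hence $\gamma(x,y,t)=\gamma(y,x,1-t)$.

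The main obstacle is equivariance, which the retraction construction does not supply for free. Each isometry $L$ of $E$ induces an isometry $L_*$ of $L^\infty(E)$ with $L_*\kappa(x)=\kappa(Lx)$, and because $\sigma$ only takes affine combinations with coefficients summing to $1$, it is $L_*$-equivariant; thus the sole failure of $L\circ\gamma(x,y,\cdot)=\gamma(Lx,Ly,\cdot)$ is that the chosen $\rho$ need not satisfy $\rho\circ L_*=L\circ\rho$. This cannot be repaired by averaging $\rho$ over $\mathrm{Isom}(E)$ in general, nor by a uniqueness argument, since conical bicombings are genuinely non-unique (already in $\R^2_\infty$).

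The resolution, which is the technical heart of the statement, is to replace the arbitrary retraction by an \emph{intrinsic}, metrically defined selection. One builds $\gamma$ on dyadic parameters by recursively selecting points inside the nonempty hyperconvex intersections $\overline{B}_{t\,d_E(x,y)}(x,E)\cap\overline{B}_{(1-t)\,d_E(x,y)}(y,E)$ (nonempty and hyperconvex by Definition \ref{def:hyper} and Lemma \ref{lemma:intercloballhyper}), arranges the selection and the conical estimate to propagate through each bisection, and passes to a limit using the uniform Lipschitz control and completeness. The delicate part is making the dyadic choice canonical, continuous, and compatible with conicality simultaneously; once this is done, every ingredient is defined purely from the metric of $E$ and is therefore automatically preserved by each isometry $L$, so equivariance is inherited by the limiting bicombing, as are continuity and the conical estimate.
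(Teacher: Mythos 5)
First, note that the paper does not prove this lemma at all: it is imported verbatim as \cite[Proposition 3.8]{lang2013injective}, so there is no in-paper argument to compare against. Judged on its own terms, the first half of your proposal is correct and complete as far as it goes: the basepointed Kuratowski embedding, the $1$-Lipschitz retraction $\rho\colon L^\infty(E)\to E$ supplied by injectivity, and the definition $\gamma(x,y,t)=\rho((1-t)\kappa(x)+t\kappa(y))$ do yield a geodesic bicombing that is conical and reversible, and your verifications of the endpoint, constant-speed, conical, and reversibility properties are all sound. You also correctly diagnose that equivariance is exactly what this construction cannot deliver, since $\rho$ is an arbitrary non-canonical extension.

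The genuine gap is that item (3) is never proved. Your ``resolution'' asserts that one can make the dyadic midpoint selection ``canonical, continuous, and compatible with conicality simultaneously,'' and then concludes equivariance because ``every ingredient is defined purely from the metric.'' But that is precisely the nontrivial content of Lang's Proposition 3.8, and the assertion begs the question: the sets $\overline{B}_{d/2}(x,E)\cap\overline{B}_{d/2}(y,E)$ are metrically defined, but they typically contain many points and admit no distinguished one (already in $\R^2_\infty$ the midpoint set of two points on a coordinate axis is a nondegenerate segment, and iterating circumcenters does not single out a point), so a choice must be made, and it is exactly the compatibility of that choice with the whole isometry group --- Lang handles this with a Zorn's lemma argument over $\mathrm{Isom}(E)$-invariant, symmetric partial midpoint maps satisfying the conical estimate, extended one orbit at a time using hyperconvexity of the relevant ball intersections --- that constitutes the proof. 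As written, your argument establishes a conical reversible bicombing (items (1) and (2)) but leaves the equivariant refinement as an unproved claim; either carry out the Zorn's lemma construction or, as the paper does, cite \cite[Proposition 3.8]{lang2013injective} for the full statement.
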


\begin{corollary}\label{cor:injectivecont}
Every injective metric space $E$ is contractible.
\end{corollary}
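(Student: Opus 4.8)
The plan is to turn the geodesic bicombing $\gamma$ furnished by Lemma~\ref{lemma:geobicomb} into an explicit contraction of $E$ onto a single point. First I would fix an arbitrary basepoint $x_0\in E$ and define
\[
H\colon E\times[0,1]\longrightarrow E,\qquad H(x,t):=\gamma(x,x_0,t).
\]
The content of the claim is then that $H$ is a homotopy from the identity of $E$ to the constant map at $x_0$, which is exactly the statement that $E$ is contractible.

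The endpoint conditions are immediate from the first axiom in the definition of a geodesic bicombing: $H(x,0)=\gamma(x,x_0,0)=x$ and $H(x,1)=\gamma(x,x_0,1)=x_0$ for every $x\in E$. Thus $H(\cdot,0)=\mathrm{id}_E$ and $H(\cdot,1)\equiv x_0$, so it only remains to verify that $H$ is continuous.

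For continuity I would observe that $H$ factors as the composition
\[
E\times[0,1]\xrightarrow{\ (x,t)\,\mapsto\,(x,x_0,t)\ }E\times E\times[0,1]\xrightarrow{\ \gamma\ }E,
\]
where the first map is continuous (it is the identity on the $E$ and $[0,1]$ factors and constant on the middle factor) and $\gamma$ is continuous by the definition of a geodesic bicombing. Hence $H$ is continuous, which exhibits a contraction of $E$ and proves the corollary.

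Honestly, there is no genuine obstacle at this point: all of the difficulty has already been absorbed into Lemma~\ref{lemma:geobicomb}, whose construction of a continuous bicombing on an arbitrary injective space is the substantive step, and given that lemma the corollary is essentially a one-line consequence. I should note that the conical, reversible, and equivariant refinements of Lemma~\ref{lemma:geobicomb} are not actually needed here: mere continuity of $\gamma$ together with its endpoint values suffices for contractibility. These stronger properties would only become relevant if one wanted the sharper statement that the contraction is canonical or that $H(\cdot,t)$ is $1$-Lipschitz (indeed a $t$-Lipschitz contraction toward $x_0$) for each fixed $t$, which follows directly from the conical inequality.
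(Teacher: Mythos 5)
Your proof is correct and follows exactly the paper's argument: fix a basepoint $x_0$ and use the restriction $H(x,t)=\gamma(x,x_0,t)$ of the geodesic bicombing from Lemma~\ref{lemma:geobicomb} as the contracting homotopy. You simply spell out the endpoint and continuity checks that the paper leaves implicit.
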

\begin{proof}
By Lemma \ref{lemma:geobicomb}, there is a geodesic bicombing $\gamma$ on $E$. Fix arbitrary point $x_0\in E$. Then, restricting $\gamma$ to $E \times \{x_0 \} \times [0,1]$ gives a deformation retraction of $E$ onto $x_0$. Hence, $E$ is contractible.
\end{proof}

\begin{example}
For every set $S$, the Banach space $L^\infty(S)$ consisting of all the bounded real-valued functions on $S$ with the metric induced by $\ell^\infty$-norm is injective. $\R^n_\infty$ is one of such spaces.
\end{example}

\begin{definition}\label{def:kuratowski}
For a compact metric space $(X,d_X)$, the map $\kappa_X:X \to L^\infty(X)$, $x \mapsto d_X(x,\cdot)$ is an isometric embedding, and it is called the \emph{Kuratowski embedding}. Hence, every compact metric space can be isometrically embedded into an injective metric space.
\end{definition}

There are other injective metric spaces associated to $(X,d_X)$.

\begin{definition}\label{def:inj}
Let $(X,d_X)$ be a metric space. Consider the following spaces associated to $X$:
\begin{align*}
&\Delta(X):=\{f\in L^\infty(X):f(x)+f(x')\geq d_X(x,x')\text{ for all }x,x'\in X\},\\
&\TS(X):=\{f\in\Delta(X):\text{if }g\in\Delta(X)\text{ and }g\leq f,\text{ then }g=f\,(\text{i.e., }f\text{ is minimal})\},\\
&\Delta_1(X):=\Delta(X)\cap\mathrm{Lip}_1(X), 
\end{align*}
where $\mathrm{Lip}_1(X)$ denotes the set of all 1-Lipschitz functions $f:X\rightarrow\mathbb{R}$. We endow these spaces with the metric induced by $\ell^\infty$-norm. Then, it is known that all the three spaces above are injective metric spaces (cf. \cite[Section 3]{lang2013injective}).

In particular, $\TS(X)$ is said to be the \textbf{tight span} of $X$ \cite{dress1984trees,isbell1964six} and it is an especially interesting space: $\TS(X)$ is the smallest injective metric space into which $X$ can be embedded, and it is unique up to isometry.
\end{definition}

\begin{remark}\label{rmk:propsofEX}
Here are some remarks. All of them are either straightforward, or can be found in \cite{lang2013injective}:
\begin{enumerate}
    \item By choosing $x=x'$ in the definition of $f\in \Delta(X)$, we see that any such $f$  is nonnegative \cite[\S 3]{lang2013injective}.
    
    \item $f\in\Delta_1(X)$ if and only if $\Vert f-d_X(x,\cdot) \Vert_\infty=f(x)$ for all $x\in X$ \cite[(3.2)]{lang2013injective}.
    
    \item For every $r>0$ and $E=\Delta_1(X)$ or $\TS(X)$, one has that $B_r(X,E)=\{f\in E:\exists\,x\in X\text{ such that }f(x)<r\}$.
    
    \item If $f\in \TS(X)$, then $f(x)=\sup_{x'\in X}(d_X(x,x')-f(x'))$ \cite[(3.1)]{lang2013injective}. In particular, this implies: $f(x)\in [0,\diam(X)]$ for all $x\in X$. With this and the fact that $X$ is isometrically embedded in $\TS(X)$, one can conclude $\diam(X)=\diam(\TS(X))$.
    
    \item If $f\in\TS(X)$ and $X$ is compact, then for all $x\in X$ there exists $x'\in X$ such that $d_X(x,x')=f(x)+f(x')$.

    \item There exists a ``projection" map  $p_X:\Delta(X)\rightarrow \TS(X)$ such that for all $f\in \Delta(X)$, $p_X(f)(x)\leq f(x)$ for all $x\in X$ \cite[Proposition 3.1]{lang2013injective}.
\end{enumerate}
\end{remark}

\begin{lemma}[{\cite[Proposition 3.5]{lang2013injective}}]\label{lm:iso span}
If $X$ is isometrically embedded in $Y$, then there exists an isometric embedding $h_{X,Y}:\TS(X)\hookrightarrow \TS(Y)$ such that for all $f\in \TS(X)$, $h_{X,Y}(f)|_X=f$.
\end{lemma}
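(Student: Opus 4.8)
The plan is to obtain $h_{X,Y}$ from a single use of injectivity and then promote the resulting $1$-Lipschitz map to an isometric embedding. Identify $X$ with its image $\{d_X(x,\cdot):x\in X\}\subseteq\TS(X)$ and with $\{d_Y(x,\cdot):x\in X\}\subseteq\TS(Y)$; since $X\hookrightarrow Y$ is isometric, the inclusion $\iota\colon X\to\TS(Y)$, $x\mapsto d_Y(x,\cdot)$, is an isometric (in particular $1$-Lipschitz) map. As $\TS(Y)$ is injective (Definition \ref{def:inj}) and $X\hookrightarrow\TS(X)$ is an isometric embedding, the extension property (Definition \ref{def:Injme}) produces a $1$-Lipschitz map $h\colon\TS(X)\to\TS(Y)$ with $h(d_X(x,\cdot))=d_Y(x,\cdot)$ for all $x\in X$. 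This $h$ is the candidate for $h_{X,Y}$.

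The heart of the argument is the claim that $h(f)|_X=f$ for every $f\in\TS(X)$; the remaining steps are then formal. I first record that $\TS(X)\subseteq\Delta_1(X)$: by Remark \ref{rmk:propsofEX}(4) each $f\in\TS(X)$ equals $\sup_{x'\in X}\bigl(d_X(\cdot,x')-f(x')\bigr)$, a supremum of $1$-Lipschitz functions and hence itself $1$-Lipschitz, so $f\in\mathrm{Lip}_1(X)$ and therefore $f\in\Delta_1(X)$; the same holds for $h(f)\in\TS(Y)$. Now fix $f\in\TS(X)$ and $x\in X$. Applying the identity of Remark \ref{rmk:propsofEX}(2) in $Y$, then $h(d_X(x,\cdot))=d_Y(x,\cdot)$, then that $h$ is $1$-Lipschitz, and finally Remark \ref{rmk:propsofEX}(2) in $X$, I get
\[
h(f)(x)=\norm{h(f)-d_Y(x,\cdot)}_\infty=\norm{h(f)-h(d_X(x,\cdot))}_\infty\le\norm{f-d_X(x,\cdot)}_\infty=f(x).
\]
Hence $h(f)|_X\le f$. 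Moreover $h(f)|_X\in\Delta(X)$, since $h(f)\in\Delta(Y)$ gives $h(f)(x)+h(f)(x')\ge d_Y(x,x')=d_X(x,x')$ for all $x,x'\in X$. Because $f$ is minimal in $\Delta(X)$ and $h(f)|_X\le f$ lies in $\Delta(X)$, minimality forces $h(f)|_X=f$.

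Granting $h(f)|_X=f$, the map $h$ is an isometric embedding: for $f,g\in\TS(X)$, restricting functions to $X\subseteq Y$ cannot increase the sup-norm, so
\[
\norm{h(f)-h(g)}_\infty\ge\sup_{x\in X}\lvert h(f)(x)-h(g)(x)\rvert=\sup_{x\in X}\lvert f(x)-g(x)\rvert=\norm{f-g}_\infty,
\]
while $1$-Lipschitzness of $h$ gives the opposite inequality; thus $h$ preserves distances, and $h_{X,Y}:=h$ satisfies $h_{X,Y}(f)|_X=f$ as required. I expect the middle paragraph to be the only real obstacle: injectivity fixes $h$ only on the copy of $X$ inside $\TS(X)$, and it is precisely the combination of the norm identity in Remark \ref{rmk:propsofEX}(2) with the minimality built into the definition of $\TS(X)$ that propagates this to the conclusion $h(f)|_X=f$ on all of $\TS(X)$; without it, $h$ would be known only to be $1$-Lipschitz.
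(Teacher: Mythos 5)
Your proof is correct. The paper itself gives no argument for this lemma (it is quoted from Lang's paper), so there is nothing in-text to compare against; on its own merits your derivation is complete and uses only facts already recorded here: injectivity of $\TS(Y)$ (Definition \ref{def:inj}) to extend the Kuratowski map $x\mapsto d_Y(x,\cdot)$ along $X\hookrightarrow\TS(X)$, items (2) and (4) of Remark \ref{rmk:propsofEX} to get $h(f)(x)=\norm{h(f)-h(d_X(x,\cdot))}_\infty\le f(x)$, and minimality of $f$ in $\Delta(X)$ to upgrade this to $h(f)|_X=f$, after which distance preservation follows by restriction to $X$. The one stylistic difference from the cited source is that the usual proof produces the extension concretely (e.g.\ via $\bar f(y)=\inf_{x\in X}\bigl(f(x)+d_Y(x,y)\bigr)$, which lies in $\Delta_1(Y)$, followed by the projection $p_Y$ of Remark \ref{rmk:propsofEX}(6)), whereas you obtain it abstractly from the extension property; both routes converge on exactly the same minimality argument to pin down $h(f)|_X=f$, so this is a matter of packaging rather than substance.
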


At this step, we provide a proof of Proposition \ref{prop:hypinj} as promised. Our proof has a similar structure to \cite[Proposition, P182]{herrlich1992hyperconvex}, but by directly alluding to the tight span, our proof is significantly more concise.

\begin{proof}[Proof of Proposition \ref{prop:hypinj}]
Since a metric space $X$ is injective if and only if $X=\TS(X)$, it is enough to prove that $X$ is hyperconvex if and only if $X$ and $\TS(X)$ are isometric. 

Suppose $X$ is hyperconvex. Let $f\in \TS(X)$. Then, for all $x,x'\in X$, $f(x)+f(x')\geq d_X(x,x')$. By the hyperconvexity of $X$, $\cap_{x\in X} \overline{B}_{f(x)}(x,X)\neq\emptyset$. Let $x_0\in \cap_{x\in X} \overline{B}_{f(x)}(x,X).$ Then, $d_X(x_0,x)\leq f(x)$ for all $x\in X$. This implies that $f\geq d_X({x_0},\cdot)$. By the minimality of $\TS(X)$, we have $f=d_X({x_0},\cdot)$. This shows that $\TS(X)=X$.

Conversely, suppose $X=\TS(X)$. Consider any index set $I$ and let $X_I\coloneqq\{x_i\}_{i\in I}\subseteq X$ and $\{r_i\}_{i\in I}$ be such that $r_i+r_j\geq d_X(x_i,x_j)$. Then, the function $f_I:X_I\rightarrow \mathbb{R}$ taking $x_i$ to $r_i$ belongs to $\Delta(X_I)$. Consider the projection $p_I:\Delta(X_I)\rightarrow \TS(X_I)$ and the embedding $h_I:\TS(X_I)\hookrightarrow \TS(X)$ from Lemma \ref{lm:iso span}. Then,
$f\coloneqq h_I(p_I(f_I))\in \TS(X)=X$. Hence, there exists $x_0\in X$ such that $d_X({x_0},\cdot)=f$. Then,
$$d_X(x_0,x_i)= f(x_i)=p_I(f_I)(x_i)\leq f_I(x_i)=r_i$$
and thus $x_0\in \cap_{i\in I} \overline{B}_{r_i}(x_i,X)$. This implies that $X$ is hyperconvex.
\end{proof}

Finally, it is known that some topological/geometric properties of $X$ are reflected in $\TS(X)$.

\begin{theorem}[{\cite[2.11]{isbell1964six}}]
\label{thm:compactness_tight_span}
If $X$ is compact, then $\TS(X)$ is also compact. \end{theorem}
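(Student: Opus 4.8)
The plan is to realize $\TS(X)$ as a closed and relatively compact subset of the space $C(X)$ of continuous real-valued functions on $X$, equipped with the uniform (i.e.\ $\ell^\infty$) metric, and then to invoke the Arzel\`a--Ascoli theorem. The two ingredients needed for relative compactness are uniform boundedness and equicontinuity of the family $\TS(X)$, and both follow from the facts recorded in Remark \ref{rmk:propsofEX}. By item (4) there, every $f\in\TS(X)$ satisfies $f(x)\in[0,\diam(X)]$, so the family is uniformly bounded (compactness of $X$ ensures $\diam(X)<\infty$). Moreover, the identity $f(x)=\sup_{x'\in X}(d_X(x,x')-f(x'))$ from item (4) gives, for all $x,y\in X$, $f(x)-f(y)\le d_X(x,y)$; by symmetry every $f\in\TS(X)$ is $1$-Lipschitz, so in particular $\TS(X)\subseteq C(X)$ and the family is equicontinuous.

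With uniform boundedness and equicontinuity established, Arzel\`a--Ascoli shows that $\TS(X)$ has compact closure in $C(X)$. Since a closed subset of a relatively compact set is compact, it remains only to prove that $\TS(X)$ is closed in $C(X)$. For this I would first upgrade item (4) of Remark \ref{rmk:propsofEX} to a full characterization: for $f\in\Delta(X)$, one has $f\in\TS(X)$ if and only if $f(x)=\sup_{x'\in X}(d_X(x,x')-f(x'))$ for every $x\in X$. Necessity is exactly item (4); for sufficiency, if $g\in\Delta(X)$ with $g\le f$, then $g(x)\ge d_X(x,x')-g(x')\ge d_X(x,x')-f(x')$ for all $x'$, and taking the supremum over $x'$ yields $g\ge f$, hence $g=f$, so $f$ is minimal and thus lies in $\TS(X)$.

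Finally, I would verify closedness directly. Let $(f_n)\subseteq\TS(X)$ converge uniformly to some $f\in C(X)$. Passing to the limit in the inequalities $f_n(x)+f_n(x')\ge d_X(x,x')$ shows $f\in\Delta(X)$. Because $f_n\to f$ uniformly, the functions $x'\mapsto d_X(x,x')-f_n(x')$ converge uniformly to $x'\mapsto d_X(x,x')-f(x')$, and suprema are stable under uniform convergence, so $\sup_{x'\in X}(d_X(x,x')-f_n(x'))\to\sup_{x'\in X}(d_X(x,x')-f(x'))$; the left-hand side is $f_n(x)\to f(x)$, so $f$ satisfies the characterizing identity and therefore $f\in\TS(X)$. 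I expect the main point requiring care to be this closedness step rather than the Arzel\`a--Ascoli input: one must supply the sufficiency direction of the characterization (only necessity is stated in the excerpt) and observe that passing to the supremum over $X$ commutes with uniform limits. Everything else is routine.
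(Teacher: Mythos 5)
Your proof is correct. Note that the paper itself offers no proof of this statement --- it is quoted from Isbell \cite{isbell1964six} as a known result --- so there is no internal argument to compare against; your Arzel\`a--Ascoli route is essentially the classical one. All three ingredients check out: uniform boundedness from $0\le f\le\diam(X)$, equicontinuity from the $1$-Lipschitz bound $f(x)\le f(y)+d_X(x,y)$ obtained by combining the triangle inequality with the identity $f(x)=\sup_{x'}(d_X(x,x')-f(x'))$, and closedness via the upgraded characterization of minimality (whose sufficiency direction you correctly supply: $g\le f$ and $g\in\Delta(X)$ force $g(x)\ge\sup_{x'}(d_X(x,x')-f(x'))=f(x)$) together with the estimate $\left|\sup h_n-\sup h\right|\le\norm{h_n-h}_\infty$, which justifies passing the supremum through the uniform limit. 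The only point worth making explicit is that the metric on $\TS(X)$ is the restriction of the sup metric on $C(X)$ (all elements being $1$-Lipschitz and hence continuous), so compactness as a subset of $C(X)$ is the same as compactness of $\TS(X)$ as a metric space; this is immediate but completes the argument.
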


\begin{definition}[$\delta$-hyperbolic space]
A metric space $(X,d_X)$ is called \emph{$\delta$-hyperbolic}, for some constant $\delta\geq 0$, if
$$d_X(w,x)+d_X(y,z)\leq\max\{d_X(w,y)+d_X(x,z),d_X(x,y)+d_X(w,z)\}+\delta$$
for all quadruples of points $x,y,z,w\in X$.
\end{definition}

\begin{proposition}[{\cite[Proposition 1.3]{lang2013injective}}]\label{prop:hyperkura}
If $X$ is a $\delta$-hyperbolic geodesic metric space for some $\delta\geq 0$, then its tight span $\TS(X)$ is also $\delta$-hyperbolic.

Moreover,
$$B_r(X,\TS(X))=\TS(X)$$
for each $r>\delta$. 
\end{proposition}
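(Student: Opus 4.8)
The plan is to reduce both statements to the four-point condition on $X$ by means of a single structural identity for distances inside the tight span. The identity I would establish first is that for any $f,g\in\TS(X)$,
$$\|f-g\|_\infty=\sup_{x,x'\in X}\big(d_X(x,x')-f(x)-g(x')\big).$$
The bound ``$\geq$'' holds because $f\in\Delta(X)$ gives $d_X(x,x')-f(x)\leq f(x')$, so the bracket is at most $f(x')-g(x')\leq\|f-g\|_\infty$. For ``$\leq$'', I would invoke the self-duality $f(x')=\sup_x(d_X(x,x')-f(x))$ from Remark \ref{rmk:propsofEX}(4): fixing $x'$ and taking the supremum over $x$ turns the right-hand side into $\sup_{x'}(f(x')-g(x'))$, and since $d_X$ is symmetric in its arguments the same supremum also equals $\sup_{x'}(g(x')-f(x'))$; the two together give $\|f-g\|_\infty$.

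With this identity in hand, the $\delta$-hyperbolicity of $\TS(X)$ is a direct transfer of the four-point condition. Given $f_1,f_2,f_3,f_4\in\TS(X)$, set $d_{ij}:=\|f_i-f_j\|_\infty$. Using the identity together with an $\eps$-approximation (the suprema need not be attained), choose $x_1,x_2$ and $x_3,x_4$ with $d_{12}\leq d_X(x_1,x_2)-f_1(x_1)-f_2(x_2)+\eps$ and $d_{34}\leq d_X(x_3,x_4)-f_3(x_3)-f_4(x_4)+\eps$. Applying the four-point condition on $X$ to $x_1,x_2,x_3,x_4$ and regrouping the resulting terms into blocks of the form $d_X(x_i,x_k)-f_i(x_i)-f_k(x_k)$, each block is bounded above by the corresponding $d_{ik}$ via the identity; this yields $d_{12}+d_{34}\leq\max(d_{13}+d_{24},d_{14}+d_{23})+\delta+2\eps$, and letting $\eps\to0$ gives the claim. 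I note that this part uses only the four-point condition and not geodesy.

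For the second statement I would show that every $f\in\TS(X)$ satisfies $\inf_{x\in X}f(x)\leq\delta$; since $\|f-d_X(x,\cdot)\|_\infty=f(x)$ for $f\in\TS(X)\subseteq\Delta_1(X)$ (Remark \ref{rmk:propsofEX}(2)), this says $f$ lies within $\delta$ of $X$, so for $r>\delta$ there is an $x$ with $f(x)<r$, i.e. $f\in B_r(X,\TS(X))$ by Remark \ref{rmk:propsofEX}(3). To bound the infimum, pick any $x_0$; self-duality produces $x_1$ with $d_X(x_0,x_1)\geq f(x_0)+f(x_1)-\eps$ (the reverse inequality always holds since $f\in\Delta(X)$). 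Because $X$ is geodesic I choose a point $z$ on a geodesic from $x_0$ to $x_1$ at distance $t=\tfrac12\big(D+f(x_0)-f(x_1)\big)$ from $x_0$, where $D=d_X(x_0,x_1)$; the $1$-Lipschitz property of $f$ guarantees $|f(x_0)-f(x_1)|\leq D$ and hence $t\in[0,D]$. Applying the four-point condition to $x_0,x_1,z$ and an arbitrary $y$, combined with $d_X(y,x_i)-f(y)\leq f(x_i)$, bounds $d_X(z,y)-f(y)$; taking the supremum over $y$ (self-duality once more) yields $f(z)\leq\tfrac12\big(f(x_0)+f(x_1)-D\big)+\delta\leq\delta+\tfrac{\eps}{2}$. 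Letting $\eps\to0$ gives $\inf_X f\leq\delta$.

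The main obstacle is locating the correct point $z$ in the second part: the midpoint of the geodesic is not good enough when $f(x_0)$ and $f(x_1)$ differ, and one must instead balance the two branches of the maximum coming from the four-point condition, which forces the specific parameter $t=\tfrac12\big(D+f(x_0)-f(x_1)\big)$. Verifying $t\in[0,D]$ from the Lipschitz bound and checking that the optimized estimate collapses to exactly $\delta$ (which happens precisely because $D\approx f(x_0)+f(x_1)$) is the delicate part; the first statement, by contrast, is a clean mechanical transfer once the distance identity is established.
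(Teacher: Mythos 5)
Your proof is correct. Note that the paper itself offers no proof of this statement --- it is quoted verbatim from Lang's work --- so there is nothing internal to compare against; your argument is a faithful, self-contained reconstruction of the standard one. The key identity $\|f-g\|_\infty=\sup_{x,x'}\bigl(d_X(x,x')-f(x)-g(x')\bigr)$ is established correctly (the ``$\geq$'' half needs only $f\in\Delta(X)$, the ``$\leq$'' half uses the self-duality of Remark \ref{rmk:propsofEX}(4) applied to each of $f$ and $g$ in turn, and the two resulting expressions $\sup(f-g)$ and $\sup(g-f)$ coincide with the common value, hence with their maximum). The transfer of the four-point condition is then mechanical and, as you observe, does not use geodesy. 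For the second part, the choice $t=\tfrac12\bigl(D+f(x_0)-f(x_1)\bigr)$ is exactly what equalizes the two branches $d_X(z,x_0)+f(x_1)$ and $d_X(z,x_1)+f(x_0)$ (both equal $\tfrac12(D+f(x_0)+f(x_1))$), and since $D\geq f(x_0)+f(x_1)-\eps$ the resulting bound $f(z)\leq\tfrac12(f(x_0)+f(x_1)-D)+\delta$ collapses to $\delta+\tfrac{\eps}{2}$; the membership $z\in X$ is where geodesy enters, and the conclusion $\inf_X f\leq\delta$ combined with items (2) and (3) of Remark \ref{rmk:propsofEX} gives $B_r(X,\TS(X))=\TS(X)$ for $r>\delta$. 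No gaps.
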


\paragraph{Stability of the tight span.}It is known that the map $X\mapsto\TS(X)$ is stable with respect to the Gromov-Hausdorff distance.

\begin{theorem}[{\cite[Theorem 3.1]{lang2013metric}}]\label{thm:stability of tight span}
For any two metric spaces $A$ and $B$,
$$\dgh(\TS(A),\TS(B))\leq 2\,\dgh(A,B).$$
\end{theorem}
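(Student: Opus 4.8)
The plan is to realize both tight spans isometrically inside one common injective space and to control their Hausdorff distance there. Fix $r>\dgh(A,B)$. By the standard description of the Gromov--Hausdorff distance, there is a metric $d$ on the disjoint union $Z=A\sqcup B$ restricting to $d_A$ and $d_B$ with $\dH(A,B)<r$ in $(Z,d)$; equivalently every point of $A$ lies within $r$ of $B$ and vice versa, so both $A$ and $B$ are $r$-dense in $Z$. Applying Lemma \ref{lm:iso span} to the inclusions $A\hookrightarrow Z$ and $B\hookrightarrow Z$ gives isometric embeddings $h_A:=h_{A,Z}\colon\TS(A)\hookrightarrow\TS(Z)$ and $h_B:=h_{B,Z}\colon\TS(Z)$ with $h_A(f)|_A=f$ and $h_B(g)|_B=g$. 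Since isometric subspaces of a common metric space satisfy $\dgh\le\dH$, it suffices to prove $\dH\big(h_A(\TS(A)),h_B(\TS(B))\big)\le 2r$ inside $\TS(Z)$ and then let $r\downarrow\dgh(A,B)$.

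By symmetry I would bound one side. Fix $f\in\TS(A)$ and set $F:=h_A(f)\in\TS(Z)$, which is $1$-Lipschitz. Since $F\in\Delta(Z)$, its restriction $F|_B$ lies in $\Delta(B)$, so I let $g:=p_B(F|_B)\in\TS(B)$ and $G:=h_B(g)\in\TS(Z)$. Using the explicit projection of \cite{lang2013injective} one has the sandwich $(F|_B)^*\le g\le F|_B$, where $(F|_B)^*(b)=\sup_{b'\in B}(d_B(b,b')-F(b'))$ (the upper bound is property (6) of Remark \ref{rmk:propsofEX}). The first goal is $0\le F(b)-g(b)\le 2r$ for $b\in B$; the lower bound is immediate. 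For the upper bound I apply property (4) of Remark \ref{rmk:propsofEX} in $Z$, splitting the supremum over $Z=A\sqcup B$:
\[
F(b)=\max\Big\{(F|_B)^*(b),\ \sup_{a\in A}\big(d_Z(b,a)-F(a)\big)\Big\}.
\]
If the first term realizes the maximum then $F(b)=(F|_B)^*(b)\le g(b)$. Otherwise, for $a\in A$ pick $b_a\in B$ with $d_Z(a,b_a)<r$ and use $1$-Lipschitzness of $F$ to get $d_Z(b,a)-F(a)\le(d_B(b,b_a)-F(b_a))+2r\le(F|_B)^*(b)+2r$; taking the supremum yields $F(b)-g(b)\le F(b)-(F|_B)^*(b)\le 2r$.

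To finish I transfer this estimate to $A$ using the explicit extension $G(z)=\sup_{b\in B}(d_Z(z,b)-g(b))$ (the form of $h_B$, again matching property (4)). For $a\in A$, choosing $b^{\ast}$ nearly attaining the supremum defining $G(a)$ and using $F(a)\ge d_Z(a,b^{\ast})-F(b^{\ast})$ gives $G(a)-F(a)\le F(b^{\ast})-g(b^{\ast})\le 2r$. Conversely, splitting $F(a)=\max\{\sup_{b}(d_Z(a,b)-F(b)),\ \sup_{a'}(d_Z(a,a')-F(a'))\}$, the first term is $\le G(a)$ since $g\le F|_B$, and the second is $\le\sup_{b}(d_Z(a,b)-F(b))+2r$ by the same $r$-density argument, so $F(a)-G(a)\le 2r$. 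Hence $\norm{F-G}_\infty\le 2r$ on all of $Z$, which is exactly $d_{\TS(Z)}(F,G)\le 2r$, establishing the one-sided Hausdorff bound.

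The one genuine obstacle is the constant: once $F$ and $G$ are forced to agree only up to $2r$ on the $r$-dense set $B$, a crude comparison through $1$-Lipschitzness would lose a further $2r$ on $A$ and only yield the weaker bound $4r$. The technical heart of the argument is therefore the transfer in the last paragraph, where I compare $F$ and $G$ on $A$ through their variational (supremum) representations from Remark \ref{rmk:propsofEX}(4) together with the explicit extension formula for $h_B$, so that the estimate already proved on $B$ propagates to $A$ with no additional loss. The remaining items to check carefully are exactly those two standard facts about tight spans: the explicit form of the projection $p_B$ (guaranteeing $g\ge(F|_B)^*$) and of the isometric embedding $h_B$.
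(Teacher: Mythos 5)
The paper does not prove this theorem; it is quoted from [LPZ13, Theorem~3.1], so there is no internal proof to compare against. Your overall strategy --- realize $\TS(A)$ and $\TS(B)$ inside $\TS(Z)$ for $Z=A\sqcup B$ realizing the Gromov--Hausdorff distance via Lemma \ref{lm:iso span}, and bound the Hausdorff distance of the images by $2r$ --- is the right one (and is the one used in that reference). Your first estimate, $0\le F(b)-g(b)\le 2r$ for $b\in B$, is correct: the sandwich $(F|_B)^*\le g\le F|_B$ needs nothing about the explicit form of $p_B$ beyond Remark \ref{rmk:propsofEX}(6), since any $g\in\Delta(B)$ with $g\le F|_B$ satisfies $g(b)\ge d_B(b,b')-g(b')\ge d_B(b,b')-F(b')$; splitting the supremum of Remark \ref{rmk:propsofEX}(4) over $Z=A\sqcup B$ and using $r$-density does the rest.

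The gap is the ``explicit extension formula'' $G(z)=\sup_{b\in B}\bigl(d_Z(z,b)-g(b)\bigr)$, which you flag as a fact to be checked: it is false, and the half of it you actually use is exactly the half that fails. For any $G\in\TS(Z)$ with $G|_B=g$ one only gets $G(z)\ge\sup_{b}(d_Z(z,b)-g(b))$ for free (from $G\in\Delta(Z)$); the reverse inequality can fail because the right-hand side need not even lie in $\Delta(Z)$: take $B=\{b_1,b_2\}$ with $d(b_1,b_2)=2$, $g\equiv 1\in\TS(B)$, and two further points $z,z'$ at distance $1$ from both $b_i$ and distance $2$ from each other; then $\sup_b(d(z,b)-g(b))=\sup_b(d(z',b)-g(b))=0$. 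Your step ``$G(a)-F(a)\le F(b^*)-g(b^*)\le 2r$'' uses $G(a)\le d_Z(a,b^*)-g(b^*)+\varepsilon$, i.e.\ the unavailable direction; running the correct variational formula $G(a)=\sup_{z\in Z}(d_Z(a,z)-G(z))$ and controlling its $A$-part by $r$-density only yields $G(a)\le\sup_b(d_Z(a,b)-g(b))+2r$, hence the weaker total bound $4r$ --- precisely the loss you were trying to avoid. The repair is to bound $G$ by the \emph{upper} envelope instead: $G$ is $1$-Lipschitz, so for $b\in B$ with $d_Z(a,b)<r$ one has $G(a)\le g(b)+d_Z(a,b)\le F(b)+d_Z(a,b)\le F(a)+2\,d_Z(a,b)<F(a)+2r$, using only $g\le F|_B$ and the $1$-Lipschitzness of $F$. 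With that replacement (your converse direction $F(a)\le G(a)+2r$ is fine as written, since it only uses $G(a)\ge\sup_b(d_Z(a,b)-g(b))$), the proof is complete.
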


\subsection{Homotopy types of Vietoris-Rips complexes of tree-like metric spaces}

A $0$-hyperbolic metric space $X$ is called, alternatively, \emph{tree-like} metric space. i.e., a metric space $X$ is tree-like if it satisfies 
$$d_X(w,x)+d_X(y,z)\leq\max\{d_X(w,y)+d_X(x,z),d_X(x,y)+d_X(w,z)\}.$$
for every $x, y, z, w\in X$.

In this section, we show the homotopy types of Vietoris-Rips complexes of tree-like metric spaces can be easily determined using the knowledge of tight span. More precisely, we show that for a tree-like metric space $X$ and for any scale $r>0$, each connected component of the neighborhood $B_r(X, \TS(X))$ is contractible. Then we obtain the result for Vietoris--Rips complex $\vr_{2r}(X)$ by the identification of homotopy type between $\vr_{2r}(X)$ and $B_r(X, \TS(X))$ in \cite{lim2020vietoris}.

\begin{proposition}[{\cite[Proposition 2.3]{lim2020vietoris}}]\label{prop:vrbrhteqv}
Let $X$ be a subspace of an injective metric space $(E,d_E)$. Then, for every $r>0$, the Vietoris-Rips complex $\vr_{2r}(X)$ is homotopy equivalent to $B_r(X,E)$.
\end{proposition}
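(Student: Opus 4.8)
The plan is to realize $B_r(X,E)$ up to homotopy as the nerve of a natural cover and to identify that nerve with $\vr_{2r}(X)$. Consider the cover $\mathcal{U}=\{B_r(x,E)\}_{x\in X}$ of $B_r(X,E)$ by open $r$-balls centered at the points of $X$; by definition $B_r(X,E)=\bigcup_{x\in X}B_r(x,E)$, so this is indeed an open cover. The Nerve Lemma will then furnish a homotopy equivalence between $B_r(X,E)$ and the nerve of $\mathcal{U}$, provided every nonempty finite intersection of members of $\mathcal{U}$ is contractible. The two remaining tasks are therefore to verify this contractibility hypothesis and to check that the nerve of $\mathcal{U}$ is precisely $\vr_{2r}(X)$.

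For contractibility, I would use the conical geodesic bicombing $\gamma$ supplied by Lemma \ref{lemma:geobicomb} rather than Lemma \ref{lemma:intercloballhyper}, since it handles open balls directly. Fix a nonempty intersection $U=\bigcap_{i=1}^{k}B_r(x_i,E)$ and take $y,y'\in U$. Applying the conical inequality with the constant geodesic at $x_i$ (note $\gamma(x_i,x_i,t)=x_i$) gives, for every $t\in[0,1]$, $d_E(x_i,\gamma(y,y',t))\leq (1-t)\,d_E(x_i,y)+t\,d_E(x_i,y')<r$, so the geodesic $\gamma(y,y',\cdot)$ stays inside $U$. Thus $U$ is convex for $\gamma$, and for any fixed $y_0\in U$ the map $(y,t)\mapsto \gamma(y,y_0,t)$ is a continuous contraction of $U$ onto $y_0$; this is the same deformation used in Corollary \ref{cor:injectivecont}, now carried out within $U$. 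Hence $\mathcal{U}$ is a good cover.

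To identify the nerve, I would show that for a finite set $\{x_0,\dots,x_k\}\subseteq X$ one has $\bigcap_i B_r(x_i,E)\neq\emptyset$ if and only if $d_X(x_i,x_j)<2r$ for all $i,j$, which is exactly the simplex condition of $\vr_{2r}(X)$. The forward direction is the triangle inequality: any $z$ in the intersection satisfies $d_X(x_i,x_j)\leq d_E(x_i,z)+d_E(z,x_j)<2r$. For the converse, set $r'=\tfrac12\max_{i,j}d_X(x_i,x_j)<r$; then $d_X(x_i,x_j)\leq r'+r'$, so hyperconvexity (Definition \ref{def:hyper}) applied to the family $(x_i,r')_i$ yields a point $z$ with $d_E(x_i,z)\leq r'<r$, i.e.\ $z\in\bigcap_i B_r(x_i,E)$. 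This matches the vertex and simplex sets of the nerve with those of $\vr_{2r}(X)$.

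The main obstacle I anticipate is the application of the Nerve Lemma when $X$ (and hence $\mathcal{U}$) is uncountable, as for $X=\Sp^n$: the elementary nerve lemma is usually stated for finite or locally finite covers. I would resolve this by noting that $B_r(X,E)$, being a subspace of the metric space $E$, is paracompact, so the open cover $\mathcal{U}$ is numerable and admits a subordinate partition of unity; under these hypotheses the Nerve Lemma holds for arbitrary good open covers and produces a homotopy equivalence to the nerve taken with its weak topology, which agrees with the standard geometric realization of $\vr_{2r}(X)$. Verifying that these point-set conditions are met, and that the resulting equivalence is natural enough for the later identifications, is the delicate part of the argument.
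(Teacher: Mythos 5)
The paper does not prove this proposition itself; it imports it from \cite[Proposition 2.3]{lim2020vietoris}, so there is no internal proof to compare against. Your argument is correct and is essentially the standard nerve-lemma proof of that cited result: cover $B_r(X,E)$ by the open balls $\{B_r(x,E)\}_{x\in X}$, use hyperconvexity to show that a finite subfamily has nonempty intersection exactly when the centers have pairwise distances $<2r$ (so the nerve is $\vr_{2r}(X)$ in the open convention), and invoke the nerve theorem for numerable good covers of the paracompact space $B_r(X,E)$. Your use of the conical bicombing from Lemma \ref{lemma:geobicomb} to contract nonempty finite intersections of \emph{open} balls is a clean touch, since Lemma \ref{lemma:intercloballhyper} only covers closed balls and would otherwise force an exhaustion argument; all the point-set hypotheses you flag (numerability via Stone's theorem, weak topology on the nerve) are indeed satisfied, and no naturality in $r$ is needed for the statement as given.
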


The tight span approach that we adopt in this section both generalizes the claim and offers an alternative technique to the one followed in the case of finite tree-like metric spaces \cite[Supp. Material]{chan2013topology}.

\begin{proposition}\label{prop:neighborhood_of_tree_metric_space}
Let $X$ be a tree-like metric space. Then for any $r>0$, the $r$-neighborhood of $X$ inside its tight span $\TS(X)$ is homotopy equivalent to a disjoint union of points with cardinality equal the number of connected components of $B_r(X, \TS(X))$.
\end{proposition}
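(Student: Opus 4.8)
The plan is to reduce the statement to the single claim that \emph{every connected component of $B_r(X,\TS(X))$ is contractible}. Granting this together with the fact that the components are open (established below), $B_r(X,\TS(X))$ becomes a topological disjoint union of contractible open sets, hence homotopy equivalent to the discrete set indexing its components, which is exactly the assertion. The geometric engine for contractibility will be that $\TS(X)$ is an $\R$-tree, i.e.\ a geodesic $0$-hyperbolic space. Recall $\TS(X)$ is injective, hence geodesic and equipped with a conical geodesic bicombing $\gamma$ by Lemma~\ref{lemma:geobicomb}; the only missing ingredient is that $\TS(X)$ is itself $0$-hyperbolic.

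First I would establish $0$-hyperbolicity of $\TS(X)$ directly, since Proposition~\ref{prop:hyperkura} does not apply verbatim (a tree-like $X$ need not be geodesic). The key computation is the distance formula
$$\|f-g\|_\infty=\sup_{x,y\in X}\big(d_X(x,y)-f(x)-g(y)\big)\qquad\text{for all }f,g\in\TS(X),$$
which follows from Remark~\ref{rmk:propsofEX}(4): writing $g(x)=\sup_{y}(d_X(x,y)-g(y))$ gives $g(x)-f(x)\le\sup_{x,y}(d_X(x,y)-f(x)-g(y))$ and symmetrically for $f(x)-g(x)$, yielding ``$\le$'', while the reverse inequality uses that $f\in\Delta(X)$, i.e.\ $d_X(x,y)-f(x)\le f(y)$. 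Given this formula, the four-point condition for any $f_1,f_2,f_3,f_4\in\TS(X)$ reduces to that of $X$: choosing near-optimizers $a,b$ for $\|f_1-f_2\|_\infty$ and $c,e$ for $\|f_3-f_4\|_\infty$ and applying $0$-hyperbolicity of $X$ to the quadruple $a,b,c,e$ splits $d_X(a,b)+d_X(c,e)$ into either $d_X(a,c)+d_X(b,e)$ or $d_X(a,e)+d_X(b,c)$, each of which regroups (using the distance formula) into a bound by $\|f_1-f_3\|_\infty+\|f_2-f_4\|_\infty$ or by $\|f_1-f_4\|_\infty+\|f_2-f_3\|_\infty$; letting the optimization error tend to $0$ finishes it. Thus $\TS(X)$ is a geodesic $0$-hyperbolic space, i.e.\ an $\R$-tree.

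It then remains to transfer standard $\R$-tree topology to $B_r(X,\TS(X))$. Applying the conical inequality of $\gamma$ with constant first geodesic (set $x'=x$ equal to a centre $c$, and $c=\gamma(c,c,t)$) gives $\|c-\gamma(u,v,t)\|_\infty\le(1-t)\|c-u\|_\infty+t\,\|c-v\|_\infty$, so open balls in $\TS(X)$ are geodesically convex, hence path-connected; therefore $\TS(X)$, and any open subset such as $B_r(X,\TS(X))$, is locally path-connected, so its connected components are open and coincide with its path components. In an $\R$-tree a connected subset $C$ is convex, since the unique geodesic $\gamma(q,p,\cdot)$ between $q,p\in C$ cannot leave $C$ (any omitted interior point would separate $q$ from $p$, contradicting connectedness); consequently contracting along $\gamma$ to a chosen basepoint $p\in C$ via $H(\cdot,t)=\gamma(\cdot,p,t)$ is well defined (by convexity) and continuous (by the conical bound), giving a deformation retraction of $C$ onto $p$. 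Hence each component is contractible, completing the proof. I expect the main obstacle to be precisely the $0$-hyperbolicity step: proving it cleanly without a geodesic hypothesis on $X$, which the distance formula above resolves; the subsequent $\R$-tree topology is routine, the only point needing care being the openness of components, needed to upgrade ``each component is contractible'' to the stated homotopy equivalence, which the convexity of balls supplies.
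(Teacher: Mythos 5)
Your proof is correct and follows the same route as the paper: reduce everything to the facts that $\TS(X)$ is an $\R$-tree and that the connected components of an open subset of an $\R$-tree are open, path-connected and contractible. The only difference is that the paper simply cites Dress's result (Theorem \ref{thm:tight_span_of_tree_metric}) for the $\R$-tree property, whereas you rederive it from scratch via the distance formula $\|f-g\|_\infty=\sup_{x,y}\bigl(d_X(x,y)-f(x)-g(y)\bigr)$ and the four-point condition, and you make the contraction explicit via the conical bicombing --- all correct, just more self-contained than the paper's two-line argument.
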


\begin{corollary}\label{cor:httype_of_tree_metric_space}
Let $X$ be a tree-like metric space. Then for any $r>0$, $\vr_{2r}(X)$ is homotopy equivalent to a disjoint union of points with cardinality equal to the number of connected components of $B_r(X, \TS(X))$.
\end{corollary}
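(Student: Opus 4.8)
The plan is to deduce Corollary \ref{cor:httype_of_tree_metric_space} directly by combining Proposition \ref{prop:neighborhood_of_tree_metric_space} with Proposition \ref{prop:vrbrhteqv}. Since $\TS(X)$ is an injective metric space (Definition \ref{def:inj}) into which $X$ embeds isometrically, Proposition \ref{prop:vrbrhteqv} applies with $E=\TS(X)$, yielding that $\vr_{2r}(X)$ is homotopy equivalent to $B_r(X,\TS(X))$ for every $r>0$. This reduces the claim to understanding the homotopy type of $B_r(X,\TS(X))$, which is precisely what Proposition \ref{prop:neighborhood_of_tree_metric_space} provides.

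Concretely, the argument proceeds in two short steps. First I would invoke Proposition \ref{prop:neighborhood_of_tree_metric_space}, which asserts that for a tree-like $X$ and any $r>0$, the neighborhood $B_r(X,\TS(X))$ is homotopy equivalent to a disjoint union of points whose cardinality equals the number of connected components of $B_r(X,\TS(X))$; in other words, each connected component of $B_r(X,\TS(X))$ is contractible. Second I would chain this with the homotopy equivalence $\vr_{2r}(X)\simeq B_r(X,\TS(X))$ from Proposition \ref{prop:vrbrhteqv}. Composing the two homotopy equivalences gives that $\vr_{2r}(X)$ is homotopy equivalent to the same disjoint union of points, establishing the corollary.

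The only point requiring a moment of care is the matching of the cardinality: the corollary phrases the answer in terms of the number of connected components of $B_r(X,\TS(X))$, and I must ensure this count is preserved under the equivalence. Because a homotopy equivalence induces a bijection on path components (equivalently on $\pi_0$), and each component of $B_r(X,\TS(X))$ is contractible by Proposition \ref{prop:neighborhood_of_tree_metric_space}, the set of path components of $\vr_{2r}(X)$ is in bijection with that of $B_r(X,\TS(X))$, and the corollary's statement follows verbatim.

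I do not anticipate a genuine obstacle here, since the corollary is an immediate consequence of the two cited propositions; essentially all the mathematical content is carried by Proposition \ref{prop:neighborhood_of_tree_metric_space}, whose proof (analyzing the geometry of $\TS(X)$ for tree-like $X$ and showing contractibility of each component of the thickening) is where the real work lives. The role of the corollary is merely to transport that structural statement about the tight-span thickening across the Vietoris--Rips identification of Proposition \ref{prop:vrbrhteqv}, so the proof is a two-line composition of known equivalences together with the elementary fact that homotopy equivalences respect $\pi_0$.
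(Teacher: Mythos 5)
Your proof is correct and is exactly the paper's argument: the paper's proof of this corollary consists of the single line ``Apply Proposition \ref{prop:vrbrhteqv} and Proposition \ref{prop:neighborhood_of_tree_metric_space}.'' Your additional remark that homotopy equivalences induce bijections on path components is a harmless (and valid) elaboration of the same composition.
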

\begin{proof}
Apply Proposition \ref{prop:vrbrhteqv} and Proposition \ref{prop:neighborhood_of_tree_metric_space}.
\end{proof}

Our proof of the above proposition relies on the characterization of the tight span of a tree-like metric space as an \emph{$\R$-tree} \cite{dress1984trees}. We now recall the definition of an $\R$-tree. 

\begin{definition}\label{defn:R_tree}
Let $X$ be a metric space, $X$ is called an \emph{$\R$-tree} if the following two conditions are satisfied:
\begin{itemize}
    \item For every $x, x'\in X$ there exist a unique isometric embedding $\gamma:[0, d_X(x, x')]\rightarrow X$ such that $\gamma(0)= x$ and $\gamma(d_X(x, x')) = x'$.
    \item For every injective continuous map $\gamma:[0, 1]\hookrightarrow X$, for each $t\in [0, 1]$, one has $d_X(\gamma(0), \gamma(t)) + d_X(\gamma(t), \gamma(1)) = d_X(\gamma(0), \gamma(1))$.
\end{itemize}
\end{definition}

In \cite{dress1984trees}, Dress shows the tight span of a tree-like metric space is an $\R$-tree.

\begin{theorem}[{\cite[Theorem 8]{dress1984trees}}]\label{thm:tight_span_of_tree_metric}
Let $X$ be a tree-like metric space, then the tight span of $X$ is isometric to an $\R$-tree.
\end{theorem}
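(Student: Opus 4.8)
The plan is to invoke Theorem \ref{thm:tight_span_of_tree_metric} to replace $\TS(X)$ by an $\R$-tree $T$, and then to prove the stronger statement that \emph{every connected component of $B_r(X,T)$ is contractible}. Since $B_r(X,T)$ is an open subset of $T$, and $\R$-trees are locally path-connected (see below), the components of $B_r(X,T)$ are open and path-connected; hence $B_r(X,T)$ is the topological disjoint union of its components, and collapsing each contractible component to a point yields the desired homotopy equivalence with a discrete set of cardinality equal to the number of components.

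First I would record two structural facts about the $\R$-tree $T$. \textbf{(i) Balls are convex.} If $p,q\in \overline{B}_\rho(x_0,T)$ and $z$ lies on the unique arc $[p,q]$, then letting $m$ be the median of $\{x_0,p,q\}$ (which lies on all three pairwise arcs, using $0$-hyperbolicity) and splitting $[p,q]=[p,m]\cup[m,q]$, additivity along $[x_0,p]$ and $[x_0,q]$ gives $d_T(x_0,z)\le\max(d_T(x_0,p),d_T(x_0,q))$. Thus balls, being convex, are path-connected, so $T$ is locally path-connected. \textbf{(ii) Cut-point property.} Every interior point $z$ of an arc $[p,q]$ separates $p$ from $q$ in $T$: otherwise there is a path in $T\setminus\{z\}$ joining $p$ and $q$, from which one extracts an injective arc avoiding $z$; by the uniqueness clause in the first axiom of Definition \ref{defn:R_tree}, this arc must equal $[p,q]\ni z$, a contradiction.

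Using (ii), I would show that each component $C$ of $B_r(X,T)$ is geodesically convex. Given $p,q\in C$, path-connectedness of $C$ supplies a path from $p$ to $q$ inside $C$; by (ii) this path meets every interior point of $[p,q]$, so $[p,q]\subseteq C$. Finally I would contract $C$: fix $p_0\in C$ and set $H(x,t):=\gamma(x,p_0,t)$, where $\gamma$ is the conical geodesic bicombing on $T$ from Lemma \ref{lemma:geobicomb}. Because geodesics in an $\R$-tree are unique, $H(x,\cdot)$ traces precisely the arc $[x,p_0]$, which lies in $C$ by convexity; thus $H$ maps $C\times[0,1]$ into $C$, is continuous (a restriction of $\gamma$), and satisfies $H(\cdot,0)=\mathrm{id}_C$ and $H(\cdot,1)\equiv p_0$, giving a deformation retraction of $C$ onto $p_0$.

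The main obstacle is the cut-point property (ii): this is exactly the place where genuine tree structure is used, as opposed to the mere contractibility of $\TS(X)$ that holds for every injective space by Corollary \ref{cor:injectivecont}. It is what forces the arc between two points of a component to remain inside that component, and hence what yields convexity; the delicate point within it is the extraction of an injective arc from an arbitrary path in $T\setminus\{z\}$, which I would justify carefully from the two defining axioms of an $\R$-tree in Definition \ref{defn:R_tree}. Once convexity of components is established, the contraction and the final assembly are routine.
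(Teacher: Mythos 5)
Your proposal does not prove the assigned statement. Theorem \ref{thm:tight_span_of_tree_metric} asserts that for a tree-like ($0$-hyperbolic) metric space $X$, the tight span $\TS(X)$ is isometric to an $\R$-tree; your very first sentence \emph{invokes} this theorem in order to replace $\TS(X)$ by an $\R$-tree $T$, and then everything that follows is an argument for Proposition \ref{prop:neighborhood_of_tree_metric_space} (that each component of $B_r(X,\TS(X))$ is contractible). With respect to the statement you were asked to prove, the argument is circular: nowhere do you verify either axiom of Definition \ref{defn:R_tree} for $\TS(X)$, which is the entire content of the theorem. (For the record, the paper itself does not reprove this theorem either; it is quoted from Dress. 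Your structural facts (i) and (ii) and the convexity-of-components argument are a reasonable, somewhat more detailed version of the paper's proof of Proposition \ref{prop:neighborhood_of_tree_metric_space}, but that is a different statement.)

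If you want to actually prove Theorem \ref{thm:tight_span_of_tree_metric}, the natural route using only material in this paper is: $\TS(X)$ is injective, hence geodesic (Lemma \ref{lemma:geobicomb}); it is $0$-hyperbolic because $X$ is, by (the four-point version underlying) Proposition \ref{prop:hyperkura} or by a direct computation with the functions $f\in\TS(X)$ using $f(x)+f(x')\geq d_X(x,x')$ and item (4) of Remark \ref{rmk:propsofEX}; and then one must show that a geodesic $0$-hyperbolic space satisfies the two axioms of Definition \ref{defn:R_tree} (uniqueness of geodesic segments and the degeneration of every injective arc to a geodesic). That last implication is the substantive step and is absent from your write-up.
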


Now we are ready to prove Proposition \ref{prop:neighborhood_of_tree_metric_space}.

\begin{proof}[Proof of Proposition \ref{prop:neighborhood_of_tree_metric_space}]
    By Theorem \ref{thm:tight_span_of_tree_metric}, $\TS(X)$ is an $\R$-tree. Fix $r>0$. Let $C$ be a connected component of $B_r(X, \TS(X))$. As $\R$-trees are locally path connected, the connected component $C$ is path-connected. Therefore, $C$ is also an $\R$-tree and hence contractible.
\end{proof}

\subsection{Tight span of antipodal metric spaces}
In this section, we recall known results on the characterization of the tight span of antipodal metric spaces. These results are then used to  establish a simpler description of $\TS(\Sp^n)$ which in turn leads to a simpler function space $\mathbf{F}(\Sp^1)$ which is isometric to $\TS(\Sp^1)$ (see Definition \ref{def:redTSS1}) in \S\ref{sec:TSS1:description}. Also, this simpler description will be implicitly used in \S\ref{sec:mtrg}.

\begin{definition}
A metric space $X$ is called \emph{antipodal} if for every $x\in X$, there exists $\bar{x}\in X$ (called an \emph{antipodal point} of $x$) such that $d_X(x,\bar{x})=d_X(x,y)+d_X(y,\bar{x})$ holds for every $y\in X$. 
\end{definition} 

\begin{lemma}[{\cite[Lemma 7.7]{goodman2000tight}}]\label{lm:antipodal}
If $X$ is an antipodal compact metric space, then a function $f\in \TS(X)$ if and only if $f$ is 1-Lipschitz and $f(x)+f(\bar{x})=\diam(X),\forall x\in X$, where $\bar{x}$ is the antipodal point of $x$.
\end{lemma}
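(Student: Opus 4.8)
The plan is to prove both directions of the characterization by exploiting the defining property of antipodal points together with the remarks about $\TS(X)$ collected in Remark \ref{rmk:propsofEX}. Throughout, write $D\coloneqq\diam(X)$.

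First I would establish the forward direction: if $f\in\TS(X)$, then $f$ is $1$-Lipschitz and satisfies $f(x)+f(\bar x)=D$ for every $x$. The $1$-Lipschitz property is immediate from the fact that $\TS(X)\subseteq\Delta_1(X)$ (the minimal functions in $\Delta(X)$ are automatically $1$-Lipschitz, by Remark \ref{rmk:propsofEX}(2), or equivalently by the formula in Remark \ref{rmk:propsofEX}(4)). For the antipodal identity, fix $x\in X$ and its antipodal point $\bar x$. Since $f\in\Delta(X)$ we have $f(x)+f(\bar x)\geq d_X(x,\bar x)=D$ (the last equality because $d_X(x,\bar x)=d_X(x,y)+d_X(y,\bar x)\geq d_X(x,x)=\cdots$, and in fact antipodal pairs realize the diameter; I would verify $d_X(x,\bar x)=D$ by taking $y$ ranging over $X$ and noting the defining equation forces $d_X(x,\bar x)$ to dominate all pairwise distances through $x$ and $\bar x$). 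For the reverse inequality I use the self-duality formula of Remark \ref{rmk:propsofEX}(4): $f(x)=\sup_{y\in X}\bigl(d_X(x,y)-f(y)\bigr)$. Applying the antipodal relation $d_X(x,y)=d_X(x,\bar x)-d_X(y,\bar x)=D-d_X(y,\bar x)$ and the lower bound $f(y)\geq d_X(y,\bar x)-f(\bar x)$ coming again from $f\in\Delta(X)$ (equivalently from the dual formula applied at $\bar x$), I get $d_X(x,y)-f(y)\leq D-f(\bar x)$ for every $y$, hence $f(x)\leq D-f(\bar x)$, i.e. $f(x)+f(\bar x)\leq D$. Combining the two inequalities yields equality.

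Next I would prove the converse: suppose $f$ is $1$-Lipschitz and $f(x)+f(\bar x)=D$ for all $x$; I must show $f\in\TS(X)$. The first task is membership in $\Delta(X)$, i.e. $f(x)+f(x')\geq d_X(x,x')$ for all $x,x'$. Here I use the antipodal identity to rewrite $f(x')=D-f(\bar{x'})$, so the claim becomes $f(x)-f(\bar{x'})\geq d_X(x,x')-D$; since $f$ is $1$-Lipschitz, $f(x)-f(\bar{x'})\geq -d_X(x,\bar{x'})$, and the antipodal relation for $x'$ gives $d_X(x,\bar{x'})=D-d_X(x,x')$ (using $d_X(x',\bar{x'})=D$ and choosing $y=x$), so $-d_X(x,\bar{x'})=d_X(x,x')-D$, which closes the inequality. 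With $f\in\Delta(X)$ established, minimality follows: if $g\in\Delta(X)$ with $g\leq f$, then for each $x$ we have $g(\bar x)\geq d_X(x,\bar x)-g(x)=D-g(x)\geq D-f(x)=f(\bar x)$, whence $g(\bar x)\geq f(\bar x)$; combined with $g\leq f$ this forces $g=f$. Therefore $f$ is minimal and lies in $\TS(X)$.

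The compactness hypothesis enters only lightly—it guarantees existence of the witnessing points in the dual formula of Remark \ref{rmk:propsofEX}(4) and in Remark \ref{rmk:propsofEX}(5), so that the suprema are attained and the argument for $d_X(x,\bar x)=D$ is clean. The step I expect to be the main obstacle is the forward antipodal identity $f(x)+f(\bar x)\le D$: it is the only place where one genuinely needs the self-duality of tight-span functions rather than merely the defining inequality of $\Delta(X)$, and care is required to combine the dual formula with the antipodal relation in the correct order so that the supremum over $y$ collapses to the single bound $D-f(\bar x)$. The converse direction, by contrast, is a direct manipulation of the $1$-Lipschitz condition and the antipodal identity, and the minimality argument is short once membership in $\Delta(X)$ is in hand.
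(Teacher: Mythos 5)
The paper offers no proof of this lemma (it is quoted from Goodman--Moulton), so your argument has to stand on its own. The converse direction and the minimality argument are essentially correct, and the preliminary fact $d_X(x,\bar x)=\diam(X)$ does follow as you indicate: for arbitrary $y,z$, adding the identities $d_X(x,\bar x)=d_X(x,y)+d_X(y,\bar x)$ and $d_X(x,\bar x)=d_X(x,z)+d_X(z,\bar x)$ and applying the triangle inequality once through $x$ and once through $\bar x$ gives $2d_X(x,\bar x)\geq 2d_X(y,z)$. The problem is in the step you yourself flagged as the crux: the derivation of $f(x)+f(\bar x)\leq D$. You bound $f(y)$ from below by $d_X(y,\bar x)-f(\bar x)$ (the $\Delta(X)$ inequality, or equivalently the dual formula at $\bar x$) and substitute into $d_X(x,y)-f(y)$ together with $d_X(x,y)=D-d_X(y,\bar x)$. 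That actually yields
\[
d_X(x,y)-f(y)\;\leq\;\bigl(D-d_X(y,\bar x)\bigr)-\bigl(d_X(y,\bar x)-f(\bar x)\bigr)\;=\;D-2\,d_X(y,\bar x)+f(\bar x),
\]
which is \emph{not} $D-f(\bar x)$; it is $\leq D-f(\bar x)$ only when $f(\bar x)\leq d_X(y,\bar x)$, which fails already at $y=\bar x$. The best uniform-in-$y$ bound your chain produces is $f(x)\leq D+f(\bar x)$, which is vacuous. So, as written, the forward antipodal identity is not established.

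The fix is small but it changes which hypothesis does the work: use the $1$-Lipschitz lower bound $f(y)\geq f(\bar x)-d_X(y,\bar x)$ rather than the $\Delta(X)$ one. Then
\[
d_X(x,y)-f(y)\;\leq\;\bigl(D-d_X(y,\bar x)\bigr)-\bigl(f(\bar x)-d_X(y,\bar x)\bigr)\;=\;D-f(\bar x)
\]
uniformly in $y$, and the dual formula $f(x)=\sup_{y}\bigl(d_X(x,y)-f(y)\bigr)$ of Remark~\ref{rmk:propsofEX}(4) gives $f(x)+f(\bar x)\leq D$. (Alternatively, use Remark~\ref{rmk:propsofEX}(5) to pick $y$ with $f(\bar x)+f(y)=d_X(\bar x,y)$ and estimate $f(x)\leq f(y)+d_X(x,y)$.) Everything else goes through; in the minimality step you should also observe that $x\mapsto\bar x$ is an involution (antipodes are unique, since two antipodes of the same point are at distance $0$ from each other), so that $g(\bar x)=f(\bar x)$ for all $x$ really does give $g=f$ on all of $X$ --- or simply run the inequality $g(z)\geq d_X(z,\bar z)-g(\bar z)\geq D-f(\bar z)=f(z)$ directly at every point $z$.
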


It is then easy to see that for any $f\in \TS(X)$, we have $0\leq f\leq \diam(X)$. This equation easily allows us to characterize the  \emph{center} and the \emph{radius} of $\TS(X)$.
\begin{definition}
For a bounded metric space $(X,d_X)$, we call $\mathrm{rad}(X)\coloneqq \inf_{x\in X}\sup_{y\in X} d_X(x,y)$ the \emph{radius} of $X$. If $x_0\in X$ satisfies that $\mathrm{rad}(X)=\sup_{y\in X}d_X(x_0,y)$, we say that $x_0$ is a \emph{center} of $X$.
\end{definition}

\begin{proposition}\label{prop:cts}
Let $X$ be a compact antipodal metric space. Then, $\mathrm{rad}(\TS(X))=\frac{\diam(X)}{2}$ and the constant function  $f_0\equiv\frac{\diam(X)}{2}$ is the \emph{unique} center of $\TS(X)$.
\end{proposition}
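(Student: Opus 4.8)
The plan is to use the antipodal characterization from Lemma~\ref{lm:antipodal} to control the value $\sup_{y\in\TS(X)}\|f-y\|_\infty$ for an arbitrary $f\in\TS(X)$. First I would compute an upper bound for the radius by exhibiting the constant function $f_0\equiv\frac{\diam(X)}{2}$ as a good center. To do this, I need to verify that $f_0$ actually lies in $\TS(X)$: it is clearly $1$-Lipschitz (being constant) and satisfies $f_0(x)+f_0(\bar x)=\diam(X)$ for every $x$, so Lemma~\ref{lm:antipodal} applies. Then for any $g\in\TS(X)$, Remark~\ref{rmk:propsofEX}(4) gives $0\le g\le\diam(X)$ pointwise, whence $\|f_0-g\|_\infty=\sup_{x\in X}|g(x)-\tfrac{\diam(X)}{2}|\le\frac{\diam(X)}{2}$. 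This shows $\sup_{g\in\TS(X)}\|f_0-g\|_\infty\le\frac{\diam(X)}{2}$, so $\rad(\TS(X))\le\frac{\diam(X)}{2}$.

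Next I would prove the matching lower bound, which shows simultaneously that the radius equals $\frac{\diam(X)}{2}$ and that $f_0$ is a center. The key observation is that $X$ itself sits isometrically inside $\TS(X)$, and for $x\in X$ the corresponding function is $d_X(x,\cdot)$. Given any candidate center $h\in\TS(X)$, I would evaluate its distance to the two functions $d_X(x,\cdot)$ and $d_X(\bar x,\cdot)$ for a well-chosen antipodal pair. Using Remark~\ref{rmk:propsofEX}(2), for $h\in\Delta_1(X)\supseteq\TS(X)$ we have $\|h-d_X(x,\cdot)\|_\infty=h(x)$, so
\[
\|h-d_X(x,\cdot)\|_\infty+\|h-d_X(\bar x,\cdot)\|_\infty=h(x)+h(\bar x)=\diam(X).
\]
Hence at least one of the two distances is $\ge\frac{\diam(X)}{2}$, forcing $\sup_{y\in\TS(X)}\|h-y\|_\infty\ge\frac{\diam(X)}{2}$ for \emph{every} $h\in\TS(X)$. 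Combined with the upper bound, this yields $\rad(\TS(X))=\frac{\diam(X)}{2}$.

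Finally, for uniqueness of the center, suppose $h\in\TS(X)$ is a center, so $\sup_{y\in\TS(X)}\|h-y\|_\infty=\frac{\diam(X)}{2}$. The displayed identity above shows that for every $x$, both $\|h-d_X(x,\cdot)\|_\infty=h(x)$ and $\|h-d_X(\bar x,\cdot)\|_\infty=h(\bar x)$ are at most $\frac{\diam(X)}{2}$, while their sum is exactly $\diam(X)$; therefore $h(x)=h(\bar x)=\frac{\diam(X)}{2}$ for all $x$. Since every point has an antipode, this forces $h\equiv\frac{\diam(X)}{2}=f_0$. The main obstacle I anticipate is the bookkeeping in the lower-bound step: one must be careful to apply Remark~\ref{rmk:propsofEX}(2) to the right ambient space and to confirm that the antipodal relation $f(x)+f(\bar x)=\diam(X)$ from Lemma~\ref{lm:antipodal} holds for the specific $h$ under consideration, rather than only for $f_0$. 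Everything else reduces to the elementary inequality that two nonnegative numbers summing to $\diam(X)$ cannot both be strictly less than $\frac{\diam(X)}{2}$.
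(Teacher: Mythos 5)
Your proposal is correct and follows essentially the same route as the paper: both arguments hinge on the identity $\norm{f-d_X(x,\cdot)}_\infty=f(x)$ from Remark \ref{rmk:propsofEX}, the antipodal relation $f(x)+f(\bar{x})=\diam(X)$ from Lemma \ref{lm:antipodal}, and the pointwise bound $0\leq f\leq\diam(X)$ to show $f_0$ attains the radius. The only cosmetic difference is that you phrase the lower bound and uniqueness via the sum of the two distances to an antipodal pair, while the paper uses $\max\{f(x),\diam(X)-f(x)\}\geq\frac{\diam(X)}{2}$ with strict inequality when $f(x)\neq\frac{\diam(X)}{2}$; these are the same elementary observation.
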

\begin{proof}
For every $f\in \TS(X)$ and every $x\in X$, we have that
\[\norm{f-d_{X}(x,\cdot)}_\infty=f(x)\text{ and }\norm{f-d_{X}(\bar{x},\cdot)}_\infty=f(\bar{x})=\diam(X)-f(x).\]
Then, 
\begin{align*}
   \sup_{g\in \TS(X)}\norm{f-g}_\infty&\geq \max\{\norm{f-d_{X}(x,\cdot)}_\infty,\norm{f-d_{X}(\bar{x},\cdot)}_\infty\}\\
   &=\max\{f(x),\diam(X)-f(x)\}\geq \frac{\diam(X)}{2}. 
\end{align*}
Moreover, if for a given $f\in\TS(X)$ there exists $x\in X$ such that $f(x)\neq \frac{\diam(X)}{2}$, we must have
\[\sup_{g\in \TS(X)}\norm{f-g}_\infty\geq \max\{f(x),\diam(X)-f(x)\}> \frac{\diam(X)}{2}.\]

Now, consider $f_0\equiv\frac{\diam(X)}{2}\in \TS(X)$. For each $f\in \TS(X)$ and each $x\in X$, since $0\leq f(x)\leq \diam(X)$, we have $\left|f(x)-\frac{\diam(X)}{2}\right|\leq \frac{\diam(X)}{2}$. This implies that $\norm{f-f_0}_\infty\leq \frac{\diam(X)}{2}$. Also, it is obvious that $\norm{d_{X}(x,\cdot)-f_0}_\infty= \frac{\diam(X)}{2}$. Hence,
$$\sup_{f\in \TS(X)} \norm{f_0-f}_\infty=\norm{f_0-d_{X}(x,\cdot)}_\infty=\frac{\diam(X)}{2}.$$
Therefore, $\mathrm{rad}(\TS(X))=\frac{\diam(X)}{2}$ and $f_0$ is the unique center of $\TS(X)$.
\end{proof}

We conclude this section with an interesting result about the convexity of the tight span of antipodal spaces.
\begin{proposition}\label{thm convex=antipodal}
Let $X$ be a compact metric space. Then, $X$ is antipodal if and only if $\TS(X)$ is \emph{convex} as a subset of $C(X)$, the space of all continuous functions $f:X\rightarrow\mathbb{R}$.
\end{proposition}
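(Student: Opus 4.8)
The plan is to prove both directions of the equivalence, using Lemma~\ref{lm:antipodal} as the central tool. Recall that $\TS(X)$ is always a subset of $C(X)$ when $X$ is compact (functions in $\TS(X)$ are $1$-Lipschitz, hence continuous). Convexity means: for $f,g\in\TS(X)$ and $t\in[0,1]$, the function $tf+(1-t)g$ again lies in $\TS(X)$.

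**For the forward direction** (antipodal $\implies$ convex), I would invoke the clean characterization from Lemma~\ref{lm:antipodal}: when $X$ is compact and antipodal, $f\in\TS(X)$ if and only if $f$ is $1$-Lipschitz and satisfies the \emph{linear} constraint $f(x)+f(\bar{x})=\diam(X)$ for all $x$. The point is that this characterization is manifestly affine. First, I would observe that the set of $1$-Lipschitz functions is convex (a convex combination of $1$-Lipschitz functions is $1$-Lipschitz, since the Lipschitz seminorm is a seminorm). Second, the constraint $f(x)+f(\bar{x})=\diam(X)$ is preserved under convex combinations: if $f,g$ both satisfy it, then for $t\in[0,1]$,
$$\big(tf+(1-t)g\big)(x)+\big(tf+(1-t)g\big)(\bar{x})=t\,\diam(X)+(1-t)\,\diam(X)=\diam(X).$$
Hence $tf+(1-t)g\in\TS(X)$, and $\TS(X)$ is convex. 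This direction should be essentially immediate once Lemma~\ref{lm:antipodal} is in hand.

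**For the converse** (convex $\implies$ antipodal) I expect the real work. Here I cannot use the antipodal characterization, so I must argue directly from minimality in $\Delta(X)$. The strategy I would pursue is to exploit the extreme structure of $\TS(X)$. A natural starting point is Remark~\ref{rmk:propsofEX}(5): for compact $X$ and any $f\in\TS(X)$, every $x$ has some $x'$ with $d_X(x,x')=f(x)+f(x')$. The Kuratowski-type functions $d_X(x,\cdot)$ are the ``most extreme'' members of $\TS(X)$ (they are isolated minimizers in a suitable sense and play the role of the embedded copy of $X$). The idea is that if $\TS(X)$ is convex, then the midpoint $\tfrac12\big(d_X(x,\cdot)+d_X(y,\cdot)\big)$ must lie in $\TS(X)$; applying the minimality/tightness condition to this midpoint should force, for a well-chosen pair, the existence of a genuine antipode. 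Concretely, I would fix $x\in X$ and try to produce $\bar{x}$ realizing $d_X(x,\bar{x})=\diam(X)=d_X(x,y)+d_X(y,\bar{x})$ for all $y$; the convexity hypothesis, combined with the fact that constant functions $f_0\equiv\diam(X)/2$ sit at the center (Proposition~\ref{prop:cts}), should pin down that $d_X(x,\cdot)$ and its ``reflection'' $\diam(X)-d_X(x,\cdot)$ both lie in $\TS(X)$, and the latter being in $\TS(X)$ is exactly what encodes the antipode of $x$.

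**The main obstacle** will be the converse direction, specifically manufacturing the antipodal point from the convexity hypothesis alone. The delicate issue is that convexity gives membership of convex combinations in $\TS(X)$, but extracting a \emph{pointwise} antipodal relation ($d_X(x,\bar{x})=d_X(x,y)+d_X(y,\bar{x})$ for \emph{every} $y$) requires showing that the function $g_x\coloneqq\diam(X)-d_X(x,\cdot)$ is itself a minimal element of $\Delta(X)$, and then identifying $g_x$ with $d_X(\bar{x},\cdot)$ for some genuine point $\bar{x}\in X$ via the fact that $X\cong$ the Kuratowski image inside $\TS(X)$. I would verify that $g_x\in\Delta(X)$ (which uses $d_X(x,y)+d_X(x,z)\ge\ldots$ type triangle estimates together with $\diam(X)$), then use convexity to place it in $\TS(X)$, and finally appeal to Remark~\ref{rmk:propsofEX}(4)--(5) to recover $\bar{x}$; compactness will be essential to guarantee the supremum defining the antipode is attained. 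Checking that convexity of $\TS(X)$ indeed forces $g_x\in\TS(X)$ for \emph{every} $x$ is where I anticipate needing the most care, likely by a midpoint/extreme-point argument showing $d_X(x,\cdot)$ and $g_x$ are antipodal within the convex body $\TS(X)$ through its unique center $f_0$.
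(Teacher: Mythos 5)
Your forward direction is exactly the paper's argument and is fine: Lemma~\ref{lm:antipodal} reduces membership in $\TS(X)$ to the $1$-Lipschitz condition plus the affine constraint $f(x)+f(\bar x)=\diam(X)$, and both are preserved by convex combinations.

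The converse is where your plan has a genuine gap, concentrated exactly at the crux. First, the one concrete tool you cite, Proposition~\ref{prop:cts}, is stated only for compact \emph{antipodal} spaces, so invoking it to produce the antipode is circular. Second, the function $g_x=\diam(X)-d_X(x,\cdot)$ need not lie in $\Delta(X)$ at all: $g_x(y)+g_x(z)\geq d_X(y,z)$ is equivalent to $d_X(x,y)+d_X(x,z)+d_X(y,z)\leq 2\diam(X)$, which fails already for the three-point equilateral space, so ``verify that $g_x\in\Delta(X)$ via triangle estimates'' cannot work; this inequality is itself a nontrivial consequence of convexity that you would have to extract, and you give no mechanism for doing so. Third, even granting $g_x\in\TS(X)$, identifying it with a Kuratowski function $d_X(\bar x,\cdot)$ requires $g_x$ to vanish somewhere, i.e.\ requires $\sup_y d_X(x,y)=\diam(X)$ for \emph{every} $x$ --- again part of what must be proved, since an arbitrary point of a non-antipodal space need not realize the diameter. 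The paper resolves all three issues with one move you did not find: take a diametrical pair $x_1,x_2$ (compactness), put $f=\tfrac12\bigl(d_X(x_1,\cdot)+d_X(x_2,\cdot)\bigr)\in\TS(X)$ by convexity, and apply Remark~\ref{rmk:propsofEX}(4)--(5) to $f$ at an arbitrary $y$; the resulting chain of inequalities forces $d_X(x_1,x_2)=d_X(x_1,y)+d_X(y,x_2)$ and produces for each $y$ a partner $z$ with $d_X(y,z)=\diam(X)$, after which the same argument applied to the pair $(y,z)$ yields $z=\bar y$. Your instinct to exploit midpoints of Kuratowski functions was the right one, but the midpoint must be taken over a diametrical pair and interrogated via the minimality property of $\TS(X)$, not via the (unavailable) center of Proposition~\ref{prop:cts}.
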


\begin{proof}
We first assume that $X$ is antipodal. Let $f,g\in \TS(X)$ and let $\lambda\in(0,1)$. Consider $h:= \lambda f+(1-\lambda)g\in C(X)$. Since both $f$ and $g$ are 1-Lipschitz, it is obvious that $h$ is 1-Lipschitz. Now, for every $x\in X$, we have that
\begin{align*}
    h(x)+h(\bar{x})&=\lambda\left( f(x)+f(\bar{x})\right)+(1-\lambda)\,\left( g(x)+g(\bar{x})\right)\\
    &=\lambda\,\diam(X)+(1-\lambda)\,\diam(X)\\
    &=\diam(X).
\end{align*}
Then, by Lemma \ref{lm:antipodal}, we have that $h\in \TS(X)$. Therefore, $\TS(X)$ is convex.

Now, we assume that $\TS(X)$ is a convex subspace of $C(X)$. Recall that $\kappa_X(x)=d_X(x,\cdot)\in \TS(X)$ for every $x\in X$. Since $X$ is compact, there exist $x_1,x_2\in X$ such that $d_X(x_1,x_2)=\diam(X)$. We will prove that $x_2=\bar{x}_1$. By convexity of $\TS(X)$, $f:=\frac{1}{2}(d_X(x_1,\cdot)+d_X(x_2,\cdot))\in \TS(X)$. Now, pick an arbitrary $y\in X$. Then, there exists $z\in X$ such that
\[f(y)=d_X(y,z)-f(z).\]
by item (4) of Remark \ref{rmk:propsofEX}. This implies that
\[2d_X(y,z)=\underbrace{d_X(x_1,y)+d_X(x_2,y)}_{\geq d_X(x_1,x_2)}+\underbrace{d_X(x_1,z)+d_X(x_2,z)}_{\geq d_X(x_1,x_2)}\geq d_X(x_1,x_2)+d_X(x_1,x_2)=2\diam(X).\]
Since $d_X(y,z)\leq \diam(X)$, we have that all equalities in the above formula holds. Therefore, $d_X(y,z)=\diam(X)=d_X(x_1,x_2)$ and $d_X(x_1,x_2)=d_X(x_1,y)+d_X(y,x_2).$

Since $y$ is arbitrary, we have that $x_2=\bar{x}_1$. Moreover, we have proved that for every $y\in X$, there exists $z\in X$ such that $d_X(y,z)=\diam(X)$. Then, we can apply the same argument as above to deduce that $z=\bar{y}$, which concludes the proof.
\end{proof}

%%%%%%%%%%%%%%%%%%%%%%%%%%%%%%%

\section{Characterization of $\TS(\Sp^1)$ and ancillary results}\label{sec:TSS1}

Given any positive integer $k$, let $C_{2k}$ denote the circular graph as shown in Fig. \ref{fig:c2k} with $2k$ vertices. We regard $C_{2k}$ as a metric space by equipping it with the shortest path distance. Then, \cite{goodman2000tight} established the following exact description of the tight span of $C_{2k}$.

\begin{figure}
    \centering
    \includegraphics[width=0.6\textwidth]{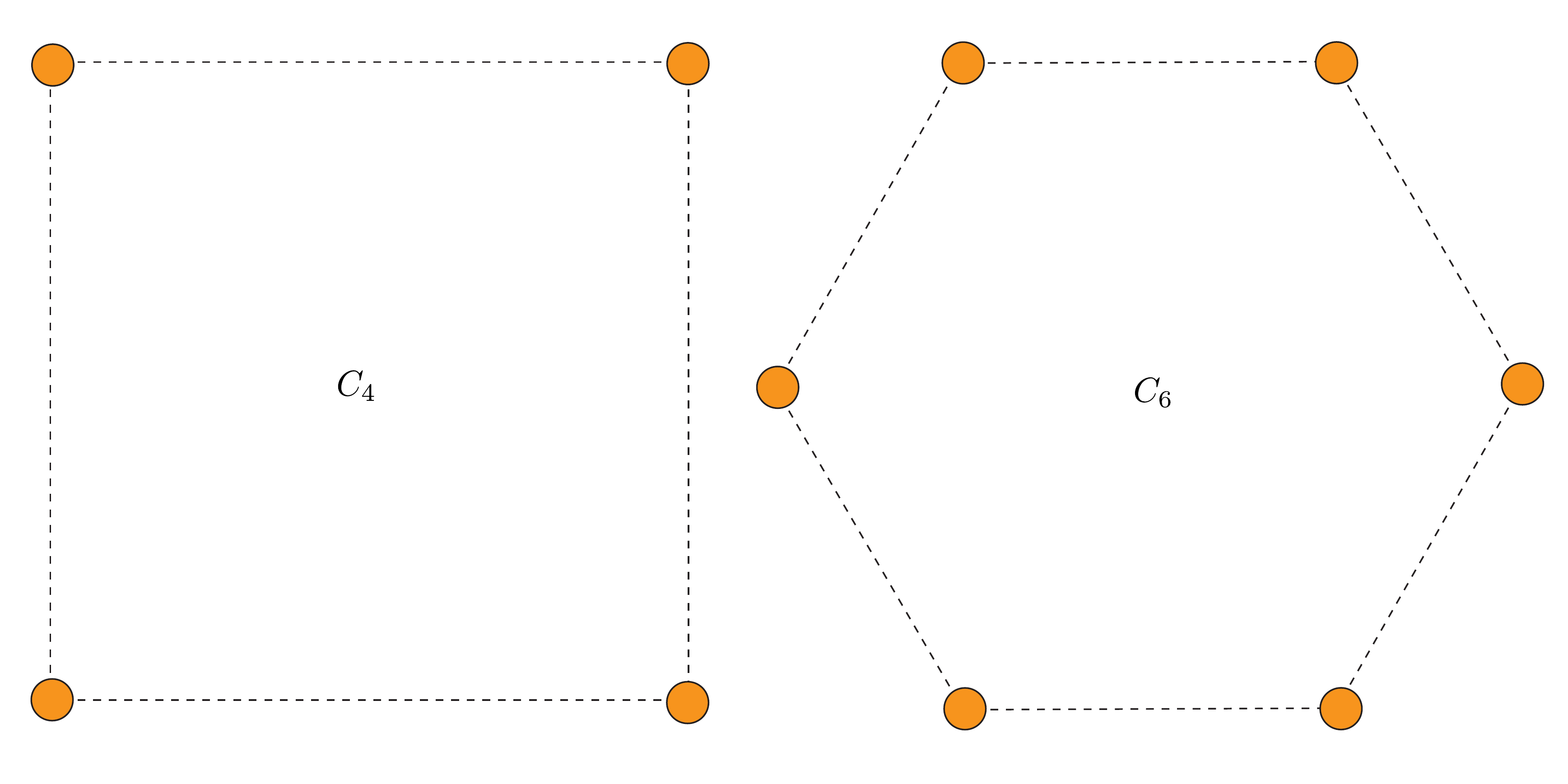}
    \caption{Examples of $C_{2k}$ when $k=2$ and $k=3$.}
    \label{fig:c2k}
\end{figure}

\begin{proposition}[{\cite[Section 9]{goodman2000tight}}]\label{prop:circular graph}
 $\TS(C_{{2k}})$ is a $k$-dimensional hypercube {living} in $\R^{2k}$. More precisely, $\TS(C_{{2k}})$ is the convex combination of the $2^k$ vertices described as follows: for every $\sigma=(\sigma_i)_{i=1}^k \in \{\pm 1\}^k$, consider $h_\sigma\in \TS(C_{2k})$ defined by
\begin{equation}\label{eq:h sigma}
    h_\sigma = \frac{k}{2}+\sum_{i=1}^k \sigma_i \cdot g_i 
\end{equation}
where $g_i$ is the function mapping vertices $i,i+1,\ldots,i+k-1$ to $\frac{1}{2}$, and the remaining vertices to $-\frac{1}{2}$. Then, the set of vertices of $\TS(C_{2k})$ is given by all the points $(h_\sigma(1),\ldots,h_\sigma(2k))\in \R^{2k}$ for  $\sigma \in \{\pm 1\}^k$.
\end{proposition}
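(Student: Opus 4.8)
The plan is to combine the antipodality of $C_{2k}$ with a discrete ``derivative'' bookkeeping that turns the two defining conditions of $\TS(C_{2k})$ into coordinatewise bounds on $\sigma$. First I would record the relevant structure of $C_{2k}$: it is antipodal with $\bar{j}=j+k \pmod{2k}$, and $\diam(C_{2k})=k$. Hence Lemma \ref{lm:antipodal} applies and says that $f\in\TS(C_{2k})$ if and only if (i) $f$ is $1$-Lipschitz and (ii) $f(j)+f(j+k)=k$ for every vertex $j$. Because $C_{2k}$ carries the shortest-path metric, condition (i) is equivalent to the single family of inequalities $|f(j+1)-f(j)|\le 1$ over consecutive vertices $j$ (indices mod $2k$). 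So the whole problem reduces to understanding functions on the $2k$ vertices constrained by these consecutive-difference bounds together with the antipodal sum relation.

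Next I would analyze the candidate vertices $h_\sigma$. The key observation is that each $g_i$ is ``antipodally odd'': since the supports $\{i,\dots,i+k-1\}$ and $\{i+k,\dots,i+2k-1\}$ partition the cycle, exactly one of $j,j+k$ lies in the support of $g_i$, so $g_i(j)+g_i(j+k)=0$. Consequently $h_\sigma(j)+h_\sigma(j+k)=\frac{k}{2}+\frac{k}{2}=k$ for every real vector $\sigma$, i.e. condition (ii) holds automatically. For condition (i) I would compute the consecutive differences directly: $g_i(j+1)-g_i(j)$ is $+1$ exactly when $j+1\equiv i$, is $-1$ exactly when $j+1\equiv i+k$, and is $0$ otherwise. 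Summing against $\sigma$ and carrying out the modular bookkeeping, one finds that each consecutive difference of $h_\sigma$ equals $\pm\sigma_i$ for a single index $i$ --- concretely the ``derivative'' sequence of $h_\sigma$ around the cycle is $(\sigma_1,\dots,\sigma_k,-\sigma_1,\dots,-\sigma_k)$. Therefore $h_\sigma$ is $1$-Lipschitz if and only if $|\sigma_i|\le 1$ for all $i$; by Lemma \ref{lm:antipodal} this gives $h_\sigma\in\TS(C_{2k})$ for every $\sigma\in[-1,1]^k$, and in particular the $2^k$ functions with $\sigma\in\{\pm1\}^k$ lie in $\TS(C_{2k})$.

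Then I would establish the converse inclusion and the cube structure simultaneously, by showing $\sigma\mapsto h_\sigma$ is an affine bijection from $[-1,1]^k$ onto $\TS(C_{2k})$. Given $f\in\TS(C_{2k})$, define $\sigma_i$ from its consecutive differences over the first half (namely $\sigma_1=f(1)-f(2k)$ and $\sigma_i=f(i)-f(i-1)$ for $2\le i\le k$); Lipschitzness forces $|\sigma_i|\le 1$. Using the antipodal relation $f(j+k)=k-f(j)$, the differences of $f$ over the second half are forced to be exactly $-\sigma_1,\dots,-\sigma_k$, so $f$ and $h_\sigma$ have identical consecutive differences all around the cycle and hence differ by an additive constant; condition (ii) pins this constant to $0$, giving $f=h_\sigma$. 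This shows the map is a bijection, and since it is affine and injective it carries the solid cube $[-1,1]^k$ onto an affine $k$-dimensional cube (parallelotope) in $\R^{2k}$ whose $2^k$ vertices are exactly the $h_\sigma$ with $\sigma\in\{\pm1\}^k$, and which equals the convex hull of those vertices, as claimed.

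The main obstacle I anticipate is purely the modular index bookkeeping in the difference computation --- tracking which $g_i$ contributes an up-jump versus a down-jump at each edge of the cycle and matching the resulting index of $\sigma_i$ with the definition of $h_\sigma$. Everything else is routine once the statement ``derivative sequence $=(\sigma,-\sigma)$'' is in hand. As a consistency check, convexity of $\TS(C_{2k})$ is guaranteed independently by Proposition \ref{thm convex=antipodal} (since $C_{2k}$ is antipodal), which agrees with the parallelotope description; one could alternatively deduce the vertex list via Krein--Milman applied to this convex body, but the explicit affine bijection is cleaner and produces the vertices directly.
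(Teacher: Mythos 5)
Your proof is correct. Note that the paper itself offers no proof of this proposition --- it is quoted from Goodman--Moulton \cite[Section 9]{goodman2000tight} --- so there is no internal argument to compare against; what you have written is a valid, self-contained derivation from Lemma \ref{lm:antipodal}, which is the same antipodality characterization the paper relies on throughout. The key steps all check out: $C_{2k}$ is antipodal with $\bar{j}=j+k$ and diameter $k$; the $1$-Lipschitz condition for a shortest-path metric reduces to the consecutive-edge inequalities; each $g_i$ satisfies $g_i(j)+g_i(j+k)=0$ so condition (ii) holds for every $\sigma$; the difference sequence of $h_\sigma$ around the cycle is $(\sigma_1,\dots,\sigma_k,-\sigma_1,\dots,-\sigma_k)$, which gives both membership for $\sigma\in[-1,1]^k$ and, combined with the antipodal relation forcing the second-half differences and condition (ii) pinning the additive constant, the surjectivity and injectivity of $\sigma\mapsto h_\sigma$. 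The only caveat worth flagging is terminological: the affine bijection exhibits $\TS(C_{2k})$ as a $k$-dimensional parallelotope (the affine image of $[-1,1]^k$) with the stated $2^k$ extreme points, which is exactly the ``convex hull of the $h_\sigma$'' assertion of the proposition; whether this parallelotope is literally an isometric hypercube in the $\ell^\infty$ metric is a separate (true but unaddressed) refinement that the precise statement does not require you to prove.
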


Up to this point, the proposition above, and the fact that tight spans are Gromov-Hausdorff stable, suggests that the tight span of $\Sp^1$ should be, in a certain sense, an infinite dimensional hypercube. For any metric space $(X,d_X)$ and any $\lambda>0$, we denote by $\lambda X$ the metric space $(X,\lambda\cdot d_X)$. Then, it is obvious that,
\[\lim_{k\rightarrow\infty}\dgh\lbracket \Sp^1,\frac{\pi}{k}C_{2k}\rbracket=0.\]
In other words, $\Sp^1$ is the Gromov-Hausdorff limit of the sequence $\left\{\frac{\pi}{k}C_{2k}\right\}_{k=1}^\infty$. By Theorem \ref{thm:stability of tight span}, we conclude that $\TS(\Sp^1)$ is in fact the Gromov-Hausdorff limit of the sequence $\left\{\TS\lbracket\frac{\pi}{k}C_{2k}\rbracket\right\}_{k=1}^\infty$. For each $k=1,2,\ldots$, note that $\TS\lbracket\frac{\pi}{k}C_{2k}\rbracket=\frac{\pi}{k}\TS\lbracket C_{2k}\rbracket$ is a $k$-dimensional hypercube. Therefore, this suggests that $\TS(\Sp^1)$ should be an infinite-dimensional hypercube (in a  sense to be determined). 

In this section, we provide a precise description of $\TS(\Sp^1)$ and prove that $\TS(\Sp^1)$ is homeomorphic to the \emph{Hilbert cube}, a proper notion of an infinite-dimensional hypercube. Furthermore, we will clarify which functions in $\TS(\Sp^1)$ play the role of ``vertices" of the Hilbert cube and characterize the homotopy types of some open thickenings of $\Sp^1$ in $\TS(\Sp^1)$.

\subsection{A precise description of $\TS(\Sp^1)$}\label{sec:TSS1:description}

We will use $\Sp^n$ to denote the $n$-dimensional unit sphere with the geodesic (round) metric. Also, for $n=1$ case, we will view $\Sp^1$ as $\R\slash 2\pi$. As an abuse of notation, $\theta\in\R$ will also denote the equivalence class of $\theta$ in $\R\slash 2\pi=\Sp^1$.

By Lemma \ref{lm:antipodal}, we have that 
\begin{equation}\label{eq:tight span of S1}
    \TS(\Sp^1)=\{f\in\mathrm{Lip}_1(\Sp^1)|\,f(\theta)+f(\bar{\theta})=\pi,\,\forall \theta\in \Sp^1\}.
\end{equation}

In the sequel, we provide a  representation of $\TS(\Sp^1)$ simpler than Equation \eqref{eq:tight span of S1}, and then determine the homeomorphism type of $\TS(\Sp^1)$.

\begin{definition}[Reduced tight span of $\Sp^1$]\label{def:redTSS1}
Let 
$$\mathbf{F}(\Sp^1)\coloneqq\{f\in\mathrm{Lip}_1([0,\pi])|\,0\leq f\leq \pi, \, f(0)+f(\pi)=\pi\}.$$
Endow $\mathbf{F}(\Sp^1)$ with the sup norm. Then, we call $\mathbf{F}(\Sp^1)$ the \emph{reduced tight span of $\Sp^1$}.
\end{definition}

\begin{remark}
Actually, $\mathbf{F}(\Sp^1)=\{f\in\mathrm{Lip}_1([0,\pi])|\, f(0)+f(\pi)=\pi\}$. The condition that $0\leq f\leq \pi$ is  implied by $f(0)+f(\pi)=\pi$ and the 1-Lipschitz condition.
\end{remark}

\begin{remark}\label{rmk:compact convex}
It is easy to check that $\mathbf{F}(\Sp^1)$ is a compact convex subset of the space $C([0,\pi])$ consisting of all continuous maps from $[0,\pi]$ to $\R$.
\end{remark}

\begin{proposition}\label{prop:reduced tight span}
$\mathbf{F}(\Sp^1)$ is isometric to $\TS(\Sp^1)$.
\end{proposition}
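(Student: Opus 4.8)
The plan is to construct an explicit isometry between $\TS(\Sp^1)$ and $\mathbf{F}(\Sp^1)$ by exploiting the antipodal symmetry already encoded in Equation \eqref{eq:tight span of S1}. The key observation is that any $f\in \TS(\Sp^1)$ is completely determined by its restriction to the half-circle $[0,\pi]$: indeed, writing points of $\Sp^1=\R/2\pi$ as $\theta\in[0,2\pi)$, the antipode of $\theta$ is $\bar\theta=\theta+\pi$, and for $\theta\in[\pi,2\pi)$ we have $\bar\theta=\theta-\pi\in[0,\pi)$, so the constraint $f(\theta)+f(\bar\theta)=\pi$ recovers the values of $f$ on $[\pi,2\pi)$ from its values on $[0,\pi)$. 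Thus I would define the map $\Phi:\TS(\Sp^1)\to C([0,\pi])$ by $\Phi(f):=f|_{[0,\pi]}$ and show it lands in $\mathbf{F}(\Sp^1)$, is surjective, and is an isometry for the respective sup norms.

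The steps in order: \textbf{(1) Well-definedness.} For $f\in\TS(\Sp^1)$, the restriction $\Phi(f)$ is clearly $1$-Lipschitz on $[0,\pi]$ with the standard metric (the geodesic metric on $\Sp^1$ restricts to the usual metric on the arc $[0,\pi]$), and the boundary condition $\Phi(f)(0)+\Phi(f)(\pi)=f(0)+f(\pi)=\pi$ holds by \eqref{eq:tight span of S1}; the range bound $0\le \Phi(f)\le\pi$ follows from item (4) of Remark \ref{rmk:propsofEX} (or directly from the Lipschitz-plus-boundary condition, per the Remark after Definition \ref{def:redTSS1}). So $\Phi(f)\in\mathbf{F}(\Sp^1)$. \textbf{(2) Surjectivity.} Given $g\in\mathbf{F}(\Sp^1)$, I would define a candidate extension $\tilde g$ on $\Sp^1$ by $\tilde g(\theta)=g(\theta)$ for $\theta\in[0,\pi]$ and $\tilde g(\theta)=\pi-g(\theta-\pi)$ for $\theta\in(\pi,2\pi)$, and then verify that $\tilde g$ is well-defined at the gluing points (consistency at $\theta=0\sim 2\pi$ and at $\theta=\pi$ uses $g(0)+g(\pi)=\pi$), that it satisfies the antipodal relation $\tilde g(\theta)+\tilde g(\bar\theta)=\pi$ everywhere, and that it is globally $1$-Lipschitz on $\Sp^1$; by \eqref{eq:tight span of S1} this places $\tilde g\in\TS(\Sp^1)$ with $\Phi(\tilde g)=g$. \textbf{(3) Isometry.} Since $\Phi$ is clearly distance-nonincreasing (sup over $[0,\pi]$ is at most sup over $\Sp^1$), it remains to show the reverse inequality, i.e. that the sup of $|f_1-f_2|$ over all of $\Sp^1$ is already attained on $[0,\pi]$. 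This is where the antipodal structure pays off: for $\theta\in(\pi,2\pi)$ one has $f_1(\theta)-f_2(\theta)=(\pi-f_1(\theta-\pi))-(\pi-f_2(\theta-\pi))=-(f_1(\theta-\pi)-f_2(\theta-\pi))$, so $|f_1(\theta)-f_2(\theta)|=|f_1(\theta-\pi)-f_2(\theta-\pi)|$ with $\theta-\pi\in[0,\pi)$; hence the sup over the second half-circle equals the sup over the first, and $\|\Phi(f_1)-\Phi(f_2)\|_\infty=\|f_1-f_2\|_\infty$.

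The routine verifications are the Lipschitz and gluing checks in Step (2); these are elementary case analyses on whether two points lie in the same or opposite half-circles, using the antipodal constraint to control the cross terms. The step I expect to be the main obstacle — though it is more bookkeeping than genuine difficulty — is verifying the \emph{global} $1$-Lipschitz property of $\tilde g$ in Step (2): the only nontrivial case is a pair $\theta\in[0,\pi]$, $\phi\in(\pi,2\pi)$ where the shorter arc between them may wrap around either endpoint, so I must confirm $|\tilde g(\theta)-\tilde g(\phi)|=|g(\theta)-(\pi-g(\phi-\pi))|\le d_{\Sp^1}(\theta,\phi)$ using the boundary identity $g(0)+g(\pi)=\pi$ together with the Lipschitz bound on $g$ over $[0,\pi]$. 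Once global $1$-Lipschitzness is secured, the remaining claims fall out immediately, and Steps (1) and (3) are short. The overall conclusion is that $\Phi$ is a surjective isometry, establishing $\mathbf{F}(\Sp^1)\cong\TS(\Sp^1)$.
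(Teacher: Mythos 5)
Your proposal is correct and follows essentially the same route as the paper: the paper defines the extension $f\mapsto\tilde f$ from $\mathbf{F}(\Sp^1)$ to $\TS(\Sp^1)$ (with $\tilde f(\theta)=\pi-f(\theta-\pi)$ on the second half-circle), verifies global $1$-Lipschitzness by the same case analysis you outline using $f(0)+f(\pi)=\pi$, and proves surjectivity and distance preservation via the same antipodal cancellation $|\tilde f(\theta)-\tilde g(\theta)|=|f(\theta-\pi)-g(\theta-\pi)|$. Working with the restriction map rather than the extension map is only a cosmetic difference, and you correctly identify the global Lipschitz check as the one step requiring real work.
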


To prove the theorem, we explicitly construct a map from $\mathbf{F}(\Sp^1)$ to $\TS(\Sp^1)$ as follows: for every $f\in\mathbf{F}(\Sp^1)$, we define $\tilde{f}:\Sp^1\rightarrow\R$ in the following way:
\begin{align*}
    \tilde{f}:\Sp^1&\longrightarrow\R\\
    \theta&\longmapsto\begin{cases}f(\theta)&\text{if }\theta\in [0,\pi]\\ \pi-f(\theta-\pi)&\text{if }\theta\in (\pi,2\pi]\end{cases}
\end{align*}

\begin{lemma}\label{lm:circle} 
If $f\in\mathbf{F}(\Sp^1)$, then $\tilde{f}$ belongs to $\TS(\Sp^1)$.
\end{lemma}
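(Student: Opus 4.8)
The plan is to verify that $\tilde f$ satisfies the two defining conditions of $\TS(\Sp^1)$ recorded in Equation \eqref{eq:tight span of S1}: that $\tilde f$ is $1$-Lipschitz with respect to the geodesic metric $\ds$ on $\Sp^1=\R/2\pi$, and that it satisfies the antipodal relation $\tilde f(\theta)+\tilde f(\bar\theta)=\pi$ for all $\theta$, where $\bar\theta=\theta+\pi$. Before either of these, I would first check that $\tilde f$ is a well-defined function on $\R/2\pi$: the only point needing attention is $\theta\equiv 0\equiv 2\pi$, where the two branches of the definition give $f(0)$ and $\pi-f(\pi)$, and these agree precisely because $f(0)+f(\pi)=\pi$. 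The antipodal relation is then immediate by splitting on whether $\theta\in[0,\pi]$ or $\theta\in(\pi,2\pi]$; in each subcase a one-line computation from the definition of $\tilde f$ together with the boundary condition yields $\tilde f(\theta)+\tilde f(\bar\theta)=\pi$.

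The substantive part is the $1$-Lipschitz estimate $|\tilde f(\theta)-\tilde f(\phi)|\le \ds(\theta,\phi)$, which I would organize by the location of $\theta,\phi$. When both points lie in $[0,\pi]$, or both lie in $[\pi,2\pi]$, the geodesic distance coincides with the ambient distance on that length-$\pi$ arc, and the estimate reduces directly to the hypothesis that $f$ is $1$-Lipschitz on $[0,\pi]$ (in the second case after the affine substitution $\theta\mapsto\theta-\pi$, which the $\pi-f(\cdot)$ form leaves $1$-Lipschitz).

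The main obstacle is the mixed case $\theta\in[0,\pi]$, $\phi\in(\pi,2\pi]$. Writing $\psi:=\phi-\pi\in[0,\pi]$, one computes $\ds(\theta,\phi)=\pi-|\theta-\psi|$ and $\tilde f(\theta)-\tilde f(\phi)=f(\theta)+f(\psi)-\pi$, so the desired inequality is equivalent to the two-sided bound $|\theta-\psi|\le f(\theta)+f(\psi)\le 2\pi-|\theta-\psi|$. I expect both halves to follow from combining the $1$-Lipschitz property of $f$ with the boundary condition, and assuming $\theta\le\psi$ the computation is clean: the lower bound comes from $f(\theta)\ge f(0)-\theta$ and $f(\psi)\ge f(\pi)-(\pi-\psi)$, which sum via $f(0)+f(\pi)=\pi$ to $f(\theta)+f(\psi)\ge\psi-\theta$; the upper bound comes from $f(\theta)\le f(0)+\theta$ and $f(\psi)\le f(\pi)+(\pi-\psi)$, which sum to $f(\theta)+f(\psi)\le 2\pi+\theta-\psi=2\pi-|\theta-\psi|$. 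Once this mixed case is established, all cases are covered and, by Equation \eqref{eq:tight span of S1}, $\tilde f\in\TS(\Sp^1)$, completing the proof.
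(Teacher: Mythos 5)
Your proof is correct and takes essentially the same route as the paper's: a case analysis according to which half of the circle the two points lie in, with the mixed case handled by combining the $1$-Lipschitz bounds of $f$ relative to the endpoints $0$ and $\pi$ with the boundary condition $f(0)+f(\pi)=\pi$ (your symmetric reformulation $|\theta-\psi|\le f(\theta)+f(\psi)\le 2\pi-|\theta-\psi|$ is a repackaging of the paper's triangle-inequality chain, and your explicit well-definedness check at $0\equiv 2\pi$ is a small welcome addition). The only subcase you leave implicit, $\theta>\psi$, follows from the one you treat because the target inequality is symmetric in $\theta$ and $\psi$.
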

\begin{proof}
It suffices to show $\tilde{f}(\theta)+\tilde{f}(\bar{\theta})=\pi$ for all $\theta\in\Sp^1$ and $\tilde{f}$ is $1$-Lipschitz.

Fix arbitrary $\theta\in\Sp^1$. If $\theta\in [0,\pi]$,  $$\tilde{f}(\theta)+\tilde{f}(\bar{\theta})=f(\theta)+\tilde{f}(\theta+\pi)=f(\theta)+(\pi-f(\theta))=\pi.$$
If $\theta\in (\pi,2\pi]$,  $$\tilde{f}(\theta)+\tilde{f}(\bar{\theta})=(\pi-f(\theta-\pi))+\tilde{f}(\theta-\pi)=(\pi-f(\theta-\pi))+f(\theta-\pi)=\pi.$$ Hence, the first condition is satisfied.

Now, let's check that $\tilde{f}$ is $1$-Lipschitz. Fix arbitrary $\theta,\theta'\in\Sp^1$. If $\theta,\theta'\in [0,\pi]$, $$\vert\tilde{f}(\theta)-\tilde{f}(\theta')\vert=\vert f(\theta)-f(\theta')\vert\leq \vert \theta-\theta'\vert=d_{\Sp^1}(\theta,\theta').$$
If $\theta,\theta'\in [\pi,2\pi]$,
$$\vert\tilde{f}(\theta)-\tilde{f}(\theta')\vert=\vert (\pi-f(\theta-\pi))-(\pi-f(\theta'-\pi))\vert=\vert f(\theta-\pi)-f(\theta'-\pi)\vert\leq \vert \theta-\theta'\vert=d_{\Sp^1}(\theta,\theta').$$
Finally, if $\theta\in [0,\pi]$ and $\theta'\in [\pi,2\pi]$, we need a more subtle case-by-case analysis.

\begin{enumerate}
    \item If $\pi\leq\theta'\leq\theta+\pi$,
    \begin{align*}
        \vert\tilde{f}(\theta)-\tilde{f}(\theta')\vert&=\vert f(\theta)-(\pi-f(\theta'-\pi))\vert\\
        &\leq\vert f(\theta)+f(0)-\pi\vert+\vert f(\theta'-\pi)-f(0)\vert\\
        &=\vert f(\theta)-f(\pi)\vert+\vert f(\theta'-\pi)-f(0)\vert\\
        &\leq \vert\theta-\pi\vert+\vert\theta'-\pi\vert\\
        &=\vert\theta-\theta'\vert=d_{\Sp^1}(\theta,\theta').
    \end{align*}
    
    \item If $\theta+\pi\leq\theta'\leq 2\pi$,
    \begin{align*}
        \vert\tilde{f}(\theta)-\tilde{f}(\theta')\vert&=\vert f(\theta)-(\pi-f(\theta'-\pi))\vert\\
        &\leq\vert f(\theta'-\pi)+f(0)-\pi\vert+\vert f(\theta)-f(0)\vert\\
        &=\vert f(\theta'-\pi)-f(\pi)\vert+\vert f(\theta)-f(0)\vert\\
        &\leq \vert \theta'-2\pi \vert +\theta\\
        &=2\pi-\theta'+\theta=d_{\Sp^1}(\theta,\theta').
    \end{align*}
\end{enumerate}
This concludes the proof.
\end{proof}

\begin{proof}[Proof of Proposition \ref{prop:reduced tight span}]
By Lemma \ref{lm:circle}, $f\mapsto\tilde{f}$ is a well-defined map from $\mathbf{F}(\Sp^1)$ to $\TS(\Sp^1)$. To establish the claim, it is enough to show this map is surjective and preserves distances.

Let's prove the first claim. Fix arbitrary $g\in\TS(\Sp^1)$. Then, one can view $g$ as a map from $\R$ to $\R$ such that (1) $g(\theta+2\pi)=g(\theta)$ $\forall\theta\in\R$, (2) $g(\theta+\pi)=-g(\theta)$ $\forall\theta\in\R$, and (3) $g$ is $1$-Lipschitz. Let $f:=g\vert_{[0,\pi]}$. Then one can easily check $f\in\mathbf{F}(\Sp^1)$ and $\tilde{f}=g$.

Next, consider two arbitrary  functions $f,g\in\mathbf{F}(\Sp^1)$. Fix an arbitrary $\theta\in\Sp^1$. If $\theta\in [0,\pi]$, then $\vert \tilde{f}(\theta)-\tilde{g}(\theta)\vert=\vert f(\theta)-g(\theta)\vert$. If $\theta\in [\pi,2\pi]$, then $\vert \tilde{f}(\theta)-\tilde{g}(\theta)\vert=\vert (\pi-f(\theta-\pi))-(\pi-g(\theta-\pi))\vert=\vert f(\theta-\pi)-g(\theta-\pi)\vert$. Hence, $\Vert \tilde{f}-\tilde{g}\Vert_\infty=\Vert f-g\Vert_\infty$.

This concludes the proof.
\end{proof}

Already suggested by Proposition \ref{prop:circular graph}, via Proposition \ref{prop:reduced tight span} we next show that $\TS(\Sp^1)$ is an infinite-dimensional cube, namely, the {Hilbert cube}:
a\begin{definition}
Let $\ell^2$ denote the Hilbert space consisting of all infinite real sequences with finite square sums. The subspace
\[\mathfrak{H}\coloneqq\Pi_{n=1}^\infty\left[-\frac{1}{n},\frac{1}{n}\right]\subseteq\ell^2 \]
is called the \emph{Hilbert cube}.
\end{definition}

Recall that in a linear space $E$, the affine hull of any subset $S\subseteq E$ is defined as \[\mathrm{aff}(S)\coloneqq\left\{\sum_{i=1}^k{t_ix_i}:\,k>0,\, t_i\in\mathbb{R}\text{ and }\sum_{i=1}^kt_i=1\right\}.\]
For any convex $S$, the dimension of $S$ is defined to be the dimension of $\mathrm{aff}(S)$. 
The following result characterizes a large family of spaces homeomorphic to $\mathfrak{H}$.

\begin{lemma}[\cite{klee1955some}]\label{lm:infinite cube}
Any infinite-dimensional compact convex subset of a separable normed linear space is homeomorphic to the Hilbert cube.
\end{lemma}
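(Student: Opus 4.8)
The plan is to reduce the statement to the special case of the Hilbert space $\ell^2$, which is Keller's theorem (every infinite-dimensional compact convex subset of $\ell^2$ is homeomorphic to $\mathfrak{H}$), by means of an affine topological embedding, and then to treat the Hilbert-space model directly. The reduction is elementary, and the genuine content lives in Keller's theorem.

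For the reduction, let $E$ be the separable normed space and $K\subseteq E$ the infinite-dimensional compact convex set; being compact, $K$ is bounded, say $\norm{x}\le M$ for all $x\in K$. Separability of $E$ makes the closed unit ball of $E^*$ weak$^*$-compact (Banach--Alaoglu) and weak$^*$-metrizable, hence weak$^*$-separable, so I can choose a countable family $\{f_n\}\subseteq E^*$ with $\norm{f_n}\le 1$ that is weak$^*$-dense in the unit ball and therefore separates the points of $E$ (given $x\neq y$, Hahn--Banach supplies a norming functional, and a sufficiently close $f_n$ still distinguishes $x$ and $y$). Define $\Phi\colon E\to\ell^2$ by $\Phi(x)=\bigl(2^{-n}f_n(x)\bigr)_{n\ge 1}$. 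Then $\Phi$ is linear; it maps $K$ into $\ell^2$ since $\sum_n 4^{-n}|f_n(x)|^2\le M^2\sum_n 4^{-n}<\infty$; it is Lipschitz because $\norm{\Phi(x)-\Phi(y)}_{\ell^2}^2=\sum_n 4^{-n}|f_n(x-y)|^2\le \tfrac13\norm{x-y}^2$; and it is injective since the $f_n$ separate points. As a continuous injection of the compact space $K$ into a Hausdorff space, $\Phi|_K$ is a homeomorphism onto its image. That image $\Phi(K)$ is compact and convex (continuous linear image of a compact convex set) and again infinite-dimensional, because an injective linear map carries affinely independent tuples to affinely independent tuples, so $\Phi(K)$ contains simplices of every dimension. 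This reduces the lemma to Keller's theorem.

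For Keller's theorem I would first normalize by translating so that $0\in K$ and replacing $\ell^2$ by the closed linear span of $K$, so that $\overline{\mathrm{span}}\,K=\ell^2$. The crucial preparatory observation is the characterization of compactness in $\ell^2$: a closed bounded set is compact iff it has uniformly small tails, $\sup_{x\in K}\sum_{n>N}\langle x,e_n\rangle^2\to 0$ as $N\to\infty$. Passing to an orthonormal basis adapted to $K$ (with $a_n$ the width of $K$ in the $e_n$-direction, which is positive for all $n$ after the span normalization and satisfies $a_n\to 0$ by the tail estimate), one arranges that $K$ is a ``full'' convex body sitting inside a Hilbert cube $\prod_n[0,a_n]\cong\mathfrak{H}$ and meeting each coordinate interval. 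The heart of the argument, and the step I expect to be the main obstacle, is then to manufacture an explicit homeomorphism from this full convex body onto the whole cube. Classically this is done by an inductive, coordinate-by-coordinate straightening (radial-type adjustments in each new direction), and the delicate point is to control the successive corrections using the tail estimates so that the infinite composition converges uniformly to a continuous bijection whose inverse is also continuous. A cleaner alternative I would seriously consider is Toru\'nczyk's topological characterization of $\mathfrak{H}$: $K$ is a compact metric absolute retract (every compact convex subset of a locally convex space is an AR by Dugundji's extension theorem), and its infinite-dimensionality supplies the disjoint-cells property by a standard general-position argument exploiting that $K$ contains simplices of arbitrarily large dimension. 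Either route isolates the same real difficulty, namely producing the homeomorphism in the Hilbert-cube model, with the reduction of the first two paragraphs being routine by comparison.
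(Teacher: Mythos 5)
The paper offers no proof of this lemma at all---it is imported verbatim from Klee's 1955 paper with a citation---so there is no internal argument to compare yours against; the only question is whether your proposal stands on its own. Your reduction to the Hilbert-space case is correct and complete: separability of $E$ gives weak$^*$-metrizability, hence weak$^*$-separability, of the dual ball, so a countable separating family $\{f_n\}$ exists; the map $\Phi(x)=(2^{-n}f_n(x))_n$ is a bounded linear injection into $\ell^2$; a continuous injection of a compact set into a Hausdorff space is a homeomorphism onto its image; and injective linear maps preserve affine independence, so $\Phi(K)$ is again infinite-dimensional, compact and convex. This is essentially the classical route by which Klee's normed-space statement is derived from Keller's theorem, and every step you give checks out.

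The gap, such as it is, sits exactly where you say it does: you do not prove Keller's theorem. Your third paragraph is a plan, not an argument---the coordinate-by-coordinate straightening is named but the convergence and continuity of the infinite composition (the ``delicate point'' you flag) is never established, and the Toru\'nczyk alternative is likewise only gestured at (in particular, the claim that infinite-dimensionality yields the disjoint-cells property ``by a standard general-position argument'' is doing a lot of unexamined work). If Keller's theorem is admissible as a known result---and it is at least as citable as the lemma itself, which the paper simply attributes to Klee---then your proof is complete and in fact more informative than the paper's bare citation, since the reduction is fully worked out. If the intent was a self-contained proof, then the entire analytic content of the lemma remains unproved, concentrated in the construction of the homeomorphism from a full compact convex body in $\prod_n[0,a_n]$ onto the cube. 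You should either commit to citing Keller and stop after your second paragraph, or actually carry out one of the two routes you sketch.
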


\begin{lemma}\label{lm:infinite F}
$\mathbf{F}(\Sp^1)\subseteq C([0,\pi])$ is infinite dimensional.
\end{lemma}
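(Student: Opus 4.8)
The plan is to show that $\mathbf{F}(\Sp^1)$ contains an infinite affinely independent set, which immediately forces $\mathrm{aff}(\mathbf{F}(\Sp^1))$, and hence $\mathbf{F}(\Sp^1)$ itself, to be infinite dimensional. Recall from Definition~\ref{def:redTSS1} that $\mathbf{F}(\Sp^1)$ consists of $1$-Lipschitz functions $f$ on $[0,\pi]$ satisfying the single linear constraint $f(0)+f(\pi)=\pi$. The constant function $f_0\equiv\frac{\pi}{2}$ clearly lies in $\mathbf{F}(\Sp^1)$ and will serve as a convenient base point; the strategy is then to exhibit infinitely many elements of $\mathbf{F}(\Sp^1)$ whose differences from $f_0$ are linearly independent in $C([0,\pi])$.

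First I would construct an explicit infinite family of ``bump'' perturbations. For each integer $n\geq 1$, partition $[0,\pi]$ into $n$ pieces and define a small piecewise-linear, $1$-Lipschitz function $\varphi_n$ supported on a subinterval $I_n$ bounded away from both endpoints $0$ and $\pi$, normalized so that its $\ell^\infty$-norm is small enough to keep $f_0+\varphi_n$ inside the range $[0,\pi]$. Since $\varphi_n$ vanishes at $0$ and $\pi$, the function $f_n\coloneqq f_0+\varphi_n$ automatically satisfies $f_n(0)+f_n(\pi)=\frac{\pi}{2}+\frac{\pi}{2}=\pi$, and being a sum of two $1$-Lipschitz functions with disjoint-enough slopes it can be arranged to be $1$-Lipschitz (by making the bumps shallow). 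Thus each $f_n\in\mathbf{F}(\Sp^1)$. If the supports $I_n$ are chosen to be pairwise disjoint (for instance $I_n=\bigl[\tfrac{\pi}{2^{n+1}},\tfrac{\pi}{2^{n}}\bigr]$ or any nested shrinking scheme), then the functions $\{\varphi_n\}_{n\geq 1}$ have disjoint supports and are therefore linearly independent in $C([0,\pi])$.

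The key step is then to conclude infinite dimensionality from this family. Because $\varphi_n=f_n-f_0$ with $f_n,f_0\in\mathbf{F}(\Sp^1)$, each $\varphi_n$ is a difference of two points of the convex set, so it lies in the linear subspace $\mathrm{aff}(\mathbf{F}(\Sp^1))-f_0$ (the direction space of the affine hull). An infinite linearly independent collection of such differences shows this direction space is infinite dimensional, whence $\dim\,\mathrm{aff}(\mathbf{F}(\Sp^1))=\infty$, which is precisely the definition of $\mathbf{F}(\Sp^1)$ being infinite dimensional. I would phrase the final line exactly in terms of the affine-hull definition recalled just before Lemma~\ref{lm:infinite cube}, so that this lemma together with Remark~\ref{rmk:compact convex} (which gives compactness and convexity in the separable space $C([0,\pi])$) can be invoked to identify $\mathbf{F}(\Sp^1)$, and hence $\TS(\Sp^1)$, with the Hilbert cube.

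The main obstacle I anticipate is purely bookkeeping rather than conceptual: I must verify that each perturbed function $f_0+\varphi_n$ remains genuinely $1$-Lipschitz and stays within $[0,\pi]$, since naively adding two $1$-Lipschitz functions can produce slopes up to $2$. This is easily handled by scaling the bumps to have small height (equivalently, small slope), so that the total slope never exceeds $1$; choosing tent-shaped $\varphi_n$ with maximum slope at most, say, a fixed small constant, and amplitude controlled accordingly, makes both the Lipschitz bound and the range constraint automatic. The disjoint-support argument for linear independence is then entirely routine.
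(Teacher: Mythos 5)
Your proof is correct, but it takes a genuinely different route from the paper. The paper's proof identifies the affine hull exactly: it asserts that $\mathrm{aff}(\mathbf{F}(\Sp^1))=\{f\in\mathrm{Lip}([0,\pi]):f(0)+f(\pi)=\pi\}$, a codimension-$1$ affine subspace of $\mathrm{Lip}([0,\pi])$, and then invokes the density of $\mathrm{Lip}([0,\pi])$ in $C([0,\pi])$ to conclude infinite-dimensionality. You instead exhibit an explicit infinite linearly independent family of directions inside the affine hull, namely disjointly supported shallow bumps $\varphi_n$ vanishing at $0$ and $\pi$ added to the center $f_0\equiv\frac{\pi}{2}$. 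Your construction is sound: the endpoint constraint is preserved because $\varphi_n(0)=\varphi_n(\pi)=0$, disjoint supports give linear independence, and an infinite linearly independent set in the direction space $\mathrm{aff}(\mathbf{F}(\Sp^1))-f_0$ forces $\dim\mathrm{aff}(\mathbf{F}(\Sp^1))=\infty$. One simplification you could make: since $f_0$ is constant, $f_0+\varphi_n$ is $1$-Lipschitz exactly when $\varphi_n$ is, so the ``slopes can add up to $2$'' worry you flag is vacuous here and no shallowness normalization is needed beyond $\mathrm{Lip}(\varphi_n)\le 1$ (and the range condition is automatic by the Remark following Definition~\ref{def:redTSS1}). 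What each approach buys: the paper's argument is shorter and yields a complete description of the affine hull, though it leaves the inclusion $\{f\in\mathrm{Lip}:f(0)+f(\pi)=\pi\}\subseteq\mathrm{aff}(\mathbf{F}(\Sp^1))$ as ``easy to see'' (it requires rescaling a Lipschitz direction until it becomes $1$-Lipschitz); your argument is more elementary and fully self-contained, at the cost of proving only the lower bound on the dimension --- which is all the lemma requires.
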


\begin{proof}
It is easy to see that the affine hull $\mathrm{aff}(\mathbf{F}(\Sp^1))$ can be characterized as follows
$$\mathrm{aff}(\mathbf{F}(\Sp^1))=\{f\in\mathrm{Lip}([0,\pi])|\,f(0)+f(\pi)=\pi\},$$
where $\mathrm{Lip}([0,\pi])$ is the space of all Lipschitz functions (not necessarily 1-Lipschitz). $\mathrm{Lip}([0,\pi])$ is dense in $C([0,\pi])$ and thus infinite dimensional. Therefore $\mathrm{aff}(\mathbf{F}(\Sp^1))$ is a codimension-1 subspace of $\mathrm{Lip}([0,\pi])$ and thus of infinite dimension.
\end{proof}

Combining Remark \ref{rmk:compact convex}, Lemma \ref{lm:infinite cube}, and Lemma \ref{lm:infinite F}, we obtain the following topological characterization of $\mathbf{F}(\Sp^1)$.

\begin{theorem}\label{thm:hilbert cube}
$\mathbf{F}(\Sp^1)$ is homeomorphic to the Hilbert cube $\mathfrak{H}$.
\end{theorem}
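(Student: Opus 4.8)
The plan is to apply Lemma~\ref{lm:infinite cube} directly, which reduces the proof to verifying three hypotheses about $\mathbf{F}(\Sp^1)$: that it is compact, that it is convex, and that it is an infinite-dimensional subset of a separable normed linear space. Most of this groundwork has already been laid in the preceding results, so the proof should amount to assembling those pieces and checking the separability requirement, which is the one ingredient not yet explicitly recorded.

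First I would invoke Remark~\ref{rmk:compact convex}, which asserts that $\mathbf{F}(\Sp^1)$ is a compact convex subset of $C([0,\pi])$. This immediately supplies two of the three hypotheses of Lemma~\ref{lm:infinite cube}. Next I would invoke Lemma~\ref{lm:infinite F} to conclude that $\mathbf{F}(\Sp^1)$ is infinite-dimensional as a convex set, in the sense that its affine hull is infinite-dimensional. The only remaining point is that the ambient normed space be separable: here I would note that $C([0,\pi])$, equipped with the sup norm, is a separable Banach space (for instance, the polynomials with rational coefficients form a countable dense subset by the Weierstrass approximation theorem). Since $\mathbf{F}(\Sp^1)$ sits inside $C([0,\pi])$ with the sup norm (Definition~\ref{def:redTSS1}), all the hypotheses of Lemma~\ref{lm:infinite cube} are met, and the conclusion that $\mathbf{F}(\Sp^1)$ is homeomorphic to $\mathfrak{H}$ follows at once.

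I do not expect any serious obstacle, since this theorem is essentially a bookkeeping step that collects Remark~\ref{rmk:compact convex}, Lemma~\ref{lm:infinite F}, and Lemma~\ref{lm:infinite cube}. The only subtlety worth flagging is the notion of dimension being used: Lemma~\ref{lm:infinite cube} requires the convex set to be infinite-dimensional as a convex subset, and one must make sure this matches the definition via the affine hull given just before Lemma~\ref{lm:infinite cube} — which it does, since Lemma~\ref{lm:infinite F} is phrased precisely in terms of $\mathrm{aff}(\mathbf{F}(\Sp^1))$. Once that alignment is noted, the proof is a one-line citation of the three ingredients.

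\begin{proof}
By Remark~\ref{rmk:compact convex}, $\mathbf{F}(\Sp^1)$ is a compact convex subset of $C([0,\pi])$ endowed with the sup norm. The space $C([0,\pi])$ with the sup norm is a separable normed linear space, since by the Weierstrass approximation theorem the polynomials with rational coefficients form a countable dense subset. By Lemma~\ref{lm:infinite F}, $\mathbf{F}(\Sp^1)$ is infinite-dimensional. Therefore $\mathbf{F}(\Sp^1)$ is an infinite-dimensional compact convex subset of a separable normed linear space, and Lemma~\ref{lm:infinite cube} yields that $\mathbf{F}(\Sp^1)$ is homeomorphic to the Hilbert cube $\mathfrak{H}$.
\end{proof}
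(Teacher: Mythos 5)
Your proposal is correct and follows exactly the paper's argument: the paper proves this theorem precisely by combining Remark~\ref{rmk:compact convex}, Lemma~\ref{lm:infinite F}, and Lemma~\ref{lm:infinite cube}. Your explicit verification that $C([0,\pi])$ is separable is a small, welcome addition that the paper leaves implicit.
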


As a direct consequence of this theorem, we have the following result.

\begin{theorem}\label{thm:E(Sn) is hilbert cube}
For every positive integer $n>0$, $\TS(\Sp^n)$ is homeomorphic to the Hilbert cube $\mathfrak{H}$.
\end{theorem}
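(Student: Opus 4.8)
The plan is to reduce the $n$-dimensional case to the already-established circle case (Theorem~\ref{thm:hilbert cube}) by showing that $\TS(\Sp^n)$ is again an infinite-dimensional compact convex subset of a separable normed linear space, and then invoking Klee's theorem (Lemma~\ref{lm:infinite cube}) verbatim. The three properties to verify are \emph{compactness}, \emph{convexity}, and \emph{infinite-dimensionality}; each should follow from the antipodal structure of $\Sp^n$ together with results already available in the excerpt.

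First I would observe that $\Sp^n$ is a compact antipodal metric space: every point $x$ has a genuine antipode $\bar{x}$ with $d_{\Sp^n}(x,\bar{x})=\pi=\diam(\Sp^n)$, and for any $y$ the geodesic triangle inequality is an equality, $d(x,\bar{x})=d(x,y)+d(y,\bar{x})$. Hence Lemma~\ref{lm:antipodal} applies and gives the clean description
\begin{equation*}
\TS(\Sp^n)=\{f\in\mathrm{Lip}_1(\Sp^n)\mid f(x)+f(\bar{x})=\pi\ \text{for all }x\in\Sp^n\}.
\end{equation*}
Compactness of $\TS(\Sp^n)$ is immediate from Theorem~\ref{thm:compactness_tight_span} since $\Sp^n$ is compact. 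For convexity, I would apply Proposition~\ref{thm convex=antipodal}: because $\Sp^n$ is antipodal, $\TS(\Sp^n)$ is a convex subset of $C(\Sp^n)$. Thus $\TS(\Sp^n)$ sits as a compact convex subset of the separable normed space $C(\Sp^n)$ (separability holds since $\Sp^n$ is a compact metric space).

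The remaining and only substantive point is infinite-dimensionality, and this is where I expect the real work to lie. The cleanest route is to exhibit an isometric (or at least affine, dimension-non-decreasing) copy of $\TS(\Sp^1)$ inside $\TS(\Sp^n)$ and then quote Lemma~\ref{lm:infinite F}. Concretely, fix a great circle $\Sp^1\hookrightarrow\Sp^n$ that is totally geodesic, so that $\Sp^1$ embeds isometrically in $\Sp^n$ as a metric subspace. By Lemma~\ref{lm:iso span} this induces an isometric embedding $h_{\Sp^1,\Sp^n}\colon\TS(\Sp^1)\hookrightarrow\TS(\Sp^n)$. Since isometric embeddings of convex sets into normed spaces do not decrease affine dimension, and $\TS(\Sp^1)$ is infinite-dimensional (Lemma~\ref{lm:infinite F}, via Proposition~\ref{prop:reduced tight span}), it follows that $\TS(\Sp^n)$ is infinite-dimensional as well. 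The one technical caveat to check is that a great circle is indeed an isometrically embedded subspace of $\Sp^n$ in the geodesic metric---i.e.\ that the intrinsic distance along the great circle agrees with the ambient geodesic distance---which holds because great circles are geodesics of $\Sp^n$ and any two points on a great circle are joined by a minimizing arc lying on that circle whenever their separation is at most $\pi$.

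Having assembled the three ingredients, the proof concludes by direct appeal to Lemma~\ref{lm:infinite cube}: $\TS(\Sp^n)$ is an infinite-dimensional compact convex subset of the separable normed linear space $C(\Sp^n)$, hence homeomorphic to the Hilbert cube $\mathfrak{H}$. The main obstacle, as noted, is the infinite-dimensionality step; the embedding-of-a-great-circle argument is the most transparent way to handle it, but an alternative would be to build directly an infinite-dimensional family inside $\mathrm{aff}(\TS(\Sp^n))$, for instance by antipodally symmetrizing a sequence of independent bump functions supported near a fixed point and its antipode---though verifying the $1$-Lipschitz and affine-hull conditions for such a family is more laborious than simply transporting the known circle result through Lemma~\ref{lm:iso span}.
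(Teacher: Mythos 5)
Your proposal follows essentially the same route as the paper: compactness from Theorem~\ref{thm:compactness_tight_span}, convexity from Proposition~\ref{thm convex=antipodal} via antipodality of $\Sp^n$, and infinite-dimensionality by pushing $\TS(\Sp^1)$ into $\TS(\Sp^n)$ along a great circle using Lemma~\ref{lm:iso span}, then concluding with Klee's theorem (Lemma~\ref{lm:infinite cube}). The one place where you are too quick is the assertion that ``isometric embeddings of convex sets into normed spaces do not decrease affine dimension.'' This is true, but it is not self-evident: the embedding $h_{\Sp^1,\Sp^n}$ is not known to be affine (Mazur--Ulam only covers surjective isometries), so you cannot directly transfer affine hulls through it; a correct justification would go through something like topological dimension of small balls in the affine hull, which is more machinery than the situation requires. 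The paper avoids the issue entirely by exploiting the extra structure of Lemma~\ref{lm:iso span}, namely that $h_n(f)|_{\Sp^1}=f$: the restriction map $\mathrm{res}\colon C(\Sp^n)\to C(\Sp^1)$ is \emph{linear} and is a left inverse of $h_n$ on $\TS(\Sp^1)$, hence it maps $\mathrm{aff}(h_n(\TS(\Sp^1)))$ onto $\mathrm{aff}(\TS(\Sp^1))$, and since a linear map cannot increase dimension, $\dim(\mathrm{aff}(\TS(\Sp^n)))\geq\dim(\mathrm{aff}(\TS(\Sp^1)))=\infty$ by Lemma~\ref{lm:infinite F}. I would recommend replacing your general claim about isometric embeddings with this restriction-map argument; with that substitution your proof is complete and matches the paper's.
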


\begin{proof}
By Proposition \ref{prop:reduced tight span} and Theorem \ref{thm:hilbert cube}, we have that $\TS(\Sp^1)$ is homeomorphic to $\mathfrak{H}$. 

Now, for any $n>1$, notice that $\TS(\Sp^n)$ is a compact convex subset of the separable normed linear space $C(\Sp^n)$ (cf. Theorem \ref{thm:compactness_tight_span} and Proposition \ref{thm convex=antipodal}). By Lemma \ref{lm:infinite cube}, we then only need to show that
$$\dim(\mathrm{aff}(\TS(\Sp^n)))=\infty.$$
Let $\Sp^1$ be any great circle in $\Sp^n$. By Lemma \ref{lm:iso span}, there exists an isometry $h_n:\TS(\Sp^1)\rightarrow \TS(\Sp^n)$ such that for every $f\in \TS(\Sp^1)$, $h_n(f)|_{\Sp^1}=f$. Then, $\mathrm{aff}(h_n(\TS(\Sp^1)))\subseteq\mathrm{aff}(\TS(\Sp^n))$. Consider the restriction map $\mathrm{res}:C(\Sp^n)\rightarrow C(\Sp^1)$ sending any $f\in C(\Sp^n)$ to $f|_{\Sp^1}$. The map $\mathrm{res}$ is obviously a linear map and it is easy to see that
\[\mathrm{res}(\mathrm{aff}(h(\TS(\Sp^n))))=\mathrm{aff}(\TS(\Sp^1)).\]
By Proposition \ref{prop:reduced tight span} and Lemma \ref{lm:infinite F}, we have that $\dim(\mathrm{aff}(\TS(\Sp^1)))=\infty$. This implies that 
$$\infty=\dim(\mathrm{aff}(h(\TS(\Sp^1))))\leq \dim(\mathrm{aff}(\TS(\Sp^n))), $$
which concludes the proof.
\end{proof}

The tight span of a metric space $X$ can be viewed as a ``hyperspace'' of $X$, i.e., a metric space containing $X$ as a subspace. The \emph{Hausdorff hyperspace} of $X$, which consists of nonempty closed subsets of $X$ and is endowed with the Hausdorff distance, is another type of hyperspace. It is known that the Hausdorff hyperspaces of many types of compact metric spaces are homeomorphic to $\mathfrak{H}$ \cite{schori1974hyperspaces,curtis1974,schori1975hyperspace}. This fact suggests us to also look into the tight span of more general compact metric spaces than spheres to determine whether an analogue of Theorem \ref{thm:hilbert cube} still holds. We leave this for future work.

%%%%%%%%%%%%%%%%%%%%%
\subsection{The vertex set of $\TS(\Sp^1)$}

From Proposition \ref{prop:circular graph} we know that $\TS(C_{2k})$ has the combinatorial structure of a hypercube, and we have an explicit description of its vertex set. In the previous section, we established that $\TS(\Sp^1)$ is an infinite-dimensional cube, i.e., the Hilbert cube. In this section, therefore, we provide an explicit description of the vertex set of $\TS(\Sp^1)$.
\begin{definition}\label{def:gtheta}
For every $\theta\in\R$, define $g_\theta:\Sp^1\rightarrow\R$ as follows
\begin{equation}\label{eq:gtheta}
    g_\theta(\varphi)\coloneqq\left\{
\begin{array}{rcl}
\frac{1}{2}       &      & \varphi\in[\theta,\theta+\pi)\\
-\frac{1}{2}     &      & {\mbox{otherwise}}
\end{array} \right.. 
\end{equation}
\end{definition}

As a generalization of Equation \eqref{eq:h sigma}, we identify the following family of functions defined on $\Sp^1$. Given any Lebesgue measurable $A\subseteq [0,\pi]$, define $h_A:\Sp^1\rightarrow \R$ as follows 
$$h_A(\varphi)\coloneqq\frac{\pi}{2}+\int_0^\pi g_\theta(\varphi)\big(1_A(\theta)-1_{[0,\pi]\backslash A}(\theta)\big)d\theta. $$

\begin{proposition}\label{prop:hA}
Given any Lebesgue measurable $A\subseteq [0,\pi]$, the function $h_A:\Sp^1\rightarrow \R$ defined above belongs to $\TS(\Sp^1).$
\end{proposition}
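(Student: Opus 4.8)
The plan is to verify directly that $h_A$ satisfies the two defining conditions of $\TS(\Sp^1)$ recorded in Equation \eqref{eq:tight span of S1}: that $h_A$ is $1$-Lipschitz and that $h_A(\varphi)+h_A(\bar{\varphi})=\pi$ for every $\varphi\in\Sp^1$. Write $w\coloneqq 1_A-1_{[0,\pi]\setminus A}$, so that $w(\theta)\in\{+1,-1\}$ for a.e.\ $\theta\in[0,\pi]$ and $h_A(\varphi)=\frac{\pi}{2}+\int_0^\pi g_\theta(\varphi)\,w(\theta)\,d\theta$. The whole argument rests on two elementary properties of the generating functions $g_\theta$, which I would isolate first: (i) for each fixed $\theta$ one has $g_\theta(\varphi)+g_\theta(\varphi+\pi)=0$, and (ii) $g_{\theta+\pi}=-g_\theta$. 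Both follow immediately from the definition \eqref{eq:gtheta}, since the semicircle $[\theta,\theta+\pi)$ on which $g_\theta=\frac12$ and its antipodal shift are complementary.

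The functional equation is the easy half. Using property (i),
\[
h_A(\varphi)+h_A(\bar{\varphi}) = \pi + \int_0^\pi \big(g_\theta(\varphi)+g_\theta(\varphi+\pi)\big)\,w(\theta)\,d\theta = \pi,
\]
so the antipodal condition holds for free, with no hypothesis on $A$ beyond measurability.

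The $1$-Lipschitz estimate is the main content and the step I expect to require the most care. Since $|w|\equiv 1$, I would first bound
\[
|h_A(\varphi)-h_A(\varphi')| \;\le\; \int_0^\pi \big|g_\theta(\varphi)-g_\theta(\varphi')\big|\,d\theta,
\]
and observe that the integrand is the $\{0,1\}$-valued indicator of those $\theta$ for which exactly one of $\varphi,\varphi'$ lies in the semicircle $[\theta,\theta+\pi)$; equivalently, viewing $\theta$ as the variable, it is the indicator of $\theta$ lying in the symmetric difference of the two semicircles $(\varphi-\pi,\varphi]$ and $(\varphi'-\pi,\varphi']$ on $\Sp^1$. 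The key computation is then purely combinatorial on the circle: writing $s\coloneqq\ds(\varphi,\varphi')\in[0,\pi]$, these two semicircles have intersection of length $\pi-s$ and union of length $\pi+s$, so their symmetric difference over the \emph{full} circle $\Sp^1$ has measure $2s$. Finally I would invoke property (ii) to note that $\theta\mapsto|g_\theta(\varphi)-g_\theta(\varphi')|$ is $\pi$-periodic, whence the integral over $[0,\pi]$ is exactly half the integral over $[0,2\pi)$, namely $\tfrac12\cdot 2s = s = \ds(\varphi,\varphi')$. This yields $|h_A(\varphi)-h_A(\varphi')|\le \ds(\varphi,\varphi')$.

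Combining the two verified conditions with the characterization in Lemma \ref{lm:antipodal}, in the form \eqref{eq:tight span of S1}, gives $h_A\in\TS(\Sp^1)$. The only place where attention is genuinely needed is this measure count on the circle together with the halving coming from $\pi$-periodicity: a naive integration of the symmetric-difference indicator over the whole circle would give $2\,\ds(\varphi,\varphi')$, and it is precisely the restriction of the domain to $[0,\pi]$ (legitimized by property (ii)) that restores the sharp Lipschitz constant $1$ rather than $2$.
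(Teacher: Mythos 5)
Your proposal is correct and follows essentially the same route as the paper: bound $|h_A(\varphi)-h_A(\varphi')|$ by $\int_0^\pi|g_\theta(\varphi)-g_\theta(\varphi')|\,d\theta$ using $|1_A-1_{[0,\pi]\setminus A}|\equiv 1$, identify that integral with $\ds(\varphi,\varphi')$, and derive the antipodal identity from $g_\theta(\varphi)+g_\theta(\bar{\varphi})=0$ before invoking Lemma \ref{lm:antipodal}. The only difference is that you carefully justify the equality $\int_0^\pi|g_\theta(\varphi)-g_\theta(\varphi')|\,d\theta=\ds(\varphi,\varphi')$ via the symmetric-difference count and the $\pi$-periodicity coming from $g_{\theta+\pi}=-g_\theta$, a step the paper simply asserts.
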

\begin{proof}
Note that $h_A$ is $1$-Lipschitz, since
$$\vert h_A(\alpha)-h_A(\beta)\vert\leq\int_0^\pi \vert g_\theta(\alpha)-g_\theta(\beta) \vert\,d\theta=d_{\Sp^1}(\alpha,\beta)$$
for every $\alpha,\beta\in\Sp^1$.

Now, for every $\theta\in[0,\pi]$ and $\varphi\in\Sp^1$, we have that $g_\theta(\varphi)+g_\theta(\bar{\varphi})=0$. This implies that $h_A(\varphi)+h_A(\bar{\varphi})=\pi$. Then, by Lemma \ref{lm:antipodal}, $h_A\in \TS(\Sp^1).$
\end{proof}

\begin{lemma}
Given two measurable subsets $A,B\subseteq[0,\pi]$ such that $\mu(A\Delta B)\neq 0$\footnote{Here $A \Delta B\coloneqq A\backslash B\cup B\backslash A$ denotes the symmetric difference between two sets.}, we have that $h_A\neq h_B$.
\end{lemma}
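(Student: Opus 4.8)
The plan is to show that the map $A \mapsto h_A$ is injective (up to measure zero) by exhibiting, for sets $A$ and $B$ differing on a positive-measure set, a point $\varphi \in \Sp^1$ where $h_A$ and $h_B$ take different values. The key observation is that the difference $h_A - h_B$ is governed entirely by the integral term, so I would first compute
$$h_A(\varphi) - h_B(\varphi) = \int_0^\pi g_\theta(\varphi)\big(1_A(\theta) - 1_{[0,\pi]\setminus A}(\theta) - 1_B(\theta) + 1_{[0,\pi]\setminus B}(\theta)\big)\,d\theta.$$
Since $1_A - 1_{[0,\pi]\setminus A} = 2\cdot 1_A - 1_{[0,\pi]}$, the bracketed factor simplifies to $2(1_A - 1_B)$, so $h_A(\varphi) - h_B(\varphi) = 2\int_0^\pi g_\theta(\varphi)\,(1_A(\theta) - 1_B(\theta))\,d\theta$. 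Thus it suffices to find a single $\varphi$ for which this integral is nonzero.

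Next I would exploit the explicit form of $g_\theta$. For a fixed $\varphi \in \Sp^1$, the condition $\varphi \in [\theta, \theta+\pi)$ (mod $2\pi$) describes, as $\theta$ ranges over $[0,\pi]$, the set of $\theta$ for which $g_\theta(\varphi) = \tfrac12$, with $g_\theta(\varphi) = -\tfrac12$ on the complement. The cleanest route is to differentiate in $\varphi$: I would argue that for $\varphi$ in a suitable range, $h_A(\varphi)$ is (up to an additive constant) essentially the integral of $1_A - 1_B$ over a moving window of $\theta$'s determined by $\varphi$, so that the derivative $\frac{d}{d\varphi}\big(h_A(\varphi) - h_B(\varphi)\big)$ picks out the local values of $1_A - 1_B$ at the window endpoints. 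If $h_A \equiv h_B$ then this derivative vanishes identically, forcing $1_A = 1_B$ almost everywhere and contradicting $\mu(A \Delta B) \neq 0$.

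Concretely, I would fix $\varphi \in [0,\pi]$ and determine the $\theta$-set $\{\theta \in [0,\pi] : g_\theta(\varphi) = \tfrac12\}$. For $\varphi \in [0,\pi]$ the membership $\varphi \in [\theta,\theta+\pi)$ holds precisely when $\theta \in [\varphi-\pi, \varphi] \pmod{2\pi}$ intersected with $[0,\pi]$, which after unwinding gives $\theta \in [0,\varphi]$. Hence $h_A(\varphi) - h_B(\varphi) = 2\big(\int_0^\varphi (1_A - 1_B)\,d\theta - \int_\varphi^\pi (1_A - 1_B)\,d\theta\big)$. Writing $F(\varphi) := \int_0^\varphi (1_A - 1_B)\,d\theta$ and $C := \int_0^\pi(1_A - 1_B)\,d\theta$, this is $2\big(2F(\varphi) - C\big)$. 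If $h_A = h_B$ on all of $[0,\pi]$, then $F$ is constant, so $1_A - 1_B = 0$ almost everywhere by the Lebesgue differentiation theorem (or simply because $F' = 1_A - 1_B$ a.e.), i.e.\ $\mu(A \Delta B) = 0$, a contradiction.

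The main obstacle is the bookkeeping in identifying the $\theta$-range where $g_\theta(\varphi) = \tfrac12$, since both $\varphi$ and $\theta$ live on the circle and the window $[\theta,\theta+\pi)$ wraps around; care is needed with the modular arithmetic and the endpoints, and one must confirm the simplification for $\varphi \in [0,\pi]$ rather than for general $\varphi \in \Sp^1$. Once the integral is reduced to $2(2F(\varphi)-C)$ with $F(\varphi) = \int_0^\varphi(1_A-1_B)$, the conclusion is immediate, so the only real work is setting up this window correctly and invoking the fundamental theorem of calculus for Lebesgue integrals to pass from $F$ constant to $1_A = 1_B$ almost everywhere.
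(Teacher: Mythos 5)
Your proposal is correct and follows essentially the same route the paper takes implicitly: the paper states this lemma without proof, but its explicit formula $h_A(\varphi)=h_A(0)+2\mu(A\cap[0,\varphi])-\varphi$ (with $h_A(0)=\pi-\mu(A)$) and Lemma \ref{lm:hA derivative=1} encode exactly your reduction to the cumulative function $F(\varphi)=\int_0^\varphi(1_A-1_B)\,d\theta$ followed by Lebesgue differentiation. The only blemish is a harmless factor-of-two slip: since $g_\theta$ takes values $\pm\tfrac12$, one gets $h_A(\varphi)-h_B(\varphi)=2F(\varphi)-C$ rather than $2\bigl(2F(\varphi)-C\bigr)$, which does not affect the conclusion.
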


\begin{example}
For any $\alpha\in [0,\pi]$, let $A=[0,\alpha]$, then $h_A=d_{\Sp^1}(e^{i(\pi+\alpha)},\cdot)$ and $h_{[0,\pi]\backslash A}=d_{\Sp^1}(\alpha,\cdot)$.
\end{example}

Let $\mu$ denote the Lebesgue measure on $[0,\pi]$. Then, it is easy to observe the following explicit formula for $h_A$:

$$ h_A(\varphi)=\left\{
\begin{array}{lcl}
\frac{\pi}{2}-\frac{1}{2}\mu(A\cap[\varphi,\pi])+\frac{1}{2}\mu(A\cap[0,\varphi]) + \frac{1}{2}\mu([\varphi,\pi]\backslash A)-\frac{1}{2}\mu([0,\varphi]\backslash A),         & \varphi\in[0,\pi]\\
\frac{\pi}{2}+\frac{1}{2}\mu(A\cap[\bar{\varphi},\pi])-\frac{1}{2}\mu(A\cap[0,\bar{\varphi}]) -\frac{1}{2}\mu([\bar{\varphi},\pi]\backslash A)+\frac{1}{2}\mu([0,\bar{\varphi}]\backslash A),         & \varphi\in[\pi,2\pi]
\end{array} \right. 
$$
where $\bar{\varphi}=\varphi-\pi$ denotes the antipodal point of $\varphi$ in $\Sp^1$.

For every $\varphi\in(\pi,2\pi]$, $h_A(\varphi)=\pi-h_A(\varphi-\pi)$. So we focus on $\varphi\in[0,\pi]$. Then, the above formula can be further simplified as follows
$$h_A(\varphi)=h_A(0)+2\mu(A\cap[0,\varphi])-\varphi,$$
and $h_A(0)=\pi-\mu(A)$. 

\begin{definition}
In a convex set $S$, a point $x\in S$ is called an \emph{extreme point/vertex point}, if it is not the midpoint of any pair of distinct points $y,z\in S$.
\end{definition}

It is clear that a function $f\in \TS(\Sp^1)$ is an extreme point if and only if $f|_{[0,\pi]}$ is an extreme point in $\mathbf{F}(\Sp^1)$. Below, we analyze extreme points in $\mathbf{F}(\Sp^1)$. First of all, by Rademacher's theorem, $h_A|_{[0,\pi]}:[0,\pi]\rightarrow\mathbb{R}$ is differentiable a.e., and moreover by Lebesgue differentiation theorem, we have the following observation. 

\begin{lemma}\label{lm:hA derivative=1}
Given any Lebesgue measurable $A\subseteq[0,\pi]$, consider the derivative $(h_A|_{[0,\pi]})'$. Then, we have that
$(h_A|_{[0,\pi]})'(x)=1$ when $x\in A$ and $(h_A|_{[0,\pi]})'(x)=-1$ when $x\in [0,\pi]\backslash A$ a.e. 
\end{lemma}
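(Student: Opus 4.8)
The plan is to start from the explicit formula derived immediately above the statement, namely that for $\varphi\in[0,\pi]$,
$$h_A(\varphi)=h_A(0)+2\mu(A\cap[0,\varphi])-\varphi,$$
and simply differentiate it term by term. The constant $h_A(0)=\pi-\mu(A)$ and the linear term $-\varphi$ are classically differentiable, so the only non-elementary piece is $\mu(A\cap[0,\varphi])$. First I would rewrite this as the indefinite Lebesgue integral
$$\mu(A\cap[0,\varphi])=\int_0^\varphi 1_A(t)\,dt$$
of the indicator function $1_A\in L^1([0,\pi])$, which recasts the whole problem as differentiating an indefinite integral.

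Next I would invoke the Lebesgue differentiation theorem, which guarantees that the indefinite integral of an $L^1$ function is differentiable almost everywhere with derivative equal to the integrand a.e. Applied to $1_A$, this yields
$$\frac{d}{d\varphi}\,\mu(A\cap[0,\varphi])=1_A(\varphi)\qquad\text{for a.e. }\varphi\in[0,\pi].$$
Differentiating the explicit formula then gives $(h_A|_{[0,\pi]})'(\varphi)=2\cdot 1_A(\varphi)-1$ for almost every $\varphi$. The conclusion follows from a trivial case split: at points of $A$ one has $1_A(\varphi)=1$, so the derivative equals $2-1=1$, while at points of $[0,\pi]\setminus A$ one has $1_A(\varphi)=0$, so the derivative equals $-1$; both statements hold almost everywhere.

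I expect no genuine obstacle here: the entire content is packaged inside the Lebesgue differentiation theorem, which is exactly the tool needed to pass from the interval case (where $h_A$ is classically piecewise linear and the derivative is computed directly) to an arbitrary measurable set $A$. The only subtlety worth flagging is that every assertion is necessarily \emph{almost everywhere} rather than pointwise, since for a general measurable $A$ the value $1_A(\varphi)$ need not agree with the Lebesgue density of $A$ at a given point $\varphi$; thus the exceptional null set (on which $h_A$ may fail to be differentiable, or on which the derivative may differ from $\pm 1$) cannot be removed.
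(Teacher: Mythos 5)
Your proposal is correct and follows essentially the same route the paper indicates: the explicit formula $h_A(\varphi)=h_A(0)+2\mu(A\cap[0,\varphi])-\varphi$ reduces everything to differentiating the indefinite integral of $1_A$, which the Lebesgue differentiation theorem handles, yielding $(h_A|_{[0,\pi]})'=2\cdot 1_A-1$ almost everywhere. Your closing remark about the unavoidable exceptional null set is also accurate and consistent with the paper's ``a.e.'' formulation.
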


It turns out that this phenomenon holds for every extreme point in $\mathbf{F}(\Sp^1)$:

\begin{proposition}
$h\in \mathbf{F}(\Sp^1)$ is an extreme point if and only if $|h'|\equiv 1$ on $[0,\pi]$ a.e.
\end{proposition}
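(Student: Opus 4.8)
The plan is to prove both directions by relating the extreme point property to the derivative condition $|h'| \equiv 1$ a.e. The key structural fact is that $\mathbf{F}(\Sp^1)$ consists of $1$-Lipschitz functions on $[0,\pi]$, so any such $h$ satisfies $|h'| \leq 1$ a.e. (by Rademacher's theorem, $h$ is differentiable a.e., and the Lipschitz bound gives $|h'| \leq 1$ wherever the derivative exists). The condition $|h'| \equiv 1$ a.e. therefore says that the derivative is ``extremal'' at almost every point, taking only the boundary values $\pm 1$. So the task is to show that this pointwise extremality of the derivative is equivalent to extremality of $h$ as a point of the convex set $\mathbf{F}(\Sp^1)$.

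For the contrapositive of the forward direction (extreme $\implies |h'| \equiv 1$ a.e.), I would assume $|h'| < 1$ on a set of positive measure and construct an explicit perturbation. Concretely, suppose the set $S \coloneqq \{x \in [0,\pi] : |h'(x)| < 1\}$ has positive measure. Since $S = \bigcup_{k} \{x : |h'(x)| \leq 1 - \tfrac1k\}$, some sublevel set $S_\delta \coloneqq \{x : |h'(x)| \leq 1-\delta\}$ has positive measure for a suitable $\delta > 0$. I would then pick a bounded measurable function $\psi$ supported on $S_\delta$ with $\int_0^\pi \psi = 0$ (so that adding its integral preserves the constraint $f(0)+f(\pi)=\pi$, i.e. the perturbation vanishes at both endpoints) and with $|\psi| \leq \delta$. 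Setting $\eta(x) \coloneqq \int_0^x \psi(t)\,dt$, the functions $h \pm \eta$ both remain $1$-Lipschitz (since $|(h \pm \eta)'| = |h' \pm \psi| \leq (1-\delta) + \delta = 1$ on $S_\delta$ and $= |h'| \leq 1$ off $S_\delta$) and both satisfy the endpoint constraint because $\eta(0) = \eta(\pi) = 0$. Choosing $\psi$ nonzero on a positive-measure subset makes $\eta \not\equiv 0$, exhibiting $h$ as the midpoint of two distinct points of $\mathbf{F}(\Sp^1)$, so $h$ is not extreme.

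For the reverse direction ($|h'| \equiv 1$ a.e. $\implies$ extreme), I would argue directly: suppose $h = \tfrac12(f+g)$ with $f,g \in \mathbf{F}(\Sp^1)$. Then for a.e. $x$ we have $h'(x) = \tfrac12(f'(x) + g'(x))$ at points where all three derivatives exist (a full-measure set). At such a point with $h'(x) = 1$, the constraint $f'(x), g'(x) \leq 1$ forces $f'(x) = g'(x) = 1$, since the average of two numbers each at most $1$ can equal $1$ only if both equal $1$; the case $h'(x) = -1$ is symmetric. Hence $f' = g' = h'$ a.e. on $[0,\pi]$. Since $f, g, h$ are all Lipschitz (hence absolutely continuous) and agree in derivative a.e., they differ by constants; the shared endpoint constraint $f(0)+f(\pi) = g(0)+g(\pi) = \pi$ together with $f'=g'$ pins down the constant, forcing $f = g = h$. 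Therefore $h$ is extreme.

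The main obstacle is the forward (perturbation) direction: one must be careful that the perturbed functions $h \pm \eta$ stay within $\mathbf{F}(\Sp^1)$, which requires simultaneously preserving the $1$-Lipschitz bound and the endpoint constraint $f(0)+f(\pi)=\pi$. The $1$-Lipschitz bound is what forces the choice $|\psi| \leq \delta$ with support inside $S_\delta$, and the endpoint constraint is what forces $\int_0^\pi \psi = 0$; verifying that a nonzero such $\psi$ exists on a positive-measure set (so that the two perturbed functions are genuinely distinct) is the crux. The reverse direction is comparatively routine, amounting to the observation that $1$ is an extreme value of the interval $[-1,1]$ combined with the fundamental theorem of calculus for absolutely continuous functions.
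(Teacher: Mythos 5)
Your argument is correct, and the two directions split as follows when compared with the paper. Your reverse direction ($|h'|\equiv 1$ a.e.\ implies extreme) is essentially identical to the paper's: average of derivatives, the endpoint values of $[-1,1]$ forcing $f'=g'=h'$ a.e., then the fundamental theorem of calculus for absolutely continuous functions plus the constraint $f(0)+f(\pi)=\pi$ to kill the additive constants. For the forward direction (extreme implies $|h'|\equiv 1$ a.e.) you genuinely diverge: the paper does not construct a perturbation at all, but instead normalizes $h_0:=h-h(0)$, shows that extremality of $h$ in $\mathbf{F}(\Sp^1)$ transfers to extremality of $h_0$ in $\mathrm{Lip}_1([0,\pi],0)$ (via the affine correction $f:=f_0+\tfrac{\pi-f_0(\pi)}{2}$), and then quotes Rolewicz's characterization of the extreme points of $\mathrm{Lip}_1([0,\pi],0)$ as exactly those with $|f'|=1$ a.e. Your contrapositive perturbation argument makes the proof self-contained: it is in effect a direct proof of (the relevant half of) the Rolewicz result, adapted to the endpoint constraint. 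The one detail you flag but do not fully execute --- the existence of a nonzero $\psi$ supported on $S_\delta$ with $|\psi|\leq\delta$ and $\int_0^\pi\psi=0$ --- is standard and easily supplied: split $S_\delta$ into two disjoint subsets $S_1,S_2$ of equal positive measure (possible since $t\mapsto\mu(S_\delta\cap[0,t])$ is continuous) and take $\psi=\delta\left(1_{S_1}-1_{S_2}\right)$. With that filled in, your version trades the external citation for a slightly longer but fully elementary argument; the paper's version is shorter but leans on \cite{rolewicz1984optimal} and on the (mildly fiddly) transfer of extremality between the two convex sets.
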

\begin{proof}
Suppose first that $|h'|\equiv 1$ a.e.
Assume that there exist $f,g\in \mathbf{F}(\Sp^1)$ such that $\frac{f+g}{2}=h$. Let $A\coloneqq\{x\in [0,\pi]:\,h'(x)=1\}$. Then, $h'(x)=1$ for $x\in A$ and $h'(x)=-1$ for $x\in [0,\pi]\backslash A$ almost everywhere. Since both $f$ and $g$ are 1-Lipschitz, they are differentiable a.e. and $|f'|,|g'|\leq 1$. Since $\frac{f+g}{2}=h$, we have that $f'(x)=g'(x)=1$ for $x\in A$ a.e. and $f'(x)=g'(x)=-1$  for $x\in [0,\pi]\backslash A$ a.e. In other words, $f'=g'$ a.e. on $[0,\pi]$. Since $f(0)+f(\pi)=g(0)+g(\pi)=\pi$, by the fundamental theorem of calculus with respect to absolute continuous functions, we must have that $f=g=h$. Therefore, $h$ is an extreme point.

Conversely, assume that $h$ is an extreme point. Let $\mathrm{Lip}_1([0,\pi],0)$ denote the set of 1-Lipschitz functions $f:[0,\pi]\rightarrow\mathbb{R}$ such that $f(0)=0$. Then, $f\in \mathrm{Lip}_1([0,\pi],0)$ is an extreme point of $\mathrm{Lip}_1([0,\pi],0)$ if and only if $|f'|=1$, a.e. (cf. \cite[Proposition 6]{rolewicz1984optimal}). Now, it suffices to prove that $h_0\coloneqq h-h(0)$ is an extreme point in $\mathrm{Lip}_1([0,\pi],0)$ which implies that $|h'|=|(h-h(0))'|=1$ a.e. Assume that there exist $f_0,g_0\in \mathrm{Lip}_1([0,\pi],0)$ such that $h_0=\frac{f_0+g_0}{2}$. Define $f\coloneqq f_0+\frac{\pi-f_0(\pi)}{2}$ and $g\coloneqq g_0+\frac{\pi-g_0(\pi)}{2}$. It is easy to check that $f,g\in \mathbf{F}(\Sp^1)$ and that $h=\frac{f+g}{2}$. Since $h$ is an extreme point, we have that $f=g=h$ and thus $f_0=g_0=h_0$ which implies that $h_0$ is an extreme point as we required.
\end{proof}

\begin{corollary}
$h_A\in \TS(\Sp^1)$ is an extreme point for every measurable $A\subseteq [0,\pi]$.
\end{corollary}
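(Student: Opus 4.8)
The plan is to combine the two immediately preceding results. The statement to prove is that $h_A \in \TS(\Sp^1)$ is an extreme point for every measurable $A \subseteq [0,\pi]$. By Proposition \ref{prop:hA}, we already know $h_A \in \TS(\Sp^1)$, so the only content here is extremality. The natural strategy is to invoke the characterization of extreme points established just above: a function $h \in \mathbf{F}(\Sp^1)$ is an extreme point if and only if $|h'| \equiv 1$ on $[0,\pi]$ a.e. Since we observed (just before the definition of extreme point) that $f \in \TS(\Sp^1)$ is an extreme point precisely when $f|_{[0,\pi]}$ is an extreme point in $\mathbf{F}(\Sp^1)$, it suffices to verify that $h_A|_{[0,\pi]}$ satisfies $|h_A'| \equiv 1$ a.e.

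First I would recall Lemma \ref{lm:hA derivative=1}, which states exactly that $(h_A|_{[0,\pi]})'(x) = 1$ for $x \in A$ and $(h_A|_{[0,\pi]})'(x) = -1$ for $x \in [0,\pi]\setminus A$, almost everywhere. This immediately gives $|(h_A|_{[0,\pi]})'| = 1$ a.e. on $[0,\pi]$, since every point of $[0,\pi]$ lies in either $A$ or its complement. Applying the preceding Proposition (the characterization of extreme points in $\mathbf{F}(\Sp^1)$) in the ``if'' direction then shows $h_A|_{[0,\pi]}$ is an extreme point of $\mathbf{F}(\Sp^1)$, and hence, by the identification mentioned above, $h_A$ is an extreme point of $\TS(\Sp^1)$.

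There is essentially no obstacle: the corollary is a direct synthesis of Lemma \ref{lm:hA derivative=1} (which supplies the derivative computation for $h_A$) and the Proposition characterizing extreme points (which translates the a.e.\ unit-derivative condition into extremality). The only point requiring a moment's care is making sure the isometry $\mathbf{F}(\Sp^1) \cong \TS(\Sp^1)$ of Proposition \ref{prop:reduced tight span}, which is affine (it sends convex combinations to convex combinations, as follows from the explicit linear formula $\tilde{f}(\theta) = f(\theta)$ or $\pi - f(\theta-\pi)$), genuinely carries extreme points to extreme points. Since the map $f \mapsto \tilde{f}$ is affine and bijective, midpoints correspond to midpoints, so extremality is preserved in both directions; this justifies the earlier claim that $f$ is extreme in $\TS(\Sp^1)$ iff $f|_{[0,\pi]}$ is extreme in $\mathbf{F}(\Sp^1)$. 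Thus the proof is a short two-line invocation, and I would simply write: ``By Lemma \ref{lm:hA derivative=1}, $|h_A'| \equiv 1$ a.e.\ on $[0,\pi]$, so $h_A$ is an extreme point by the preceding Proposition.''
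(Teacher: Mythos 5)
Your proof is correct and is exactly the argument the paper intends (the corollary is stated without proof precisely because it follows immediately from Lemma \ref{lm:hA derivative=1} together with the characterization of extreme points of $\mathbf{F}(\Sp^1)$ via $|h'|\equiv 1$ a.e.). Your added remark that the affine isometry $f\mapsto\tilde{f}$ preserves midpoints, and hence extremality, is a worthwhile clarification of the identification the paper uses implicitly.
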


\begin{lemma}
If $f\in \TS(\Sp^1)$ is an extreme point, then there is $A\subseteq [0,\pi]$ such that $f=h_A$.
\end{lemma}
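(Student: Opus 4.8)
The plan is to invert the derivative-to-set correspondence already established for the functions $h_A$. Since $f\in\TS(\Sp^1)$ is an extreme point, its restriction $f|_{[0,\pi]}\in\mathbf{F}(\Sp^1)$ is an extreme point, so by the preceding proposition we have $|f'|\equiv 1$ a.e.\ on $[0,\pi]$ (recall that $f$ is $1$-Lipschitz, hence differentiable a.e.\ by Rademacher's theorem and absolutely continuous). The natural candidate is then $A\coloneqq\{x\in[0,\pi]:f'(x)=1\}$. This set is Lebesgue measurable because $f'$ is a measurable function, and by construction $f'=1$ a.e.\ on $A$ while $f'=-1$ a.e.\ on $[0,\pi]\setminus A$.

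First I would invoke Lemma \ref{lm:hA derivative=1}, which gives $(h_A|_{[0,\pi]})'=1$ a.e.\ on $A$ and $(h_A|_{[0,\pi]})'=-1$ a.e.\ on $[0,\pi]\setminus A$. Comparing with the previous paragraph yields $f'=h_A'$ a.e.\ on $[0,\pi]$. Since both $f$ and $h_A$ are $1$-Lipschitz, they are absolutely continuous on $[0,\pi]$, so the fundamental theorem of calculus for absolutely continuous functions shows that $f-h_A$ is constant; write $f=h_A+c$ on $[0,\pi]$.

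To pin down $c$, I would use the defining boundary constraint of $\mathbf{F}(\Sp^1)$. Both $f$ and $h_A$ satisfy the antipodal relation $f(0)+f(\pi)=\pi$ and $h_A(0)+h_A(\pi)=\pi$ (the latter by Proposition \ref{prop:hA} together with Lemma \ref{lm:antipodal}, or directly from the explicit formula $h_A(0)=\pi-\mu(A)$). Then $\pi=f(0)+f(\pi)=\bigl(h_A(0)+c\bigr)+\bigl(h_A(\pi)+c\bigr)=\pi+2c$, forcing $c=0$. Hence $f=h_A$ on $[0,\pi]$. Finally, every element of $\TS(\Sp^1)$ is determined by its values on $[0,\pi]$ via the relation $g(\theta+\pi)=\pi-g(\theta)$ (equivalently, through the isometry $\mathbf{F}(\Sp^1)\cong\TS(\Sp^1)$ of Proposition \ref{prop:reduced tight span}), so agreement on $[0,\pi]$ upgrades to the identity $f=h_A$ as elements of $\TS(\Sp^1)$.

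I do not expect any step to present a genuine obstacle: the argument is in essence a uniqueness statement for an absolutely continuous function with prescribed a.e.\ derivative together with a single scalar normalization. The only points requiring mild care are the measurability of $A$ and the reduction from $\TS(\Sp^1)$ to $\mathbf{F}(\Sp^1)$, both of which are already furnished by the surrounding results.
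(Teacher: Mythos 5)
Your proof follows essentially the same route as the paper's: define $A=\{x\in[0,\pi]:f'(x)=1\}$, match derivatives with $(h_A|_{[0,\pi]})'$ a.e., and integrate. You are in fact slightly more careful than the paper, which invokes the fundamental theorem of calculus without explicitly pinning down the constant of integration; your use of the boundary relation $f(0)+f(\pi)=\pi=h_A(0)+h_A(\pi)$ to force $c=0$ fills that small gap correctly.
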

\begin{proof}

Let $A\coloneqq\{x\in [0,\pi]:\,f'(x)=1\}$. Then, $f'(x)=1$ for $x\in A$ and $f'(x)=-1$ for $x\in [0,\pi]\backslash A$ a.e. Then, $(f|_{[0,\pi]})'=(h_A|_{[0,\pi]})'$ a.e. By the fundamental theorem of calculus for Lebesgue measure, we have that $f|_{[0,\pi]}=h_A|_{[0,\pi]}$ and thus $f=h_A$.
\end{proof}

Therefore, the set $\{h_A\}_{A\subseteq [0,\pi]}$ (where $A$ is required to be Lebesgue measurable) coincides with the set of all extreme points in $\TS(\Sp^1)$. Then, by the  Krein-Milman theorem, we deduce the following theorem which, in analogy with Proposition \ref{prop:circular graph}, permits interpreting $\{h_A\}_{A\subseteq [0,\pi]}$ as the ``set of vertices" of the infinite dimensional cube $\TS(\Sp^1)$ (cf. Theorem \ref{thm:hilbert cube}).  

\begin{theorem}
The closure of the convex hull of $\{h_A\}_{A\subseteq [0,\pi]}$ coincides with $\TS(\Sp^1)$.
\end{theorem}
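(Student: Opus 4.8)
The plan is to invoke the Krein--Milman theorem, for which the preceding results have already assembled every necessary ingredient. Recall that the Krein--Milman theorem asserts that a nonempty compact convex subset $K$ of a locally convex Hausdorff topological vector space coincides with the closed convex hull of its set of extreme points. The strategy is therefore to verify that $\TS(\Sp^1)$ meets these hypotheses inside an appropriate ambient space, and then to substitute the explicit description of its extreme points obtained above.

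First I would fix the ambient space to be $C(\Sp^1)$ equipped with the sup norm; this is a Banach space and hence in particular a locally convex Hausdorff topological vector space, so the norm closure is the relevant notion of closure appearing in the statement. Next I would record that $\TS(\Sp^1)$ is a compact convex subset of $C(\Sp^1)$: compactness follows from Theorem \ref{thm:compactness_tight_span} since $\Sp^1$ is compact, and convexity follows from Proposition \ref{thm convex=antipodal} since $\Sp^1$ is antipodal. Thus the Krein--Milman theorem applies to $K=\TS(\Sp^1)$, yielding that $\TS(\Sp^1)$ is the closed convex hull of its set of extreme points.

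The key remaining step is the identification of the extreme points, which has already been carried out in the results immediately preceding this theorem: the corollary and lemma above show that $f\in\TS(\Sp^1)$ is an extreme point if and only if $f=h_A$ for some Lebesgue measurable $A\subseteq[0,\pi]$, so that the set of extreme points of $\TS(\Sp^1)$ is exactly $\{h_A\}_{A\subseteq[0,\pi]}$. Substituting this into the Krein--Milman conclusion immediately gives that the closure of the convex hull of $\{h_A\}_{A\subseteq[0,\pi]}$ equals $\TS(\Sp^1)$, as claimed.

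There is no serious obstacle here: the mathematical content lies entirely in the earlier characterization of extreme points, and the present statement is a formal consequence of Krein--Milman. The only point requiring mild care is to confirm that the isometry $\mathbf{F}(\Sp^1)\cong\TS(\Sp^1)$ of Proposition \ref{prop:reduced tight span} is affine (it is, being induced by the affine assignment $f\mapsto\tilde f$), so that extreme points of $\mathbf{F}(\Sp^1)$ correspond bijectively to extreme points of $\TS(\Sp^1)$ and the analysis performed in $\mathbf{F}(\Sp^1)$ transfers faithfully to $\TS(\Sp^1)$.
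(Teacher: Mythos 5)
Your proof is correct and follows exactly the route the paper intends: the theorem is stated as a direct consequence of the Krein--Milman theorem applied to the compact convex set $\TS(\Sp^1)\subseteq C(\Sp^1)$, combined with the preceding identification of the extreme points as precisely $\{h_A\}_{A\subseteq[0,\pi]}$. Your added remark that the isometry $\mathbf{F}(\Sp^1)\cong\TS(\Sp^1)$ is affine (so that extreme points correspond) is a worthwhile detail the paper leaves implicit.
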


\subsection{Homotopy types of open $r$-thickenings of $\Sp^1$ within $\TS(\Sp^1)$}\label{sec:TSS1:httypes}

Recall that, given any $r>0$, $B_r(\Sp^1,\TS(\Sp^1))$ denotes the open $r$-thickening of $\Sp^1$ in $\TS(\Sp^1)$. When $r>\frac{\pi}{2}$, we have that $B_r(\Sp^1,\TS(\Sp^1))=\TS(\Sp^1)$ which is contractible. When $r\in\left( 0,\frac{\pi}{2}\right)$, the homotopy type of $B_r(\Sp^1,\TS(\Sp^1))\subseteq \TS(\Sp^1)$ can be characterized as follows. By Proposition \ref{prop:vrbrhteqv} we have that $B_r(\Sp^1,\TS(\Sp^1))$ is homotopy equivalent to the Vietoris-Rips complex $\mathrm{VR}_{2r}(\Sp^1)$. Then, through the characterization of $\mathrm{VR}_{2r}(\Sp^1)$ in \cite{adamaszek2017vietoris}, we have that for any non-negative integer $n$, when $r\in\big(\frac{n\pi}{2n+1},\frac{(n+1)\pi}{2n+3}\big]$, $B_r(\Sp^1,\TS(\Sp^1))$ is homotopy equivalent to $\Sp^{2n+1}$. 

Note that the case when $r=\frac{\pi}{2}$ is not covered in the above argument. In this section, (1) we consider the case $r=\frac{\pi}{2}$, i.e., we prove that $B_{\frac{\pi}{2}}(\Sp^1,\TS(\Sp^1))$ is contractible, and (2) we provide a geometric proof, which does not rely on the Vietoris-Rips complex structure, of the fact that $B_r(\Sp^1,\TS(\Sp^1))$ is homotopy equivalent to $\Sp^1$ when $0<r\leq \frac{\pi}{3}$.

\paragraph{The case $r=\frac{\pi}{2}$.} We establish the following proposition.

\begin{proposition}\label{prop:contractible without 1 point}
$B_{\frac{\pi}{2}}(\Sp^1,\TS(\Sp^1))$ is contractible.
\end{proposition}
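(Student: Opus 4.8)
The plan is to first pin down the space $B_{\frac{\pi}{2}}(\Sp^1,\TS(\Sp^1))$ set-theoretically and then prove that what remains is contractible. By items (2) and (3) of Remark \ref{rmk:propsofEX} we have $\norm{f-d_{\Sp^1}(\theta,\cdot)}_\infty=f(\theta)$ for every $f\in\TS(\Sp^1)$ and $\theta\in\Sp^1$, so
\[
B_{\tfrac{\pi}{2}}(\Sp^1,\TS(\Sp^1))=\Big\{f\in\TS(\Sp^1):\exists\,\theta\in\Sp^1,\ f(\theta)<\tfrac{\pi}{2}\Big\}.
\]
I would then feed in the antipodal identity $f(\theta)+f(\bar\theta)=\pi$ valid on $\Sp^1$ (Lemma \ref{lm:antipodal}, using $\diam(\Sp^1)=\pi$): if $f(\theta)\ge\frac{\pi}{2}$ for \emph{all} $\theta$, then $f(\bar\theta)=\pi-f(\theta)\le\frac{\pi}{2}$ forces $f\equiv\frac{\pi}{2}=f_0$, the unique center of $\TS(\Sp^1)$ identified in Proposition \ref{prop:cts}. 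Hence the complement of the thickening is exactly the singleton $\{f_0\}$, and the proposition is equivalent to the statement that the punctured space $\TS(\Sp^1)\setminus\{f_0\}$ is contractible.

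Passing to the translated model $V\coloneqq\TS(\Sp^1)-f_0$ (equivalently $\mathbf{F}(\Sp^1)-\frac{\pi}{2}$ via Proposition \ref{prop:reduced tight span}), $V$ is a compact, convex, balanced ($V=-V$), infinite-dimensional subset of a normed space having $0$ as its center, and the goal becomes contractibility of $V\setminus\{0\}$. It is worth emphasizing why the finite-dimensional heuristic fails here: a punctured finite-dimensional convex body radially retracts onto its gauge-boundary and is thus homotopy equivalent to a sphere, but the radial (Minkowski-gauge) retraction is not even continuous in the present setting. Indeed, letting $v_n\in V$ be a $1$-Lipschitz sawtooth with $n$ teeth and amplitude $\frac{\pi}{2n}$, the scaled function $2v_n$ has slope $\pm 2$ and is no longer $1$-Lipschitz, so the gauge value of $v_n$ equals $1$ while $\norm{v_n}_\infty\to 0$; hence the gauge is discontinuous at $0$. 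The puncture must therefore be handled using the infinite-dimensionality in an essential way, and it is precisely this feature that makes $V\setminus\{0\}$ contractible rather than sphere-like.

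The clean route I would take invokes Theorem \ref{thm:hilbert cube}: $\TS(\Sp^1)$ is homeomorphic to the Hilbert cube $\mathfrak{H}$, which is an absolute retract. In $\mathfrak{H}=\prod_{n\ge1}[-\frac{1}{n},\frac{1}{n}]$ every singleton is a \emph{$Z$-set}: given $\eps>0$, choosing $n$ with $\frac{2}{n}<\eps$ and pushing the $n$-th coordinate to a fixed value distinct from that of the puncture yields a map $\eps$-close to the identity whose image misses the point. The complement of a $Z$-set in the Hilbert cube is homotopy dense, and a homotopy dense subset of an absolute retract is again an absolute retract; in particular the inclusion $\TS(\Sp^1)\setminus\{f_0\}\hookrightarrow\TS(\Sp^1)$ is a homotopy equivalence, and since $\mathfrak{H}$ is contractible, so is $\TS(\Sp^1)\setminus\{f_0\}$.

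The main obstacle is exactly the one exposed above: any argument must route the contraction ``around'' the center $f_0$ without relying on a continuous radial structure. If a self-contained homotopy is preferred over the $Z$-set machinery, I would contract $V\setminus\{0\}$ toward a fixed nonzero element $w$ (e.g.\ $w(x)=x-\frac{\pi}{2}$) along the straight segments $u\mapsto(1-t)u+tw$; such a segment meets $0$ only when $u$ lies on the antipodal ray $\{-sw:s>0\}$, which inside $V$ consists only of affine functions of negative slope—a very small exceptional set. The delicate remaining step is to precede this by a homotopy that continuously displaces every point off that ray by adding a transverse, non-affine ``wiggle'' while staying inside the convex set $V$ throughout; the infinitely many independent directions available in $V$ are what permit this displacement to be carried out globally and continuously, whereas the gauge example above shows that no finite-dimensional or radial shortcut can succeed.
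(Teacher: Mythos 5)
Your proof is correct and takes essentially the same route as the paper: you identify $B_{\frac{\pi}{2}}(\Sp^1,\TS(\Sp^1))$ with $\TS(\Sp^1)\setminus\{f_0\}$ via the antipodal identity (this is exactly the paper's Lemma \ref{lm:neighborhood} specialized to $r=\frac{\pi}{2}$), and then conclude via the Hilbert cube homeomorphism of Theorem \ref{thm:hilbert cube} that the punctured space is contractible. The only difference is that the paper cites Chapman for the fact that $\mathfrak{H}$ minus a point has the homotopy type of $\mathfrak{H}$, whereas you supply the standard $Z$-set justification of that same fact.
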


To prove this result, we first observe that the $r$-neighborhood of $\Sp^1$ in $\TS(\Sp^1)$ is in fact the complement of the $(\frac{\pi}{2}-r)$-neighborhood of the center $f_0\equiv\frac{\pi}{2}$ (cf. Proposition \ref{prop:cts}) in $\TS(\Sp^1)$:

\begin{lemma}\label{lm:neighborhood}
For every $r\in[0,\frac{\pi}{2}]$, ${B_r(\Sp^1,\TS(\Sp^1))}= \TS(\Sp^1)\backslash \overline{B}_{\frac{\pi}{2}-r}(f_0,\TS(\Sp^1))$, where $f_0\equiv\frac{\pi}{2}$.
\end{lemma}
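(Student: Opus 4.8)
The plan is to reduce both sides of the claimed identity to explicit conditions on the values of $f\in\TS(\Sp^1)$ and then compare them directly. First I would invoke item (3) of Remark \ref{rmk:propsofEX}, which for $E=\TS(\Sp^1)$ gives $B_r(\Sp^1,\TS(\Sp^1))=\{f\in\TS(\Sp^1):\exists\,\theta\in\Sp^1\text{ with }f(\theta)<r\}$; equivalently, writing $m(f)\coloneqq\min_{\theta\in\Sp^1}f(\theta)$ (the minimum being attained since $\Sp^1$ is compact and $f$ continuous), membership in the open thickening is governed entirely by whether $m(f)<r$. Thus the whole statement will follow once I express the sup-distance to the center $f_0\equiv\frac\pi2$ in terms of $m(f)$.

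The key computation is therefore to show $\norm{f-f_0}_\infty=\frac\pi2-m(f)$. Here I would exploit the antipodal identity $f(\theta)+f(\bar\theta)=\pi$ from Equation \eqref{eq:tight span of S1}, together with $0\le f\le\pi$. Setting $M(f)\coloneqq\max_\theta f(\theta)$, the substitution $\bar\theta\mapsto\theta$ shows that the value set $\{f(\theta)\}_\theta$ equals $\{\pi-f(\theta)\}_\theta$, forcing $M(f)=\pi-m(f)$ and, in particular, $m(f)\le\frac\pi2\le M(f)$. Consequently the values of $f(\theta)-\frac\pi2$ sweep the interval $[m(f)-\frac\pi2,\,M(f)-\frac\pi2]=[-(\frac\pi2-m(f)),\,\frac\pi2-m(f)]$, which is symmetric about $0$, so $\norm{f-f_0}_\infty=\sup_\theta|f(\theta)-\tfrac\pi2|=\frac\pi2-m(f)$.

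With this identity in hand the equivalence is immediate: $f\in B_r(\Sp^1,\TS(\Sp^1))\iff m(f)<r\iff\frac\pi2-m(f)>\frac\pi2-r\iff\norm{f-f_0}_\infty>\frac\pi2-r\iff f\notin\overline{B}_{\frac\pi2-r}(f_0,\TS(\Sp^1))$. Taking complements within $\TS(\Sp^1)$ then yields exactly ${B_r(\Sp^1,\TS(\Sp^1))}=\TS(\Sp^1)\backslash\overline{B}_{\frac\pi2-r}(f_0,\TS(\Sp^1))$, as claimed.

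There is no serious obstacle here: the argument hinges on the single observation that antipodal symmetry makes the excursions of $f$ above and below the value $\frac\pi2$ mirror images of each other, so that the sup-distance to $f_0$ is controlled precisely by how far $f$ dips below $\frac\pi2$, i.e. by $m(f)$. The only points needing minor care are the attainment of the extrema (from compactness and continuity) and the fact, already packaged into Remark \ref{rmk:propsofEX}(3) via Remark \ref{rmk:propsofEX}(2), that the sup-norm distance from $f$ to the embedded point $d_{\Sp^1}(\theta,\cdot)$ equals $f(\theta)$.
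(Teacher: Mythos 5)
Your proof is correct and follows essentially the same route as the paper's: both arguments rest on the identity $\norm{f-d_{\Sp^1}(\theta,\cdot)}_\infty=f(\theta)$ from Remark \ref{rmk:propsofEX} and on the antipodal relation $f(\bar\theta)=\pi-f(\theta)$, which makes the deviation of $f$ from $f_0\equiv\frac{\pi}{2}$ symmetric above and below. The paper simply runs the two inclusions separately, whereas you package the same observation into the single identity $\norm{f-f_0}_\infty=\frac{\pi}{2}-\min_\theta f(\theta)$ before comparing with $r$; this is a clean reformulation, not a different argument.
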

\begin{proof}
If $f\in\overline{B}_{\frac{\pi}{2}-r}(f_0,\TS(\Sp^1))$, then $\left|f(\theta)-\frac{\pi}{2}\right|\leq\frac{\pi}{2}-r$ for every $\theta\in\Sp^1$, which implies that $r\leq f(\theta)\leq\pi-r$ for every $\theta\in\Sp^1$. Then, $\Vert f-d_{\Sp^1}(\theta,\cdot)\Vert=f(\theta)\geq r$ (cf. item 2 in Remark \ref{rmk:propsofEX}) for every $\theta\in\Sp^1$. Therefore, $f\notin{B_r(\Sp^1,\TS(\Sp^1))}$.

On the other hand, if $f\notin\overline{B}_{\frac{\pi}{2}-r}(f_0,\TS(\Sp^1))$, then $\exists\, \theta\in\Sp^1$ such that $\left|f(\theta)-\frac{\pi}{2}\right|>\frac{\pi}{2}-r$, which implies that $f(\theta)< r$ or $f(\theta)>\pi-r$. Then, 
\[\norm{f-d_{\Sp^1}(\theta,\cdot)}_\infty=f(\theta)<r\text{ or }\norm{f-d_{\Sp^1}(\bar{\theta},\cdot)}_\infty=f(\bar{\theta})=\pi-f(\theta)< r.\] Therefore, $f\in{B_r(\Sp^1,\TS(\Sp^1))}.$
\end{proof}

\begin{proof}[Proof of Proposition \ref{prop:contractible without 1 point}]
By Lemma \ref{lm:neighborhood}, $B_{\frac{\pi}{2}}(\Sp^1,\TS(\Sp^1))=\TS(\Sp^1)\backslash\{f_0\}$.
Therefore, $B_{\frac{\pi}{2}}(\Sp^1,\TS(\Sp^1))$ is the whole space $\TS(\Sp^1)$ minus one point. By Proposition \ref{prop:reduced tight span} and Theorem \ref{thm:hilbert cube}, $B_{\frac{\pi}{2}}(\Sp^1,\TS(\Sp^1))$ is homeomorphic to the Hilbert cube $\mathfrak{H}$ minus one point, which is of the same homotopy type as the Hilbert cube itself (cf. \cite[page 330]{chapman1972structure}) and is therefore contractible . 
\end{proof}

The proof of Lemma \ref{lm:neighborhood} above entirely relies on the facts that $\Sp^1$ is antipodal and $\diam(\Sp^1)=\pi$. Therefore, for every $n\in\mathbb{N}$, the proof (and thus the statement) of Lemma \ref{lm:neighborhood} can be easily generalized to the case of $\Sp^n$ for $n\geq 2$. In this way, by Theorem \ref{thm:E(Sn) is hilbert cube} and using a argument similar to the one used for proving Proposition \ref{prop:contractible without 1 point}, we conclude that:

\begin{proposition}
$B_{\frac{\pi}{2}}(\Sp^n,\TS(\Sp^n))$ is contractible for every $n\in\mathbb{N}$.
\end{proposition}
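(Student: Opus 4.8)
The plan is to mimic exactly the argument used for Proposition \ref{prop:contractible without 1 point}, which handled the case $n=1$. The key structural facts that made that proof work are twofold: first, that $B_{\frac{\pi}{2}}(\Sp^1,\TS(\Sp^1))$ equals the whole tight span with its unique center removed, and second, that $\TS(\Sp^1)$ is homeomorphic to the Hilbert cube, so that removing a point leaves something contractible. Both ingredients have already been prepared for general $n$ in the excerpt, so the proof will be short.

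First I would invoke the generalization of Lemma \ref{lm:neighborhood}, which the paragraph preceding the statement explicitly notes holds for every $n\geq 1$ because its proof uses only that $\Sp^n$ is antipodal with $\diam(\Sp^n)=\pi$. Applied with $r=\frac{\pi}{2}$, this gives
\[
B_{\frac{\pi}{2}}\bigl(\Sp^n,\TS(\Sp^n)\bigr)=\TS(\Sp^n)\setminus\overline{B}_{0}\bigl(f_0,\TS(\Sp^n)\bigr)=\TS(\Sp^n)\setminus\{f_0\},
\]
where $f_0\equiv\frac{\pi}{2}$ is the unique center of $\TS(\Sp^n)$ guaranteed by Proposition \ref{prop:cts} (valid since $\Sp^n$ is a compact antipodal metric space). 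Thus the $\frac{\pi}{2}$-thickening is precisely the tight span with one point deleted.

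Next I would apply Theorem \ref{thm:E(Sn) is hilbert cube}, which states that $\TS(\Sp^n)$ is homeomorphic to the Hilbert cube $\mathfrak{H}$ for every positive integer $n$. Under this homeomorphism the deleted point $f_0$ corresponds to a single point of $\mathfrak{H}$, so $B_{\frac{\pi}{2}}(\Sp^n,\TS(\Sp^n))$ is homeomorphic to $\mathfrak{H}$ minus a point. Finally I would cite the same fact used in the $n=1$ case, namely that the Hilbert cube with one point removed is homotopy equivalent to the Hilbert cube itself (\cite[page 330]{chapman1972structure}), which is contractible. Hence $B_{\frac{\pi}{2}}(\Sp^n,\TS(\Sp^n))$ is contractible, as claimed.

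I do not anticipate a genuine obstacle here, since every nontrivial component has already been established earlier in the excerpt; the work is purely in assembling them. The only point demanding a sentence of care is the reduction $B_{\frac{\pi}{2}}(\Sp^n,\TS(\Sp^n))=\TS(\Sp^n)\setminus\{f_0\}$, where one must confirm that the generalized Lemma \ref{lm:neighborhood} indeed collapses the closed $(\frac{\pi}{2}-r)$-ball to the single center point when $r=\frac{\pi}{2}$, and that Proposition \ref{prop:cts} guarantees this center is unique so that exactly one point is removed.
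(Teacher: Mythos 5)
Your proposal is correct and follows essentially the same route as the paper: the authors likewise note that Lemma \ref{lm:neighborhood} generalizes verbatim to $\Sp^n$ (since its proof only uses antipodality and $\diam(\Sp^n)=\pi$), obtain $B_{\frac{\pi}{2}}(\Sp^n,\TS(\Sp^n))=\TS(\Sp^n)\setminus\{f_0\}$, and conclude via Theorem \ref{thm:E(Sn) is hilbert cube} and the fact that the Hilbert cube minus a point is contractible. No gaps.
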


\paragraph{The case $r\in(0,\frac{\pi}{3}]$.} We now provide a direct proof\footnote{Here ``direct'' means a proof which does not invoke the result by Adams and Adamaszek described above.} of the fact that $B_r(\Sp^1,\TS(\Sp^1))$ is homotopy equivalent to $\Sp^1$ when $0<r\leq \frac{\pi}{3}$. This proof can be seen as being in the same spirit as but more direct than the one given by Katz in \cite[Section 3]{katz1991neighborhoods}. In fact, at the beginning of \cite[Section 3]{katz1991neighborhoods} the author  comments: 
\begin{quote}\emph{It is tempting to retract to the (suitably weighted) center of mass of the sublevel set $f\leq r$ which is contained in a semicircle. However,\ldots more work has to be done.}
\end{quote}
and then follows a different approach. Our proof departs from Katz's in that the strategy we follow is exactly that of considering such a weighted center of mass map (cf. equations (\ref{eq:bary}) and (\ref{eq:bary-proj}) below) and then directly checking that it gives us the required deformation retraction (cf. the proof of Theorem \ref{thm:def-retr}).\medskip

Recall that for every $f\in \TS(\Sp^1)$, we have that $f(\theta)=\Vert f-d_{\Sp^1}(\theta,\cdot) \Vert_\infty$ for any $\theta\in\Sp^2$ (cf. item 2 in Remark \ref{rmk:propsofEX}). Then, $f^{-1}([0,r))=\{\theta\in\Sp^1:\Vert f-d_{\Sp^1}(\theta,\cdot) \Vert_\infty<r\}$.

We will also use the following version of Jung's theorem:

\begin{theorem}[{Jung's Theorem \cite[Lemma 2]{katz1983filling}}]\label{thm:Jung}
Every subset of $\Sp^n$ with diameter less than or equal to $\arccos\left(-\frac{1}{n+1}\right)$ either coincides with the set of vertices of some inscribed regular $(n+1)$-simplex, or is contained in some ball of radius $\pi-\arccos\left(-\frac{1}{n+1}\right)$. 
\end{theorem}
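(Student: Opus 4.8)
The plan is to prove this spherical Jung theorem through a minimal enclosing cap (spherical Chebyshev center) argument, realizing $\Sp^n$ as the unit sphere of $\R^{n+1}$ and converting geodesic distance into inner products via $\langle x,y\rangle=\cos d_{\Sp^n}(x,y)$. Write $\alpha:=\arccos\!\left(-\frac1{n+1}\right)$, so that $\cos\alpha=-\frac1{n+1}$ and $\cos(\pi-\alpha)=\frac1{n+1}$; a subset lies in a closed cap of radius $\pi-\alpha$ exactly when some $c\in\Sp^n$ satisfies $\langle x,c\rangle\ge\frac1{n+1}$ for all $x$ in it. Replacing $S$ by its closure (which preserves the diameter and both alternatives) I may assume $S$ is compact, and set $m:=\max_{c\in\Sp^n}\min_{x\in S}\langle x,c\rangle$, attained at a center $c^*$ with active set $S_0=\{x\in S:\langle x,c^*\rangle=m\}$. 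Two easy bounds come first: since all pairwise distances are $\le\alpha$, the whole of $S$ sits within $\alpha$ of any fixed base point, so $m\ge\cos\alpha=-\frac1{n+1}$.

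Next I would establish the matching circumradius inequality. First-order optimality of $c^*$ forces $0$ into the convex hull of the tangential projections $P_Tx=x-m\,c^*$ ($x\in S_0$) in the tangent space $T=(c^*)^\perp$: otherwise a separating unit vector $v\in T$ would give $\langle v,x\rangle=\langle v,P_Tx\rangle>0$ uniformly on the compact set $S_0$, and sliding $c^*$ along $c(s)=\cos(s)c^*+\sin(s)v$ for small $s>0$ would strictly raise $\min_x\langle x,c\rangle$, contradicting optimality. By Carathéodory in the $n$-dimensional space $T$, I obtain points $x_0,\dots,x_k\in S_0$ with $k\le n$ and weights $\lambda_i>0$, $\sum_i\lambda_i=1$, such that $\sum_i\lambda_i x_i=m\,c^*$. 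Expanding $m^2=\big|\sum_i\lambda_i x_i\big|^2$ and using $\langle x_i,x_j\rangle\ge-\frac1{n+1}$ with $\sum_i\lambda_i^2\ge\frac1{k+1}\ge\frac1{n+1}$ yields $m^2\ge\frac1{(n+1)^2}$, so either $m\ge\frac1{n+1}$ or $m\le-\frac1{n+1}$.

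The case split is then clean. If $m\ge\frac1{n+1}$, then $S$ lies in the closed cap of radius $\arccos(m)\le\pi-\alpha$ about $c^*$, which is the second alternative. Otherwise $m<\frac1{n+1}$ forces $m\le-\frac1{n+1}$, and together with $m\ge-\frac1{n+1}$ this gives $m=-\frac1{n+1}$ and equality throughout the estimate above: there are exactly $n+1$ active points $x_0,\dots,x_n$ with uniform weights $\frac1{n+1}$, pairwise inner products $-\frac1{n+1}$, and $\sum_i x_i=-c^*$. Because $\langle x_i,c^*\rangle=m=-\frac1{n+1}$ as well, the $n+2$ unit vectors $u_0=x_0,\dots,u_n=x_n,\,u_{n+1}=c^*$ have all pairwise inner products equal to $-\frac1{n+1}$ and sum to $0$, i.e.\ they are the vertices of an inscribed regular $(n+1)$-simplex.

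The remaining step, which I expect to be the main obstacle, is the rigidity statement that $S$ is \emph{exactly} this vertex set rather than some larger configuration. For any $y\in S$, the diameter bound gives $\langle y,x_j\rangle\ge-\frac1{n+1}$ for $j\le n$ and membership in the enclosing cap gives $\langle y,c^*\rangle\ge-\frac1{n+1}$, so $\langle y,u_j\rangle\ge-\frac1{n+1}$ for all $j$. Since the $u_j$ form a tight frame, $\sum_j u_ju_j^\top=\frac{n+2}{n+1}I$, whence $\sum_j\langle y,u_j\rangle^2=\frac{n+2}{n+1}$, while $\sum_j\langle y,u_j\rangle=\langle y,\sum_j u_j\rangle=0$. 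Setting $t_j:=\langle y,u_j\rangle+\frac1{n+1}\ge0$ gives $\sum_j t_j=\frac{n+2}{n+1}$ and $\sum_j t_j^2=\frac{(n+2)^2}{(n+1)^2}=\big(\sum_j t_j\big)^2$; for nonnegative $t_j$ this forces all cross terms to vanish, so a single $t_j\neq0$ and $y=u_j$. Hence $S\subseteq\{u_0,\dots,u_{n+1}\}$. Finally, if $c^*=u_{n+1}\notin S$ then $S\subseteq\{x_0,\dots,x_n\}$ would lie in the cap of radius $\pi-\alpha$ about $-c^*$ (as $\langle x_i,-c^*\rangle=\frac1{n+1}$), contradicting $m=-\frac1{n+1}$; therefore $c^*\in S$ and $S=\{u_0,\dots,u_{n+1}\}$, the full vertex set. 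The delicate points to be careful about are the variational justification that $0\in\mathrm{conv}(P_T S_0)$ for a possibly infinite active set, and tracking that equality in the Carathéodory-reduced estimate genuinely pins down all of the required configuration data.
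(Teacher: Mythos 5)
Your argument is correct, and it is worth noting that the paper does not prove Theorem \ref{thm:Jung} at all: it is imported by citation from Katz's 1983 paper (Lemma 2 there), so there is no in-paper proof to compare against. What you supply is a complete, self-contained proof via the spherical Chebyshev center: the identity $m^2=\bigl|\sum_i\lambda_i x_i\bigr|^2\ge\frac{n+2}{n+1}\sum_i\lambda_i^2-\frac{1}{n+1}\ge\frac{1}{(n+1)^2}$ after Carath\'eodory reduction is the heart of the matter, and the tight-frame computation $\sum_j u_ju_j^{\top}=\frac{n+2}{n+1}I$ cleanly settles the rigidity half (that $S$ is exactly the vertex set, including the center $c^*$ itself), which is the part most write-ups leave vague. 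All the numerics check out: equality forces $k=n$, uniform weights, pairwise inner products $-\frac{1}{n+1}$, $\sum_i x_i=-c^*$, and the $t_j$ computation $\sum_j t_j^2=\bigl(\sum_j t_j\bigr)^2$ with $t_j\ge 0$ does pin $y$ to a vertex. The one step you should actually write out is the one you flag: passing from strict separation on the active set $S_0$ to an improvement of $\min_{x\in S}\langle x,c(s)\rangle$ over all of $S$ requires the standard compactness argument (choose $\eta>0$ so that $\langle v,x\rangle\ge\delta/2$ on $S_\eta:=\{x\in S:\langle x,c^*\rangle\le m+\eta\}$, and handle $S\setminus S_\eta$ by the $1$-Lipschitz dependence of $\langle x,c\rangle$ on $c$); this is routine but not optional, since $S_0$ may be infinite and inactive points may accumulate at the active value. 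With that detail filled in, the proof is complete and arguably more informative than the bare citation the paper relies on.
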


\begin{lemma}\label{lm:semi-circle}
If $r\in(0,\frac{\pi}{3}]$ and $f\in \TS(\Sp^1)$, $f^{-1}([0,r))$ is contained in an arc with length no longer than $2r$.
\end{lemma}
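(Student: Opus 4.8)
The plan is to reduce the statement to a diameter bound and then upgrade ``small diameter'' to ``contained in a short arc'' by invoking the $n=1$ case of Jung's Theorem (Theorem \ref{thm:Jung}), crucially exploiting that $S\coloneqq f^{-1}([0,r))$ is an \emph{open} subset of $\Sp^1$ (being the preimage of $(-\infty,r)$ under the continuous function $f$).

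First I would bound $\diam(S)$. Fix $\theta,\theta'\in S$, so that $f(\theta),f(\theta')<r$. The idea is to combine the antipodal normalization $f(\theta')+f(\bar{\theta}')=\pi$ supplied by Lemma \ref{lm:antipodal} with the $1$-Lipschitz property of $f$. Since $f$ is $1$-Lipschitz,
\[ d_{\Sp^1}(\theta,\bar{\theta}')\ge f(\bar{\theta}')-f(\theta)=(\pi-f(\theta'))-f(\theta)>\pi-2r. \]
On $\Sp^1$ the antipodal map is an isometry satisfying $d_{\Sp^1}(\theta,\bar{\theta}')=\pi-d_{\Sp^1}(\theta,\theta')$, so this rearranges to $d_{\Sp^1}(\theta,\theta')<2r$. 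As $\theta,\theta'$ were arbitrary, $\diam(S)\le 2r$.

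Next, since $r\le\frac{\pi}{3}$ we have $2r\le\frac{2\pi}{3}=\arccos\!\left(-\frac12\right)$, so Jung's Theorem applied with $n=1$ gives two alternatives: either $S$ is contained in the vertex set of an inscribed regular $2$-simplex, or $S$ is contained in a ball of radius $\pi-\frac{2\pi}{3}=\frac{\pi}{3}$. The first alternative is vacuous here: the vertex set is finite while $S$ is open, so $S$ being a subset of it forces $S=\emptyset$, which is trivially contained in an arc of length $0\le 2r$. In the second alternative $S$ lies inside a closed arc $I$ of length $\frac{2\pi}{3}\le\pi$. Such an arc is geodesically convex, so the $\Sp^1$-metric restricted to $I$ coincides with the interval metric; parametrizing $I$ as an interval and taking the infimum $a$ and supremum $b$ of the parameter values of points of $S$, I would conclude $S\subseteq[a,b]$ with $b-a=\diam(S)\le 2r$, i.e. $S$ is contained in an arc of length at most $2r$, as required.

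The main obstacle I anticipate is the diameter bound itself, specifically getting the interplay right between the antipodal identity $f(\theta')+f(\bar\theta')=\pi$, the $1$-Lipschitz inequality, and the circle relation $d_{\Sp^1}(\theta,\bar{\theta}')=\pi-d_{\Sp^1}(\theta,\theta')$. A secondary subtlety worth flagging is that one must genuinely use the openness of $S$ to discard the equilateral-triangle exception in Jung's Theorem; without it, that alternative would only yield an arc of length $\frac{2\pi}{3}$ rather than the sharper bound $2r$.
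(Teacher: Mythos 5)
Your proof is correct and follows essentially the same route as the paper's: establish $\diam\bigl(f^{-1}([0,r))\bigr)\leq 2r\leq \frac{2\pi}{3}$ and then apply the $n=1$ case of Jung's Theorem to place the set in a short arc. The only differences are cosmetic: you obtain the diameter bound from the antipodal identity $f(\theta')+f(\bar{\theta}')=\pi$ together with $1$-Lipschitzness and $d_{\Sp^1}(\theta,\bar{\theta}')=\pi-d_{\Sp^1}(\theta,\theta')$, whereas the paper uses the triangle inequality $d_{\Sp^1}(\theta,\theta')\leq \norm{f-d_{\Sp^1}(\theta,\cdot)}_\infty+\norm{f-d_{\Sp^1}(\theta',\cdot)}_\infty<2r$; and you explicitly rule out the inscribed-equilateral-triangle alternative in Jung's Theorem using the openness of $f^{-1}([0,r))$, a case the paper's proof passes over in silence (it could equally be excluded by noting that all pairwise distances in $f^{-1}([0,r))$ are strictly less than $2r\le\frac{2\pi}{3}$, while the triangle realizes distance exactly $\frac{2\pi}{3}$).
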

\begin{proof}
For every $\theta,\theta'\in f^{-1}([0,r))$, one has that $\norm{f-d_{\Sp^1}(\theta,\cdot)}_\infty,\norm{f-d_{\Sp^1}(\theta',\cdot)}_\infty<r$. Then,
\[d_{\Sp^1}(\theta,\theta')=\norm{d_{\Sp^1}(\theta,\cdot)-d_{\Sp^1}(\theta',\cdot)}_\infty\leq \norm{f-d_{\Sp^1}(x,\cdot)}_\infty+\norm{f-d_{\Sp^1}(x',\cdot)}_\infty<2r.\]
This implies that $\diam(f^{-1}([0,r)))\leq 2r\leq \frac{2\pi}{3}.$ By Jung's theorem (cf. Theorem \ref{thm:Jung}), $f^{-1}([0,r))$ is contained in a semicircle and thus inside an arc with length not exceeding  $2r$.
\end{proof}

Let $s:\R\rightarrow\R\slash 2\pi=\Sp^1$ be the canonical quotient map. Given $r\in (0,\frac{\pi}{3}]$, let $f$ be in $ B_r(\Sp^1,\TS(\Sp^1))$, then $f^{-1}([0,r))$ is contained in a semicircle (cf. Lemma \ref{lm:semi-circle}). Then, there exists $\theta_f\in[0,2\pi)$ such that $f^{-1}([0,r))\subseteq s([\theta_f,\theta_f+\pi])$. Pick any such $\theta_f$ for every $f\in B_r(\Sp^1,\TS(\Sp^1))$. \medskip

Consider the following \emph{weighted center of mass:} 

\begin{equation}\label{eq:bary}n_f^r \coloneqq \frac{\int_{\theta_f}^{\theta_f+\pi}x(r-f\circ s(x))1_{f^{-1}([0,r))}(s(x))\,dx}{\int_{\theta_f}^{\theta_f+\pi}(r-f\circ s(x))1_{f^{-1}([0,r))}(s(x))\,dx}\in\R\end{equation}
and let

\begin{equation}\label{eq:bary-proj}m_f^r\coloneqq s(n_f^r)\in\Sp^1. \end{equation}

See Figure \ref{fig:barycenter} for an illustration of $n_f^r$ when $\theta_f=0$. Note that since $f\in B_r(\Sp^1,\TS(\Sp^1))$, the denominator in Equation \eqref{eq:bary} is positive and thus $n_f^r$ is well-defined. Recall that $\theta_f\in[0,2\pi)$ is arbitrarily chosen such that $f^{-1}([0,r))\subseteq s([\theta_f,\theta_f+\pi])$. It is also not hard to see that $n_f^r$ does not depend on the choice of $\theta_f$.

Moreover,  we have \begin{equation}\label{eq:restr}m_{d_{\Sp^1}(\theta,\cdot)}^r=\theta\,\,\mbox{for every $\theta\in\Sp^1$}.\end{equation}

\begin{figure}
    \centering
    \includegraphics[width=0.5\textwidth]{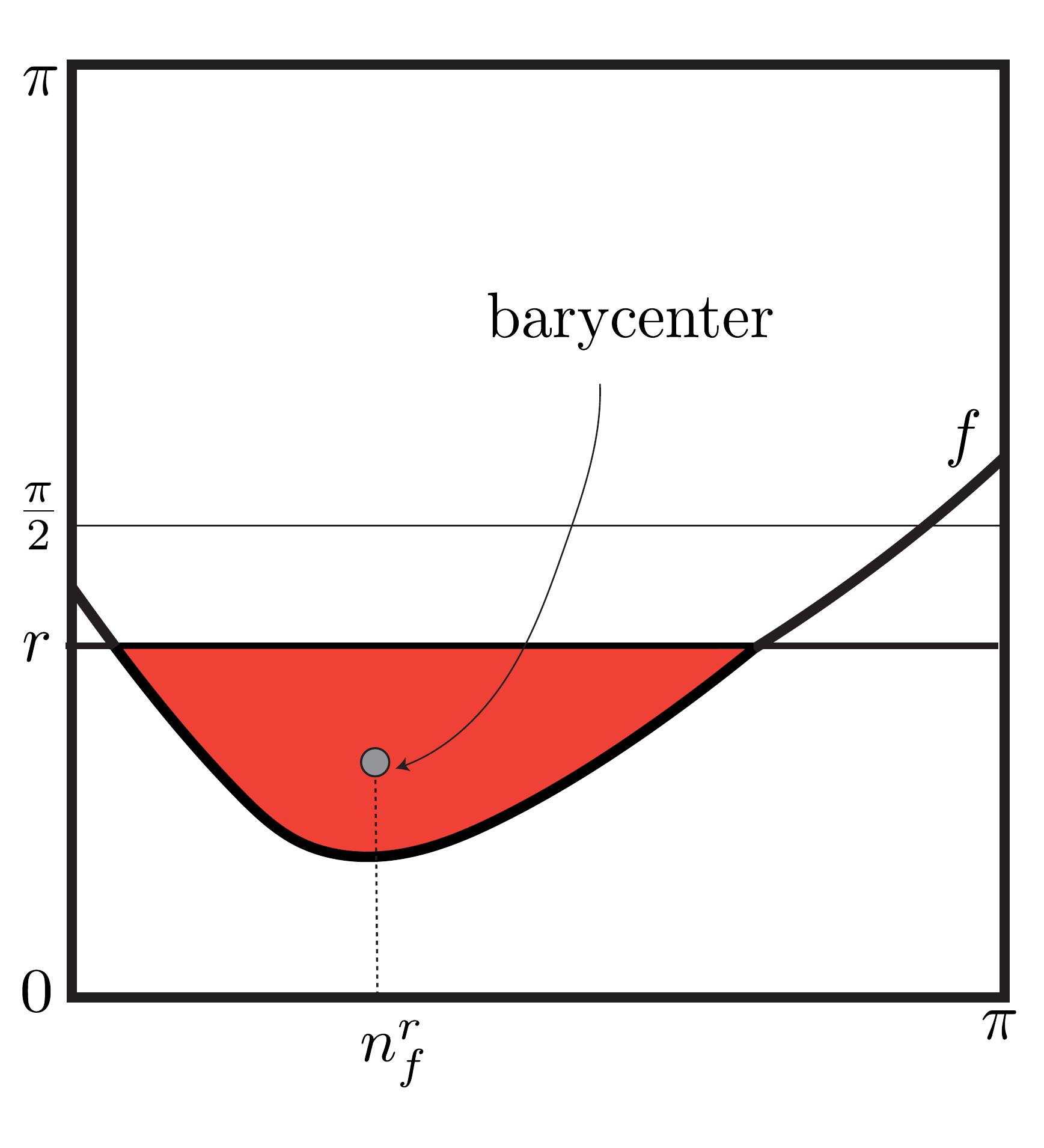}
    \caption{The construction of $n_f^r$. The real value $n_f^r$ is the projection of the barycenter of the red region onto the the x-axis. See equations (\ref{eq:bary}) and (\ref{eq:bary-proj}) below.}
    \label{fig:barycenter}
\end{figure}

\begin{lemma}
If $r\in(0,\frac{\pi}{3}]$, the map $m^r_\bullet:B_r(\Sp^1,\TS(\Sp^1))\rightarrow\Sp^1$  taking $f$ to $m_f^r$ is a retraction.
\end{lemma}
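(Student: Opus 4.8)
The plan is to show that $m^r_\bullet$ is a retraction, which by definition means it is a continuous map whose restriction to the subspace $\Sp^1 \subseteq B_r(\Sp^1, \TS(\Sp^1))$ is the identity. The second requirement is essentially already recorded in Equation \eqref{eq:restr}, so the genuine content is \emph{continuity} together with a careful verification that \eqref{eq:restr} indeed exhibits $\Sp^1$ as a subspace on which $m^r_\bullet$ acts as the identity. Concretely, $\Sp^1$ sits inside $\TS(\Sp^1)$ via the Kuratowski embedding $\theta \mapsto d_{\Sp^1}(\theta, \cdot)$, and since $\diam(\Sp^1) = \pi > r$ (well, one needs $r \le \pi/3 < \pi/2$ so the embedded copy lies inside the thickening), each $d_{\Sp^1}(\theta,\cdot)$ is a legitimate element of the domain; Equation \eqref{eq:restr} then says $m^r_{d_{\Sp^1}(\theta,\cdot)} = \theta$, giving the retraction property on the nose. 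So the bulk of the proof is continuity.

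First I would fix the well-definedness issues flagged in the text: that the denominator in \eqref{eq:bary} is strictly positive (because $f \in B_r$ guarantees $f^{-1}([0,r))$ is nonempty, so the integrand is positive on a set of positive measure), and that $n^r_f$ does not depend on the chosen lift $\theta_f$. For the independence claim, I would note that any two admissible choices of $\theta_f$ differ by a shift that still contains $f^{-1}([0,r))$ inside the resulting half-circle $s([\theta_f, \theta_f+\pi])$; since the integrand $ (r - f\circ s(x))\,1_{f^{-1}([0,r))}(s(x))$ is supported precisely on (the lift of) $f^{-1}([0,r))$, which by Lemma \ref{lm:semi-circle} sits in an arc of length at most $2r \le \tfrac{2\pi}{3} < \pi$, both integrals pick up the same mass distribution and hence the same weighted mean in $\R$, modulo the identification under $s$. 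The key geometric input here is exactly Lemma \ref{lm:semi-circle}: the sublevel set is confined to a short arc strictly shorter than a semicircle, so the barycenter computation never ``wraps around'' the circle and the projection $m^r_f = s(n^r_f)$ is unambiguous.

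The heart of the argument, and the step I expect to be the main obstacle, is continuity of $f \mapsto m^r_f$ with respect to the $\ell^\infty$ (sup) metric on the domain. The plan is to show the numerator and denominator integrals in \eqref{eq:bary} depend continuously on $f$. If $\|f - g\|_\infty$ is small then the truncated weight $\max(r - f\circ s(x), 0)$ converges uniformly to $\max(r - g\circ s(x),0)$, and one can rewrite the integrals using this truncation to absorb the indicator $1_{f^{-1}([0,r))}$, making the integrands manifestly continuous in $f$ without the discontinuous characteristic function. The delicate point is that the \emph{domain} of integration (the arc $[\theta_f, \theta_f+\pi]$) may be forced to change as $f$ varies, since the choice of $\theta_f$ is not canonical; this is where I would lean on the independence-of-$\theta_f$ fact to argue that one may choose $\theta_f$ locally continuously (or simply integrate the compactly-supported truncated weight over all of $\R$ modulo $2\pi$ and exploit that its support is confined to a sub-semicircular arc). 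Once both integrals are continuous and the denominator is locally bounded away from $0$ on $B_r$, the quotient $n^r_f$ is continuous, and composing with the continuous quotient $s$ yields continuity of $m^r_f$, completing the proof that $m^r_\bullet$ is a retraction.
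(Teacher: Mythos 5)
Your overall strategy---reduce everything to continuity of the numerator and denominator in \eqref{eq:bary}, bound the denominator away from zero locally, and invoke \eqref{eq:restr} for the retraction property---is exactly the strategy of the paper's proof, and your observation that $(r-f\circ s(x))\,1_{f^{-1}([0,r))}(s(x))=\max(r-f\circ s(x),0)$ is a genuine simplification: since $|\max(a,0)-\max(b,0)|\le|a-b|$, the integrands converge uniformly as $\|f-g\|_\infty\to 0$, which would replace the paper's three-way splitting of the integration domain into $s^{-1}(B)$, $s^{-1}(B_f\backslash B)$ and $s^{-1}(B_g\backslash B)$ and its separate $O(\eps)$ bounds on each piece.

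However, the one step you defer is precisely where the real difficulty sits, and one of your two proposed fixes does not work. The numerator $\int x\,(r-f\circ s(x))\,1_{B_f}(s(x))\,dx$ contains the non-periodic factor $x$, so it cannot be read as an integral ``over $\R$ modulo $2\pi$'': its value genuinely depends on which lift of the support you integrate over, and changing the lift shifts $n^r_f$ by $2\pi$ (harmless after applying $s$ to a single $f$, but fatal when you subtract $n^r_f-n^r_g$ computed over incompatible lifts). Your other remedy---choosing $\theta_f$ locally continuously, i.e., finding a single interval $[\theta,\theta+\pi]$ containing lifts of both $f^{-1}([0,r))$ and $g^{-1}([0,r))$---is the right idea but needs an argument: Lemma \ref{lm:semi-circle} confines each sublevel set to a semicircle \emph{separately}, while their union is only seen to have diameter at most $2r+\eps$, which for $r=\frac{\pi}{3}$ exceeds the Jung threshold $\frac{2\pi}{3}$, so Jung's theorem does not apply to the union. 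The paper closes this by a case analysis: with $\theta_f,\theta_g\in[0,2\pi)$, the lifts of $B=B_f\cap B_g$ inside the two integration intervals either coincide or differ by exactly $2\pi$, and in the latter case one replaces $\theta_g$ by $\theta_g-2\pi$ (using periodicity of $g\circ s$ and of $s$) to reduce to the former. Something equivalent to this---together with the observation that $B\neq\emptyset$ once $\eps$ is small compared with $S_f$, which your positive-part estimate in fact yields---is needed before your argument is complete.
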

\begin{proof}
We only need to prove the continuity of $m^r_\bullet$. Fix an  arbitrary $f\in B_r(\Sp^1,\TS(\Sp^1))$ and $\eps>0$. Next, choose an arbitrary $g\in B_r(\Sp^1,\TS(\Sp^1))$ such that $\norm{f-g}_\infty<\eps$. Let $B_f\coloneqq f^{-1}([0,r))$, $B_g\coloneqq g^{-1}([0,r))$ and $B\coloneqq B_f\cap B_g.$ Consider any $x\in s^{-1}(B_f\backslash B)$. Then, $g\circ s(x)\geq r$ and $g\circ s(x)-f\circ s(x)<\eps$. Thus, $r-f\circ s(x)<\eps$. Similarly, for every $x\in s^{-1}(B_g\backslash B)$, $r-g\circ s(x)<\eps$.

Now, we show that $\ds(m_f^r,m_g^r)\leq C_f\eps$ where $C_f>0$ is a constant depending on $f$. This will establish the continuity of $m_\bullet^r$.  By the requirement that $\theta_f,\theta_g\in [0,2\pi)$, there are only two possible cases:

\begin{enumerate}
    \item $s^{-1}(B)\cap[\theta_f,\theta_f+\pi]=s^{-1}(B)\cap[\theta_g,\theta_g+\pi].$
    
    \item $s^{-1}(B)\cap[\theta_f,\theta_f+\pi]=s^{-1}(B)\cap[\theta_g,\theta_g+\pi]\pm 2\pi.$
\end{enumerate}

For the first case, we have that

\begin{align*}
    &\left|\int_{\theta_f}^{\theta_f+\pi}(r-f\circ s(x))1_{B_f}(s(x))\,dx-\int_{\theta_g}^{\theta_g+\pi}(r-g\circ s(x))1_{B_g}(s(x))\,dx\right|  \\
    =&\Bigg|\left(\int_{[\theta_f,\theta_f+\pi]\cap s^{-1}(B)}+\int_{[\theta_f,\theta_f+\pi]\cap s^{-1}(B_f\backslash B)}\right)(r-f\circ s(x))\,dx\\
    -&\left(\int_{[\theta_g,\theta_g+\pi]\cap s^{-1}(B)}+\int_{[\theta_g,\theta_g+\pi]\cap s^{-1}(B_g\backslash B)}\right)(r-g\circ s(x))\,dx\Bigg|\\
    \leq&\int_{[\theta_f,\theta_f+\pi]\cap s^{-1}(B)}|g\circ s(x)-f\circ s(x)|\,dx+\int_{[\theta_f,\theta_f+\pi]\cap s^{-1}(B_f\backslash B)}|r-f\circ s(x)|\,dx\\
    +&\int_{[\theta_g,\theta_g+\pi]\cap s^{-1}(B_g\backslash B)}|r-g\circ s(x)|\,dx\\
    \leq &3\pi\eps,
\end{align*}
Similarly,
\[\left|\int_{\theta_f}^{\theta_f+\pi}x(r-f\circ s(x))1_{B_f}(s(x))\,dx-\int_{\theta_g}^{\theta_g+\pi}x(r-g\circ s(x))1_{B_g}(s(x))\,dx\right|\leq 9\pi^2\eps. \]

For notational simplicity, we let $$S_f\coloneqq{\int_{\theta_f}^{\theta_f+\pi}(r-f\circ s(x))1_{f^{-1}([0,r))}(s(x))\,dx}$$ and  $$E_f\coloneqq{\int_{\theta_f}^{\theta_f+\pi}x(r-f\circ s(x))1_{f^{-1}([0,r))}(s(x))\,dx}.$$ We also define $S_g$ and $E_g$ in a similar manner. Then, $|S_f-S_g|\leq 3\eps$ and $|E_f-E_g|\leq 9\pi\eps^2$. Since $S_f>0$, we can choose $\eps$ small enough so that $S_g\geq S_f-3\eps>\frac{S_f}{2}$. With this choice of $\varepsilon$, we  then have
\begin{align*}
|n_f^r-n_g^r|&=
    \left|\frac{E_f}{S_f}-\frac{E_g}{S_g}\right|= \left|\frac{E_fS_g-E_gS_f}{S_fS_g}\right|\\
    &\leq 2\left|\frac{E_f(S_g-S_f)}{S_f^2}\right|+2\left|\frac{(E_f-E_g)S_f}{S_f^2}\right|\\
    &\leq \frac{2(3\pi E_f+9\pi^2S_f)}{S_f^2}\cdot\eps
\end{align*}

Hence, by the continuity of the quotient map $s$, we have that $\ds(m_f^r,m_g^r)\leq C_f\,\eps,$ where the constant $C_f\coloneqq \frac{2(3\pi E_f+9\pi^2S_f)}{S_f^2}>0$ depends only on $f$.

Now for the second case, without loss of generality one can assume $s^{-1}(B)\cap[\theta_f,\theta_f+\pi]=s^{-1}(B)\cap[\theta_g,\theta_g+\pi]- 2\pi.$ By the periodicity of $g\circ s$ and $s$, one has 
$$\frac{\int_{\theta_g}^{\theta_g+\pi}x(r-g\circ s(x))1_{B_g}(s(x))\,dx}{\int_{\theta_g}^{\theta_g+\pi}(r-g\circ s(x))1_{B_g}(s(x))\,dx} = \underbrace{\frac{\int_{\theta_g-2\pi}^{\theta_g+\pi-2\pi}x(r-g\circ s(x))1_{B_g}(s(x))\,dx}{\int_{\theta_g-2\pi}^{\theta_g+\pi-2\pi}(r-g\circ s(x))1_{B_g}(s(x))\,dx}}_{=n'_g}+2\pi.$$
Then $m_g^r=s(n_g^r)=s(n'_g).$ Take $\theta'_g=\theta_g-2\pi$. Then $s^{-1}(B)\cap[\theta_f,\theta_f+\pi]=s^{-1}(B)\cap[\theta'_g,\theta'_g+\pi]$ and thus the argument for the first case applies here to conclude that $\ds(m_f^r,m_g^r)\leq C_f'\eps,$ where $C_f'>0$ is some constant depending only on $f$.

\medskip
In conclusion, $m^r_\bullet:B_r(\Sp^1,\TS(\Sp^1))\rightarrow\Sp^1$ is continuous and, in view of (\ref{eq:restr}), it is therefore a retraction.
\end{proof}

\begin{lemma}\label{lm:middle point}
For every $r\in(0,\frac{\pi}{3}]$ and $f\in B_r(\Sp^1,\TS(\Sp^1))$, there exists $\theta\in\Sp^1$ such that $\ds(\theta,m^r_f)< r$ and $\norm{f-d_{\Sp^1}(\theta,\cdot)}_\infty<r$.
\end{lemma}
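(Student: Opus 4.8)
The plan is to rephrase the conclusion entirely in terms of the open sublevel set $A\coloneqq f^{-1}([0,r))$. By item 2 of Remark~\ref{rmk:propsofEX} we have $\norm{f-d_{\Sp^1}(\theta,\cdot)}_\infty=f(\theta)$, so the requirement $\norm{f-d_{\Sp^1}(\theta,\cdot)}_\infty<r$ is precisely $\theta\in A$. Hence it suffices to exhibit a point $\theta\in A$ with $\ds(\theta,m_f^r)<r$; in other words, I must show that the weighted center of mass $m_f^r$ lies within geodesic distance \emph{strictly} less than $r$ of the set $A$ itself.

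First I would pass to the real line using the same lift employed in the definitions \eqref{eq:bary} and \eqref{eq:bary-proj}. Since $f\in B_r(\Sp^1,\TS(\Sp^1))$ the set $A$ is nonempty, and since $f$ is continuous $A$ is open. By Lemma~\ref{lm:semi-circle}, $A$ is contained in an arc of length at most $2r\leq\frac{2\pi}{3}<\pi$, so writing $\tilde A\coloneqq s^{-1}(A)\cap[\theta_f,\theta_f+\pi]$ together with $m_-\coloneqq\inf\tilde A$ and $m_+\coloneqq\sup\tilde A$, we get $m_+-m_-\leq 2r$. On such a short arc the quotient $s$ is an isometry onto its image, so the geodesic distance in $\Sp^1$ between images of points of $[m_-,m_+]$ equals the ordinary distance in $\R$.

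Next I would locate $n_f^r$ inside this interval. The weight $x\mapsto r-f\circ s(x)$ is strictly positive on $\tilde A$ (which is open, hence of positive measure) and vanishes outside it, so $n_f^r$ is a genuine weighted average of points of $\tilde A$ and therefore $m_-<n_f^r<m_+$. The key elementary observation is then that $n_f^r$ lies within distance $r$ of the nearer endpoint, since $\min(n_f^r-m_-,\,m_+-n_f^r)\leq\frac12(m_+-m_-)\leq r$.

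Finally, I would use the openness of $A$ to upgrade this to a strict inequality against an actual point of $A$. Assuming without loss of generality that $n_f^r-m_-\leq r$, the infimum $m_-=\inf\tilde A$ is not attained (the set is open), so I can pick $x\in\tilde A$ with $m_-<x<n_f^r$; then $0<n_f^r-x<n_f^r-m_-\leq r$. Setting $\theta\coloneqq s(x)\in A$ yields $\ds(\theta,m_f^r)=\lvert n_f^r-x\rvert<r$ and $\theta\in A$, which is exactly what is required. I expect the only delicate point to be the boundary case $m_+-m_-=2r$, in which $n_f^r$ is forced to be the midpoint and $n_f^r-m_-=r$; here the strictness is rescued precisely by $x>m_-$, i.e.\ by the fact that the infimum of an open set is never achieved. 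Along the way I would also verify that $\lvert n_f^r-x\rvert<r<\pi$ so the geodesic and real distances genuinely coincide, and that the lift/antipodal bookkeeping in \eqref{eq:bary} does not push the chosen point outside $[m_-,m_+]$.
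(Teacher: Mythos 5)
Your proposal is correct and follows essentially the same route as the paper: reduce the second condition to $\theta\in f^{-1}([0,r))$ via Remark~\ref{rmk:propsofEX}, and use Lemma~\ref{lm:semi-circle} to place the weighted center of mass inside the (short) geodesic convex hull of that sublevel set, at distance less than $r$ from one of its points. The paper simply asserts the existence of such a $\theta$ in one line; your midpoint estimate $\min(n_f^r-m_-,\,m_+-n_f^r)\leq\tfrac12(m_+-m_-)\leq r$ together with the openness of the sublevel set (so the endpoint is not attained, giving strictness) is exactly the justification the paper leaves implicit.
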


\begin{proof}
In fact, this follows directly from Lemma \ref{lm:semi-circle}. Since $f^{-1}([0,r))$ is contained in an arc with length no longer than $2r$, $m^r_f$ also lies in this arc. More precisely, $m^r_f$ belongs to the geodesic convex hull of $f^{-1}([0,r))$. Then, there exists $\theta\in f^{-1}([0,r))$ such that $\ds(\theta,m^r_f)<r$. Since $\theta\in f^{-1}([0,r))$, one has that $\norm{f-d_{\Sp^1}(\theta,\cdot)}_\infty=f(\theta)<r.$
\end{proof}

\begin{theorem}\label{thm:def-retr}
For every $r\in(0,\frac{\pi}{3}]$, $B_r(\Sp^1,\TS(\Sp^1))$ is homotopy equivalent to $\Sp^1$.
\end{theorem}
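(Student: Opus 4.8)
The plan is to promote the retraction $m^r_\bullet$ from the previous lemma into a (strong) deformation retraction, using the conical geodesic bicombing $\gamma$ on the injective space $\TS(\Sp^1)$ provided by Lemma~\ref{lemma:geobicomb}. Writing $\iota:\Sp^1\hookrightarrow\TS(\Sp^1)$, $\theta\mapsto d_{\Sp^1}(\theta,\cdot)$ for the Kuratowski embedding, I would define
\[
H:B_r(\Sp^1,\TS(\Sp^1))\times[0,1]\longrightarrow\TS(\Sp^1),\qquad H(f,t)\coloneqq\gamma\big(f,\,\iota(m^r_f),\,t\big).
\]
Its endpoints are $H(f,0)=f$ and $H(f,1)=\iota(m^r_f)$, and continuity of $H$ is immediate from the continuity of $\gamma$, of $m^r_\bullet$ (established in the previous lemma), and of $\iota$. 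Moreover, if $f=\iota(\varphi)$ already lies on $\iota(\Sp^1)$, then $m^r_f=\varphi$ by (\ref{eq:restr}), so $H(\iota(\varphi),t)=\gamma(\iota(\varphi),\iota(\varphi),t)=\iota(\varphi)$ is constant in $t$; thus $H$ fixes $\iota(\Sp^1)$ pointwise.

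The crux of the argument is to verify that $H$ never leaves $B_r(\Sp^1,\TS(\Sp^1))$, and this is exactly what Lemma~\ref{lm:middle point} is designed for. Fix $f\in B_r(\Sp^1,\TS(\Sp^1))$ and use Lemma~\ref{lm:middle point} to choose $\theta\in\Sp^1$ with $\ds(\theta,m^r_f)<r$ and $\norm{f-d_{\Sp^1}(\theta,\cdot)}_\infty=f(\theta)<r$. Comparing the geodesic $t\mapsto H(f,t)$ with the constant geodesic $t\mapsto\gamma(\iota(\theta),\iota(\theta),t)=\iota(\theta)$ via the conical inequality of Lemma~\ref{lemma:geobicomb}, and using that $\iota$ is isometric, I obtain for every $t\in[0,1]$
\[
\norm{H(f,t)-d_{\Sp^1}(\theta,\cdot)}_\infty\;\le\;(1-t)\,\norm{f-d_{\Sp^1}(\theta,\cdot)}_\infty+t\,\ds(m^r_f,\theta)\;<\;(1-t)\,r+t\,r\;=\;r.
\]
Since $H(f,t)\in\TS(\Sp^1)$, item (2) of Remark~\ref{rmk:propsofEX} identifies the left-hand side with $H(f,t)(\theta)$, so $H(f,t)(\theta)<r$ and therefore $H(f,t)\in B_r(\Sp^1,\TS(\Sp^1))$ by item (3) of the same remark. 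Note that the two strict inequalities supplied by Lemma~\ref{lm:middle point}, together with the convex-combination weights $(1-t)+t=1$, are precisely what keep the bound strictly below $r$ along the entire segment.

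With these verifications in hand, $H$ is a strong deformation retraction of $B_r(\Sp^1,\TS(\Sp^1))$ onto $\iota(\Sp^1)\cong\Sp^1$; in particular $\iota$ and $m^r_\bullet$ are mutually inverse homotopy equivalences, which gives the claim. I expect the only genuinely delicate step to be the stay-in-$B_r$ estimate above; everything else is formal bookkeeping. The one subtlety worth flagging is that the conical comparison must be carried out against $\iota(\theta)$ for the $f$-dependent point $\theta$ produced by Lemma~\ref{lm:middle point}, and not against $\iota(m^r_f)$: comparing against $\iota(m^r_f)$ would only control $\norm{H(f,t)-\iota(m^r_f)}_\infty$, which need not witness membership in $B_r$ because the center of mass $m^r_f$ itself need not satisfy $f(m^r_f)<r$.
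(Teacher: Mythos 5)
Your proposal is correct and follows essentially the same route as the paper: the paper's homotopy is the explicit convex combination $H(f,t)=tf+(1-t)d_{\Sp^1}(m^r_f,\cdot)$ (available because $\TS(\Sp^1)$ is convex, by Proposition~\ref{thm convex=antipodal}), which is precisely a conical geodesic interpolation between $f$ and $\iota(m^r_f)$, and the stay-in-$B_r$ estimate via the witness $\theta$ from Lemma~\ref{lm:middle point} is identical to yours. Replacing the explicit segment by the abstract bicombing of Lemma~\ref{lemma:geobicomb} is a cosmetic difference; your observation that one must compare against $\iota(\theta)$ rather than $\iota(m^r_f)$ is exactly the point the paper's proof also relies on.
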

\begin{proof}
Recall that $\kappa_{\Sp^1}:\Sp^1\rightarrow B_r(\Sp^1,\TS(\Sp^1))$ denotes the Kuratowski embedding.
We build a homotopy $$H:B_r(\Sp^1,\TS(\Sp^1))\times [0,1]\rightarrow B_r(\Sp^1,\TS(\Sp^1))$$ between $\kappa_{\Sp^1}\circ m^r:B_r(\Sp^1,\TS(\Sp^1))\rightarrow B_r(\Sp^1,\TS(\Sp^1))$ and $\mathrm{id}:B_r(\Sp^1,\TS(\Sp^1))\rightarrow B_r(\Sp^1,\TS(\Sp^1))$ as follows. For every $(f,t)\in B_r(\Sp^1,\TS(\Sp^1))\times [0,1]$, let $H(f,t)\coloneqq tf+(1-t)d_{\Sp^1}(m^r_f,\cdot).$ We need to show $H$ is well-defined, i.e., the image of $H$ lies in $B_r(\Sp^1,\TS(\Sp^1))$. In fact, by Lemma \ref{lm:middle point}, there is $x\in\Sp^1$ such that $\norm{d_{\Sp^1}(x,\cdot)-f}_\infty<r$ and $\norm{d_{\Sp^1}(x,\cdot)-d_{\Sp^1}(m^r_f,\cdot)}_\infty<r.$ Hence
$$\norm{tf+(1-t)d_{\Sp^1}(m^r_f,\cdot)-d_{\Sp^1}(x,\cdot)}_\infty\leq t\norm{f-d_x}_\infty+(1-t)\norm{d_{\Sp^1}(x,\cdot)-d_{\Sp^1}(m^r_f,\cdot)}_\infty<r. $$
Thus $H(f,t)\in B_r(\Sp^1,\TS(\Sp^1))$. It is then easy to check that $H$ is continuous and thus $\Sp^1$ is a deformation retraction of $B_r(\Sp^1,\TS(\Sp^1))$.
\end{proof}

%%%%%%%%%%%%%%%%%%%%%%%%%%%%%%%%%%%
\section{Mountain range functions and the tight span}\label{sec:mtrg}
In this section, we first review the notion of \emph{mountain range function} that was introduced by Katz in \cite{katz1991neighborhoods}. We then study the problem of determining sufficient conditions for a mountain range function on a sphere to be an element in the tight span of sphere. 

\begin{definition}
    Let $X$ be a metric space and $P=\{p_1,\cdots, p_n\}\subseteq X$ be a finite subset along with a set of real numbers $V=\{v_1, \cdots, v_n\}$. We define the \emph{mountain range function} $\mr_X(P, V):X\rightarrow \R$ as
    \begin{equation*}
        \mr_X(P, V)(x) := \min_{i = 1,\cdots ,n} (d_X(x, p_i) + v_i).
    \end{equation*}
\end{definition}

\begin{remark}
    For a mountain range function $\mr_X(P, V)$, if all $v_i$ are equal to the same number $v$, obtain that $\mr_X(P, V)(x)$ equals $d_X(x, P) + v$ where $d_X(x, P)$ denotes the distance function to the set $P$.
\end{remark}

Now, we give conditions under which a mountain range function on $X$ is an element in $\Delta_1(X)$  (cf. Definition \ref{def:inj}).

\begin{proposition}\label{prop:mr_in_D1}
 Let $X$ be a compact metric space and let $P$ be a finite subset equipped with the induced metric. For every function $f\in \Delta(P)$, we associate it with a mountain range function $\mr_X(P,f)$ in the following way,
    \[\mr_X(P, f)(x) := \min_{p_i\in P}(d_X(x, p_i) + f(p_i))\]
    Then $\mr_X(P, f) \in \Delta_1(X)$. Moreover, if $f\in \Delta_1(P)$, then $\mr_X(P, f)$ is an extension of $f$ onto $X$.
\end{proposition}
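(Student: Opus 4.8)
The plan is to write $g \coloneqq \mr_X(P,f)$ and verify the two defining conditions of $\Delta_1(X)$ directly, namely that $g$ is $1$-Lipschitz and that $g(x)+g(x')\geq d_X(x,x')$ for all $x,x'\in X$. Since $P$ is finite and $X$ is compact, each summand $d_X(\cdot,p_i)+f(p_i)$ is bounded, so $g$ is bounded and hence lies in $L^\infty(X)$; it therefore remains only to check the two inequalities and, for the last claim, to identify $g|_P$ with $f$.

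For the Lipschitz condition I would observe that for each fixed $p_i\in P$ the function $x\mapsto d_X(x,p_i)+f(p_i)$ is $1$-Lipschitz, being a constant shift of the distance function to $p_i$; since $g$ is the pointwise minimum of the finitely many such functions, it is again $1$-Lipschitz. The step that does the real work is the $\Delta$-inequality. Here I would fix $x,x'\in X$, choose $p_i,p_j\in P$ attaining the minima defining $g(x)$ and $g(x')$ respectively, and compute
\[
g(x)+g(x') = d_X(x,p_i)+f(p_i)+d_X(x',p_j)+f(p_j).
\]
Applying $f(p_i)+f(p_j)\geq d_X(p_i,p_j)$, which is exactly the hypothesis $f\in\Delta(P)$, followed by the triangle inequality $d_X(x,p_i)+d_X(p_i,p_j)+d_X(p_j,x')\geq d_X(x,x')$, yields $g(x)+g(x')\geq d_X(x,x')$. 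This combination of the $\Delta(P)$ property at the ``interior'' pair $(p_i,p_j)$ with two triangle inequalities at the endpoints is the only slightly delicate point; the finiteness of $P$ is what guarantees the minima are attained, while compactness of $X$ is used separately only to place $g$ in $L^\infty(X)$.

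For the ``moreover'' statement, assuming $f\in\Delta_1(P)$, I would show $g|_P=f$ by two inequalities. Taking $p_i=p$ in the defining minimum gives $g(p)\leq d_X(p,p)+f(p)=f(p)$. Conversely, for every $p_i\in P$ the $1$-Lipschitz property of $f$ on $P$ gives $f(p)-f(p_i)\leq d_X(p,p_i)$, i.e.\ $d_X(p,p_i)+f(p_i)\geq f(p)$; taking the minimum over $p_i$ yields $g(p)\geq f(p)$. Hence $g(p)=f(p)$ for every $p\in P$, so $g$ extends $f$. I do not anticipate any genuine obstacle: the whole argument is bookkeeping of triangle inequalities, with the $\Delta(P)$ hypothesis entering exactly once (for the $\Delta$-inequality) and the $1$-Lipschitz hypothesis on $f$ entering exactly once (only for the extension claim).
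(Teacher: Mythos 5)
Your proposal is correct and follows essentially the same route as the paper: choose minimizers $p_i,p_j$ for the $\Delta$-inequality, apply $f(p_i)+f(p_j)\geq d_X(p_i,p_j)$ and the triangle inequality, verify the $1$-Lipschitz property via the minimizer at one endpoint (the paper spells out the computation you summarize as ``a minimum of finitely many $1$-Lipschitz functions is $1$-Lipschitz''), and obtain $g|_P=f$ from the two inequalities you give. No gaps.
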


\begin{proof}
    Let $f$ be a function in $\Delta(P)$, then for every $p_i, p_j\in P$, we have $f(p_i) + f(p_j)\geq d_X(p_i, p_j)$. We first show that $\mr_X(P, f)\in \Delta(X)$. For any $x, x'$ in $X$, there are some $i, j$ such that $\mr_X(P, f)(x) = d_X(x, p_{i}) + f(p_{i})$ and $\mr_X(P, f)(x') = d_X(x', p_j) + f(p_j)$. Therefore,
    \begin{align*}
        \mr_X(P, f)(x) + \mr_X(P, f)(x') & = d_X(x, p_i) + f(p_i) + d_X(x', p_j) + f(p_j)\\
        & \geq d_X(x, p_i) + d_X(p_i, p_j) + d_X(p_{i'}, x')\\
        & \geq d_X(x, x').
    \end{align*}
    As $X$ is compact, $\mr_X(P, f)$ is a bounded function and hence, $\mr_X(P, f)\in \Delta(X)$.

    Next, we need to show $\mr_X(P, f)$ is a $1$-Lipschitz function. For any $x, x'$ in $X$, let $p_j\in P$ be such that $\mr_X(P, f)(x') = d_X(x, p_j) + f(p_j)$, as before. Then, we have
    \begin{align*}
        \mr_X(P, f)(x) - \mr_X(P, f)(x') & \leq (d_X(x, p_j) + f(p_j)) - (d_X(x', p_j) - f(p_j)) \\
        & = d_X(x, p_j) - d_X(x', p_j) \\
        & \leq d_X(x, x').
    \end{align*}
    Similarly, we have $\mr_X(P, f)(x') - \mr_X(P, f)(x)\leq d_X(x, x')$. This shows that $\mr_X(P, f)$ is a $1$-Lipschitz function and $\mr_X(P, f)$ belongs $\Delta_1(X)$.

    If $f$ is inside $\Delta_1(X)$, then $f$ is furthermore $1$-Lipschitz. For fixed $p_i\in P$ and any other $p_j\in P$, we have
    \begin{align*}
        d_X(p_i, p_i) + f(p_i) &\leq f(p_j) + d_X(p_i, p_j).
    \end{align*}
    Therefore, $\mr_X(P, f)(p_i) = f(p_i)$ for every $p_i\in P$, that is, $\mr_X(P, f)$ is an extension of $f$.
\end{proof}

\begin{remark}
    The above construction gives the following commutative diagram:
    \begin{center}
        \begin{tikzcd}
            {P} \arrow[r, hook] \arrow[d, hook, "\kappa_P"'] & X \arrow[d, hook, "\kappa_X"] \\
            \Delta_1(P) \arrow[r, hook, "\mr"] & \Delta_1(X)
        \end{tikzcd}
    \end{center}
    where the vertical maps are Kuratowski embeddings.
\end{remark}

Now we focus on the case where $X$ is a sphere and $f$ is a constant function. It turns out that for $\mr_{\Sp^n}(P, f)$ to be an element in the tight span $\TS(\Sp^n)$, $P$ must be a special configuration that we will now introduce. For any two points $x, x'\in P$, we say that $x$ and $x'$ are \emph{comaximal} if $d_X(x, x') = \diam(P)$. We use the notation $\comax_P(x)$ to denote the set of points $x'$ in $P$ that is comaximal with $x$.
 
\begin{definition}[\cite{katz1989diameter}]
    Let $P$ be a subset of $\Sp^n$ containing no antipodal pairs. We say that a point $p\in P$ is \emph{held} by $P$ if for every tangent vector $v\in T_p\Sp^n$, there exists a point $p'\in \comax_P(p)$ such that the inner product $\langle v, \exp^{-1}_p(p')\rangle_{T_p\Sp^n} \geq 0$,  where $\exp^{-1}_p(p')$ is the unit tangent vector at $p$ that is tangent to the unique shortest geodesic connecting $p$ to $p'$.

    We say that $P$ is \emph{pointwise extremal} if every point $p\in P$ is held by $P$.
\end{definition}

\begin{remark}
    In $\Sp^1$, a finite subset is pointwise extremal if and only if it is the vertex set of an inscribed regular odd $n$-gon, see \cite[Lemma~4.3]{katz1991neighborhoods}.
\end{remark}

\begin{proposition}\label{proposition:held_point_as_local_max}
    If $P$ is a finite subset of $\Sp^n$ with no antipodal pairs. Then $P$ is pointwise extremal if and only if for every $p_i\in P$, the function $d_{\Sp^n}(\cdot, P)$ attains a local maximum at $\bar{p_i}$, the antipodal of $p_i$.
\end{proposition}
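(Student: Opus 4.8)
The plan is to translate both sides into first-order (directional-derivative) conditions at the relevant point and observe that they become the \emph{same} condition once one identifies the tangent spaces at $p_i$ and $\bar{p_i}$. The workhorse is the spherical identity $d_{\Sp^n}(\bar{p},q)=\pi-d_{\Sp^n}(p,q)$, valid for all $q$ because $\bar p=-p$ in the ambient $\R^{n+1}$. First I would record two consequences. (i) $d_{\Sp^n}(\bar{p_i},P)=\pi-\max_{p'\in P}d_{\Sp^n}(p_i,p')$, and the points of $P$ realizing this minimal distance to $\bar{p_i}$ are exactly the points of $P$ \emph{farthest} from $p_i$; call this set $A_i$. (ii) The hyperplanes $T_{p_i}\Sp^n$ and $T_{\bar{p_i}}\Sp^n$ are the \emph{same} subspace $p_i^\perp\subseteq\R^{n+1}$, and a direct computation of the normalized projection shows $\exp^{-1}_{p_i}(p')=\exp^{-1}_{\bar{p_i}}(p')$ as unit vectors for every $p'\neq\pm p_i$. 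Since $P$ has no antipodal pairs, no $p'\in P$ equals $\pm p_i$, so each $d_{\Sp^n}(\cdot,p')$ is smooth near $\bar{p_i}$ and these identifications apply verbatim.

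Next I would set up the gradient/first-order analysis of $d_{\Sp^n}(\cdot,P)=\min_{p'\in P}d_{\Sp^n}(\cdot,p')$ near $\bar{p_i}$. The gradient of $d_{\Sp^n}(\cdot,p')$ at $\bar{p_i}$ is the unit vector $-\exp^{-1}_{\bar{p_i}}(p')$ pointing away from $p'$, and near $\bar{p_i}$ only the active pieces $p'\in A_i$ matter, so the one-sided directional derivative of $d_{\Sp^n}(\cdot,P)$ in a direction $w$ is $-\max_{p'\in A_i}\langle\exp^{-1}_{\bar{p_i}}(p'),w\rangle$. Hence $\bar{p_i}$ satisfies the first-order condition to be a local maximum precisely when, for every $w\in T_{\bar{p_i}}\Sp^n$, there is $p'\in A_i$ with $\langle w,\exp^{-1}_{\bar{p_i}}(p')\rangle\geq 0$. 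Using identification (ii), this is literally the ``held'' condition for $p_i$, except that it involves the farthest set $A_i$ rather than $\comax_P(p_i)$ (the points at distance exactly $\diam(P)$).

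This discrepancy is what the two implications must reconcile, and it is handled by a diameter-realization observation. For the forward direction: if $P$ is pointwise extremal then each $p_i$ is held, so $\comax_P(p_i)\neq\emptyset$, forcing $\max_{p'}d_{\Sp^n}(p_i,p')=\diam(P)$; thus $A_i=\comax_P(p_i)$ and the held condition yields the first-order local-maximum condition at each $\bar{p_i}$. For the converse, from local maxima at every antipode one reads off the directional condition for the set $A_i$, and one must then argue that local maxima at \emph{all} antipodes force $\max_{p'}d_{\Sp^n}(p_i,p')=\diam(P)$ for every $i$ (so that $A_i=\comax_P(p_i)$ and ``held'' follows); note this is genuinely a global statement, since for a single point $A_i$ and $\comax_P(p_i)$ can differ. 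I would prove this rigidity by examining the point $p_i$ minimizing $\max_{p'}d_{\Sp^n}(p_i,p')$ and using that it is ``surrounded'' by $A_i$ (i.e.\ $0$ lies in the convex hull of the directions $\exp^{-1}_{p_i}(p')$, $p'\in A_i$), deriving that this minimal farthest-distance must already equal $\diam(P)$.

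The main obstacle I expect is twofold and both parts concern passing between the \emph{first-order critical condition} above and a \emph{genuine} local maximum. The directional-derivative criterion is automatically sufficient for a local maximum when the active points lie at spherical distance $\geq\tfrac{\pi}{2}$ from $\bar{p_i}$ (equivalently $\max_{p'}d_{\Sp^n}(p_i,p')\leq\tfrac{\pi}{2}$), because then each active $d_{\Sp^n}(\cdot,p')$ is geodesically concave near $\bar{p_i}$ and so is their minimum; in the complementary ``convex regime'' one must rule out second-order escape in directions tangent to the gradient, which requires the active directions to span $T_{\bar{p_i}}\Sp^n$ more robustly than the bare inequality $\langle w,\cdot\rangle\geq 0$ provides. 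Making this upgrade precise, together with the global diameter-realization step in the converse, is where the real work lies; the identities and directional-derivative bookkeeping of the first two paragraphs are routine by comparison.
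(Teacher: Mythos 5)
Your first two paragraphs of bookkeeping are correct and amount to the same reduction the paper itself uses: the paper rewrites $d_{\Sp^n}(x,P)=\pi-\Phi_p(\bar x)$ for $x$ near $\bar p$, with $\Phi_p=\max_{p'\in\comax_P(p)}d_{\Sp^n}(p',\cdot)$, and then matches ``$p$ is held'' against ``$\Phi_p$ attains a local minimum at $p$.'' But your proposal is not a proof: the closing paragraph explicitly leaves unproved the two steps that carry all of the content, namely (a) upgrading the first-order condition ``for every $w$ some active direction $\exp^{-1}_{\bar{p_i}}(p')$ has nonnegative inner product with $w$'' to a genuine local maximum of $d_{\Sp^n}(\cdot,P)$ at $\bar{p_i}$, and (b) the global claim that local maxima at all antipodes force every $p_i$ to realize $\diam(P)$ (so that your farthest set $A_i$ equals $\comax_P(p_i)$). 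Announcing that this is ``where the real work lies'' does not discharge it.

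Moreover, step (a) is not a technicality that more care would repair. ``Held'' is exactly the first-order statement that $0$ lies in the convex hull of the comaximal directions at $p_i$, and this is strictly weaker than an actual local extremum. Concretely, let $P\subseteq\Sp^2$ consist of three equally spaced points on a great circle, so $\diam(P)=2\pi/3$ and there are no antipodal pairs. At each $p_i$ the two comaximal directions are opposite unit vectors, so every $p_i$ is held and $P$ is pointwise extremal; yet the two nearest points of $P$ to $\bar{p_i}$ lie at distance $\pi/3<\pi/2$, and moving off the great circle increases the distance to both of them (while the distance to $p_i$ stays near $\pi$), so $d_{\Sp^2}(\cdot,P)$ does \emph{not} attain a local maximum at $\bar{p_i}$. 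Thus the implication you defer in your ``convex regime'' actually fails, and the equivalence can only be read at the level of first-order critical conditions. For what it is worth, the paper's own one-paragraph proof silently asserts precisely the two identifications you flagged: it equates ``held'' with ``$\Phi_p$ attains a local minimum at $p$'' and replaces the set of nearest points of $P$ near $\bar p$ by $\comax_P(p)$ without comment. So your diagnosis of where the difficulty sits is accurate, but neither your proposal nor the paper resolves it, and in your write-up the gap is explicit rather than hidden.
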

\begin{proof}
     Let $p\in P$ be a fixed point in $P$. If $p$ is held by $P$, then for every $v\in T_{p}\Sp^n$, there exists a point $p_{i}\in \comax_P(p)$ such that the inner product $\langle-v,  \exp_{p}^{-1}(p_{i})\rangle \geq 0$, that is $\langle v , \exp_{p}^{-1}(p_{i})\rangle \leq 0$. This is equivalent to the function
     \[\Phi_{p}(x)= \max_{p_{i}\in \comax_P(p)}d_{\Sp^n}(p_{i}, x)\]
     obtaining a local minimum at $p$. Note that, as $P$ is a finite set, for every $x$ in a small neighborhood around $\bar{p}$, we have
     \[d_{\Sp^n}(x, P) = \min_{p_i\in  \comax_P(p)} d_{\Sp^n}(x, p_i) = \min_{p_i\in  \comax_P(p)}(\pi - d_{\Sp^n}(\bar{x}, p_i)) = \pi- \Phi_{p}(\bar{x}).\]
    Therefore, $d_{\Sp^n}(\cdot, P)$ attaining a local maximum at $\bar{p}$ is equivalent to $\Phi_{p}(\cdot)$ attaining a local minimum at $p$ which is equivalent to $p$ being held by $P$.
\end{proof}

\begin{proposition}
    Let $P$ be a finite subset of $\Sp^n$ without antipodal pairs. If the function
    \[\mr_{\Sp^n}(P, a)(x) = d_{\Sp^n}(x, P) + a\]
    is inside $\TS(\Sp^n)$, then $a$ equals $\frac{1}{2}\diam(P)$ and $P$ is a pointwise extremal set.
\end{proposition}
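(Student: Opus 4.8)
The plan is to extract a single functional identity from the membership $g \coloneqq \mr_{\Sp^n}(P,a) \in \TS(\Sp^n)$ and then read off both conclusions from it. First I would invoke Lemma \ref{lm:antipodal}: since $\Sp^n$ is a compact antipodal space with $\diam(\Sp^n) = \pi$, any $g \in \TS(\Sp^n)$ satisfies $g(x) + g(\bar x) = \pi$ for every $x$. Substituting $g(x) = d_{\Sp^n}(x,P) + a$ gives the key identity
\begin{equation*}
d_{\Sp^n}(x,P) + d_{\Sp^n}(\bar x,P) = \pi - 2a \qquad \text{for all } x \in \Sp^n,
\end{equation*}
i.e. the distance-to-$P$ function plus its antipodal reflection is constant. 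Everything else follows from this.

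To pin down $a$, I would evaluate the identity at $x = p_i \in P$, where $d_{\Sp^n}(p_i,P) = 0$, so that $d_{\Sp^n}(\bar{p_i},P) = \pi - 2a$. Using the antipodal relation on the sphere $d_{\Sp^n}(\bar{p_i}, p_k) = \pi - d_{\Sp^n}(p_i,p_k)$, one has $d_{\Sp^n}(\bar{p_i},P) = \min_k(\pi - d_{\Sp^n}(p_i,p_k)) = \pi - \max_k d_{\Sp^n}(p_i,p_k)$. Comparing the two expressions, the eccentricity $\max_k d_{\Sp^n}(p_i,p_k)$ equals $2a$ for \emph{every} $p_i \in P$; taking the maximum over $i$ yields $\diam(P) = 2a$, that is, $a = \tfrac12\diam(P)$. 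As a by-product, every point of $P$ realizes the diameter, so $\comax_P(p_i) \neq \emptyset$ for all $i$.

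For pointwise extremality I would appeal to Proposition \ref{proposition:held_point_as_local_max}, which reduces the claim to showing that $d_{\Sp^n}(\cdot,P)$ attains a local maximum at each $\bar{p_i}$. Writing $D \coloneqq \diam(P) = 2a$, the key identity rearranges to $d_{\Sp^n}(x,P) = (\pi - D) - d_{\Sp^n}(\bar x,P) \le \pi - D$ for all $x$, since $d_{\Sp^n}(\bar x,P) \ge 0$. On the other hand, the computation above shows $d_{\Sp^n}(\bar{p_i},P) = \pi - 2a = \pi - D$. Hence each $\bar{p_i}$ is in fact a \emph{global} maximum of $d_{\Sp^n}(\cdot,P)$, a fortiori a local one, and Proposition \ref{proposition:held_point_as_local_max} gives that $P$ is pointwise extremal.

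The argument is short precisely because the antipodal constraint $g(x)+g(\bar x)=\pi$ does all the work; the only point requiring care is the translation between the geometric hypothesis (membership in the tight span) and this constraint, which is exactly the content of Lemma \ref{lm:antipodal}. The remaining steps are elementary manipulations of the distance-to-$P$ function and its antipodal symmetry, so I do not anticipate a genuine obstacle — one need only state the passage from ``$\le \pi - D$ everywhere and $= \pi - D$ at $\bar{p_i}$'' to ``local maximum'' cleanly, so that Proposition \ref{proposition:held_point_as_local_max} applies verbatim.
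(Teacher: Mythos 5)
Your proposal is correct and follows essentially the same route as the paper: both extract the antipodal identity $g(x)+g(\bar x)=\pi$ from Lemma \ref{lm:antipodal}, evaluate it at points of $P$ to determine $a=\tfrac{1}{2}\diam(P)$, and show that $\bar{p_i}$ is a global (hence local) maximum of $d_{\Sp^n}(\cdot,P)$ before invoking Proposition \ref{proposition:held_point_as_local_max}. The only (harmless) difference is that you derive that every point of $P$ has eccentricity $2a$ and then maximize over $P$, whereas the paper fixes a diameter-realizing pair at the outset; your variant has the minor added benefit of making explicit that $\comax_P(p_i)\neq\emptyset$ for every $i$.
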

\begin{proof}
    Fix a $p$ in $P$ such that there is some $p'\in P$ with $d_{\Sp^n}(p, p') = \diam(P)$. Then we have $d_{\Sp^n}(\bar{p}, P) = \pi - \diam(P)$.
    As $\mr_{\Sp^n}(P, a)(x)\in \TS(\Sp^n)$, we have
    \[\pi = \mr_{\Sp^n}(P, a)(p) + \mr_{\Sp^n}(P, a)(\bar{p}) = a + d_{\Sp^n}(\bar{p}, P) + a\]
    This shows $d_{\Sp^n}(\bar{p}, P) = \pi - 2\, a$, that is, $a = \frac{1}{2}\,(\pi - d_{\Sp^n}(\bar{p}, P)) = \frac{1}{2}\diam(P)$.
    For every $p_i\in P$, note that, the function $\mr_{\Sp^n}(P, a)(x)$ obtains minimum at $p_i$. Furthermore, according to the equality
    \[\pi = \mr_{\Sp^n}(P, a)(p_i) + \mr_{\Sp^n}(P, a)(\bar{p_i}),\]
    we deduce that $\mr_{\Sp^n}(P, a)(x)$ obtains maximum at $\bar{p_i}$. Therefore, the function $d_{\Sp^n}(\cdot, P)$ obtains local maximum at $\bar{p_i}$ as well. By Proposition \ref{proposition:held_point_as_local_max}, we know that $p_i$ is held by $P$. This shows that $P$ is pointwise extremal.
\end{proof}

On the other hand, for every pointwise extremal configuration on $\Sp^1$, we get a function 
inside the $\TS(\Sp^1)$ from the mountain range function construction, see Remark \ref{rmk:odd-n-gon} below. However, it is not the case for higher-dimensional spheres.

\begin{remark}\label{rmk:odd-n-gon}
    Let $P$ be the vertex set of an inscribed regular odd $n$-gon in $\Sp^1$. Then it is clear that the function
    \[\mr_{\Sp^n}\left(P, \frac{1}{2}\diam(P)\right)(\theta) = d_{\Sp^1}(\theta,P) + \frac{1}{2}\diam(P)\]
 is $1$-Lipschitz and satisfies the property that the sum of function values on any antipodal pair is $\pi$ and  therefore by Lemma \ref{lm:antipodal} this function is in $\TS(\Sp^1)$.
\end{remark}

\begin{remark}
The above mountain range function construction for a pointwise extremal configurations on higher dimensional spheres does not necessarily produce a function in the tight span. In fact, let $P$ be a subset of $\Sp^n$ such that the mountain range function
    \[\mr_{\Sp^n}\left(P, \frac{1}{2}\diam(P)\right)(x) = d_{\Sp^n}(x, P) + \frac{1}{2}\diam(P)\]
    is inside $\TS(\Sp^n)$. If $n\geq 2$ then $P$ must contain infinitely many points. First we observe that if $\diam(P) = \pi$ then $\mr_{\Sp^n}(P, \frac{1}{2}\diam(P))(x)\geq \frac{\pi}{2}$ for every $x$ and by Lemma \ref{lm:antipodal}, we have $d_{\Sp^n}(x,P) = 0$ for every $x$ and hence, $P$ is the whole sphere $\Sp^n$. Therefore, for other cases, we can assume $P$ does not contain a pair of antipodal points. Fix a point $p_i\in P$ and let $\bar{p_i}$ be its antipodal point. If $P$ is finite, then there exists some $\epsilon>0$ such that for every point $x\in B_\epsilon(p_i,\Sp^n)$, $d_{\Sp^n}(x,P)=d_{\Sp^n}(x, p_i)$. Note that, for $\mr_{\Sp^n}(P, \frac{1}{2}\diam(P))$ to be a function in the tight span $\TS(\Sp^n)$, it must satisfy
    \begin{itemize}
        \item $\mr_{\Sp^n}(P, \frac{1}{2}\diam(P))(p_i) + \mr_{\Sp^n}(P, \frac{1}{2}\diam(P))(\bar{p_i}) = \pi$
        \item $\mr_{\Sp^n}(P, \frac{1}{2}\diam(P))(x) + \mr_{\Sp^n}(P, \frac{1}{2}\diam(P))(\bar{x}) = \pi$
    \end{itemize}
    From our assumption on $x$, we have
    $$\mr_{\Sp^n}\left(P, \frac{1}{2}\diam(P)\right)(x) = \mr_{\Sp^n}\left(P, \frac{1}{2}\diam(P)\right)(p_i) + d_{\Sp^n}(x, p_i).$$ Therefore,
    \begin{align*}
        d_{\Sp^n}(\bar{x}, P) &= \mr_{\Sp^n}\left(P,\frac{1}{2}\diam(P)\right)(\bar{x}) - \frac{1}{2}\diam(P) \\
        & = \mr_{\Sp^n}\left(P, \frac{1}{2}\diam(P)\right)(\bar{p_i}) - d_{\Sp^n}(x, p_i)- \frac{1}{2}\diam(P)\\
        & = \mr_{\Sp^n}\left(P, \frac{1}{2}\diam(P)\right)(\bar{p_i}) - d_{\Sp^n}(\bar{x}, \bar{p_i})- \frac{1}{2}\diam(P)\\
        & = d_{\Sp^n}(\bar{p_i}, P) - d_{\Sp^n}(\bar{x}, \bar{p_i})
    \end{align*}
    
    This implies that there must be a point $p_{i'}$ in $\comax_P(p_i)$ such that $\bar{x}, \bar{p}_i, p_{i'}$ lie on a geodesic of $\Sp^n$. When $n\geq 2$, there are infinitely many geodesic circles based at $\bar{p}_i$ such that pairwise intersections are exactly $\{p_i, \bar{p}_i\}$. Therefore, by varying $\bar{x}\in B_\epsilon(P)$, there must be infinitely many different $p_{i'}$ for different choices of $x$ and thus $P$ consists of infinitely many points.
\end{remark}

Nevertheless, the following rotated odd $n$-gon on $\Sp^n$ gives a function in the tight span through the mountain range function construction. Let us introduce a convenient notion for subsets of $\Sp^n$ that give rise to an element in $\TS(\Sp^n)$ through the mountain range function construction.
\begin{definition}
    Let $P$ be a subset of $\Sp^n$, if the mountain range function
    \[\mr_{\Sp^n}\left(P, \frac{1}{2}\diam(P)\right)(x) = d_{\Sp^n}(x,P) + \frac{1}{2}\diam(P)\]
    is inside $\TS(\Sp^n)$, we then say $P$ is an \emph{admissible subset} in $\Sp^n$.
\end{definition}

The following example shows that we can get admissible subsets of $\Sp^n$ by iteratively revolving odd $n$-gon on $\Sp^1$.

\begin{example}[Examples of Mountain range functions inside $\TS(\Sp^n)$]\label{ex:MTrginTSSPn}
For every positive integer $m$, the regular $(2m+1)$-gon $P_m^1$ and the constant $\frac{1}{2}\diam(P_m^1)=\frac{m\pi}{2m+1}$ give rise to a mountain range function $\mathrm{MR}(P_m^1,\frac{m\pi}{2m+1})\in \TS(\Sp^1)$. Based on $P_m^1$, we will introduce $P_m^n\subseteq \Sp^n$ through rotations for every integer $n>1$ such that $\mathrm{MR}(P_m^n,\frac{m\pi}{2m+1})\in \TS(\Sp^n)$. Consider the spherical coordinate for $\Sp^n\subseteq\R^{n+1}$ as follows
\begin{align*}
    &x_1=\cos(\varphi_1)\\
    &x_2=\sin(\varphi_1)\cos(\varphi_2)\\
    &x_3=\sin(\varphi_1)\sin(\varphi_2)\cos(\varphi_3)\\
    &\cdots\\
    &x_{n}=\sin(\varphi_1)\sin(\varphi_2)\cdots\sin(\varphi_{n-1})\cos(\varphi_n)\\
    &x_{n+1}=\sin(\varphi_1)\sin(\varphi_2)\cdots\sin(\varphi_{n-1})\sin(\varphi_n)
\end{align*}
where $\varphi_1\in[0,\pi]$ and $\varphi_k\in[0,2\pi)$ for all $k=2,\ldots,n$. Assume that $P_m^1$ is represented by polar coordinates $\left\{\frac{2k\pi}{2m+1}|\,k=0,1,\ldots,2m+1\right\}$. We let 
$$Q_m\coloneqq \left\{\frac{2k\pi}{2m+1}|\,k=0,1,\ldots,2m+1\right\}\cap[0,\pi]=\left\{\frac{2k\pi}{2m+1}|\,k=0,1,\ldots,m+1\right\}.$$
Then, we let $P_m^n\coloneqq Q_m\times [0,2\pi]^{n-1}$ denote a set of points in $\Sp^n$ in terms of spherical coordinate. For every point $x\in\Sp^n$ represented by spherical coordinate $(\psi_1,\ldots,\psi_n)$, consider the great circle $\Sp^1_x$ containing $x$ and $(0,\ldots,0)$ represented by the following set of equations: $(\varphi_1,\ldots,\varphi_n)\in \Sp^1_x$ iff $\varphi_k=\psi_k$ for all $k=2,\ldots,n$. Of course, $(0,\psi_2,\ldots,\psi_n)$ and $(0,\ldots,0)$ represent the same point in $\Sp^n$. Then, $\Sp_x^1\cap P_m^n=\{(\frac{2k\pi}{2m+1},\psi_2,\ldots,\psi_n)|\,k=0,1,\ldots,m+1\}$ is itself a regular $(2m+1)$-gon, denoted by $P_{m,x}^1$. Since the antipodal point $\bar{x}\in \Sp_x^1$, we have that
\begin{align*}
    &\mathrm{MR}\left(P_m^n,\frac{m\pi}{2m+1}\right)(x)+\mathrm{MR}\left(P_m^n,\frac{m\pi}{2m+1}\right)(\bar{x})\\
    =&\mathrm{MR}\left(P_{m,x}^1,\frac{m\pi}{2m+1}\right)(x)+\mathrm{MR}\left(P_{m,x}^1,\frac{m\pi}{2m+1}\right)(\bar{x})\\
    =&\pi,
\end{align*}
where the last equality follows from $\mathrm{MR}\left(P_{m,x}^1,\frac{m\pi}{2m+1}\right)\in\TS(\Sp^1_x)$ and Lemma \ref{lm:antipodal}. It is obvious that $\mathrm{MR}\left(P_m^n,\frac{m\pi}{2m+1}\right)$ is 1-Lipschitz. Then, by Lemma \ref{lm:antipodal} again, we have that $\mathrm{MR}\left(P_m^n,\frac{m\pi}{2m+1}\right)\in \TS(\Sp^n)$.
\end{example}

\begin{figure}
    \centering
    \includegraphics[width=0.6\textwidth]{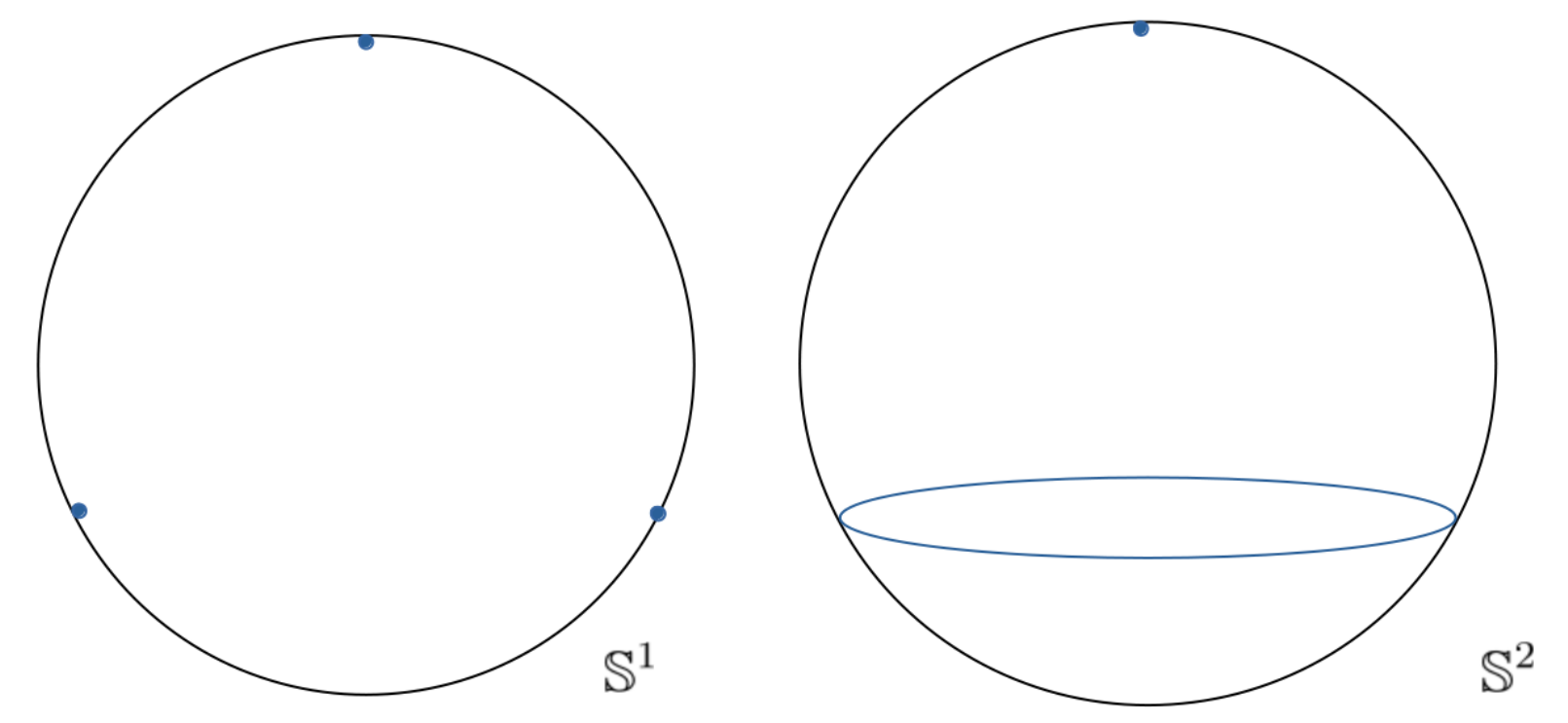}
    \caption{Example of revolved admissible set: on the left, we have the three vertices of an inscribed equilateral triangle of $\Sp^1$; on the right, we have the revolved configuration on $\Sp^2$, the north pole along with a circle.}
    \label{fig:revolved_asd}
\end{figure}

In general, we have the following result regarding revolved admissible sets.
\begin{proposition}
    Let $P_{n+1}$ be a subset of $\Sp^{n+1}$ such that there exists a point $a\in P_{n+1}$ with the following properties,
    \begin{enumerate}
        \item any rotation at the point $a$ preserves $P_{n+1}$,
        \item for any $\Sp^n$ that contains both $a$ and $\bar{a}$, that is a slice under the rotation at $a$, $P_{n+1}\cap\Sp^n$ is admissible in $\Sp^n$.
    \end{enumerate}
    Then, $P_{n+1}$ is admissible in $\Sp^{n+1}$.
\end{proposition}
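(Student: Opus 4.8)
The plan is to verify directly the two conditions that, by Lemma \ref{lm:antipodal}, characterize membership in $\TS(\Sp^{n+1})$. Write $D\coloneqq\diam(P_{n+1})$ and let
\[f\coloneqq\mr_{\Sp^{n+1}}\!\left(P_{n+1},\tfrac12 D\right)=d_{\Sp^{n+1}}(\cdot,P_{n+1})+\tfrac12 D.\]
Since the distance function to any subset is $1$-Lipschitz (as in the proof of Proposition \ref{prop:mr_in_D1}), $f$ is automatically $1$-Lipschitz, and $\diam(\Sp^{n+1})=\pi$. Hence by Lemma \ref{lm:antipodal} it suffices to show that $f(x)+f(\bar x)=\pi$ for every $x\in\Sp^{n+1}$, equivalently that
\[d_{\Sp^{n+1}}(x,P_{n+1})+d_{\Sp^{n+1}}(\bar x,P_{n+1})=\pi-D.\]

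The first step is to extract the structural content of hypothesis (1): since every rotation fixing $a$ preserves $P_{n+1}$ and such rotations act transitively on each geodesic sphere centered at $a$, the set $P_{n+1}$ is a union of ``latitude'' spheres (full orbits of the rotation group). Working in spherical coordinates centered at $a$, I would then prove the key geometric identity: for every $x\in\Sp^{n+1}$ and every slice $\Sp^n_x$ (a totally geodesic $\Sp^n$ through $a,\bar a,x$),
\[d_{\Sp^{n+1}}(x,P_{n+1})=d_{\Sp^n_x}(x,P_{n+1}\cap\Sp^n_x).\]
The inequality $\le$ is immediate since $\Sp^n_x$ is totally geodesic and $P_{n+1}\cap\Sp^n_x\subseteq P_{n+1}$. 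For $\ge$, take a nearest point $p\in P_{n+1}$, say at latitude $\varphi$; the whole latitude sphere through $p$ lies in $P_{n+1}$, so I may rotate $p$ to the point $q$ at the same latitude whose equatorial direction agrees with that of $x$. Writing $\alpha$ for the latitude of $x$ and using the cosine law $\cos d_{\Sp^{n+1}}(x,y)=\cos\alpha\cos\varphi+\sin\alpha\sin\varphi\,\langle u_x,u_y\rangle$ (with $u_x,u_y$ the equatorial directions), the choice $u_q=u_x$ maximizes the inner product, so $d(x,q)\le d(x,p)$; moreover $u_x\in\Sp^n_x$ forces $q\in\Sp^n_x$, giving $\ge$.

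Next I would check that the additive constants match, i.e. $\diam(P_{n+1}\cap\Sp^n_x)=D$. Again $\le$ is automatic; for $\ge$, the same cosine law shows that the diameter of $P_{n+1}$ is realized by a pair of points with \emph{antipodal} equatorial directions, and such a pair can always be placed inside the slice, because each latitude sphere meets $\Sp^n_x$ in a full sub-sphere of directions closed under $u\mapsto -u$. Finally, invoking hypothesis (2): since $P_{n+1}\cap\Sp^n_x$ is admissible in $\Sp^n_x\cong\Sp^n$, Lemma \ref{lm:antipodal} applied inside $\Sp^n_x$ to the antipodal pair $x,\bar x\in\Sp^n_x$ yields
\[d_{\Sp^n_x}(x,P_{n+1}\cap\Sp^n_x)+d_{\Sp^n_x}(\bar x,P_{n+1}\cap\Sp^n_x)=\pi-\diam(P_{n+1}\cap\Sp^n_x)=\pi-D.\]
Combining this with the distance identity applied to both $x$ and $\bar x$ (both lie in the antipodally symmetric slice $\Sp^n_x$) gives exactly $d_{\Sp^{n+1}}(x,P_{n+1})+d_{\Sp^{n+1}}(\bar x,P_{n+1})=\pi-D$, completing the proof.

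I expect the main obstacle to be the geometric distance identity (the $\ge$ direction) together with its companion diameter equality; both rest on reducing $P_{n+1}$ to a union of latitude spheres and a short computation with the spherical cosine law, while everything else is bookkeeping with Lemma \ref{lm:antipodal}. This argument is precisely the higher-dimensional generalization of the computation carried out for the revolved odd $n$-gons in Example \ref{ex:MTrginTSSPn}, with the great circle $\Sp^1_x$ there replaced by the slice $\Sp^n_x$ and the one-dimensional membership $\mr\in\TS(\Sp^1_x)$ replaced by the inductive admissibility hypothesis (2).
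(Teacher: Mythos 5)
Your argument is correct and follows essentially the same route as the paper's proof: both reduce the claim to the slice identity $d_{\Sp^{n+1}}(x,P_{n+1})=d_{\Sp^n}(x,P_{n+1}\cap\Sp^n)$ together with the diameter equality $\diam(P_{n+1})=\diam(P_{n+1}\cap\Sp^n)$, and then conclude via Lemma \ref{lm:antipodal} applied to the antipodal pair $x,\bar x$ inside the slice. The only divergence is in how those two sub-facts are verified --- you compute both explicitly with the spherical cosine law on latitude orbits, whereas the paper obtains the distance identity from the orbits meeting the slice perpendicularly and then deduces the diameter equality from it via the relation $\max_{p'\in P_{n+1}}d_{\Sp^{n+1}}(p',p)=\pi-d_{\Sp^{n+1}}(\bar p,P_{n+1})$ --- a cosmetic difference.
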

\begin{proof}
    Let $x$ be a point in $\Sp^{n+1}$, then there exists a $\Sp^n$ slice that contains $x$, $a$ and $\bar{a}$. For any point $p\in P_{n+1}$, the orbit of $p$ under the rotation intersects perpendicularly with the above chosen $\Sp^n$ at a point $p'$. Therefore, $d_{\Sp^{n}}(x, p') = d_{\Sp^{n+1}}(x, p')\leq d_{\Sp^{n+1}}(x, p)$ which further implies
    \[d_{\Sp^{n+1}}(x, P_{n+1}) = d_{\Sp^{n}}(x, P_{n+1}\cap \Sp^n).\]
    This allows us to show $\diam(P_{n+1}) = \diam(P_{n+1}\cap \Sp^n)$ where $\Sp^n$ is a slice under rotation. Note that, as rotations are isometries and $P_{n+1}$ is preserved under rotations, the quantity $\diam(P_{n+1}\cap \Sp^n)$ is independent of the choice of slice. For any point $p$ in $P_{n+1}$, the diameter of $P_{n+1}$ at $p$ is
    \[\max_{p'\in P_{n+1}}d_{\Sp^{n+1}}(p', p) = \pi - d_{\Sp^{n+1}}(\bar{p}, P_{n+1}) = \pi - d_{\Sp^n}(\bar{p}, P_{n+1}\cap \Sp^n)= \max_{p'\in  P_{n+1}\cap \Sp^n}d_{\Sp^n}(p', p).\]
    where $\Sp^n$ is a slice containing $p$ and hence, $\bar{p}$. As $p$ is arbitrary, we get $\diam(P_{n+1}) = \diam(P_{n+1}\cap \Sp^n)$. We then have the following calculation.
    \begin{align*}
          & \mr_{\Sp^{n+1}}\left(P_{n+1}, \frac{1}{2}\diam(P_{n+1})\right)(x) + \mr_{\Sp^{n+1}}\left(P_{n+1}, \frac{1}{2}\diam(P_{n+1})\right)(\bar{x})                                                                \\
        & =  d_{\Sp^{n+1}}(x, P_{n+1})(x) + \frac{1}{2}\diam(P_{n+1}) + d_{\Sp^{n+1}}(x, P_{n+1})(\bar{x})+ \frac{1}{2}\diam(P_{n+1})                                     \\
        & =  d_{\Sp^{n}}(x, P_{n+1}\cap \Sp^n)(x) + \frac{1}{2}\diam(P_{n+1}\cap \Sp^n) + d_{\Sp^{n}}(x, P_{n+1}\cap \Sp^n)(\bar{x})+ \frac{1}{2}\diam(P_{n+1}\cap \Sp^n) \\
        & =  \pi
    \end{align*}
    The last line comes from the fact that $P_{n+1}\cap \Sp^n$ is admissible and applying Lemma \ref{lm:antipodal}. As $\mr_{\Sp^{n+1}}\left(P_{n+1}, \frac{1}{2}\diam(P_{n+1})\right)$ is a $1$-Lipschitz function by Proposition \ref{prop:mr_in_D1}, the result then comes from the characterization of functions in the tight span as in Lemma \ref{lm:antipodal}.
\end{proof}

%%%%%%%%%%%%%%%%%%%%%

\section{The case of $\Sp^n$ with $\ell^\infty$-metric}\label{sec:TSSninfty}

Let $n$ be a positive integer. For simplicity of notation, let $d_\infty$ denote the metric induced by $\ell^\infty$-norm on $\R^n$, and we write $\R^n_\infty:=(\R^n,\ell^\infty)$.
We denote by $\Sp^n_\infty:=\{x\in \R^{n+1}: \norm{x}_2=1\}$ the $n$-dimensional sphere equipped with $\ell^\infty$-metric, and similarly $\bbD^{n+1}_\infty:=\{x\in \R^{n+1}: \norm{x}_2\leq 1\}$ the $(n+1)$-dimensional ball equipped with $\ell^\infty$-metric. 
In this section, we study the tight span of $\Sp^n_\infty$. 

\paragraph{The tight span of $\Sp^1_\infty$.}
We start with the case of $\Sp^1_\infty\subset \R^2_\infty$. Notice that $\R^2_\infty$ and the \emph{Manhattan plane} $(\R^2,\ell^1)$ are isometric, but for $n>2$, $\R^n_\infty$ and $(\R^n,\ell^1)$ are not isometric. In particular, while $\R^n_\infty$ is hyperconvex for every $n$, $(\R^n,\ell^1)$ is hyperconvex only when $n\leq 2.$ On the other hand, the $\ell_\infty$ and $\ell_1$ metrics on real spaces are closely related: \cite[Theorem 5]{herrlich1992hyperconvex} proves that the space $\R^{2^n-1}_\infty$ is isometric to the tight span of $(\R^n,\ell^1)$.

The tight span of a subset of $\R^2$ equipped with either the $\ell^\infty$ or the $\ell^1$-norm has been completely studied in \cite{eppstein2011optimally,kilicc2015tight,kilicc2016tight}, in which case the tight span is the same as the orthogonal convex hull. In addition, in \cite{kilicc2021algorithm}, the authors develop a procedure to compute the tight span of finite subsets of $(\R^2,\ell^1)$.

In $\R^2_\infty$, a subspace $Y$ is said to be geodesically convex if any two points in $Y$ are connected by a geodesic that is fully contained in $Y$.
By \cite[Theorem 2]{kilicc2016tight}, the tight span of $X\subseteq\R^2_\infty$ is isometric to any closed geodesically convex subspace $Y$ containing $X$, which is minimal (w.r.t. inclusion) with these properties. From this, we have:

\begin{corollary}
The tight span of $\Sp^1_\infty$ is isometric to $\bbD^2_\infty$.
\end{corollary}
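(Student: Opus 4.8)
The plan is to invoke \cite[Theorem 2]{kilicc2016tight} exactly as quoted above: it suffices to exhibit $\bbD^2_\infty$ as a closed, geodesically convex subspace of $\R^2_\infty$ that contains $\Sp^1_\infty$ and is minimal (with respect to inclusion) among subspaces enjoying these three properties. Two of the three conditions are immediate: $\bbD^2_\infty$ is the closed Euclidean unit disk, which is compact and hence closed in $\R^2_\infty$ (the $\ell^\infty$-topology and the Euclidean topology coincide on $\R^2$), and it contains $\Sp^1_\infty$ as its Euclidean boundary. The real work is therefore concentrated in geodesic convexity and in minimality, and both rest on an explicit understanding of the geodesics of $\R^2_\infty$.

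For geodesic convexity I would first record the elementary fact that the straight Euclidean segment between \emph{any} two points $p,q\in\R^2$ is an $\ell^\infty$-geodesic: parametrizing $\gamma(t)=p+t(q-p)$ on $[0,1]$ gives $\|\gamma'(t)\|_\infty=\max(|q_1-p_1|,|q_2-p_2|)=d_\infty(p,q)$, so the length of $\gamma$ equals $d_\infty(p,q)$. Since the Euclidean disk is convex in the ordinary sense, the straight segment joining two of its points stays inside it; hence any two points of $\bbD^2_\infty$ are joined by an $\ell^\infty$-geodesic contained in $\bbD^2_\infty$, which is precisely geodesic convexity.

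For minimality I would prove the stronger statement that \emph{every} closed geodesically convex $Y\subseteq\R^2_\infty$ with $\Sp^1_\infty\subseteq Y$ already contains all of $\bbD^2_\infty$; since $\bbD^2_\infty$ is itself such a $Y$, this makes it the minimum, a fortiori minimal. The key geometric input is that when the displacement $q-p$ is \emph{diagonal}, i.e. $|q_1-p_1|=|q_2-p_2|$, the $\ell^\infty$-geodesic from $p$ to $q$ is \emph{unique} and equals the straight segment: writing a candidate geodesic as $(x(t),y(t))$, minimality of the length forces $|x'|=|y'|$ almost everywhere with $x$ and $y$ both monotone between the endpoints, so $y$ is the affine function of $x$ of slope $\pm 1$. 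Now, given any $z$ in the open disk, the line of slope $+1$ through $z$ meets $\Sp^1_\infty$ in two points $p,q$ with $z$ strictly between them (the two roots of the intersection equation have negative product), and $q-p$ is diagonal; by uniqueness the only $\ell^\infty$-geodesic joining $p$ and $q$ is the segment through $z$, so geodesic convexity of $Y$ forces $z\in Y$. As the slope-$1$ chords sweep out the entire open disk, $Y$ contains the open disk, and closedness then yields $\bbD^2_\infty\subseteq Y$.

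The main obstacle is exactly this last step: the uniqueness of diagonal geodesics, combined with the observation that diagonal chords cover the disk, is what upgrades the merely existential geodesic-convexity hypothesis into genuine containment of every interior point; the remaining verifications are routine. Assembling geodesic convexity, closedness, containment of $\Sp^1_\infty$, and the minimality statement, and feeding them into \cite[Theorem 2]{kilicc2016tight}, yields $\TS(\Sp^1_\infty)\cong\bbD^2_\infty$.
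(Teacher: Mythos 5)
Your proposal is correct and follows the same route as the paper, which simply invokes \cite[Theorem 2]{kilicc2016tight} and leaves the verification implicit. You supply exactly the missing details --- closedness, geodesic convexity via straight segments, and minimality via the uniqueness of $\ell^\infty$-geodesics between diagonally displaced points together with the fact that slope-$\pm 1$ chords of $\Sp^1_\infty$ sweep out the open disk --- and each of these steps checks out.
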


Unlike the case of $\R^2_\infty$, when $n\geq 3$ the tight span of subsets of $\R^n_\infty$ appear difficult to describe. Some related results in the literature include studies of the injectivity of subsets of $\R^n_\infty$, such as \cite{pavon2016injective} for the injectivity of convex polyhedrons in $\R^n_\infty$ and \cite{descombes2017injective} for the injectivity of more general subsets of $\R^n_\infty$. In this section, we study the tight span of $\Sp^n_\infty$, as well as the injectivity of $\bbD^{n+1}_\infty$.

In Section \ref{sec:surrouning points}, for a given compact set $X\subseteq\R^n_\infty$ we introduce a set $\caF(X)\subseteq\R^n_\infty$ that is a sub-metric space of the tight span $\TS(X)$, and use it to show that $\bbD^{n+1}_\infty$ is not injective when $n\geq 2$. In Section \ref{sec:minimal points}, we identify an injective set $\caE(X)\subseteq\R^n_\infty$ containing $\TS(X)$, and prove that $\caE(\Sp^2_\infty)$ (in this case, equal to $\caF(\Sp^2_\infty)$) is isometric to $\TS(\Sp^2_\infty)$.

%%%%%%%%%%%%%%%%%%%%%%%%%%%%%%%%%%%%
\subsection{The set of $X$-surrounding points $\caF(X)$} \label{sec:surrouning points}
In this section, we study a particular subspace $\caF(X)$ of the tight span $\TS(X)$, for a given compact set $X\subseteq\R^n_\infty$. We show that when $n\geq 2$, the tight span of $\Sp^n_\infty$ (or $\bbD^{n+1}_\infty$) properly contains $\bbD^{n+1}_\infty$ up to isometry.

As in \cite{descombes2017injective}, we divide $\R^n_\infty$ into the following cone-shaped regions. For each $1\leq i\leq n$, let $\Lambda_i :=\{x\in \R^n_\infty: x_i=\|x\|_{\infty}\}.$
For each $x\in \R^n_\infty$, $\xi\in\{\pm 1\}$ and $i=1,\dots,n$, let
\[x+\xi \Lambda_i  :=\{x+\xi z: z\in \Lambda_i \}=\{z\in \R^n_\infty: z_i-x_i = \xi\, d_\infty(z,x)\}.\]

Clearly, $\R^n_\infty = \cup_{\xi\in\{\pm 1\},i\in\{1,\dots,n\}}(x+\xi \Lambda_i).$ See Figure \ref{fig:regions} for illustrations of $\xi \Lambda_i $ in $\R^2_\infty$ and $\R^3_\infty$.

\begin{figure}[h!]
    \centering

\begin{tikzpicture}[scale=0.8]
\coordinate (b) at (0,0); 
\coordinate (c) at (0,2); 
\coordinate (f) at (2,2);
\coordinate (g) at (2,0); 

\draw[black, dashed] (b) -- (c) -- (f) -- (g) -- cycle;
\draw[blue] (b) -- (f);
\draw[blue] (c) -- (g);

\node[mark=none] at (-2.5,1){$\text{In }\R^2:$};
\node[mark=none, blue] at (2.4,1){$\Lambda_1$};
\node[mark=none, blue] at (-0.6,1){$-\Lambda_1$};
\node[mark=none, blue] at (1,2.3){$\Lambda_2$};
\node[mark=none, blue] at (1,-.5){$-\Lambda_2$};
\end{tikzpicture}

\newcommand{\Depth}{2}
\newcommand{\Height}{2}
\newcommand{\Width}{2}
\begin{tikzpicture}[scale=0.8]
\coordinate (O) at (0,-0.25,0);
\coordinate (A) at (0,\Width-0.25,0);
\coordinate (D) at (\Depth,-0.25,0); 
\coordinate (E) at (\Depth,\Width-0.25,0); 

\coordinate (B) at (0,\Width,\Height);
\coordinate (C) at (0,0,\Height); 
\coordinate (F) at (\Depth,\Width,\Height);
\coordinate (G) at (\Depth,0,\Height);

\coordinate (M) at (0.5\Depth,0.5\Width,0,5\Height); %center

\draw[black, dashed] (A) -- (B) -- (F) -- (E) -- cycle;% Top Face
\draw[black, dashed] (C) -- (B) -- (F) -- (G) -- cycle;% Front Face
\draw[black, dashed] (D) -- (E) -- (F) -- (G) -- cycle;% Right Face
\draw[black, dashed, dashed] (D) -- (O) -- (A);
\draw[black, dashed, dashed] (C) -- (O);

\draw[blue] (E) -- (C);
\draw[blue] (O) -- (F);
\draw[blue] (A) -- (G);
\draw[blue] (D) -- (B);

\node[mark=none] at (-3,0.5){$\text{In }\R^3:$};
\node[mark=none, blue] at (2.4,0.6){$\Lambda_1$};
\node[mark=none, blue] at (-1.3,0){$-\Lambda_1$};
\node[mark=none, blue] at (0.5,2){$\Lambda_3$};
\node[mark=none, blue] at (0.25,-1.3){$-\Lambda_3$};
\node[mark=none, blue] at (2.1,1.9){$\Lambda_2$};
\node[mark=none, blue] at (-1,-1){$-\Lambda_2$};
\end{tikzpicture}

    \caption{The regions $\xi\Lambda_i$ in $\R^2_\infty$ and $\R^3_\infty$.}
    \label{fig:regions}
\end{figure}
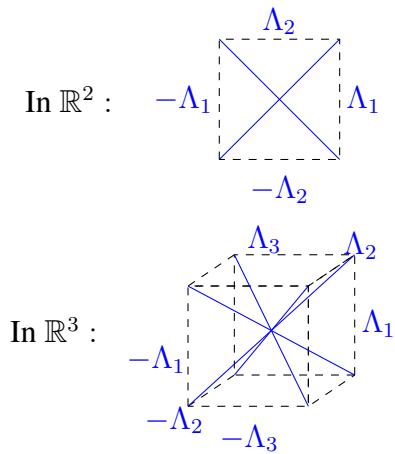

\begin{proposition} \label{prop:property of C_i and I_xy1}
Let $x,y\in \R^n_\infty$. for every $i\in\{1,\dots,n\}$ and $\xi\in\{\pm 1\}$,

\begin{enumerate}
    \item \label{prop:y in x+Lambda_i  iff x in y-Lambda_i} $y\in x+\xi \Lambda_i  \iff x\in y-\xi \Lambda_i .$
    \item \label{prop:C_y subset C_x} $y\in x+\xi \Lambda_i  \implies (y+\xi \Lambda_i )\subseteq (x+\xi \Lambda_i )$ and $(x-\xi \Lambda_i )\subseteq (y-\xi \Lambda_i )$.
\end{enumerate}
\end{proposition}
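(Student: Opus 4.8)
The plan is to work entirely from the explicit analytic description of the cones, namely
$$z\in x+\xi\Lambda_i \iff \xi\,(z_i-x_i)=d_\infty(z,x),$$
which in particular forces $\xi\,(z_i-x_i)\geq 0$ together with the coordinatewise bound $|z_j-x_j|\leq \xi\,(z_i-x_i)$ for every $j$, since $d_\infty(z,x)=\norm{z-x}_\infty=\max_j|z_j-x_j|$. With this reformulation in hand, both statements reduce to one-line manipulations of the metric.

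First I would dispatch claim (1). The condition $x\in y-\xi\Lambda_i = y+(-\xi)\Lambda_i$ reads $-\xi\,(x_i-y_i)=d_\infty(x,y)$, i.e. $\xi\,(y_i-x_i)=d_\infty(y,x)$, which is literally the defining condition for $y\in x+\xi\Lambda_i$ because $d_\infty$ is symmetric. Thus (1) carries no content beyond symmetry of the metric and the fact that negating $\xi$ interchanges the two half-cones.

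For claim (2) the governing idea is \emph{additivity of the $\ell^\infty$-distance along the cone direction}. Assume $y\in x+\xi\Lambda_i$, so $\xi\,(y_i-x_i)=d_\infty(y,x)$. To prove $(y+\xi\Lambda_i)\subseteq(x+\xi\Lambda_i)$, take $w\in y+\xi\Lambda_i$, so $\xi\,(w_i-y_i)=d_\infty(w,y)$. Adding the two equalities gives $\xi\,(w_i-x_i)=d_\infty(w,y)+d_\infty(y,x)$. On the other hand, for each coordinate $j$ the triangle inequality yields $|w_j-x_j|\leq|w_j-y_j|+|y_j-x_j|\leq d_\infty(w,y)+d_\infty(y,x)=\xi\,(w_i-x_i)$; taking the maximum over $j$ shows $d_\infty(w,x)\leq\xi\,(w_i-x_i)$. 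The reverse inequality $\xi\,(w_i-x_i)\leq|w_i-x_i|\leq d_\infty(w,x)$ is automatic, so equality holds and $w\in x+\xi\Lambda_i$.

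The second inclusion $(x-\xi\Lambda_i)\subseteq(y-\xi\Lambda_i)$ is obtained by running exactly the same computation in the opposite direction: by (1), $y\in x+\xi\Lambda_i$ is the same as $x\in y-\xi\Lambda_i$, and for $v\in x-\xi\Lambda_i$ one writes $\xi\,(y_i-v_i)=\xi\,(y_i-x_i)+\xi\,(x_i-v_i)=d_\infty(y,x)+d_\infty(x,v)$ and bounds $|v_j-y_j|\leq d_\infty(v,x)+d_\infty(x,y)$ coordinatewise as before. I do not expect any genuine obstacle here; the only points requiring care are carrying the sign $\xi\in\{\pm1\}$ uniformly through the argument (so that both half-cones are handled at once) and recording the two short inequalities — the coordinatewise triangle bound and the trivial lower bound $\xi\,(w_i-x_i)\leq d_\infty(w,x)$ — that together upgrade ``$\leq$'' to the equality defining membership.
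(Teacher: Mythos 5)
Your proposal is correct and follows essentially the same route as the paper: both arguments reduce membership in $x+\xi\Lambda_i$ to the identity $\xi(z_i-x_i)=d_\infty(z,x)$, telescope the $i$-th coordinate through the intermediate point, and close the sandwich using the triangle inequality together with the trivial bound $\xi(z_i-x_i)\leq d_\infty(z,x)$. The only cosmetic difference is that the paper compresses this into a single chain of inequalities, whereas you separate the coordinatewise upper bound from the lower bound; the content is identical.
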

\begin{proof}
Part (\ref{prop:y in x+Lambda_i  iff x in y-Lambda_i}) is trivial. Part (\ref{prop:C_y subset C_x}) is true, because for $y\in x+\xi \Lambda_i $ and any $z\in y+\xi \Lambda_i $, we have
\[d_\infty(z,x)\geq \xi(z_i-x_i)=\xi(z_i-y_i)+\xi(y_i-x_i)= d_\infty(z,y)+d_\infty(y,x) \geq d_\infty(z,x)\]
which implies $\xi(z_i-x_i)=d_\infty(z,x)$, i.e. $y+\xi \Lambda_i\subseteq x+\xi \Lambda_i $. The other inclusion is proved in a similar way.
\end{proof}

\begin{definition}[$X$-surrounding points] 
For a compact subset $X\subseteq\R^n_\infty$, we call a point $p\in\R^n_\infty$ a \emph{$X$-surrounding point} if $(p+\xi  \Lambda_i)\cap X\neq \emptyset$ for every $i\in \{1,\dots,n\}$ and $\xi\in\{\pm 1\}$.

Denote $\caF(X):=\{ p\in \R^n_\infty: p\text{ is a }X\text{-surrounding point}\}$, and call it the \emph{set of $X$-surrounding points}.
\end{definition}

Clearly, $X\subseteq\caF(X)$, and $\caF(X)\subseteq\caF(Y)$ whenever $X\subseteq Y\subseteq\R^n_\infty$.\vspace{\baselineskip}

For each $z\in \R^n_\infty$, let $d_\infty^X(z,\cdot):=d_\infty(z,\cdot)\vert_X\in L^\infty(X)$. We then have the following map:
\begin{align*}
    \iota_X:\R^n_\infty&\longrightarrow L^\infty(X)\\
    z&\longmapsto d_\infty^X(z,\cdot)
\end{align*}
By definition, the set $\iota_X^{-1}(\TS(X))$ consists of all $z\in\R^n_\infty$ such that the function $d_\infty^X(z,\cdot)$ is minimal (see Definition \ref{def:inj}).

\begin{proposition}\label{prop:FX_EX_caEX}
Let $X$ be a compact subset of $\R^n_\infty$. Then, for every $X$-surrounding point $p$, $d_\infty^X(p,\cdot)$ is minimal. i.e., $\iota_X(\caF(X))\subseteq\TS(X)$. In addition, the map $\iota_X:\caF(X)\to \TS(X)$ preserves distances. Thus, $\caF(X)\preceq\TS(X)$.
\end{proposition}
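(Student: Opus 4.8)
The plan is to verify the two assertions separately, both resting on one elementary feature of the $\ell^\infty$-geometry: along a fixed cone direction the distances add. Concretely, the computation carried out inside the proof of Proposition \ref{prop:property of C_i and I_xy1} shows that if $y\in x+\xi \Lambda_i$ and $z\in y+\xi \Lambda_i$, then
\[
d_\infty(x,z)=d_\infty(x,y)+d_\infty(y,z),
\]
so that $y$ lies on an $\ell^\infty$-geodesic from $x$ to $z$. This additivity, together with the defining property of $X$-surrounding points, is exactly what produces the ``witness'' points of $X$ needed below.

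For the first claim, set $f:=d_\infty^X(p,\cdot)$. That $f\in\Delta(X)$ is immediate from the triangle inequality (and boundedness, since $X$ is compact). To prove minimality I would show that for every $x\in X$ there is a partner $x'\in X$ with $d_\infty(x,x')=f(x)+f(x')$. Given $x$, since $\R^n_\infty=\bigcup_{i,\xi}(p+\xi \Lambda_i)$ there are $i$ and $\xi$ with $x\in p+\xi \Lambda_i$. Because $p$ is $X$-surrounding, the opposite cone meets $X$, so I may pick $x'\in(p-\xi \Lambda_i)\cap X$. By Proposition \ref{prop:property of C_i and I_xy1} the points $x',p,x$ are then aligned along the direction $+\xi \Lambda_i$ (namely $p\in x'+\xi \Lambda_i$ and $x\in p+\xi \Lambda_i$), so the additivity fact gives precisely $d_\infty(x,x')=d_\infty(x,p)+d_\infty(p,x')=f(x)+f(x')$. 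Now if $g\in\Delta(X)$ satisfies $g\le f$, then
\[
f(x)+f(x')=d_\infty(x,x')\le g(x)+g(x')\le f(x)+f(x'),
\]
forcing $g(x)=f(x)$; as $x$ was arbitrary, $g=f$. Hence $f$ is minimal, i.e. $\iota_X(p)\in\TS(X)$, proving $\iota_X(\caF(X))\subseteq\TS(X)$.

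For the second claim, fix $p,q\in\caF(X)$. The triangle inequality gives $\lvert d_\infty(p,x)-d_\infty(q,x)\rvert\le d_\infty(p,q)$ for all $x\in X$, hence $\norm{\iota_X(p)-\iota_X(q)}_\infty\le d_\infty(p,q)$. For the reverse inequality (assuming $p\ne q$) I would again follow the cone direction: choose $i,\xi$ with $q\in p+\xi \Lambda_i$, and use that $q$ is $X$-surrounding to pick $x\in(q+\xi \Lambda_i)\cap X$. Then $p,q,x$ are aligned in the direction $+\xi \Lambda_i$, so additivity yields $d_\infty(p,x)=d_\infty(p,q)+d_\infty(q,x)$, whence $d_\infty(p,x)-d_\infty(q,x)=d_\infty(p,q)$ and therefore $\norm{\iota_X(p)-\iota_X(q)}_\infty\ge d_\infty(p,q)$. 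Combining the two bounds shows $\iota_X$ is distance-preserving on $\caF(X)$; together with the first claim this exhibits an isometric embedding $\caF(X)\hookrightarrow\TS(X)$, i.e. $\caF(X)\preceq\TS(X)$.

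The only real content lies in producing the witness points $x'$ and $x$, and this is precisely where the $X$-surrounding hypothesis enters; everything else is triangle inequalities and the one-line additivity computation. The main obstacle, therefore, is the sign and cone bookkeeping: one must check that the cone meeting $X$ supplied by the surrounding condition is the one placing $p$ (resp.\ $q$) on a genuine geodesic between the two relevant points, rather than on the wrong side, so that the additivity identity applies with the correct orientation.
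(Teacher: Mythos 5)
Your proposal is correct and follows essentially the same route as the paper: for minimality you use the opposite cone $(p-\xi\Lambda_i)\cap X$ to produce a partner point realizing $d_\infty(x,x')=f(x)+f(x')$, and for distance preservation you use $(q+\xi\Lambda_i)\cap X$ together with additivity of distances along a fixed cone direction, exactly as in the paper's proof. The only differences are cosmetic — you invoke the additivity computation from Proposition \ref{prop:property of C_i and I_xy1} rather than redoing it in coordinates, and you spell out explicitly why the existence of such partner points forces minimality, a step the paper leaves implicit.
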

\begin{proof} 
Fix an arbitrary $p\in \caF(X)$ and arbitrary $x\in X$. Then, $x\in p+\xi  \Lambda_i$ for some $i\in \{1,\dots, n\}$ and $\xi\in \{\pm 1\}$. Take $y\in (p-\xi  \Lambda_i)\cap X\neq\emptyset$ (because $p\in \caF(X)$). Notice that
\[d_\infty(x,y)\leq d_\infty^X(p,x)+d_\infty^X(p,y)=\xi(x_i-p_i)+(-\xi)(y_i-p_i)=\xi(x_i-y_i)\leq d_\infty(x,y).\]
Hence, $d_\infty^X(p,x)+d_\infty^X(p,y)=d_\infty(x,y)$ so that $d_\infty^X(p,\cdot)$ is minimal. i.e., $d_\infty^X(p,\cdot)\in \TS(X)$.\vspace{\baselineskip}

We now prove that $\iota_X:\caF(X)\to \TS(X)$ preserves distances. Fix arbitrary $p,q\in \caF(X)$. One can assume that $q\in p+\xi \Lambda_i$ for some $i\in \{1,\dots,n\}$ and $\xi\in \{\pm 1\}$. Because $q\in\caF(X)$, $(q+\xi\Lambda_i)\cap X\neq \emptyset$. Take $x\in(q+\xi \Lambda_i)\cap X \subseteq (p+\xi \Lambda_i)\cap X$ where the last inclusion holds by the item (2) of Proposition \ref{prop:property of C_i and I_xy1}. Therefore,
\[d_\infty^X(p,x)-d_{\infty}^X(q,x)=\xi(x_i-p_i)-\xi(x_i-q_i)=\xi(q_i-p_i)=d_\infty(p,q).\]
Thus, $\|d_\infty^X(p,\cdot)-d_\infty^X(q,\cdot)\|_\infty\geq d_\infty(p,q)$. The reverse inequality $\|d_\infty^X(p,\cdot)-d_\infty^X(q,\cdot)\|_\infty\leq d_\infty(p,q)$ follows directly from the triangle inequality. Therefore, $\iota_X:\caF(X)\to \TS(X)$ preserves distances, implying that $\caF(X)\preceq\TS(X)$.
\end{proof}

\begin{example}
Notice that $\caF(X)$ is not necessarily injective. Consider the example $X:={(0,3)}\cup (\{0\}\times [-1,1])\subseteq \R^2_\infty$, for which we have $X\subsetneq \caF(X)\prec \TS(X)$. See Figure \ref{fig:caFX v.s. EX}. 

\begin{figure}[h!]
    \centering
    \begin{tikzpicture}[scale=0.8]
\node (a) at (-1,0) {};
\node (c) at (0,3) {};
\node (e) at (1,0) {};

%\node (b) at (2,1) {};
\node (d1) at (4,1) {};
%\node (f) at (6,1) {};
\node (a1) at (3,0) {};
\node (c1) at (4,3) {};
\node (e1) at (5,0) {};

\node (a2) at (7,0) {};
\node (c2) at (8,3) {};
\node (e2) at (9,0) {};
\node (d2) at (8,1) {};

\filldraw (c) circle[radius=1pt];
\draw [thick] (e.center)--(a.center) node at (0,-0.5) {$X$}; 

\filldraw [color= green!80!blue] (c1) circle[radius=1pt];
\node[color= green!80!blue] at (4,-0.5) {$\caF(X)$}; 
\filldraw [color=green!80!blue] (a1.center) -- (d1.center) -- (e1.center) -- cycle;

%\filldraw [color= blue!80] (c2) circle[radius=1pt];
\node[color= blue!80] at (8,-0.5) {$\TS(X)$}; 
\draw [color=blue!80] (d2.center)--(c2.center); 
\filldraw [color=blue!50] (a2.center) -- (d2.center) -- (e2.center) -- cycle;
\end{tikzpicture} 
    \caption{An example such that $X\subsetneq \caF(X)\prec \TS(X)$.}
    \label{fig:caFX v.s. EX}
\end{figure}
\end{example}

\begin{remark} The set $\caF(X)$ is injective if and only if $\caF(X)\cong \TS(X)$. 
\end{remark}

\begin{proposition}
If $X\subseteq\R^n_\infty$ is compact, then $\caF(X)$ is compact.
\end{proposition}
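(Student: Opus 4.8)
The plan is to invoke the Heine–Borel property of the finite-dimensional space $\R^n_\infty$: since $\caF(X)$ is a subset of $\R^n_\infty$, it suffices to show that $\caF(X)$ is both bounded and closed.

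For boundedness, I would argue directly from the definition of an $X$-surrounding point. As $X$ is compact it is bounded, say $\norm{x}_\infty\leq R$ for all $x\in X$. Fix $p\in\caF(X)$ and $i\in\{1,\dots,n\}$. Choosing $x\in (p+\Lambda_i)\cap X$ gives $x_i-p_i=d_\infty(x,p)\geq 0$, hence $p_i\leq x_i\leq R$; choosing $y\in(p-\Lambda_i)\cap X$ gives $p_i-y_i=d_\infty(y,p)\geq 0$, hence $p_i\geq y_i\geq -R$. Thus every coordinate of $p$ lies in $[-R,R]$, so $\caF(X)\subseteq\overline{B}_R(0,\R^n_\infty)$ is bounded.

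For closedness, I would write $\caF(X)$ as a finite intersection $\caF(X)=\bigcap_{i=1}^n\bigcap_{\xi\in\{\pm1\}}C_{i,\xi}$, where $C_{i,\xi}:=\{p\in\R^n_\infty:(p+\xi\Lambda_i)\cap X\neq\emptyset\}$, and show each $C_{i,\xi}$ is closed. The key observation is that for any $x,p$ one has $d_\infty(x,p)-\xi(x_i-p_i)\geq 0$, with equality precisely when $x\in p+\xi\Lambda_i$. Hence, setting
$$\phi_{i,\xi}(p):=\min_{x\in X}\big(d_\infty(x,p)-\xi(x_i-p_i)\big)$$
(the minimum being attained because $X$ is compact and $x\mapsto d_\infty(x,p)-\xi(x_i-p_i)$ is continuous), we obtain $C_{i,\xi}=\phi_{i,\xi}^{-1}(\{0\})$. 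Since for each fixed $x$ the map $p\mapsto d_\infty(x,p)-\xi(x_i-p_i)$ is Lipschitz with a constant independent of $x$, the pointwise infimum $\phi_{i,\xi}$ is itself Lipschitz, hence continuous, so $C_{i,\xi}$ is closed. Therefore $\caF(X)$ is a finite intersection of closed sets and is closed.

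Combining the two parts, $\caF(X)$ is a closed and bounded subset of $\R^n_\infty$, hence compact. The only point requiring any care — and the closest thing to an obstacle — is the continuity of $\phi_{i,\xi}$; alternatively one can bypass it with a direct sequential argument: given $p_k\to p$ with $p_k\in\caF(X)$, for each pair $(i,\xi)$ pick $x_k\in(p_k+\xi\Lambda_i)\cap X$, extract a convergent subsequence $x_k\to x\in X$ by compactness of $X$, and pass to the limit in the closed relation $\xi(x_i-p_i)=d_\infty(x,p)$ to conclude $x\in(p+\xi\Lambda_i)\cap X$, so that $p\in\caF(X)$.
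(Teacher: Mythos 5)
Your proof is correct. The overall strategy is the same as the paper's: both establish compactness via Heine--Borel, and your boundedness argument is essentially identical to the paper's (the paper phrases it contrapositively -- any $p$ outside a cube $[-a,a]^n\supseteq X$ has some cone $p+\xi\Lambda_i$ missing the cube -- whereas you read off the two-sided bound $-R\le p_i\le R$ directly from the nonemptiness of $(p\pm\Lambda_i)\cap X$). Where you diverge is the closedness step. The paper shows the complement of $\caF(X)$ is open by a geometric thickening argument: if $(p+\xi\Lambda_i)\cap X=\emptyset$, then $r:=d_\infty(X,p+\xi\Lambda_i)>0$ (using compactness of $X$), and translating shows $q+\xi\Lambda_i\subseteq B_r(p+\xi\Lambda_i,\R^n_\infty)$ for every $q\in B_r(p,\R^n_\infty)$, so the whole ball $B_r(p,\R^n_\infty)$ avoids $\caF(X)$. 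You instead write $\caF(X)$ as the finite intersection of the sets $C_{i,\xi}$ and exhibit each as the zero set of the uniformly Lipschitz (hence continuous) function $\phi_{i,\xi}(p)=\min_{x\in X}\bigl(d_\infty(x,p)-\xi(x_i-p_i)\bigr)$; your fallback sequential argument (extract $x_k\to x$ and pass to the limit in the closed relation $\xi(x_i-p_i)=d_\infty(x,p)$) is also sound. All three arguments use compactness of $X$ in the same essential way; the paper's version gives slightly more quantitative information (an explicit radius of openness around each point of the complement), while yours is arguably cleaner in that it reduces closedness to continuity of an explicit function and avoids reasoning about neighborhoods of unbounded cones.
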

\begin{proof} Because $X$ is bounded, we can choose a large enough bounded $n$-dimensional cube $C=[-a,a]^n\supseteq X$ for some positive number $a$.
For any $p\notin C$, there exists some $1\leq i\leq n$ such that $|p_i|> a$. Without loss of generality, assume $p_i$ is positive. Any point $x\in p+\Lambda_i$ satisfies $x_i>p_i>a$, and thus cannot be in $C$. Therefore, $(p+\Lambda_i)\cap C=\emptyset$, implying that $p\notin \caF(X)$. Thus, $\caF(X)$ is contained in $ C$ and is bounded.

Let $p\in \R^n_\infty\backslash\caF(X)$ and assume that $ \xi\in\{\pm 1\}$ and $1\leq i\leq n$ are such that $ (p+\xi\Lambda_i)\cap X=\emptyset$. Since $X$ and $p+\xi\Lambda_i$ are both closed, $r:=d_\infty (X,p+\xi\Lambda_i)>0$. For any $q\in B_r(p,\R^n_\infty)$, we claim that $q+\xi\Lambda_i\subseteq B_r(p+\xi\Lambda_i,\R^n_\infty)$. Indeed, for every $z\in q+\xi\Lambda_i$, we have $z+p-q\in (q+p-q)+\xi\Lambda_i=p+\xi\Lambda_i$, and it follows from
\[d_\infty(z,p+\xi\Lambda_i)\leq d_\infty(z,z+p-q)=d_\infty(p,q)<r\]
that $z\in B_r(p+\xi\Lambda_i,\R^n_\infty)$. 
Therefore, $B_r(p,\R^n_\infty)\subseteq \R^n_\infty\backslash\caF(X)$, and thus $\caF(X)$ is closed. Hence, $\caF(X)$ is compact.
\end{proof}

\begin{proposition}
If $X\subseteq\R^n_\infty$ is convex, then $\caF(X)$ is convex.
\end{proposition}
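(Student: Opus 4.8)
The plan is to verify convexity directly from the definition of $X$-surrounding point, exploiting the fact that membership in a cone $p+\xi\Lambda_i$ is governed by the (sublinear) $\ell^\infty$-distance. Fix $p,q\in\caF(X)$ and $\lambda\in[0,1]$, and set $m\coloneqq\lambda p+(1-\lambda)q$. To conclude $m\in\caF(X)$, I must show that $(m+\xi\Lambda_i)\cap X\neq\emptyset$ for every $i\in\{1,\dots,n\}$ and $\xi\in\{\pm1\}$. Throughout I will use the description $p+\xi\Lambda_i=\{z\in\R^n_\infty:\,z_i-p_i=\xi\,d_\infty(z,p)\}$ recorded before Proposition \ref{prop:property of C_i and I_xy1}.

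First I would produce the candidate witness. Fix a direction $(i,\xi)$. Since $p,q\in\caF(X)$, choose $x\in(p+\xi\Lambda_i)\cap X$ and $y\in(q+\xi\Lambda_i)\cap X$, and set $z\coloneqq\lambda x+(1-\lambda)y$. Because $X$ is convex, $z\in X$, so the only thing left is to prove $z\in m+\xi\Lambda_i$. By the choice of the witnesses we have the two key identities $\xi(x_i-p_i)=d_\infty(x,p)\ge|x_j-p_j|$ and $\xi(y_i-q_i)=d_\infty(y,q)\ge|y_j-q_j|$ for all $j$, i.e. in each case the $i$-th coordinate difference (signed by $\xi$) realizes the max-norm.

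The heart of the argument is a sandwich estimate. Writing $z_i-m_i=\lambda(x_i-p_i)+(1-\lambda)(y_i-q_i)$ and multiplying by $\xi$ gives $\xi(z_i-m_i)=\lambda\,d_\infty(x,p)+(1-\lambda)\,d_\infty(y,q)$, which is in particular nonnegative. On the other hand, for every coordinate $j$, the triangle inequality applied coordinatewise yields $|z_j-m_j|\le\lambda|x_j-p_j|+(1-\lambda)|y_j-q_j|\le\lambda\,d_\infty(x,p)+(1-\lambda)\,d_\infty(y,q)$, so taking the maximum over $j$ gives $d_\infty(z,m)\le\xi(z_i-m_i)$. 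Combining this with the trivial reverse bound $d_\infty(z,m)\ge|z_i-m_i|=\xi(z_i-m_i)$ (where the last equality uses the nonnegativity just noted) forces $d_\infty(z,m)=\xi(z_i-m_i)$, which is exactly the statement $z\in m+\xi\Lambda_i$. Hence $z\in(m+\xi\Lambda_i)\cap X$, and since $(i,\xi)$ was arbitrary, $m\in\caF(X)$.

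I do not expect a genuine obstacle here: the proof is essentially bookkeeping built on the sublinearity of $\ell^\infty$. The one point that deserves care, and which I would flag explicitly, is checking that $\xi(z_i-m_i)\ge0$ so that $|z_i-m_i|=\xi(z_i-m_i)$; this is what lets the lower and upper bounds meet, and it holds precisely because that quantity equals a convex combination of the two nonnegative distances $d_\infty(x,p)$ and $d_\infty(y,q)$. Everything else is a direct consequence of convexity of $X$ and of the norm.
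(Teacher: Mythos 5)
Your proof is correct and follows essentially the same route as the paper: take the convex combination of witnesses $x\in(p+\xi\Lambda_i)\cap X$ and $y\in(q+\xi\Lambda_i)\cap X$, and sandwich $d_\infty(z,m)$ between $\xi(z_i-m_i)$ from below and the convex combination $\lambda\,d_\infty(x,p)+(1-\lambda)\,d_\infty(y,q)=\xi(z_i-m_i)$ from above. The only cosmetic difference is that the paper writes out the computation for $\Lambda_1$ and appeals to symmetry for the other cones, whereas you handle a general $(i,\xi)$ directly.
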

\begin{proof}
For every two points $p,q\in \caF(X)$ and each $\lambda\in [0,1]$, let $w:=\lambda p+(1-\lambda) q$.
By the definition of $\caF(X)$, there exist $x\in (p+\Lambda_1)\cap X$ and $y\in (q+\Lambda_1)\cap X$.
Because $X$ is convex, we have $z:=\lambda x+(1-\lambda)y\in X$. It follows from
\[d_\infty(w,z)\leq \lambda d_\infty(p,x)+(1-\lambda)d_\infty(q,y)=\lambda (x_1-p_1)+(1-\lambda)(y_1-q_1)=z_1-w_1\leq d_\infty(w,z)\]
that $z\in w+\Lambda_1$. Therefore, $(w+\Lambda_1)\cap X\neq \emptyset$, and by a similar argument we have $(w+\xi\Lambda_i)\cap X\neq \emptyset$ for every $\xi\in\{\pm 1\}$ and $1\leq i\leq n$. Thus, $\caF(X)$ is convex.
\end{proof}

Next, we show that the tight span of the $n$-sphere $\Sp ^n_\infty$ (or the $(n+1)$-ball $\bbD^{n+1}_\infty$) with $\ell^\infty$ metric properly contains the $(n+1)$-ball $\bbD^{n+1}_\infty$ for $n\geq 2$.

\begin{theorem}\label{thm:ESn_infinity}
Let $n\geq 2$. For the tight spans of $\Sp ^n_\infty$ and $\bbD^{n+1}_\infty$, we have
\begin{itemize}
    \item $\TS(\Sp ^n_\infty)\succeq \caF(\Sp^n_\infty)\supsetneq \bbD^{n+1}_\infty;$ 
    
    \item $\TS(\bbD^{n+1}_\infty)\succeq\caF(\bbD^{n+1}_\infty)\supseteq\caF(\Sp^n_\infty)\supsetneq \bbD^{n+1}_\infty.$
\end{itemize}
 As a consequence, $\bbD^{n+1}_\infty$ is not injective.
\end{theorem}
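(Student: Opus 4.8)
The plan is to reduce the two displayed chains to a single geometric claim, the strict inclusion $\caF(\Sp^n_\infty)\supsetneq\bbD^{n+1}_\infty$, and then to read off non-injectivity from it. Both relations $\TS(\Sp^n_\infty)\succeq\caF(\Sp^n_\infty)$ and $\TS(\bbD^{n+1}_\infty)\succeq\caF(\bbD^{n+1}_\infty)$ are immediate instances of Proposition \ref{prop:FX_EX_caEX} (applied to $X=\Sp^n_\infty$ and $X=\bbD^{n+1}_\infty$), since $\iota_X$ is distance-preserving with image in $\TS(X)$. The middle inclusion $\caF(\bbD^{n+1}_\infty)\supseteq\caF(\Sp^n_\infty)$ is just the monotonicity $X\subseteq Y\Rightarrow\caF(X)\subseteq\caF(Y)$ noted after the definition of $\caF$, applied to $\Sp^n_\infty\subseteq\bbD^{n+1}_\infty$. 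So everything hinges on the core claim, which splits into an inclusion and a strictness assertion.

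For the inclusion $\bbD^{n+1}_\infty\subseteq\caF(\Sp^n_\infty)$ I would fix $p$ with $\norm{p}_2\le 1$ and verify directly that $p$ is $\Sp^n_\infty$-surrounding. Given $i\in\{1,\dots,n+1\}$ and $\xi\in\{\pm 1\}$, consider the ray $z(t)=p+t\,\xi\,e_i$ for $t\ge 0$. Since $z(t)_i-p_i=\xi t$ and $d_\infty(z(t),p)=t$, each $z(t)$ lies in $p+\xi\Lambda_i$. The map $t\mapsto\norm{z(t)}_2$ is continuous, equals $\norm{p}_2\le 1$ at $t=0$, and tends to $\infty$; by the intermediate value theorem it attains the value $1$ at some $t_0\ge 0$, giving a point of $(p+\xi\Lambda_i)\cap\Sp^n_\infty$. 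Hence $(p+\xi\Lambda_i)\cap\Sp^n_\infty\neq\emptyset$ for all $i,\xi$, so $p\in\caF(\Sp^n_\infty)$.

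The strictness -- producing a surrounding point \emph{outside} the ball -- is the main obstacle, and is exactly where the hypothesis $n\ge 2$ enters. I would take the diagonal point $p=c\,\mathbf{1}$ with $\mathbf{1}=(1,\dots,1)\in\R^{n+1}$ and $c$ slightly larger than $\tfrac{1}{\sqrt{n+1}}$, so that $\norm{p}_2=c\sqrt{n+1}>1$, i.e. $p\notin\bbD^{n+1}_\infty$. The $2(n+1)$ cone conditions split into two groups. All \emph{inward} directions $-\Lambda_i$ are witnessed simultaneously by the single sphere point $z_0=\tfrac{1}{\sqrt{n+1}}\mathbf{1}$: here $d_\infty(z_0,p)=c-\tfrac{1}{\sqrt{n+1}}$ is attained with the correct sign in every coordinate, so $z_0\in(p-\Lambda_i)\cap\Sp^n_\infty$ for all $i$. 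For each \emph{outward} direction $+\Lambda_i$ I would search inside the cone among points of the symmetric form $z=a\,e_i+b\sum_{j\ne i}e_j$ with $a\ge c$ and $|b-c|\le a-c$; taking $b=2c-a$ on the cone boundary, the function $\phi(a)=a^2+n(2c-a)^2$ satisfies $\phi(c)=(n+1)c^2>1$ while its minimum value is $\tfrac{4n}{n+1}c^2$, which is $<1$ precisely because $4n<(n+1)^2\iff(n-1)^2>0$, i.e. $n\ge 2$. The intermediate value theorem then yields $a$ with $\phi(a)=1$, giving the required point of $(p+\Lambda_i)\cap\Sp^n_\infty$. This is the step that genuinely uses $n\ge 2$: for $n=1$ the minimum of $\phi$ stays $\ge 1$, consistent with the earlier fact that $\TS(\Sp^1_\infty)\cong\bbD^2_\infty$.

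Finally, for non-injectivity I would use the second chain with $X=\bbD^{n+1}_\infty$. Recall that a metric space is injective iff it coincides with its own tight span, i.e. iff the Kuratowski embedding $\kappa_X\colon X\to\TS(X)$ is onto. Pick the surrounding point $p\in\caF(\bbD^{n+1}_\infty)\setminus\bbD^{n+1}_\infty$ produced above (it lies in $\caF(\Sp^n_\infty)\subseteq\caF(\bbD^{n+1}_\infty)$ and outside $\bbD^{n+1}_\infty$). By Proposition \ref{prop:FX_EX_caEX}, $\iota_{\bbD^{n+1}_\infty}(p)=d_\infty(p,\cdot)|_{\bbD^{n+1}_\infty}$ lies in $\TS(\bbD^{n+1}_\infty)$; and since $\iota_{\bbD^{n+1}_\infty}$ is distance-preserving, hence injective on $\caF(\bbD^{n+1}_\infty)$, while $\kappa_{\bbD^{n+1}_\infty}(x)=\iota_{\bbD^{n+1}_\infty}(x)$ for $x\in\bbD^{n+1}_\infty$, the element $\iota_{\bbD^{n+1}_\infty}(p)$ cannot equal $\kappa_{\bbD^{n+1}_\infty}(x)$ for any $x\in\bbD^{n+1}_\infty$. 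Thus $\kappa_{\bbD^{n+1}_\infty}$ is not surjective, so $\bbD^{n+1}_\infty\neq\TS(\bbD^{n+1}_\infty)$ and $\bbD^{n+1}_\infty$ is not injective.
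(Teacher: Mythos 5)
Your proof is correct and follows essentially the same route as the paper: the same diagonal witness $p=c\,\mathbf{1}$ with $c$ just above $\tfrac{1}{\sqrt{n+1}}$, the same one-parameter family inside each outward cone $p+\Lambda_i$ (your $\phi(a)=a^2+n(2c-a)^2$ is the paper's quadratic in $t$ reparametrized, with the minimum-value criterion replacing the discriminant, both reducing to $(n-1)^2>0$). The only differences are cosmetic improvements: you explicitly verify $\bbD^{n+1}_\infty\subseteq\caF(\Sp^n_\infty)$ and spell out the non-injectivity deduction, both of which the paper leaves implicit, and you use the single point $\tfrac{1}{\sqrt{n+1}}\mathbf{1}$ for all inward cones where the paper uses the points $v^i$.
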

\begin{proof}  
To prove $\caF(\Sp^n_\infty)\supsetneq \bbD^{n+1}_\infty$ it is enough to find a point $p\in \caF(\Sp ^n_\infty)-\bbD^{n+1}_\infty$.

Let $x=\frac{1}{\sqrt{n+1}}(1,\dots,1)\in \R^{n+1}_\infty$. Let $p=(1+\lambda)x$, where $\lambda$ is chosen to be a positive number such that $\lambda^2+2\lambda\leq \frac{(n-1)^2}{4n}$. It is clear that $p\notin\bbD^{n+1}_\infty$. See Figure \ref{fig:ESn} for the case of $n=2$.
\begin{figure}[h!]
    \centering
    \begin{tikzpicture}
    \node[fill=black,circle,inner sep=.1em] (o) at (0,0)  {};
    \filldraw[fill=none](0,0) circle (1.5);
    \draw [dashed](-1.5,0) to[out=90,in=90,looseness=.5] (1.5,0);
    \draw (1.5,0) to [out=270,in=270,looseness=.5] (-1.5,0);
    \draw (0,1.5) to [out=-10,in=90,looseness=.5] (0.8,-0.38);
    
    \node[fill=black,circle,inner sep=.1em] (x) at (0.55,0.7)  {};
    \node[fill=red,circle,inner sep=.1em] (p) at (0.7,0.891) {};
    \node[right, red] at (p)  {\footnotesize $p=\frac{1+\lambda}{\sqrt{3}}(1,1,1)$};
    \node[left] at (x)  {\footnotesize $\frac{1}{\sqrt{3}}(1,1,1)$};
    \draw [dashed] (0,0) to (x);
    \draw (x) to (p);
	\end{tikzpicture}
    \caption{A point $p$ that is in $\caF(\Sp^2_\infty)\cong \TS(\Sp ^2_\infty)$ but not in $\bbD^{3}_\infty$, see Theorem \ref{thm:ESn_infinity}.}
    \label{fig:ESn}
\end{figure}

Let $v^1=\frac{1}{\sqrt{n+1}}(-1,1,\dots,1), v^2=\frac{1}{\sqrt{n+1}}(1,-1,1,\dots,1),\dots,v^{n+1}=\frac{1}{\sqrt{n+1}}(1,\dots,1,-1)$ be points in $\Sp ^n_\infty$. Fix arbitrary $i=1,\cdots,n+1$. It follows from the fact $p_i-v_i^i=d_\infty(p,v^i)$ that $v^i\in p- \Lambda_i$. Thus, $(p- \Lambda_i)\cap \Sp ^n_\infty\neq \emptyset$. Next, we prove $(p+\Lambda_i)\cap \Sp ^n_\infty\neq \emptyset$. For any $t>0$, the point $-tv^i$ is in $\Lambda_i$, implying that $p-t v^i\in p+\Lambda_i$. We compute
\begin{align*}
 \|p-t v^i\|_{2}^2 - 1 &= \lbracket \frac{1+\lambda+t}{\sqrt{n+1}}  \rbracket^2 + n\lbracket \frac{1+\lambda-t}{\sqrt{n+1}}\rbracket^2 -1 \\
 &= t^2-\frac{2(n-1)(1+\lambda)}{n+1} t+ (\lambda^2+2\lambda)
\end{align*}
Because $\lambda$ is positive and $n\geq 2$, the above quadratic function has a positive axis of symmetry. Also, the discriminant of this function $(n-1)^2(1+\lambda)^2-(n+1)^2(\lambda^2+2\lambda)$ is nonnegative because of the assumption on $\lambda$. Hence, this function obtains zero for some $t>0$. Thus, we have proved that there is some $t>0$ such that $p-t v^i\in \Sp ^n_\infty$, i.e. $(p- \Lambda_i)\cap \Sp ^n_\infty\neq \emptyset$. Therefore, $p\in \caF(\Sp ^n_\infty)-\bbD^{n+1}_\infty$ as we required.
\end{proof}

\begin{remark}
Let us see why the above proof of Theorem \ref{thm:ESn_infinity} does not go through when $n = 1$. We use the same notation as the theorem. When $\lambda>0$, $\|p-t v^i\|_{2}^2 - 1 = t^2 + (\lambda^2+2\lambda)=0$ does not have a solution for $t$. Thus, $p=\frac{1+\lambda}{\sqrt{2}}(1,1)\notin \caF(\Sp_\infty^1)$ for every $\lambda>0$, unlike the case when $n\geq 2.$
\end{remark}

%%%%%%%%%%%%%%%%%%%%%%%%%%%%%%%%%%%%
\subsection{The set of $X$-minimal points $\caE(X)$}
\label{sec:minimal points}

In this section, we identify an injective set $\caE(X)$ containing a given set $X\subseteq\R^n_\infty$, which thus also contains the tight span of $X$ up to isometry.

For every $x,y\in \R^n$, the \emph{closed metric interval} (see \cite[page 13]{deza2009encyclopedia}) between $x$ and $y$ is defined as the set
\[I_{xy}:=\{z\in \R^n: d_\infty(x,z)+d_\infty(z,y)=d_\infty(x,y)\}.\]

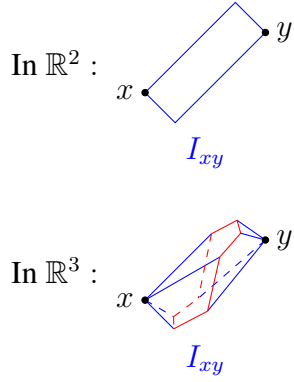
\begin{figure}[h!]
    \centering

\begin{tikzpicture}[scale=0.8]

\node[mark=none] at (2.5,1){$\text{In }\R^2:$};

\coordinate (x) at (4,0.5); 
\coordinate (y) at (6,1.5); 
\coordinate (f) at (5.5,2);
\coordinate (f1) at (4.5,0);

\draw[blue] (x) -- (f) -- (y) -- (f1) -- cycle;
\filldraw[black] (x) circle (1.5pt) node[anchor=east] {$x$};
\filldraw[black] (y) circle (1.5pt) node[anchor=west] {$y$};
\node[mark=none, blue] at (5,-0.5){$I_{xy}$};
\end{tikzpicture}

\hspace{2em}
\newcommand{\Depth}{2}
\newcommand{\Height}{2}
\newcommand{\Width}{2}

\begin{tikzpicture}[scale=0.8]
\coordinate (x) at (0,0); 
\coordinate (y) at (2,1); 

\draw [blue] (x) -- (45:3.7em) coordinate (a);
\draw [blue] (x) -- (30:3.4em) coordinate (b);
\draw [blue, dashed] (x) -- (-30:1.3em) coordinate (c);
\draw [blue] (x) -- (-45:1.6em) coordinate (d);

\draw [red] (a) -- (41:4.8em) coordinate (A);
\draw [red] (b) -- (35:4.6em) coordinate (B);
\draw [red, dashed] (c) -- (2:2.1em) coordinate (C);
\draw [red] (d) -- (-10:2.5em) coordinate (D);

\draw [blue] (y) -- (A);
\draw [blue] (y) -- (B);
\draw [blue, dashed] (y) -- (C);
\draw [blue] (y) -- (D);

\draw [red, dashed] (c) -- (d);
\draw [red] (A) -- (B);
\draw [red, dashed] (a) -- (C);
\draw [red] (b) -- (D);

\node[mark=none] at (-1.5,0.5){$\text{In }\R^3:$};
\filldraw[black] (x) circle (1.5pt) node[anchor=east] {$x$};
\filldraw[black] (y) circle (1.5pt) node[anchor=west] {$y$};
\node[mark=none, blue] at (1,-1){$I_{xy}$};
\end{tikzpicture}
    \caption{Examples of the regions $I_{xy}$ in $\R^2$ and $\R^3$. In both figures, the interiors are included.}
    \label{fig:regions2}
\end{figure}
See Figure \ref{fig:regions2} for illustrations of $I_{xy}$ in both $\R^2$ and $\R^3$, and below for some property of $I_{xy}$.

\begin{proposition} \label{prop:property of C_i and I_xy}
Let $x,y\in \R^n_\infty$.
\begin{enumerate}
    \item \label{prop:I_xy decomp} Suppose that $y\in x+\xi \Lambda_i $. Then, $I_{xy}= (x+\xi \Lambda_i )\cap (y-\xi \Lambda_i ).$ 
    
    \item \label{prop:I_xy contains D_xz} For every $z\in I_{xy}$, $I_{xz}\subseteq I_{xy}$.
\end{enumerate}
\end{proposition}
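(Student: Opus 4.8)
The plan is to handle the two parts separately: Part (\ref{prop:I_xy decomp}) uses the explicit coordinate description of the $\ell^\infty$-cones, while Part (\ref{prop:I_xy contains D_xz}) is a general metric statement that needs nothing about $\ell^\infty$.

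For Part (\ref{prop:I_xy decomp}) I would prove the two inclusions. The backward inclusion is the easy direction: if $z\in (x+\xi\Lambda_i)\cap(y-\xi\Lambda_i)$, then the coordinate description of the cones gives $d_\infty(x,z)=\xi(z_i-x_i)$ and $d_\infty(z,y)=\xi(y_i-z_i)$; summing telescopes to $\xi(y_i-x_i)$, which equals $d_\infty(x,y)$ precisely because the hypothesis $y\in x+\xi\Lambda_i$ says $\xi(y_i-x_i)=d_\infty(x,y)$, so $z\in I_{xy}$. The forward inclusion carries the actual content. Starting from $z\in I_{xy}$, I would invoke the elementary bound $d_\infty(a,b)\geq \xi(b_i-a_i)$ (the $i$-th coordinate gap never exceeds the sup-distance, and $\xi\in\{\pm1\}$) applied to the pairs $(x,z)$ and $(z,y)$. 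Adding these and using the hypothesis yields
$$d_\infty(x,y)=d_\infty(x,z)+d_\infty(z,y)\geq \xi(z_i-x_i)+\xi(y_i-z_i)=\xi(y_i-x_i)=d_\infty(x,y),$$
so the entire chain collapses to equalities. Reading off equality in each summand gives exactly $d_\infty(x,z)=\xi(z_i-x_i)$ and $d_\infty(z,y)=\xi(y_i-z_i)$, i.e. $z\in x+\xi\Lambda_i$ and $z\in y-\xi\Lambda_i$, completing the inclusion.

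For Part (\ref{prop:I_xy contains D_xz}) I would note that it is purely metric and follows from the triangle inequality used twice. Given $z\in I_{xy}$ and $w\in I_{xz}$, I want $w\in I_{xy}$. Combining $d_\infty(w,y)\leq d_\infty(w,z)+d_\infty(z,y)$ with the defining equalities $d_\infty(x,w)+d_\infty(w,z)=d_\infty(x,z)$ and $d_\infty(x,z)+d_\infty(z,y)=d_\infty(x,y)$ gives $d_\infty(x,w)+d_\infty(w,y)\leq d_\infty(x,y)$; the reverse inequality is again the triangle inequality, forcing equality and hence $w\in I_{xy}$.

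The only mild subtlety, and the step I would watch most carefully, is the sign bookkeeping for $\xi$ in Part (\ref{prop:I_xy decomp}): one must remember that $y\in x+\xi\Lambda_i$ forces $\xi(y_i-x_i)=d_\infty(x,y)\geq 0$, so that the coordinate displacements enter the inequality chain with the correct sign and every intermediate estimate is genuinely sharp. Beyond this, both parts are routine.
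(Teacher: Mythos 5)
Both parts of your proposal are correct. Part (\ref{prop:I_xy decomp}) is essentially identical to the paper's argument: the backward inclusion by telescoping the coordinate increments, and the forward inclusion by squeezing the chain $d_\infty(x,y)=d_\infty(x,z)+d_\infty(z,y)\geq \xi(z_i-x_i)+\xi(y_i-z_i)=d_\infty(x,y)$ into equalities. Part (\ref{prop:I_xy contains D_xz}) is where you genuinely diverge. The paper proves it by reducing to the cone description: it picks $\xi,i$ with $y\in x+\xi\Lambda_i$, writes $I_{xy}=(x+\xi\Lambda_i)\cap(y-\xi\Lambda_i)$ and $I_{xz}=(x+\xi\Lambda_i)\cap(z-\xi\Lambda_i)$ via Part (\ref{prop:I_xy decomp}), and then applies the nesting of cones from the earlier Proposition \ref{prop:property of C_i and I_xy1} to get $z-\xi\Lambda_i\subseteq y-\xi\Lambda_i$. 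Your argument instead uses only the triangle inequality twice, never touching the $\ell^\infty$ structure: it establishes that $z\in I_{xy}$ implies $I_{xz}\subseteq I_{xy}$ in an arbitrary metric space. Your route is both more elementary and strictly more general; what the paper's version buys in exchange is an explicit cone-intersection formula for $I_{xz}$ sitting inside the one for $I_{xy}$, which keeps the geometric picture (nested $\ell^\infty$-diamonds) in view for the later arguments in \S\ref{sec:minimal points}, but this is a matter of exposition rather than necessity.
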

\begin{proof} For Part (\ref{prop:I_xy decomp}), notice that for any $z\in (x+\xi \Lambda_i )\cap (y-\xi \Lambda_i )$,
\begin{align*}
 d_\infty(x,z)+d_\infty(z,y)=\xi(z_i-x_i)+\xi(y_i-z_i)=\xi(y_i-x_i)=d_\infty(x,y).
\end{align*}
Hence, $z\in I_{xy}$ so that $(x+\xi \Lambda_i )\cap (y-\xi \Lambda_i )\subseteq I_{xy}$.

Next, for any $z\in I_{xy}$, we have
\begin{align*}
 d_\infty(x,z)+d_\infty(z,y)= d_\infty(x,y)=\xi(y_i-x_i)=\xi(y_i-z_i)+\xi(z_i-x_i)\leq d_\infty(x,z)+d_\infty(z,y)
\end{align*}
implying $\xi(y_i-z_i)=d_\infty(z,y)$ and $\xi(z_i-x_i)=d_\infty(x,z).$ Hence, $z\in (x+\xi \Lambda_i )\cap (y-\xi \Lambda_i )$ so that $I_{xy}\subseteq (x+\xi \Lambda_i )\cap (y-\xi \Lambda_i )$.\vspace{\baselineskip}

For Part (\ref{prop:I_xy contains D_xz}), without loss of generality one can assume that $y\in x+\xi \Lambda_i $ for some $\xi$ and $i$. Then for every $z\in I_{xy}=(x+\xi \Lambda_i )\cap (y-\xi \Lambda_i )$ (by Part (\ref{prop:I_xy decomp})), we have
\begin{align*}
    I_{xz}&=(x+\xi \Lambda_i )\cap (z-\xi \Lambda_i ), \text{ (because $z\in x+\xi \Lambda_i $)}\\
    &\subseteq  (x+\xi \Lambda_i )\cap (y-\xi \Lambda_i ), \text{ (because $z\in y-\xi \Lambda_i $ and Proposition \ref{prop:property of C_i and I_xy1})}\\ 
    &=I_{xy}.
\end{align*}
\end{proof}

\begin{definition}[$X$-minimal points] \label{def:X-minimal}
For a compact subset $X\subseteq\R^n_\infty$, we say that $z\in \R^n_\infty$ is an \emph{$X$-minimal point (in $\R^n_\infty$)}, if the distance function $d_\infty^X(z,\cdot):X\to \R_{\geq 0}$ is minimal. i.e., for every $x\in X,$ there exists $y\in X$ such that $d_\infty(x,z)+d_\infty(z,y)=d_\infty(x,y)$.

Denote by $\caE(X)$ the \emph{set of $X$-minimal points (in $\R^n_\infty$)}, and equip it with the $\ell^\infty$ metric. Equivalently, \[\caE(X)=\bigcap_{x\in X}\bigcup _{y\in X} I_{xy}.\]
\end{definition}

\begin{proposition} \label{prop:property of E}  Consider a subset $X$ of $\R^n_\infty$. Then, the following properties hold:
\begin{enumerate}
    \item \label{prop-part:X subset EX} $X\subseteq \caE(X)$.
    
    \item \label{prop-part:E^2=E}  For every $\tilde{X}$ such that $X\subseteq \tilde{X}\subseteq \caE(X)$, we have $\caE(\tilde{X})\subseteq \caE(X)$. In particular, $\caE(\caE(X))=\caE(X)$.
    
    \item \label{prop-part:compact} If $X$ is compact, then $\caE(X)$ is also compact.
    
    \item \label{prop-part:FX_EX_caEX} We have $X\subseteq\caF(X)\subseteq\caE(X)$, and $\caF(X)\preceq\TS(X)\preceq\caE(X).$
\end{enumerate}
\end{proposition}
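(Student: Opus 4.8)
The plan is to dispatch the four parts in increasing order of difficulty. Parts \ref{prop-part:X subset EX} and \ref{prop-part:compact} are routine; Part \ref{prop-part:E^2=E} rests on a concatenation-of-intervals trick; and the only genuinely new content is the embedding $\TS(X)\preceq\caE(X)$ in Part \ref{prop-part:FX_EX_caEX}, which I expect to be the crux and which I would obtain from the universal property of the tight span rather than from a direct proof that $\caE(X)$ is injective.

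For Part \ref{prop-part:X subset EX}, given $x_0\in X$ and any $x\in X$, the choice $y=x_0$ gives $d_\infty(x,x_0)+d_\infty(x_0,x_0)=d_\infty(x,x_0)$, so $x_0\in I_{x x_0}$ and $d_\infty^X(x_0,\cdot)$ is minimal; hence $x_0\in\caE(X)$. For Part \ref{prop-part:compact} (with $X$ compact) I would check that $\caE(X)$ is closed and bounded and invoke Heine--Borel. Boundedness: for $z\in\caE(X)$ and a fixed $x_0\in X$ there is $y\in X$ with $d_\infty(x_0,z)\le d_\infty(x_0,z)+d_\infty(z,y)=d_\infty(x_0,y)\le\diam(X)$, so $\caE(X)\subseteq\overline{B}_{\diam(X)}(x_0,\R^n_\infty)$. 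Closedness: if $z_k\to z$ with $z_k\in\caE(X)$, then fixing $x\in X$ and picking $y_k\in X$ with $z_k\in I_{x y_k}$, the compactness of $X$ yields a subsequence $y_k\to y\in X$, and continuity of $d_\infty$ gives $z\in I_{xy}$; since $x$ is arbitrary, $z\in\caE(X)$.

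For Part \ref{prop-part:E^2=E}, let $z\in\caE(\tilde X)$ and fix $x\in X\subseteq\tilde X$. Since $z$ is $\tilde X$-minimal there is $\tilde y\in\tilde X$ with $d_\infty(x,z)+d_\infty(z,\tilde y)=d_\infty(x,\tilde y)$. Because $\tilde y\in\tilde X\subseteq\caE(X)$ is $X$-minimal, applying its minimality to $x$ gives $b\in X$ with $d_\infty(x,\tilde y)+d_\infty(\tilde y,b)=d_\infty(x,b)$. Concatenating, $d_\infty(x,z)+d_\infty(z,\tilde y)+d_\infty(\tilde y,b)=d_\infty(x,b)$; collapsing the middle term by the triangle inequality forces $d_\infty(x,z)+d_\infty(z,b)=d_\infty(x,b)$, i.e.\ $z\in I_{xb}$ with $b\in X$. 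As $x$ was arbitrary, $z\in\caE(X)$. Taking $\tilde X=\caE(X)$ gives $\caE(\caE(X))\subseteq\caE(X)$, and the reverse inclusion is Part \ref{prop-part:X subset EX} applied to $\caE(X)$, so $\caE(\caE(X))=\caE(X)$.

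Finally, for Part \ref{prop-part:FX_EX_caEX}: the inclusion $X\subseteq\caF(X)$ is immediate since $0\in\Lambda_i$ forces $x\in(x+\xi\Lambda_i)\cap X$ for all $i,\xi$, while $\caF(X)\subseteq\caE(X)$ and $\caF(X)\preceq\TS(X)$ are precisely the conclusions of Proposition \ref{prop:FX_EX_caEX} (an $X$-surrounding point yields a minimal distance function). The delicate point is $\TS(X)\preceq\caE(X)$. My idea is to sidestep injectivity of $\caE(X)$: choose $a$ large with $X\subseteq C:=\overline{B}_a(0,\R^n_\infty)$, noting that $C$ is a closed ball, hence hyperconvex by Lemma \ref{lemma:intercloballhyper}, hence injective by Proposition \ref{prop:hypinj}, and compact. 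Then $\TS(C)\cong C$ via the Kuratowski map $\kappa_C$, and Lemma \ref{lm:iso span} applied to $X\hookrightarrow C$ produces an isometric embedding $h:\TS(X)\hookrightarrow\TS(C)\cong C$ with $h(f)|_X=f$. Writing $h(f)=\kappa_C(z)=d_\infty(z,\cdot)|_C$ for the corresponding $z=z(f)\in C\subseteq\R^n_\infty$, the identity $h(f)|_X=f$ restricts to $d_\infty^X(z,\cdot)=f$; since $f\in\TS(X)$ is minimal, $z$ is an $X$-minimal point, so $z\in\caE(X)$. Thus $f\mapsto z(f)$ is an isometric embedding of $\TS(X)$ into $\caE(X)$, giving $\TS(X)\preceq\caE(X)$ and completing the chain $\caF(X)\preceq\TS(X)\preceq\caE(X)$.
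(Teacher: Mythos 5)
Your proof is correct, and for the one substantive part it takes a genuinely different route from the paper. Parts (\ref{prop-part:X subset EX}) and (\ref{prop-part:compact}) match the paper's treatment (your Part (\ref{prop-part:compact}) is in fact more complete, since you verify boundedness, which the paper's proof leaves implicit, before applying the closedness argument). Your Part (\ref{prop-part:E^2=E}) is the same idea as the paper's --- transitivity of metric betweenness --- but you derive it inline from the triangle inequality (valid in any metric space), whereas the paper routes it through item (\ref{prop:I_xy contains D_xz}) of Proposition \ref{prop:property of C_i and I_xy}, whose proof uses the cone decomposition of $\R^n_\infty$. The real divergence is in Part (\ref{prop-part:FX_EX_caEX}): the paper obtains $\TS(X)\preceq\caE(X)$ from the injectivity of $\caE(X)$ together with the minimality of the tight span, which forces a forward reference to Proposition \ref{prop:inj_iff_Q=caEQ} (itself resting on Part (\ref{prop-part:E^2=E})). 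You instead embed $X$ into a large hyperconvex cube $C=\overline{B}_a(0,\R^n_\infty)$, use $\TS(C)\cong C$ via the Kuratowski map and Lemma \ref{lm:iso span} to realize every $f\in\TS(X)$ as $d_\infty^X(z,\cdot)$ for an actual point $z\in C$, and observe that minimality of $f$ makes $z$ an $X$-minimal point; isometry of the resulting map $f\mapsto z(f)$ follows since both $h_{X,C}$ and $\kappa_C$ preserve distances. This is self-contained at this stage of the development and in addition shows that $\iota_X:\caE(X)\to\TS(X)$ is surjective; what it does not give you is the injectivity of $\caE(X)$ itself, which the paper still needs downstream (e.g.\ in Theorem \ref{thm:caE(S2)}), so the paper's detour is not wasted work in context. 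Both arguments are sound; just note that your construction tacitly uses that $X$ is bounded (to choose $a$), consistent with the standing compactness assumption under which $\caE(X)$ and $\caF(X)$ are defined.
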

\begin{proof} Part (\ref{prop-part:X subset EX}) is straightforward. For Part (\ref{prop-part:E^2=E}), we claim that for every $x\in X\subseteq\tilde{X}$, 
$\bigcup _{\tilde{y}\in\tilde{X}} I_{x\tilde{y}}= \bigcup _{y\in X} I_{xy}.$
Then it follows from the claim that 
\begin{align*}
   \caE(\tilde{X}) = \bigcap_{\tilde{x}\in \tilde{X}}\bigcup _{\tilde{y}\in\tilde{X}} I_{\tilde{x}\tilde{y}}\subseteq \bigcap_{x\in X}\bigcup _{\tilde{y}\in\tilde{X}} I_{x\tilde{y}}= \bigcap_{x\in X}\bigcup _{y\in X} I_{xy}=\caE(X).
\end{align*}
To prove the claim, it suffices to show $\bigcup _{\tilde{y}\in\tilde{X}} I_{x\tilde{y}}\subseteq\bigcup _{y\in X} I_{xy}$, because the reverse inclusion follows directly from the fact $X\subseteq\tilde{X}$. Fix $x\in X\subseteq\tilde{X}$. For each $z\in \bigcup _{\tilde{y}\in\tilde{X}} I_{x\tilde{y}}$, there exists $\tilde{y}\in \tilde{X}\subseteq\caE(X)$ such that $z\in I_{x\tilde{y}}$. Because $\tilde{y}\in 
\caE(X)$ and $x\in X$, there exists $y\in X$ such that $\tilde{y}\in I_{xy}$. It follows from item (\ref{prop:I_xy contains D_xz}) of Proposition \ref{prop:property of C_i and I_xy} that $z\in I_{x\tilde{y}}\subseteq I_{xy}$. Therefore,
\[\bigcup _{\tilde{y}\in\tilde{X}} I_{x\tilde{y}}\subseteq\bigcup _{y\in X} I_{xy},\]
and the claim is proved. Taking $\tilde{X}= \caE(X)$, we obtain $\caE(\caE(X))\subseteq \caE(X)$. Combined with Part (\ref{prop-part:X subset EX}), we then have $\caE(X)=\caE(\caE(X))$.

For Part (\ref{prop-part:compact}), we assume that $X$ is compact. By Theorem \ref{thm:compactness_tight_span} the tight span $\TS(X)$ is also compact, i.e. every limit of minimal functions is still minimal. Consider a convergent sequence $\{p_i\}_{i\in \mathbb{N}}\subseteq \caE(X)$. Then $d_\infty(\lim p_i,\cdot) = \lim d_\infty(p_i,\cdot)$ is the limit of minimal functions, and thus is also minimal. It follows that $\lim p_i\in \caE(X)$.

Finally, item (\ref{prop-part:FX_EX_caEX}) can be proved by using Proposition \ref{prop:FX_EX_caEX}, the minimality of $\TS(X)$, and the injectivity of $\caE(X)$ which will be shown in Proposition \ref{prop:inj_iff_Q=caEQ}.
\end{proof}

\begin{example}\label{ex:EX-caFX}  
The isometric embeddings in Proposition \ref{prop:property of E} (\ref{prop-part:FX_EX_caEX}) can all be non-surjective. 
For example, let $X:={(0,3)}\cup (\{0\}\times [-1,1])\subseteq \R^2_\infty$ as before. Then we have $X\subsetneq \caF(X)\prec \TS(X)\prec \caE(X)$, see Figure \ref{fig:caFX v.s. EX 2}. 

\begin{figure}[h!]
    \centering
    \begin{tikzpicture}[scale=0.8]
\node (a) at (-1,0) {};
\node (c) at (0,3) {};
\node (e) at (1,0) {};

\node (d1) at (4,1) {};

\node (a1) at (3,0) {};
\node (c1) at (4,3) {};
\node (e1) at (5,0) {};

\node (a2) at (7,0) {};
\node (c2) at (8,3) {};
\node (e2) at (9,0) {};
\node (d2) at (8,1) {};

\node (a3) at (12,0) {};
\node (c3) at (13,3) {};
\node (e3) at (14,0) {};
\node (b3) at (11,1) {};
\node (f3) at (15,1) {};

\filldraw (c) circle[radius=1pt];
\draw [thick] (e.center)--(a.center) node at (0,-0.5) {$X$}; 

\filldraw [color= green!80!blue] (c1) circle[radius=1pt];
\node[color= green!80!blue] at (4,-0.5) {$\caF(X)$}; 
\filldraw [color=green!80!blue] (a1.center) -- (d1.center) -- (e1.center) -- cycle;

\node[color= blue!80] at (8,-0.5) {$\TS(X)$}; 
\draw [color=blue!80] (d2.center)--(c2.center);
\filldraw [color=blue!50] (a2.center) -- (d2.center) -- (e2.center) -- cycle;

\filldraw [color=red!30] (a3.center) -- (b3.center) -- (c3.center) -- (f3.center) -- (e3.center) -- cycle;
\node [color=red] at (13,-0.5) {$\caE(X)$}; 
\end{tikzpicture} 
    \caption{An example such that $X\subsetneq \caF(X)\prec \TS(X)\prec \caE(X)$.}
    \label{fig:caFX v.s. EX 2}
\end{figure}
\end{example}

Next, we show that the set of $X$-minimal points is injective:

\begin{proposition}\label{prop:inj_iff_Q=caEQ}
For a compact $Q\subseteq\R^n_\infty$, $Q$ is injective if and only if $Q=\caE(Q)$. In addition, for any every compact set $X\subseteq\R^n_\infty$, $\caE(X)$ is injective.
\end{proposition}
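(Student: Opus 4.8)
The plan is to prove the two statements in tandem, since the second follows from the first together with the idempotence property $\caE(\caE(X))=\caE(X)$ established in Proposition \ref{prop:property of E}. The key technical characterization I would aim for is this: \emph{a point $z\in \R^n_\infty$ lies in $\caE(Q)$ if and only if for every $x\in Q$ the function $d_\infty^X(z,\cdot)$ cannot be strictly decreased at $z$ while staying in $\Delta(Q)$}; this is just a restatement of $X$-minimality, but phrased this way it connects directly to the tight-span characterization of injectivity (namely, $Q$ is injective iff $Q=\TS(Q)$, i.e. every function $d_\infty^Q(z,\cdot)$ for $z\in Q$ is already minimal and conversely every minimal function is realized by a point of $Q$).

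First I would prove the easy direction of the equivalence. Suppose $Q=\caE(Q)$; I want to show $Q$ is injective, equivalently $Q\cong\TS(Q)$. Using the Kuratowski-type embedding $\iota_Q:\R^n_\infty\to L^\infty(Q)$, $z\mapsto d_\infty^Q(z,\cdot)$, the set $\caE(Q)$ is by Definition \ref{def:X-minimal} exactly $\iota_Q^{-1}(\TS(Q))$ intersected with the image, so $\iota_Q(\caE(Q))\subseteq \TS(Q)$. The hypothesis $Q=\caE(Q)$ says every point of $Q$ maps to a minimal function; to get the reverse containment I must show \emph{every} minimal function on $Q$ (every element of $\TS(Q)$) is of the form $d_\infty^Q(z,\cdot)$ for some $z\in Q$. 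This is where I would use that $\R^n_\infty$ itself is injective (it is an $L^\infty$ space): given $f\in\TS(Q)\subseteq\TS(\R^n_\infty)$, the minimality of $f$ together with hyperconvexity of $\R^n_\infty$ produces a realizing point $z\in\R^n_\infty$ with $d_\infty^Q(z,\cdot)=f$, and minimality of $f$ forces $z\in\caE(Q)=Q$. Conversely, if $Q$ is injective then $Q=\TS(Q)$, and since the realizing points of minimal functions are precisely the $X$-minimal points, $Q=\caE(Q)$.

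For the second statement I would apply the first to $Q=\caE(X)$: by Proposition \ref{prop:property of E}(\ref{prop-part:E^2=E}) we have $\caE(\caE(X))=\caE(X)$, and by Proposition \ref{prop:property of E}(\ref{prop-part:compact}) the set $\caE(X)$ is compact, so the criterion $Q=\caE(Q)$ is met and $\caE(X)$ is injective. The main obstacle I anticipate is the realization step in the forward direction: carefully producing a genuine point $z\in\R^n_\infty$ whose distance function equals a given minimal $f$, and checking that minimality pins it down inside $\caE(Q)$ rather than merely in the ambient space. The natural tool is hyperconvexity of $\R^n_\infty$ applied to the ball family $\{\overline{B}_{f(x)}(x,\R^n_\infty)\}_{x\in Q}$, whose pairwise-intersection condition is exactly $f\in\Delta(Q)$; Lemma \ref{lemma:intercloballhyper} guarantees the intersection is nonempty and hyperconvex, and then the minimality of $f$ is what forces the chosen intersection point to be $X$-minimal. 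Once this realization is clean, both equivalences close up routinely.
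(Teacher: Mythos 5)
Your proposal is correct and follows essentially the same route as the paper: the converse direction via the map $\iota_Q:z\mapsto d_\infty^Q(z,\cdot)$ together with the realization of each $f\in\TS(Q)$ by a point of $\R^n_\infty$ (you make explicit, via hyperconvexity of $\R^n_\infty$ and minimality of $f$, a step the paper leaves implicit), and the second claim via idempotence and compactness of $\caE(X)$. The only spot where you are terser than the paper is the forward direction: to get from ``every minimal function is realized by some $q\in Q$'' to $\caE(Q)\subseteq Q$ you still need the observation that each $f\in\TS(Q)$ then has a zero, so that $p\in\caE(Q)$ satisfies $d_\infty(p,q)=0$ for some $q\in Q$ and hence $p=q$.
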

\begin{proof}
Assume that $Q$ is injective. To show $Q=\caE(Q)$, it suffices to show that $Q\supseteq \caE(Q)$.
Since $Q$ is injective, $Q$ is isometric to $\TS(Q)$. As a consequence, any function $f\in\TS(Q)$ can be realized as a distance map $d_\infty^Q(z,\cdot):Q\to \R$ for some $z\in Q$, and thus has a zero. For any $p\in\caE(Q)$, the distance map $d_\infty^Q(p,\cdot):Q\to \R$ is in the tight span $\TS(Q)$. Because $d_\infty^Q(p,\cdot):Q\to \R$ has a zero, there is some $q\in Q$ such that $d_\infty^Q(p,q)=0$, i.e. $p=q\in Q.$ 

Conversely, suppose $Q=\caE(Q)$. Consider the map $\iota_Q:\R^n_\infty\to L^{\infty}(Q)\text{ with }z\mapsto d_\infty^Q(z,\cdot).$
The set $\iota_Q^{-1}(\TS(Q))$ consists of $z\in \R^n_\infty$ such that the function $d_\infty^Q(z,\cdot)$ are minimal, and thus is equal to $\caE(Q)(=Q)$ by Definition \ref{def:X-minimal}. Thus, $\iota_Q|_{Q}:Q\to \TS(Q)$ is surjective. In addition, because $\iota_Q|_{Q}$ preserves distance, $\iota_Q|_{Q}$ is an isometry. It follows from $Q\cong \TS(Q)$ that $Q$ is injective.

Recall from Proposition \ref{prop:property of E} (\ref{prop-part:E^2=E}) that $\caE(X)=\caE(\caE(X))$, for any every compact set $X\subseteq\R^n_\infty$. Thus, $\caE(X)$ is injective.
\end{proof}

\begin{remark}
For every compact set $X\subseteq\R^n_\infty$, $\TS(X)$ is isometric to the smallest compact set $Y\subseteq\R^n_\infty$ such that $X\subseteq Y\subseteq \caE(X)$ and $Y=\caE(Y)$.
\end{remark} 

\subsubsection{The tight span of $\Sp ^2_\infty$}
We have already seen in Theorem \ref{thm:ESn_infinity} that the tight span of $\Sp ^n_\infty$ (or that of $\bbD^{n+1}_\infty$) properly contains $\bbD^{n+1}_\infty$ for $n\geq 2$, up to isometry. Below, we concentrate to the case when $n=2$, and use the injectivity of $\caE(\Sp^2_\infty)$ to show that $\TS(\Sp^2_\infty)$ is isometric to $\caF(\Sp^2_\infty)=\caE(\Sp^2_\infty)$.

\begin{theorem} \label{thm:caE(S2)}
Let $\Sp ^2_\infty$ be the unit sphere centered at the origin in $\R^3_\infty$. Then, 

\begin{enumerate}
    \item \label{thm-part:boundary}
    $\caF(\Sp^2_\infty)=\caE(\Sp^2_\infty)$, which is then isometric to the tight span $\TS(\Sp^2_\infty)$ of $\Sp^2_\infty$.
    
    \item \label{thm-part:manifold}
    $\caF(\bbD^3_\infty)=\caE(\bbD^3_\infty)=\caF(\Sp^2_\infty)$, which is then isometric to the tight span $\TS(\bbD^3_\infty)$ of $\bbD^3_\infty$. In addition, $\TS(\bbD^3_\infty)\cong \TS(\Sp^2_\infty).$
\end{enumerate}
\end{theorem}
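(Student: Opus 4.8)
The plan is to deduce the whole theorem from two \emph{set-level} equalities, $\caF(\Sp^2_\infty)=\caE(\Sp^2_\infty)$ and $\caF(\bbD^3_\infty)=\caE(\bbD^3_\infty)$, together with the chain of isometric embeddings $\caF(X)\preceq\TS(X)\preceq\caE(X)$ already recorded in Proposition~\ref{prop:FX_EX_caEX} and Proposition~\ref{prop:property of E}(\ref{prop-part:FX_EX_caEX}). Indeed, whenever $\caF(X)=\caE(X)$ \emph{as subsets} of $\R^n_\infty$, the composite isometric embedding $\caF(X)\hookrightarrow\TS(X)\hookrightarrow\caE(X)=\caF(X)$ is an isometric self-map of the compact space $\caF(X)$, hence surjective; therefore both embeddings are bijective and $\caF(X)\cong\TS(X)\cong\caE(X)$. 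Since $\caE(X)$ is injective by Proposition~\ref{prop:inj_iff_Q=caEQ}, this simultaneously realizes $\TS(X)$ as an injective set. Thus the entire statement reduces to the two set equalities, and once $\caF(\bbD^3_\infty)=\caF(\Sp^2_\infty)$ is also established, the final claim $\TS(\bbD^3_\infty)\cong\TS(\Sp^2_\infty)$ drops out, both tight spans being isometric to the common set.

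The heart is the inclusion $\caE(\Sp^2_\infty)\subseteq\caF(\Sp^2_\infty)$ (the reverse always holds, by Proposition~\ref{prop:property of E}(\ref{prop-part:FX_EX_caEX})), and I would first translate both membership conditions into cone language. Combining Proposition~\ref{prop:property of C_i and I_xy}(\ref{prop:I_xy decomp}) with Proposition~\ref{prop:property of C_i and I_xy1}(\ref{prop:C_y subset C_x}), one checks that for a sphere point $x$ lying in the interior of a cone $z+\zeta\Lambda_k$ there exists $y\in\Sp^2_\infty$ with $z\in I_{xy}$ \emph{if and only if} the opposite cone meets the sphere, i.e. $(z-\zeta\Lambda_k)\cap\Sp^2_\infty\neq\emptyset$. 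Consequently $z$ is $\Sp^2_\infty$-minimal precisely when the family of the six cones $z\pm\Lambda_i$ whose interiors meet the sphere is invariant under the antipodal map $\zeta\Lambda_k\mapsto-\zeta\Lambda_k$, whereas $z$ is $\Sp^2_\infty$-surrounding precisely when all six cones meet the sphere. So part~(\ref{thm-part:boundary}) reduces to the purely geometric statement that, for the round sphere, an antipodally invariant incidence pattern must be complete.

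The \textbf{main obstacle} is exactly this last statement, and it is where the roundness of $\Sp^2_\infty$ is essential: Example~\ref{ex:EX-caFX} shows $\caE\subseteq\caF$ can fail for non-round $X$. I would argue by contradiction. An antipodally invariant but incomplete pattern forces some axis $i$ whose two open cones both miss the sphere, so the sphere is confined to $\{x:\ |x_i-z_i|\leq\max_{j\neq i}|x_j-z_j|\}$; since a round $2$-sphere cannot lie in the codimension-one locus where the maximal coordinate offset is attained twice, the other two axes cannot both be double-empty, so (at least) one of them has both cones meeting the sphere. The claim is that this confinement is incompatible with the antipodal balance simultaneously required on that axis: the confinement pushes $z$ away from the $i$-axis, after which one of the remaining axes acquires a cone meeting the sphere whose opposite does not. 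Making this step precise—selecting the right extremal sphere points (e.g. the axis points $\pm e_k$ and nearby points) and handling sphere points lying on shared faces of two cones—is the technical crux of the whole theorem; I expect it to require a careful, largely elementary computation using $x_1^2+x_2^2+x_3^2=1$.

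For part~(\ref{thm-part:manifold}) I would first prove $\caF(\bbD^3_\infty)=\caF(\Sp^2_\infty)$. The inclusion $\supseteq$ is monotonicity of $\caF$; for $\subseteq$, if a cone $p+\xi\Lambda_i$ meets $\bbD^3_\infty$ at a point $q$, then by Proposition~\ref{prop:property of C_i and I_xy1}(\ref{prop:C_y subset C_x}) it contains the whole ray $q+t\,\xi e_i$ $(t\geq 0)$, which starts inside the ball and escapes to infinity, hence crosses $\Sp^2_\infty$, so the cone already meets the sphere. Next, $\bbD^3_\infty\subseteq\caE(\Sp^2_\infty)$, because every $z$ in the ball lies on a chord $[x,y]$ with $x,y\in\Sp^2_\infty$ and line segments always lie in the metric interval $I_{xy}$; Proposition~\ref{prop:property of E}(\ref{prop-part:E^2=E}) applied with $X=\Sp^2_\infty\subseteq\tilde X=\bbD^3_\infty\subseteq\caE(\Sp^2_\infty)$ then yields $\caE(\bbD^3_\infty)\subseteq\caE(\Sp^2_\infty)$, while the reverse inclusion follows from part~(\ref{thm-part:boundary}) together with $\caF(\bbD^3_\infty)\subseteq\caE(\bbD^3_\infty)$. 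Collecting these gives $\caF(\bbD^3_\infty)=\caE(\bbD^3_\infty)=\caF(\Sp^2_\infty)=\caE(\Sp^2_\infty)$, and the isometries with the tight spans—hence $\TS(\bbD^3_\infty)\cong\TS(\Sp^2_\infty)$—follow from the reduction in the first paragraph.
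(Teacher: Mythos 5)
Your overall architecture is sound and matches the paper's: the reduction of the isometry statements to the set equalities $\caF=\caE$ (your compactness/surjectivity argument is a valid alternative to the paper's appeal to the minimality of the tight span in Lemma~\ref{prop:EX=caEX=caFX}), the use of the mirror property (Lemma~\ref{lem:mirror intersection}) to convert $\Sp^2_\infty$-minimality into an ``antipodally balanced'' incidence pattern of the six cones, and your argument for part~(\ref{thm-part:manifold}) via $\caF(\bbD^3_\infty)=\caF(\Sp^2_\infty)$, $\bbD^3_\infty\subseteq\caE(\Sp^2_\infty)$ and Proposition~\ref{prop:property of E}(\ref{prop-part:E^2=E}) are all correct (and in fact more explicit than what the paper writes for part~(\ref{thm-part:manifold})).

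However, there is a genuine gap at exactly the point you flag as ``the technical crux'': you never prove that for $p\in\caE(\Sp^2_\infty)$ no coordinate axis can have both open cones $\mathrm{Int}(p\pm\Lambda_i)$ disjoint from the sphere. You reduce the theorem to this statement, observe that not all three axes can be double-empty, and then only \emph{assert} that the resulting confinement of the sphere is ``incompatible with the antipodal balance'' on another axis, deferring the verification to ``a careful, largely elementary computation'' that is not carried out. It is also not evident that the contradiction, if one pursues your route, really surfaces as a violation of the mirror property on a second axis rather than in some other form, so the sketch cannot be accepted as a proof outline with only routine details missing. The paper closes precisely this gap by topological rather than computational means: after securing one axis with both cones meeting the sphere, it uses connectedness of $\Sp^2_\infty$ to rule out the case where the remaining two axes are both interior-empty (the sphere would split into the two nonempty open sets $\{x_1<p_1\}$ and $\{x_1>p_1\}$), and then uses simple connectedness to rule out the case where exactly one axis is interior-empty (a loop visiting the four regions $R^1,\dots,R^4$ cut out by the cones of the two active axes would have to be contractible in $(p+\Lambda_1)\cup(p-\Lambda_1)\cup(p+\Lambda_2)\cup(p-\Lambda_2)\setminus\{p\}$, which it is not). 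Until you supply an argument of comparable substance for your confinement claim, part~(\ref{thm-part:boundary}) --- and hence everything downstream of it --- remains unproven.
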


We prepare the following two lemmas for the proof of Theorem \ref{thm:caE(S2)}.

\begin{lemma}\label{prop:EX=caEX=caFX}
Let $X$ be a compact subset of $\R^n_\infty$.
\begin{enumerate}
    \item If the map $\iota_X:\caE(X)\to \TS(X)\text{ with }x\mapsto d_\infty^X(x,\cdot)$ preserves distances, then the tight span $\TS(X)\cong\caE(X)$.
    
    \item \label{prop-part:caE=caF}
    If $\caE(X)\subseteq\caF(X)$, i.e. $\caE(X)=\caF(X)$, then the tight span $\TS(X)\cong \caE(X)=\caF(X)$.
\end{enumerate}
\end{lemma}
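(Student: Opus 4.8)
The plan is to prove part (1) in full and then deduce part (2) as an immediate corollary. For part (1) the strategy is to exhibit isometric embeddings in \emph{both} directions between $\caE(X)$ and $\TS(X)$, and then upgrade mutual embeddability to a genuine isometry using compactness. First I would note that $\iota_X$ really does map $\caE(X)$ into $\TS(X)$: by the definition of the set of $X$-minimal points (Definition \ref{def:X-minimal}), for each $z\in\caE(X)$ the function $d_\infty^X(z,\cdot)$ is minimal, so $\iota_X(z)\in\TS(X)$. Under the standing hypothesis that $\iota_X$ preserves distances, this already gives an isometric embedding $\caE(X)\preceq\TS(X)$. For the reverse direction I would invoke Proposition \ref{prop:inj_iff_Q=caEQ}, which says $\caE(X)$ is injective, so that $\caE(X)\cong\TS(\caE(X))$. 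Since $X\subseteq\caE(X)$ isometrically (Proposition \ref{prop:property of E}), Lemma \ref{lm:iso span} produces an isometric embedding $j:=h_{X,\caE(X)}:\TS(X)\hookrightarrow\TS(\caE(X))=\caE(X)$ with $j(f)|_X=f$. Thus also $\TS(X)\preceq\caE(X)$.

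The heart of the argument is to promote these two embeddings to an isometry. Write $T:=\iota_X\circ j:\TS(X)\to\TS(X)$, which is distance-preserving. Because $X$ is compact, $\TS(X)$ is compact by Theorem \ref{thm:compactness_tight_span}, and a distance-preserving self-map of a compact metric space is automatically surjective: if $T$ missed a point $y$, then since $T(\TS(X))$ is closed we would have $\delta:=d(y,T(\TS(X)))>0$, while $d(y,T^{\,k}(y))\geq\delta$ for every $k\geq 1$ (as $T^{\,k}(y)\in T(\TS(X))$), contradicting the existence of a convergent subsequence of $(T^{\,k}(y))_k$. Surjectivity of $T=\iota_X\circ j$ forces $\iota_X$ to be surjective, and since $\iota_X$ is already distance-preserving it is an isometry of $\caE(X)$ onto $\TS(X)$; hence $\TS(X)\cong\caE(X)$. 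The main obstacle is precisely this last step: distance-preservation by itself does not yield an isometry for non-compact spaces, so the proof genuinely leans on the compactness of both $\caE(X)$ (Proposition \ref{prop:property of E}) and $\TS(X)$ (Theorem \ref{thm:compactness_tight_span}) together with the self-map trick. One must also be careful that $j$ truly lands inside $\caE(X)$, which is exactly where the identification $\TS(\caE(X))=\caE(X)$ afforded by injectivity is used.

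Finally, part (2) follows at once. If $\caE(X)\subseteq\caF(X)$, then combined with the always-valid inclusion $\caF(X)\subseteq\caE(X)$ from Proposition \ref{prop:property of E} we obtain $\caF(X)=\caE(X)$. By Proposition \ref{prop:FX_EX_caEX} the map $\iota_X:\caF(X)\to\TS(X)$ preserves distances, hence so does $\iota_X:\caE(X)\to\TS(X)$, and applying part (1) gives $\TS(X)\cong\caE(X)=\caF(X)$, as claimed.
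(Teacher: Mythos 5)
Your proof is correct, but for part (1) it takes a genuinely different route from the paper. The paper's proof is a one-liner: under the hypothesis, $\iota_X$ realizes the injective space $\caE(X)$ (injectivity coming from Proposition \ref{prop:inj_iff_Q=caEQ}) as a subspace of $\TS(X)$ containing the Kuratowski image of $X$, and then the minimality of the tight span --- no proper subspace of $\TS(X)$ containing $X$ is injective --- immediately forces $\iota_X(\caE(X))=\TS(X)$. You instead build embeddings in both directions ($\caE(X)\preceq\TS(X)$ from the hypothesis, and $\TS(X)\preceq\TS(\caE(X))\cong\caE(X)$ from Lemma \ref{lm:iso span} and injectivity of $\caE(X)$) and then upgrade mutual embeddability to an isometry via the fact that a distance-preserving self-map of a compact metric space is surjective. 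Both arguments are valid. The paper's buys brevity and does not need compactness of $\TS(X)$ at this step; yours avoids invoking the ``no proper injective subspace'' characterization of the tight span, using only the extension property of Lemma \ref{lm:iso span}, at the price of leaning on Theorem \ref{thm:compactness_tight_span}. One small point of care: in your justification of surjectivity, the inequality $d(y,T^k(y))\geq\delta$ does not by itself contradict the existence of a convergent subsequence; you need to extract indices $m<n$ with $d(T^m(y),T^n(y))<\delta$ and pull back via distance preservation to get $d(y,T^{n-m}(y))<\delta$. This is the standard argument and easily repaired, but as written the contradiction is not fully spelled out. Part (2) is handled exactly as in the paper.
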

\begin{proof} If the map $\iota_X:\caE(X)\to \TS(X)\text{ with }x\mapsto d_\infty^X(x,\cdot)$ preserves distance, then $\caE(X)$ is an injective metric space isometrically embedded into $\TS(X)$. By the minimality of $\TS(X)$, it must be true that $\caE(X)\cong \TS(X)$.

Assume now $\caE(X)=\caF(X)$. Recall from Proposition \ref{prop:FX_EX_caEX} that $\iota_X:\caF(X)=\caE(X)\to \TS(X)\text{ with }x\mapsto d_\infty^X(x,\cdot)$ preserves distances. Thus, $\TS(X)\cong \caF(X)=\caE(X)$.
\end{proof}

\begin{lemma} \label{lem:mirror intersection}
Let $X\subseteq \R^n_\infty$ and $p\in \caE(X)$. For each fixed $1\leq i\leq n$ and $\xi \in \{\pm 1\}$, if $\mathrm{Int}(p+\xi  \Lambda_i)\cap X\neq \emptyset$, then $(p-\xi  \Lambda_i)\cap X\neq \emptyset$.

\end{lemma}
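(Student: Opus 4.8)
The plan is to combine the minimality of the distance function $d_\infty^X(p,\cdot)$ guaranteed by $p\in\caE(X)$ with the \emph{strict} defining inequalities of the interior cone. Without loss of generality I take $\xi=+1$; the case $\xi=-1$ follows by the symmetric argument (or by reflecting the $i$-th coordinate). Recall that $p+\Lambda_i=\{z:z_i-p_i=d_\infty(z,p)\}$, so its interior consists of those $z$ with $z_i-p_i>|z_j-p_j|$ for every $j\neq i$.

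First I would pick $x\in\mathrm{Int}(p+\Lambda_i)\cap X$ and set $M:=d_\infty(x,p)$, so that $x_i-p_i=M$ and $|x_j-p_j|<M$ for all $j\neq i$. Since $p\in\caE(X)$, Definition \ref{def:X-minimal} applied to this $x$ yields a point $y\in X$ with $p\in I_{xy}$, i.e. $d_\infty(x,p)+d_\infty(p,y)=d_\infty(x,y)$. Writing $N:=d_\infty(p,y)$, the identity becomes $d_\infty(x,y)=M+N$.

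The key step is to locate which coordinate realizes the maximum $d_\infty(x,y)=M+N$. For every $j\neq i$ the triangle inequality gives $|x_j-y_j|\le|x_j-p_j|+|p_j-y_j|<M+N$, where the strictness comes precisely from the interior hypothesis $|x_j-p_j|<M$ together with $|p_j-y_j|\le N$. Hence the value $M+N$ can only be attained at coordinate $i$, forcing $|x_i-y_i|=M+N$. Combining this with $|x_i-y_i|\le|x_i-p_i|+|p_i-y_i|=M+|p_i-y_i|\le M+N$ shows both $|p_i-y_i|=N=d_\infty(p,y)$ and the one-dimensional triangle equality $|x_i-y_i|=(x_i-p_i)+|p_i-y_i|$. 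Since $x_i-p_i=M>0$, this last equality forces $p_i-y_i\ge 0$, so $p_i-y_i=d_\infty(p,y)$, which is exactly the condition $y\in p-\Lambda_i$. Therefore $(p-\Lambda_i)\cap X$ contains $y$ and is nonempty.

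I expect the only delicate point to be the bookkeeping of strict versus non-strict inequalities: the interior assumption is exactly what upgrades $|x_j-p_j|\le M$ to the strict $|x_j-p_j|<M$, and this strictness is indispensable — on the boundary of the cone the maximum defining $d_\infty(x,y)$ could be realized at a coordinate $j\neq i$ and the sign/coordinate analysis would break down. Everything else is standard $\ell^\infty$ triangle-equality analysis, so no machinery beyond Definition \ref{def:X-minimal} and the cone description of $p\pm\Lambda_i$ should be needed.
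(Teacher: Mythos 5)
Your proof is correct and follows essentially the same route as the paper's: pick $x$ in the open cone, use $p\in\caE(X)$ to produce $y\in X$ with $p\in I_{xy}$, and then exploit the strict inequalities $|x_j-p_j|<d_\infty(x,p)$ for $j\neq i$ to force the triangle equality to be realized in coordinate $i$ with the right sign, giving $y\in p-\xi\Lambda_i$. The only cosmetic difference is that you carry out this last step by a direct coordinate computation, whereas the paper packages it as ``$p\in\mathrm{Int}(x-\xi\Lambda_i)$ can lie in only one of the cones $x+\eta\Lambda_j$'' via the decomposition $I_{xy}=(x+\eta\Lambda_j)\cap(y-\eta\Lambda_j)$ from Proposition \ref{prop:property of C_i and I_xy}.
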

\begin{proof}
Without loss of generality, assume that $i=1$ and $\xi=1$. Let $x\in \mathrm{Int}(p+  \Lambda_1)\cap X$. Since $p\in \caE(X)\subseteq \bigcup_{y\in X} I_{xy}$, there exists $y\in X$ such that $p\in I_{xy}$. Suppose $1\leq j\leq n$ and $\eta\in \{\pm 1\}$ are such that $y\in x+\eta  \Lambda_j$. Then $p\in I_{xy}=(x+\eta  \Lambda_j)\cap (y-\eta  \Lambda_j)\subseteq (x+\eta  \Lambda_j)$ by item (\ref{prop:I_xy decomp}) of Proposition \ref{prop:property of C_i and I_xy}. Combined with the fact that $p$ is in $\mathrm{Int}(x- \Lambda_1)$, we must have $\eta=-1$ and $j=1$. Thus, $p\in y-\eta  \Lambda_j=y+\Lambda_1$ and it follows that $y\in(p- \Lambda_1)\cap X$, i.e. $(p- \Lambda_1)\cap X\neq \emptyset$.
\end{proof}

\begin{proof}[Proof of Theorem \ref{thm:caE(S2)}]
To prove $\caF(\Sp^2_\infty)=\caE(\Sp^2_\infty)$, it is enough to show $\caE(\Sp^2_\infty)\subseteq\caF(\Sp^2_\infty)$. Then, item (\ref{thm-part:boundary}) immediately follows from Proposition \ref{prop:EX=caEX=caFX}.
Because the ball $\bbD^3_\infty$ is clearly contained in $\caF(\Sp^2_\infty)$, it suffices to show that $\caE(\Sp^2_\infty)-\bbD^3_\infty\subseteq \caF(\Sp^2_\infty)$. 

Take any $p\in \caE(\Sp^2_\infty)-\bbD^3_\infty$. Because $\Sp^2_\infty$ cannot be fully embedded into the union of finitely many $2$-dimensional planes $\bigcup_{i=1}^3\lbracket\partial(p+ \Lambda_i)\cup \partial(p- \Lambda_i)\rbracket $, there must exist some $1\leq i\leq 3$ and $\xi \in \{\pm 1\}$ such that $\mathrm{Int}(p+\xi  \Lambda_i)\cap \Sp^2_\infty\neq \emptyset.$ Without loss of generality, we assume that $\xi=1$ and $i=1$. By Lemma \ref{lem:mirror intersection}, we also have $(p-  \Lambda_1)\cap \Sp^2_\infty\neq \emptyset.$

If both $\mathrm{Int}(p+ \Lambda_i)\cap \Sp^2_\infty=\emptyset$ and $\mathrm{Int}(p- \Lambda_i)\cap \Sp^2_\infty=\emptyset$ for $i=2$ and $i=3$, then for every $x\in \Sp^2_\infty $, $x$ is either in $(p+ \Lambda_1)-\{p\}$ or $(p- \Lambda_1)-\{p\}$. Equivalently, we have either $x_1>p_1$ or $x_1<p_1$. Therefore,
\[\Sp^2_\infty \\
= \{x\in \Sp^2_\infty : x_1< p_1\} \sqcup \{x\in \Sp^2_\infty : x_1> p_1\}.\]

In addition, $\{x\in \Sp^2_\infty : x_1< p_1\}\supset (p+ \Lambda_1)\cap \Sp^2_\infty\neq \emptyset$ is nonempty, so is $\{x\in \Sp^2_\infty : x_1> p_1\}$. 
Thus, we have written $\Sp^2_\infty$ as the disjoint union of two nonempty open subsets, contradicting that $ \Sp^2_\infty $ is connected.
Hence, there must exist $2\leq i\leq 3$ such that $\mathrm{Int}(p+ \Lambda_i)\cap \Sp^2_\infty\neq\emptyset$ or $\mathrm{Int}(p- \Lambda_i)\cap \Sp^2_\infty\neq\emptyset$. Without loss of generality, suppose $\mathrm{Int}(p+ \Lambda_2)\cap \Sp^2_\infty\neq\emptyset$. By Lemma \ref{lem:mirror intersection}, we also have $(p-  \Lambda_2)\cap \Sp^2_\infty\neq \emptyset.$

If both $\mathrm{Int}(p+ \Lambda_3)\cap \Sp^2_\infty=\emptyset$ and $\mathrm{Int}(p- \Lambda_3)\cap \Sp^2_\infty=\emptyset$, then
\[\Sp^2_\infty \subseteq (p+ \Lambda_1)\cup (p- \Lambda_1)\cup (p+ \Lambda_2)\cup (p- \Lambda_2) -\{p\}.\]
Denote the regions $\mathrm{Int}(p+ \Lambda_1)\cap \Sp^2_\infty, \mathrm{Int}(p+ \Lambda_2)\cap \Sp^2_\infty,(p- \Lambda_1)\cap \Sp^2_\infty$ and $ (p- \Lambda_2)\cap \Sp^2_\infty$ by $R^1,R^2,R^3$ and $R^4$ respectively. For $j=1,2,3,4$, take $x^j\in R^j$, and let $\gamma_j\subseteq R_j\cup R_{j+1}$ be a path connecting $x^j$ and $x^{j+1}$, where we assume $R_5=R_1$ and $x^5=x^1$. Because $\Sp^2_\infty$ is simply-connected, the loop $\gamma:=\gamma_1\circ\gamma_2\circ\gamma_3\circ\gamma_4$ is contractible in $\Sp ^2$, and thus contractible in $(p+ \Lambda_1)\cup (p- \Lambda_1)\cup (p+ \Lambda_2)\cup (p- \Lambda_2) -\{p\}$. This gives a contradiction.
\end{proof}

%%%%%%%%%%%%%%%%%%%%%%%%%%%%%%%%%%%%%%%%%%%%%%%%%%%%%%%%%%%%%%%%%%%%%%%%%%%%%%%%%%%%%%%%%%%%%%%%%%%%%%%%%%%%%%%

\newcommand{\etalchar}[1]{$^{#1}$}

%%%%%%%%%%%%%%%%%%%%%%%%%%%%%%%%%%%%%%%%%%%%%%%%%%%%%%%%%%%%%%%%%%%%%%%%%%%%%%%%%%%%%%%%%%%%%%%%%%%%%%%%%%%%%%%%
\end{document}